\newcommand{\ifundef}[1]{\expandafter\ifx\csname#1\endcsname\relax}
\DeclareMathAlphabet{\mathbbe}{U}{bbold}{m}{n}
\def\re@DeclareMathSymbol#1#2#3#4{%
    \let#1=\undefined
    \DeclareMathSymbol{#1}{#2}{#3}{#4}}
  \DeclareSymbolFont{tcSyC}{U}{txsyc}{m}{n}
  \re@DeclareMathSymbol{\Top}{\mathord}{tcSyC}{120}
  \re@DeclareMathSymbol{\Bot}{\mathord}{tcSyC}{121}
  \DeclareFontFamily{U}{MnSymbolC}{}
  \DeclareSymbolFont{mnSyC}{U}{MnSymbolC}{m}{n}
  \DeclareFontShape{U}{MnSymbolC}{m}{n}{
      <-6>  MnSymbolC5
     <6-7>  MnSymbolC6
     <7-8>  MnSymbolC7
     <8-9>  MnSymbolC8
     <9-10> MnSymbolC9
    <10-12> MnSymbolC10
    <12->   MnSymbolC12}{}
  \DeclareFontShape{U}{MnSymbolC}{b}{n}{
      <-6>  MnSymbolC-Bold5
     <6-7>  MnSymbolC-Bold6
     <7-8>  MnSymbolC-Bold7
     <8-9>  MnSymbolC-Bold8
     <9-10> MnSymbolC-Bold9
    <10-12> MnSymbolC-Bold10
    <12->   MnSymbolC-Bold12}{}
  \re@DeclareMathSymbol{\righthalfcup}{\mathord}{mnSyC}{184}
  \re@DeclareMathSymbol{\lefthalfcap}{\mathord}{mnSyC}{185}
\newcommand{\mlaux}[3]{\setbox0=\hbox{$\mathsurround=0pt #2{#3}$}%
  \dimen0=\dp0\advance\dimen0 by \ht0\lower#1\dimen0\box0}
\newcommand{\makellapm}[2]{\hbox to 0pt{\hss$\mathsurround=0pt #1{#2}$}}
\newcommand{\makerlapm}[2]{\hbox to 0pt{$\mathsurround=0pt #1{#2}$\hss}}
\newcommand{\makelapm}[2]{\hbox to 0pt{\hss$\mathsurround=0pt #1{#2}$\hss}}
\newcommand{\makeushort}[3]{%
	\setbox0=\hbox{$\mathsurround=0pt #2{#3}$}%
	\hbox to 1\wd0{\hss\underbar{\hbox to #1\wd0{\hss\box0\hss}}\hss}}
\newcommand{\ushort}[1]{\relax\mathpalette{\makeushort{#1}}}
\def\makebigger#1#2#3{\scalebox{#1}{$\mathsurround=0pt #2{#3}$}}
\def\bigger#1#2{{\relax\mathpalette{\makebigger{#1}}{#2}}}
\def\scaleuphalf{1.0954}
\newcommand{\op}{^{\mathord{\text{\rm op}}}}
\newcommand{\mapcat}{^\cattwo}
\newcommand{\nd}{^{\text{nd}}}
\newcommand{\defeq}{\mathrel{:=}}
\def\newmop{\@ifstar{\@newmop m}{\@newmop o}}
\def\@newmop#1{\@ifnextchar[{\@@newmop #1}{\@@@newmop #1}}
\def\@@newmop#1[#2]{\@declmathop #1#2}
\def\@@@newmop#1#2{\expandafter\@declmathop\expandafter #1\csname #2\endcsname{#2}}
\newcommand{\unit}{\eta}
\newcommand{\counit}{\varepsilon}
\newcommand{\rotatemath}[2]{\rotatebox[origin=c]{180}{$\m@th #1{#2}$}}
\newcommand{\cprod}{\mathbin{\mathpalette\rotatemath\Pi}}
\newcommand{\pocorner}{\hbox to 8pt{{\vrule height8pt depth0pt width0.5pt}%
    \vbox to 8pt{{\hrule height0.5pt width7.5pt depth0pt}\vfill}}}
\newcommand{\poexcursion}{\save[]-<15pt,-15pt>*{\pocorner}\restore}
\newcommand{\pbcorner}{\vbox to 0pt{\kern 4pt\hbox to 0pt{\kern 4pt%
      \vbox{{\hrule height0.5pt width7.5pt depth0pt}}%
      {\vrule height8pt depth0pt width0.5pt}\hss}\vss}}
\newcommand{\pbexcursion}{\save[]+<5pt,-5pt>*{\pbcorner}\restore}
\newcommand{\pwr}{\pitchfork}
\newcommand{\tns}{\ast}
\newcommand{\wcolim}{\circledast}
\newcommand{\leibwcolim}{\leib\wcolim}
\newcommand{\wlim}[2]{\{#1,#2\}}
\newcommand{\leibwlim}[2]{\{#1,#2\}^{\wedge}}
\newcommand{\leib}[1]{\mathbin{\widehat{#1}}}
\newcommand{\wleib}{\widehat}
\newcommand{\etimes}{\mathbin{\ushort{0.5}{\mathord\times}}}
\newcommand{\category}[1]{\underline{\smash[b]{\text{\rm{#1}}}}}
\newcommand{\lcat}{\mathcal}
\newcommand{\scat}{\mathbf}
\newcommand{\cattwo}{{\bigger{1.12}{\mathbbe{2}}}}
\newcommand{\catone}{{\bigger{1.16}{\mathbbe{1}}}}
\def\Del@Sym{{\bigger\scaleuphalf{\mathbbe{\Delta}}}}
\def\del@fn{\futurelet\del@next}
\def\del@dn{\def\del@next}
\def\parsedel@{%
  \ifx +\del@next \del@dn+{\Del@Sym_{\mathord{+}}}%
  \else \del@dn {\del@fn\parsedel@@}%
  \fi\del@next}
\def\parsedel@@{%
  \ifx\space@\del@next \expandafter\del@dn\space{\del@fn\parsedel@@}%
  \else\ifx [\del@next \del@dn[{\del@fn\parsedel@@@}%
  \else\ifx _\del@next \del@dn{\Delta}%
  \else\ifx ^\del@next \del@dn{\Delta}%
  \else \del@dn{\Del@Sym}%
  \fi\fi\fi\fi\del@next}
\def\parsedel@@@{%
  \ifx\space@\del@next \expandafter\del@dn\space{\del@fn\parsedel@@@}%
  \else\ifx t\del@next \del@dn t{\Del@Sym_\infty\del@fn\parsedel@@@@}%
  \else\ifx b\del@next \del@dn b{\Del@Sym_{-\infty}\del@fn\parsedel@@@@}%
  \else \del@dn{\errmessage{unexpected modifier}}%
  \fi\fi\fi\del@next}
\def\parsedel@@@@{%
  \ifx\space@\del@next \expandafter\del@dn\space{\del@fn\parsedel@@@@}%
  \else\ifx ]\del@next \del@dn]{}%
  \else \del@dn{\errmessage{expecting close of option block}}%
  \fi\fi\del@next}
\def\Del{\del@fn\parsedel@}
\newcommand{\Set}{\category{Set}}
\newcommand{\fact}{\category{fact}}
\newcommand{\sSet}{\category{sSet}}
\newcommand{\boundary}{\partial}
\newcommand{\direct}{\overrightarrow}
\newcommand{\inverse}{\overleftarrow}
\newcommand{\reedycat}[1]{(\scat{#1},\direct{\scat{#1}},\inverse{\scat{#1}})}
\newcommand{\latch}{L}
\newcommand{\match}{M}
\def\reedyfilt#1_#2{#1_{\leq #2}}
\newcommand{\mclass}{\mathcal}
\def\makeslashed#1#2#3#4#5{#1{\mathpalette{\sla@{#2}{#3}{#4}}{#5}}}
\def\@mathlower#1#2#3{\setbox0=\hbox{$\m@th#2#3$}\lower#1\ht0\box0}
\def\mathlower#1#2{\mathpalette{\@mathlower{#1}}{#2}}
\newcommand{\inc}{\hookrightarrow}
\def\tens@fn{\futurelet\tens@next}
\def\tens@dn{\def\tens@nextcont}
\newtoks\tens@toks
\def\addtotens@toks#1{\tens@toks=\expandafter{\the\tens@toks#1}}
\def\parsetens@@{%
    \ifx\space@\tens@next \expandafter\tens@dn\space{\tens@fn\parsetens@@}%
    \else\ifx ^\tens@next \tens@dn ^##1{\parsetens@procsep^\addtotens@toks{##1}%
      \tens@fn\parsetens@@}%
    \else\ifx _\tens@next \tens@dn _##1{\parsetens@procsep_\addtotens@toks{##1}%
      \tens@fn\parsetens@@}%
    \else\tens@dn{\ifx *\tens@last \else\addtotens@toks\egroup\fi\the\tens@toks}%
    \fi\fi\fi\tens@nextcont}
\def\parsetens@procsep#1{%
  \ifx *\tens@last \addtotens@toks{#1}\addtotens@toks\bgroup%
  \else\ifx \tens@last\tens@next \addtotens@toks,%
  \else \addtotens@toks\egroup\addtotens@toks\bgroup%
    \addtotens@toks\egroup\addtotens@toks{#1}\addtotens@toks\bgroup%
  \fi\fi\let\tens@last\tens@next}
\newcommand{\tn}[1]{\let\tens@last=*\tens@toks={#1}\tens@fn\parsetens@@}
\def\adjdisplay#1-|#2:#3->#4.{{%
    \xymatrix@R=0em@!C=2.5em{%
      *+[l]{#3} \ar@/_0.55pc/[rr]_-{#2} & {\bot} &
      *+[r]{#4}\ar@/_0.55pc/[ll]_-{#1}}}}
\def\adjdisplaytwo#1-|#2:#3->#4.{{%
\xymatrix@=1.2em{
      {#3}\ar@/_1.5ex/[rr]_-{#2}^-{}="one"
      & & {#4}
      \ar@/_1.5ex/[ll]_-{#1}^-{}="two" 
      \ar@{}"one";"two"|{\bot}
    }}}
\def\adjinline#1-|#2:#3->#4.{{#1}\dashv{#2}:#3\to #4}
\newcommand{\pent}[1]{
  \xybox{
    \POS (0,-15)*+{\a}="0", 
         (-14,-5)*+{\b}="1", 
         (-9,12)*+{\c}="2", 
         (9,12)*+{\d}="3", 
         (14,-5)*+{\e}="4"
    \POS"0" \ar "1"^{\labelstyle \ab}|{}="01"
    \POS"1" \ar "2"^{\labelstyle \bc}|{}="12"
    \POS"2" \ar "3"^{\labelstyle \cd}|{}="23"
    \POS"3" \ar "4"^{\labelstyle \de}|{}="34"
    \POS"0" \ar "4"_{\labelstyle \ae}|{}="04"
    \ifcase #1
    \POS"0" \ar "2"|{\labelstyle \ac}="02"
    \POS"0" \ar "3"|{\labelstyle \ad}="03"
    \POS"02";"1"**{}, ?(0.3) \ar@{=>} ?(0.7)^{\labelstyle \abc}
    \POS"03";"2"**{}, ?(0.25) \ar@{=>} ?(0.5)_{\labelstyle \acd}
    \POS"04";"3"**{}, ?(0.2) \ar@{=>} ?(0.4)_{\labelstyle \ade}
    \or
    \POS"1" \ar "3"|{\labelstyle \bd}="13"
    \POS"1" \ar "4"|{\labelstyle \be}="14"
    \POS"13";"2"**{}, ?(0.3) \ar@{=>} ?(0.7)_{\labelstyle \bcd}
    \POS"14";"3"**{}, ?(0.25) \ar@{=>} ?(0.5)_{\labelstyle \bde}
    \POS"04";"1"**{}, ?(0.25) \ar@{=>} ?(0.5)_{\labelstyle \abe}
    \or
    \POS"2" \ar "4"|{\labelstyle \ce}="24"
    \POS"0" \ar "2"|{\labelstyle \ac}="02"
    \POS"02";"1"**{}, ?(0.3) \ar@{=>} ?(0.7)^{\labelstyle \abc}
    \POS"04";"2"**{}, ?(0.2) \ar@{=>} ?(0.35)_{\labelstyle \ace}
    \POS"24";"3"**{}, ?(0.2) \ar@{=>} ?(0.6)^{\labelstyle \cde}
    \or
    \POS"1" \ar "3"|{\labelstyle \bd}="13"
    \POS"0" \ar "3"|{\labelstyle \ad}="03"
    \POS"04";"3"**{}, ?(0.2) \ar@{=>} ?(0.4)_{\labelstyle \ade}
    \POS"13";"2"**{}, ?(0.3) \ar@{=>} ?(0.7)_{\labelstyle \bcd}
    \POS"03";"1"**{}, ?(0.25) \ar@{=>} ?(0.5)^{\labelstyle \abd}
    \or
    \POS"2" \ar "4"|{\labelstyle \ce}="24"
    \POS"1" \ar "4"|{\labelstyle \be}="14"
    \POS"24";"3"**{}, ?(0.2) \ar@{=>} ?(0.6)^{\labelstyle \cde}
    \POS"04";"1"**{}, ?(0.25) \ar@{=>} ?(0.5)_{\labelstyle \abe}
    \POS"14";"2"**{}, ?(0.25) \ar@{=>} ?(0.5)^{\labelstyle \bce}
    \else\fi
  }
}
\newcommand{\pentofpent}[1]{
  \def\baselen{#1}
  \begin{xy}
    0;<\baselen,0mm>:
    *{\xybox{
        \POS(0,-4)*[o]{\pent 0}="zero"
        \POS(16,40)*[o]{\pent 3}="three"
        \POS(72,40)*[o]{\pent 1}="one"
        \POS(88,-4)*[o]{\pent 4}="four"
        \POS(44,-36)*[o]{\pent 2}="two"
        \ar@<1ex>"zero";"three"^-{\objectstyle\abcd}
        \ar@<1ex>"three";"one"^-{\objectstyle\abde}
        \ar@<1ex>"one";"four"^-{\objectstyle\bcde}
        \ar@<-1ex>"zero";"two"_-{\objectstyle\acde}
        \ar@<-1ex>"two";"four"_-{\objectstyle\abce}
        \ar@{=>}(44,-5);(44,+15)^{\objectstyle\abcde}
     }}
  \end{xy}
}
\setlist{}
\theoremstyle{plain}
\newtheorem{thm}{Theorem}[section]
\newtheorem{lem}[thm]{Lemma}
\newtheorem{cor}[thm]{Corollary}
\newtheorem{prop}[thm]{Proposition}
\theoremstyle{definition}
\newtheorem{defn}[thm]{Definition}
\newtheorem{ex}[thm]{Example}
\newtheorem{ntn}[thm]{Notation}
\theoremstyle{remark}
\newtheorem{obs}[thm]{Observation}
\newtheorem{rec}[thm]{Recall}
\let\c@equation\c@thm
\numberwithin{equation}{section}
\title{The theory and practice of Reedy categories}
\author[Riehl]{Emily Riehl}
\address{
  Department of Mathematics \\
  Harvard University \\
  Cambridge, MA 02138\\
  USA
}
\email{eriehl@math.harvard.edu}
\author[Verity]{Dominic Verity}
\address{
  Centre of Australian Category Theory \\
  Macquarie University \\
  NSW 2109 \\
  Australia
}
\email{dominic.verity@mq.edu.au}
\date{$2\nd$ June 2014}
\subjclass[2010]{%
  55U35, 18G30, 18D10
}
\begin{document}
  
  \ifpdf
  \DeclareGraphicsExtensions{.pdf, .jpg, .tif}
  \else
  \DeclareGraphicsExtensions{.eps, .jpg}
  \fi
  
  \begin{abstract}
      The goal of this paper is to demystify the role played by the Reedy category axioms in homotopy theory. With no assumed prerequisites beyond a healthy appetite for category theoretic arguments, we present streamlined proofs of a number of useful technical results, which are well known to folklore but difficult to find in the literature. While the results presented here are not new, our approach to their proofs is somewhat novel. Specifically, we reduce the much of the hard work involved to simpler computations involving {\em weighted colimits\/} and {\em Leibniz (pushout-product) constructions}. The general theory is developed in parallel with examples, which we use to prove that familiar formulae for homotopy limits and colimits indeed have the desired properties.
  \end{abstract}
  
  \maketitle

\tableofcontents

\section{Introduction}

The homotopical behavior of limits or colimits of diagrams of a fixed shape is somewhat subtle. The situation is improved considerably when the shape in question, given in general by a small category $\scat{C}$, is a {\em Reedy category\/}. Ordinary colimits indexed by Reedy categories include pushouts, coequalisers, coproducts, and sequential colimits. Weighted colimits further include geometric realizations of simplicial objects. As the opposite of a Reedy category is always a Reedy category, the dual limit notions will also fit into this framework.

A Reedy category might admit multiple Reedy category structures, as we shall see in examples below. A key ingredient in a Reedy structure is a degree function assigning to each object a natural number. Certain, but not necessarily all, morphisms from an object to an object of lower (respectively higher) degree are said to {\em strictly lower\/} (respectively {\em raise\/}) {degrees}. Each morphism must factorise uniquely as one of the former type followed by one of the later. 

A consequence of these axioms, providing the key motive for their introduction, is that diagrams indexed by a Reedy category can be defined inductively via a procedure that we will outline below. Trivial examples of this are familiar: a diagram indexed by the poset $\omega$ may of course be stipulated by first choosing the image of the initial object, then choosing a morphism with this object as its domain, then choosing a morphism with the codomain of this chosen arrow as its domain, and so on. In a widely circulated preprint, Chris Reedy described an inductive procedure that can be used to define a simplicial object: the extension from an $n$-truncated simplicial object to an $(n+1)$-truncated simplicial object is determined by a factorisation of a canonical map from the $n$-skeleton to the $n$-coskeleton. This procedure is functorial: extensions of maps also correspond to factorisations, this time in the arrow category.

It will be of interest to understand the homotopical behavior of the skeleta and coskeleta. For instance, the geometric realization of a simplicial space $X$ is the colimit of the geometric realizations of a sequence of maps $\sk_{n-1} X \to \sk_n X$. A geometric understanding of these maps will supply conditions under which this colimit is homotopy invariant. To that end, Reedy observed that there is a pushout diagram of the following form 
\begin{equation}\label{eq:reedy's.po}
  \xymatrix{ \latch_n X \times \Delta^n \cup X_n \times \partial \Delta^n \ar[d] \ar[r] & X_n \times \Delta^n \ar[d] \\ \sk_{n-1} X \ar[r] & \sk_n X \poexcursion}
\end{equation}
in which $\latch_n X$ is the {\em space of degenerate $n$-simplices\/} of $X$, defined in this case to be the union of the images of the degeneracy maps with codomain $X_n$.\footnote{To make this diagram of simplicial spaces ``type check'' we should explain what we mean by the product  $X_n\times \Del^n$ of  a space $X_n$ with a simplicial set $\Del^n$. This is an unfortunate, but traditional, abuse of notation under which if $X$ is a space and $Y$ is a simplicial set then $X\times Y$ denotes the simplicial space whose space of $n$-simplices is the $Y_n$-fold copower of the space $X$.} These pushouts simply formalise our intuition that we may construct the $n$-skeleton of a simplicial space $X$ by adjoining a space of non-degenerate $n$-simplices to its $(n-1)$-skeleton by glueing along suitable maps of simplex boundaries.

The point of view championed in this paper is that Reedy's insights can be reduced to an analysis of the hom-bifunctor $\Delta \colon \Del\op \times \Del \to \Set$. Specifically, we define its $n$-skeleton $\sk_n \Delta$ to be the sub-bifunctor of $\Delta$ of those simplicial operators $[m]\to[k]$ which factorise through $[n]$ and define the boundaries of the covariant and contravariant representables $\Del^n\defeq\Delta(-,[n])$ and $\Del^n\defeq\Delta([n],-)$ to be their sub-functors $\boundary\Del^n\defeq\sk_{n-1}\Delta(-,[n])$ and $\boundary\Del_n\defeq\sk_{n-1}\Delta([n],-)$. Then we show, by a simple and direct combinatorial argument, that there exists a pushout diagram
\begin{equation}\label{eq:reedy's.delta}
  \xymatrix{ \boundary\Delta_n \etimes \Delta^n \cup \Delta_n \etimes \boundary\Delta^n \ar[d] \ar[r] & \Delta_n \etimes \Delta^n \ar[d] \\ \sk_{n-1}\Delta \ar[r] & \poexcursion \sk_n \Delta}
\end{equation}
 in $\Set^{\Del\op \times \Del}$, wherein the symbol $\etimes$ denotes the manifest exterior product functor $\Set^{\Del\op}\times\Set^{\Del}\to\Set^{\Del\op \times \Del}$. Of course this result can be regarded as being a special case of Reedy's, but for us it is a key and very concrete observation which can be extended immediately to his general case of all simplicial objects in arbitrary (small cocomplete) categories. Specifically, we do this by observing that the objects in Reedy's pushout~\eqref{eq:reedy's.po} can be obtained as {\em weighted colimits\/} whose {\em weights\/} are the bifunctors in~\eqref{eq:reedy's.delta}. Everything else follows by observing that weighted colimit constructions are cocontinuous in their weights. This approach generalise with no extra effort to categories of functors on an arbitrary Reedy category, and we present our work here at that level of generality.

In this context, a {\em weight\/} is a functor describing the ``shape'' of a generalised cone over diagrams indexed by a fixed small category. Objects representing the set of cones described by a particular weight are called {\em weighted limits or colimits}. Weighted limits and colimits are indispensable to enriched category theory but can provide a useful conceptual simplification even in the unenriched context, which is all that we will need here. Their use will simplify the proofs involved in the development of Reedy category theory precisely because calculations involving the weights are the reason why these results are true. We summarise our expository project with the slogan that ``it's all in the weights''. 

The definition of a Reedy category originates in unpublished notes of Dan Kan, which circulated as an early draft of the book \cite{Dwyer:2004fk}. This material survives in the published literature thanks to~\cite[chapter 15]{Hirschhorn:2003:ModCat}. A briefer account based on the same source material can be found in~\cite[chapter 4]{Hovey:1999fk}. We offer this as a ``second generation'' account of the classical theory. Other work that might contest this title is \cite{Berger:2008uq} which introduces \emph{generalised Reedy categories}, a definition which, in contrast with ordinary Reedy categories, is invariant under equivalence.

\subsection*{\S\ Roadmap}

We conclude this introduction with a review of the notions of weighted limit and colimit which are central to all of our work here. Reedy categories are introduced in \S\ref{sec:reedy}, in which we prove a lemma characterising the factorisations of arrows in a Reedy category. In \S\ref{sec:latching}, we introduce latching and matching objects, defined using skeleta and coskeleta functors appropriate to the Reedy setting. In \S\ref{sec:Leibniz-Reedy}, we give a thorough study of Leibniz constructions, which are used to define relative latching and matching maps. This section concludes with the definition of the Reedy model structure, which we establish in \S\ref{sec:proof}.

In \S\ref{sec:Leibniz-rel-cell-cx}, we prove that the Leibniz tensor of a pair of relative cell complexes is again a relative cell complex, whose cells are the Leibniz tensors of the given cells. This calculation enables us, in \S\ref{sec:cellular} which is really the heart of this paper, to introduce the canonical cellular presentation for the two-sided representable functor of a Reedy category. As an immediate corollary, any natural transformation between functors indexed by a Reedy category admits a canonical presentation as a relative cell complex whose cells are built from its relative latching maps, and also a canonical presentation as a ``Postnikov tower'' whose layers are built from its relative matching maps (proposition \ref{prop:building-up} and its dual). This makes the proof of the Reedy model structure in \S\ref{sec:proof} essentially a triviality.

As an epilogue, in \S\S\ref{sec:hocolim}--\ref{sec:simplicial} we turn our attention to homotopy limits and colimits of diagrams of Reedy shape. We begin in \S\ref{sec:hocolim} with a gentle introduction to this topic, illustrating examples of homotopy limit and homotopy colimits formulae provided by the Reedy model structures. In \S\ref{sec:connected-weights}, we unify all the examples just considered, proving that the limit (resp.~colimit) is a right (resp.~left) Quillen functor if and only if the weights used to define latching (resp.~matching) objects are connected. In \S\ref{sec:simplicial}, we describe the homotopical properties of the weighted limit and colimit bifunctors in a simplicial model category. A corollary is the homotopy invariance of the geometric realization of Reedy cofibrant simplicial objects and of the totalization of Reedy fibrant cosimplicial objects.

\subsection*{\S\ Acknowledgments}

During the preparation of this work the authors were supported by DARPA through the AFOSR grant number HR0011-10-1-0054-DOD35CAP and by the Australian Research Council through Discovery grant number DP1094883. The first-named author was also supported by the National Science Foundation under Award No.~DMS-1103790.

We would also like to extend personal thanks to Mike Hopkins without whose support and encouragement this work would not exist. We are also very grateful for the cogent suggestions made by the referee, which have added significantly to the quality of our exposition.

\subsection*{\S\ Notational conventions}

  \begin{ntn}[size]
    Generally speaking, in this paper matters of size will not be crucial for us. However, for definiteness we shall adopt the usual conceit of assuming that we have fixed an inaccessible cardinal which then determines a corresponding Grothendieck universe. We shall refer to the members of that universe simply as {\em sets\/} and refer to everything else as {\em classes}.

    When discussing the existence of limits and colimits we shall implicitly assume that these are indexed by small categories (categories with sets of objects and arrows). Correspondingly completeness and cocompleteness properties will implicitly reference the existence of limits and colimits indexed by small categories. To aid the intuition, we shall distinguish certain small or large structures notationally. For example, we shall usually use bold capitals $\scat{A},\scat{B},\ldots$ to denote small categories and calligraphic letters $\lcat{A},\lcat{B},\ldots$ to denote large, locally small categories. 

    While not strictly necessary in all places, we shall also adopt the blanket assumption that all of the large, locally small categories we shall consider admit all small limits and colimits.
  \end{ntn}

    \begin{ntn}[general index notation]\label{ntn:index-stuff}
In what follows, we will employ {\em index notation\/} in the context of an iterated functor category.
In this context, we use multiple subscripts and superscripts to denote the structural components from which objects and arrows are constructed. Furthermore, we follow the convention of using subscripts for contravariant actions and superscripts for covariant actions. These conventions accord with the standard notation used for $n$-fold simplicial sets and other common examples.

So for example if $X$ is an object of an iterated functor category $((\lcat{M}^{\scat{E}})^{\scat{D}\op})^{\scat{C}}$ then $X^c$ denotes the object of $(\lcat{M}^{\scat{E}})^{\scat{D}\op}$ obtained by evaluating $X$ at $c\in\scat{C}$, $X^c_d$ denotes the object of $\lcat{M}^{\scat{E}}$ obtained by evaluating $X^c$ at $d\in\scat{D}$ and finally $X^{c,e}_d$ is the object of $\lcat{M}$ obtained by evaluating $X^c_d$ at $e\in\scat{E}$. Furthermore, if $f\colon c\to \bar{c}$ is a map in $\scat{C}$ then $X^f\colon X^c\to X^{\bar{c}}$ denotes the map of $(\lcat{M}^{\scat{E}})^{\scat{D}\op}$ obtained by applying $X$ to $f$ and so forth.
		
In this context functoriality may be expressed as a family of left (covariant) and right (contravariant) actions, one for each subscript and superscript, which each satisfy the appropriate action axioms individually and which also satisfy the obvious pairwise action interchange laws. 

      It should be said that this index notation is best suited to expressing situations in which we think of the domains of our functors as being categories of operators and the functors themselves as being families of objects in the codomain category upon which those operators act. In other situations we will, of course, resort to more traditional notation for our functors.
    \end{ntn}

    \begin{ntn}[representables]
      If $\scat{C}$ is a category we will overload our notation and also use $\scat{C}$ to denote the usual two-variable hom-functor in the functor category $\Set^{\scat{C}\op\times\scat{C}}$. So using this convention, in tandem with the index conventions outlined above, we will write $\scat{C}_c$ and $\scat{C}^c$ for the covariant and contravariant representables (respectively) associated with an object $c\in\scat{C}$. So it follows that if we have a second object $\bar{c}\in\scat{C}$ then the expression $\scat{C}^c_{\bar{c}}$ denotes the value of the contravariant representable $\scat{C}^c$ at $\bar{c}$ (respectively covariant representable $\scat{C}_{\bar{c}}$ at $c$) which is simply equal to the hom-set $\scat{C}(\bar{c},c)$. We will also deploy {\em cypher notation\/} and write $\scat{C}_\bullet\colon\scat{C}\op\to\Set^{\scat{C}}$ and $\scat{C}^\bullet\colon\scat{C}\to\Set^{\scat{C}\op}$ to denote the corresponding Yoneda embeddings. 
    \end{ntn}
    
\subsection*{\S\ Weighted limits and colimits}    

Of paramount importance to enriched category are the notions of \emph{weighted limit} and \emph{weighted colimit}. As this manuscript will illustrate, these ideas can be clarifying even in the classical $\Set$-enriched case. For the reader's convenience, we include the following brief survey of (unenriched) weighted limits and colimits containing all of the facts that we will use here. A more comprehensive treatment can be found in \cite{Kelly:2005:ECT} or \cite{Riehl:2014kx}.

Ordinary limits and colimits are objects representing the $\Set$-valued functor of cones with a given summit over or under a fixed diagram. However, in the enriched context, these $\Set$-based universal properties are insufficiently expressive. The intuition is that in the presence of extra structure on the hom-sets of a category, cones over or under a diagram might come in exotic ``shapes''. 

\begin{ex}[tensors and cotensors]\label{ex:tensor-cotensor}
For example, in the case of a diagram of shape $\catone$ in a category $\lcat{M}$, the shape of a cone might be a set $S \in \Set$. Writing $X \in \lcat{M}$ for the object in the image of the diagram, the $S$-weighted limit of $X$ is an object $S \pwr X \in \lcat{M}$ satisfying the universal property \[ \lcat{M}(M, S \pwr X) \cong \Set(S,\lcat{M}(M,X))\] while the $S$-weighted colimit of $X$ is an object $S \tns X \in \lcat{M}$ satisfying the universal property \[ \lcat{M}(S \tns X, M) \cong \Set(S, \lcat{M}(X,M)).\]  For historical reasons, $S \pwr X$ is called the \emph{cotensor} and $S \tns X$ is called the \emph{tensor} of $X \in \lcat{M}$ by the set $S$. If $\lcat{M}$ has small products and coproducts, then $S\pwr X$ and $S\tns X$ are, respectively, the $S$-fold product and coproduct of the object $X$ with itself.

 Assuming the objects with these defining universal properties exist, cotensors and tensors define bifunctors \[ \pwr  \colon \Set \op \times \lcat{M} \to \lcat{M} \qquad  \tns \colon \Set \times \lcat{M} \to \lcat{M}.\]   A typical application forms tensors (or cotensors) of an object in $\lcat{M}$ with a $\Set$-valued functor of shape $\scat{C}$, producing an object in $\lcat{M}^{\scat{C}}$ (or $\lcat{M}^{\scat{C}\op}$).
\end{ex}

\begin{defn}[ends and coends]\label{defn:ends-and-coends}
  In line with our standing conventions, suppose that $\scat{C}$ is a small category and that $\lcat{M}$ is a large and locally small category which possesses all small limits and colimits. Assume also that $H\colon\scat{C}\op\times\scat{C}\to\lcat{M}$ is a bifunctor, then its {\em end\/} is the given by the equaliser
  \begin{equation*}
    \int_{c\in\scat{C}} H^c_c \defeq \mathrm{eq} \left( \prod\limits_{c \in\scat{C}} H^c_c \rightrightarrows \prod\limits_{f\colon \overline{c} \to c \in\scat{C}} H_{\overline{c}}^c\right)
  \end{equation*}
  of the obvious parallel pair of maps induced by the families:
  \begin{equation*}
    \{\xymatrix{{\prod\limits_{c \in\scat{C}} H^c_c}\ar[r]^-{\pi_c} & {H^c_c}\ar[r]^-{H^c_f} & H^c_{\overline{c}}}\}_{f\colon \overline{c}\to c\in\scat{C}}
    \mkern20mu\text{and}\mkern20mu
    \{\xymatrix{{\prod\limits_{c \in\scat{C}} H^c_c}\ar[r]^-{\pi_{\overline{c}}} & {H^{\overline{c}}_{\overline{c}}}\ar[r]^-{H^f_{\overline{c}}} & H^c_{\overline{c}}}\}_{f\colon \overline{c}\to c\in\scat{C}}
    \end{equation*}
    Dually its {\em coend\/} is the colimit given by a corresponding coequaliser:
\begin{equation*}
\int^{c\in\scat{C}} H^c_c \defeq \mathrm{coeq}\left( \coprod\limits_{\overline{c} \to c \in \scat{C}} H_c^{\overline{c}}  \rightrightarrows \coprod\limits_{c \in \scat{C}} H^c_c \right)
\end{equation*}
\end{defn}

\begin{defn}[categories of elements]\label{defn:cat-of-elts}
  Suppose that $H\colon \scat{C}\op\times\scat{D}\to\Set$ is a bifunctor. Then its {\em two sided category of elements\/} $\el H$ has 
      \begin{itemize}
         \item objects triples $(c,d,x)$ where $c$ is an object of $\scat{C}$, $d$ is an object of $\scat{D}$, and $x$ is an element of $H^d_c$, and
         \item arrows $(f,g)\colon(c,d,x)\to(\bar{c},\bar{d},\bar{x})$ which comprise pairs of arrows $f\colon c\to \bar{c}$ in $\scat{C}$ and $g\colon d\to \bar{d}$ with the property that $H^g_d(x) = H^{\bar{c}}_f(\bar{x})$, whose composite and identities are as in $\scat{C}$ and $\scat{D}$.
       \end{itemize}
       This category comes equipped with an obvious forgetful functor $\el{H}\to\scat{C}\times\scat{D}$. This construction specialises to give category of elements constructions for covariant and contravariant functors $F\colon \scat{D}\to\Set$ and $G\colon\scat{C}\op\to\Set$.
   \end{defn}

\begin{obs}[coends in sets]\label{obs:coend-in-sets}
  Suppose that $H\colon\scat{C}\op\times\scat{C}\to\Set$ is a bifunctor. Then its coend $\int^{c\in\scat{C}} H^c_c$ admits a concrete description in terms of the category of elements construction. Specifically, we may form the following pullback of categories
  \begin{equation}\label{eq:pb-coends-in-sets}
    \xymatrix@=2em{
      {d^*(\el{H})}\pbexcursion
      \ar[r]\ar[d] & {\el{H}}\ar[d] \\
      {\scat{C}}\ar[r]_-{d} &
      {\scat{C}\times\scat{C}}
    }
  \end{equation}
  in which $d\colon\scat{C}\to\scat{C}\times\scat{C}$ denotes the diagonal functor. Then we may show, directly from the defining coequaliser in definition~\ref{defn:ends-and-coends}, that $\int^{c\in\scat{C}} H^c_c$ is canonically isomorphic to the set $\pi_0(d^*(\el{H}))$ of connected components of the category appearing in that pullback. More explicitly, the category $d^*(\el{H})$ has: 
  \begin{itemize}
    \item objects which are pairs $(c,x)$ in which $c$ is an object of $\scat{C}$ and $x$ is an element of $H^c_c$, and 
    \item arrows $f\colon(c,x)\to(\bar{c},\bar{x})$ which are arrows $f\colon c\to\bar{c}$ of $\scat{C}$ satisfying the equation $H_c^f(x) = H_f^{\bar{c}}(\bar{x})$, whose composites and identities are as in $\scat{C}$.
  \end{itemize}
  Its set of connected components is isomorphic to the set of equivalence classes $[c,x]$ of objects of $d^*(\el{H})$ wherein two objects are equivalent if and only if they are connected by a finite zig-zag of arrows in there.
\end{obs}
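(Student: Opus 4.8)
The plan is to reduce both sides to equivalence relations on a single common set and verify that these coincide. Write $S\defeq\coprod_{c\in\scat{C}}H^c_c$. On the one hand $S$ is the codomain of the parallel pair whose coequaliser (in $\Set$) is $\int^c H^c_c$, so that $\int^c H^c_c$ is the quotient of $S$ by the least equivalence relation $\sim_1$ generated by the elementary identifications induced by those two maps; on the other hand, by the explicit description recorded in the statement, $S$ is exactly the object set of $d^*(\el H)$, so that $\pi_0(d^*(\el H))$ is the quotient of $S$ by the least equivalence relation $\sim_2$ generated by ``there is an arrow of $d^*(\el H)$ between these two objects''. It therefore suffices to prove $\sim_1=\sim_2$, and since each is the equivalence relation generated by a specified set of pairs, it is enough to exhibit every generator of one as a finite zig-zag of generators of the other.

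First I would write out the two maps $\coprod_{f\colon\bar c\to c}H^{\bar c}_c\rightrightarrows S$ explicitly: on the summand indexed by $f\colon\bar c\to c$ and $z\in H^{\bar c}_c$, one map sends $z$ to $(c,H^f_c(z))$ and the other sends $z$ to $(\bar c,H^{\bar c}_f(z))$, where $H^f_c$ is the covariant action of $f$ (in the superscript variable) and $H^{\bar c}_f$ the contravariant action (in the subscript variable). Thus $\sim_1$ is generated by the pairs $(c,H^f_c(z))$ versus $(\bar c,H^{\bar c}_f(z))$. For $\sim_1\subseteq\sim_2$ I would check that $f$ is itself an arrow $(\bar c,H^{\bar c}_f(z))\to(c,H^f_c(z))$ of $d^*(\el H)$: its required equation $H^f_{\bar c}(H^{\bar c}_f(z))=H^c_f(H^f_c(z))$ is just the pairwise interchange law for $H$ — commutativity of the covariant action in one variable with the contravariant action in the other — evaluated at $z$. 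Hence those two objects lie in one component of $d^*(\el H)$.

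For $\sim_2\subseteq\sim_1$ I would take an arbitrary arrow $f\colon(c,x)\to(\bar c,\bar x)$ of $d^*(\el H)$ and set $z\defeq H^f_c(x)=H^{\bar c}_f(\bar x)$, the two expressions agreeing by the arrow's defining equation; then $z$ sits in the summand of the left-hand coproduct indexed by $f$, and applying the two parallel maps and collapsing along elementary relations already present (once more only functoriality of the two actions) identifies the images of $(f,z)$ with $(c,x)$ and $(\bar c,\bar x)$, forcing $(c,x)\sim_1(\bar c,\bar x)$. As $\sim_2$ is generated by exactly such pairs, this gives $\sim_2\subseteq\sim_1$, hence $\sim_1=\sim_2$; the induced bijection sends the coequaliser class of $(c,x)$ to $[c,x]$, which one checks is the canonical comparison map, completing the proof.

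The step I expect to be the real obstacle is $\sim_2\subseteq\sim_1$: one must keep careful track of which variable of $H$ is being acted on covariantly and which contravariantly, in order to be sure that the defining equation of an arrow of $d^*(\el H)$ really does produce an element of the correct indexing summand of the coequaliser and that the two structure maps then return the original pair of objects rather than a once-shifted version of them. Once this bookkeeping is pinned down the pairwise interchange law closes the gap, and everything else is a routine manipulation of the universal property of coequalisers in $\Set$ together with the presentation of $d^*(\el H)$ recorded in the statement.
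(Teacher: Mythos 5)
Your overall strategy---taking $\coprod_{c}H^c_c$ as the common underlying set and comparing the equivalence relation $\sim_1$ generated by the coequaliser with the relation $\sim_2$ generated by the arrows of $d^*(\el H)$---is the natural reading of the paper's ``directly from the defining coequaliser'', and your first direction is correct: the interchange law does exhibit $f$ as an arrow of $d^*(\el H)$ between the two images of $z$, which yields a surjection $\int^{c}H^c_c\twoheadrightarrow\pi_0(d^*(\el H))$. The gap is in the converse, exactly where you predicted trouble, and it is not a bookkeeping slip that more care will repair. For an arrow $f\colon(c,x)\to(\bar c,\bar x)$ of $d^*(\el H)$ the common value $z\defeq H^f_c(x)=H^{\bar c}_f(\bar x)$ lies in $H^{\bar c}_c$, i.e.\ in $H$ with contravariant coordinate $\dom f$ and covariant coordinate $\cod f$; but the summand of $\coprod_{\bar a\to a}H^{\bar a}_a$ indexed by $f\colon c\to\bar c$ is $H^{c}_{\bar c}$, with the two coordinates transposed. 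So $z$ does not lie in the summand indexed by $f$, and the two structure maps---which only ever push elements forward---cannot return $(c,x)$ and $(\bar c,\bar x)$ from any further image of $z$.

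In fact $\sim_2\subseteq\sim_1$ fails for a general bifunctor, so no argument can close this gap without an extra hypothesis. Take $\scat{C}=\cattwo=\{0\xrightarrow{u}1\}$ and $H=\scat{C}$ the hom bifunctor. The only summand of the coequaliser's source attached to a non-identity arrow is $\scat{C}^0_1=\scat{C}(1,0)=\emptyset$, so there are no relations and $\int^{c}H^c_c=\{\id_0,\id_1\}$ has two elements; yet $u$ is an arrow $(0,\id_0)\to(1,\id_1)$ of $d^*(\el\scat{C})$ because $u\circ\id_0=\id_1\circ u$, so $\pi_0(d^*(\el\scat{C}))$ is a singleton. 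Only the surjection from your first direction holds in general. The isomorphism does hold---and your argument can be completed---when $H$ is an exterior product $W\etimes D$ with $W\in\Set^{\scat{C}\op}$ and $D\in\Set^{\scat{C}}$, which is the only situation in which the paper actually invokes this observation (observations \ref{obs:wcolim-in-sets} and \ref{obs:skel-reps}): there an arrow $f\colon(c,(x,y))\to(\bar c,(\bar x,\bar y))$ satisfies $x=W_f(\bar x)$ and $\bar y=D^f(y)$, so the element $w\defeq(\bar x,y)\in W_{\bar c}\times D^c=H^c_{\bar c}$ genuinely lies in the summand indexed by $f$ and the two structure maps carry it to $(c,(x,y))$ and $(\bar c,(\bar x,\bar y))$ as required.
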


\begin{defn}[weighted limits and colimits] 
 Suppose $\scat{C}$ is a small category and that $\lcat{M}$ is a large and locally small category which is (small) complete and cocomplete. In this context, the weighted limit 
      \begin{equation}\label{eq:wlimformula}
      \wlim{\,}{\,}_{\scat{C}}\colon (\Set^{\scat{C}})\op\times\lcat{M}^{\scat{C}} \to {\lcat{M}}
      \mkern30mu\text{where}\mkern30mu
      \wlim{W}{X}_{\scat{C}} \defeq \int_{c\in\scat{C}} W^c\pwr X^c
    \end{equation}    and  weighted colimit
    \begin{equation*}
      \wcolim_{\scat{C}}\colon \Set^{\scat{C}\op}\times\lcat{M}^{\scat{C}} \to {\lcat{M}}
      \mkern30mu\text{where}\mkern30mu
      V\wcolim_{\scat{C}} X \defeq \int^{c\in\scat{C}} V_c\tns X^c
    \end{equation*}
define bifunctors.  We refer to the object $\wlim{W}{X}_\scat{C}$ as the \emph{limit} of the diagram $X\colon \scat{C}\to\lcat{M}$ \emph{weighted by} $W\colon\scat{C}\to\Set$ and to $V\wcolim_\scat{C}X$ as the {\em colimit\/} of $X$ \emph{weighted by} $V\colon\scat{C}\op\to\Set$. These objects are characterised by the universal properties \[ \lcat{M}(M, \wlim{W}{X}_{\scat{C}}) \cong \Set^{\scat{C}}(W, \lcat{M}(M,X)) \qquad \lcat{M}(V\wcolim_{\scat{C}} X, M) \cong \Set^{\scat{C}\op}(V,\lcat{M}(X,M)).\] 

A natural transformation of weights $f\colon\overline{W} \to W$ in $\Set^{\scat{C}}$ induces a derived morphism $\wlim{f}{X}_\scat{C}\colon\wlim{W}{X}_\scat{C} \to \wlim{\overline{W}}{X}_\scat{C}$ between weighted limits and a natural transformation $g\colon V\to \overline{V}$ in $\Set^{\scat{C}\op}$ induces a derived morphism $g\wcolim_{\scat{C}} X\colon V\wcolim_{\scat{C}} X \to \overline{V}\wcolim_{\scat{C}} X$ between weighted colimits.
\end{defn}
    
  \begin{ex}[terminal weights]\label{ex:weighted-conical}
  When $W$ is, respectively, the constant $\scat{C}$-diagram or $\scat{C}\op$-diagram at the terminal object $1 \in \Set$, we see from the defining formulae \eqref{eq:wlimformula} that \[ \wlim{1}{X}_\scat{C} \cong \lim X \qquad \mathrm{and} \qquad 1 \wcolim_{\scat{C}} X \cong \colim X.\] Here the weight $1$ stipulates that the cones should have their ordinary ``shapes'' with one leg pointing to or from each object in the diagram $X$.
  \end{ex}  
    
\begin{ex}[representable weights]\label{ex:weighted-yoneda}
The bifunctors \eqref{eq:wlimformula} admit canonical isomorphisms
    \begin{equation}\label{eq:yoneda}
     \wlim{\scat{C}_a}{X}_{\scat{C}} \cong \int_{b\in\scat{C}} \scat{C}(a,b)\pwr X^b \cong X^a
\mkern40mu
     \scat{C}^a\wcolim_{\scat{C}} X \cong \int^{b\in\scat{C}} \scat{C}(b,a)\tns X^b \cong X^a 
    \end{equation}
    which are natural in $a\in\scat{C}$ and $X\in\lcat{M}^{\scat{C}}$. This result is simply a recasting of the classical Yoneda lemma, which name it bears herein. Hence, limits or colimits weighted by representables are computed simply by evaluating the diagram at the appropriate object.
    \end{ex}
    
    \begin{obs} On account of their explicit construction or their defining universal property, the weighted limit and weighted colimit bifunctors are each cocontinuous in the weights. In particular weights can be ``made-to-order''. A weight constructed as a colimit of representables will stipulate the expected universal property. 
   \end{obs}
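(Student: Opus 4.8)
The plan is to verify the cocontinuity claim in both of the ways the statement advertises---from the defining universal properties and from the explicit end/coend formulae---and then to read off the ``made-to-order'' slogan by feeding a presentation of a weight as a colimit of representables through this cocontinuity together with the Yoneda lemma of Example~\ref{ex:weighted-yoneda}.

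\emph{From the universal properties.} Fix $X\in\lcat{M}^\scat{C}$ and a colimit $V\cong\colim_j V_j$ in $\Set^{\scat{C}\op}$. For every $M\in\lcat{M}$ the defining isomorphism for the weighted colimit, together with the fact that the contravariant hom $\Set^{\scat{C}\op}(-,\lcat{M}(X,M))$ carries colimits to limits, produces a chain of isomorphisms
\begin{multline*}
  \lcat{M}\bigl((\colim_j V_j)\wcolim_\scat{C}X,\,M\bigr)\;\cong\;\Set^{\scat{C}\op}\bigl(\colim_j V_j,\,\lcat{M}(X,M)\bigr)\\
  \cong\;\lim_j\Set^{\scat{C}\op}\bigl(V_j,\,\lcat{M}(X,M)\bigr)\;\cong\;\lim_j\lcat{M}\bigl(V_j\wcolim_\scat{C}X,\,M\bigr),
\end{multline*}
which one checks is induced by the evident comparison map and is natural in $M$; the Yoneda lemma then identifies $(\colim_j V_j)\wcolim_\scat{C}X$ with $\colim_j(V_j\wcolim_\scat{C}X)$. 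For the weighted limit bifunctor $\wlim{\,}{\,}_\scat{C}$ the argument is dual: this bifunctor is \emph{contravariant} in the weight, so ``cocontinuous in the weight'' means that a colimit $W\cong\colim_j W_j$ in $\Set^\scat{C}$ is sent to the limit $\lim_j\wlim{W_j}{X}_\scat{C}$, and this follows by running the same Yoneda argument with $\lcat{M}(M,-)$ against the isomorphism $\lcat{M}(M,\wlim{W}{X}_\scat{C})\cong\Set^\scat{C}(W,\lcat{M}(M,X))$.

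\emph{From the explicit construction.} Alternatively, unwind the coend formula $V\wcolim_\scat{C}X=\int^{c}V_c\tns X^c$ of Definition~\ref{defn:ends-and-coends}: the tensor $-\tns X^c\colon\Set\to\lcat{M}$ is a left adjoint (to $\lcat{M}(X^c,-)$) and hence cocontinuous; colimits in $\Set^{\scat{C}\op}$ are computed pointwise, so $(\colim_j V_j)_c\cong\colim_j (V_j)_c$; and the coend is a coequaliser of coproducts, hence a colimit, which therefore commutes with the colimit over $j$ because colimits commute with colimits. Chaining these observations rewrites $(\colim_j V_j)\wcolim_\scat{C}X$ as $\colim_j(V_j\wcolim_\scat{C}X)$. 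Dually, in the end formula $\wlim{W}{X}_\scat{C}=\int_c W^c\pwr X^c$ one uses that $-\pwr X^c$ turns colimits of sets into limits (immediate from the cotensor's universal property in Example~\ref{ex:tensor-cotensor}), that colimits in $\Set^\scat{C}$ are pointwise, and that an end is a limit while limits commute with limits.

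\emph{Made-to-order weights.} If a weight is presented as a colimit of representables, say $W\cong\colim_j\scat{C}_{a_j}$ in $\Set^\scat{C}$, then combining cocontinuity in the weight with the Yoneda isomorphism $\wlim{\scat{C}_{a}}{X}_\scat{C}\cong X^{a}$ of~\eqref{eq:yoneda} gives $\wlim{W}{X}_\scat{C}\cong\lim_j X^{a_j}$, and dually $V\wcolim_\scat{C}X\cong\colim_j X^{a_j}$ when $V\cong\colim_j\scat{C}^{a_j}$; so one may design a weight to make the weighted (co)limit into precisely whatever (co)limit of evaluations is wanted---and taking the maximal such presentation, indexed by the category of elements, merely reproduces the defining end/coend formulae. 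The argument throughout is entirely formal; the only points that repay attention are the variance bookkeeping---since $\wlim{\,}{\,}_\scat{C}$ is contravariant in the weight, ``cocontinuous in the weight'' means that a colimit of weights produces a \emph{limit} of weighted limits---and checking that the comparison maps produced by the Yoneda arguments are natural in the probing object, as the appeal to Yoneda requires.
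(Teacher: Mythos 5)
Your argument is correct and is exactly the elaboration the paper intends: the observation is stated without proof, gesturing precisely at the two justifications you flesh out (the representing isomorphisms plus Yoneda, and the end/coend formulae plus cocontinuity of the tensor and commutation of colimits). Your care with the variance bookkeeping for $\wlim{\,}{\,}_\scat{C}$ --- that ``cocontinuous in the weight'' means colimits of weights go to limits of weighted limits --- is the one point worth being explicit about, and you handle it correctly.
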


\begin{ex}\label{ex:matching.preview}
Let $K$ and $X$ be simplicial sets, i.e., objects of the functor category $\Set^{\Del\op}$ where $\Del$ is the category of finite non-empty ordinals and order-preserving maps. From the end formula \eqref{eq:wlimformula}, the limit of $X$ weighted by $K$ is merely the set of natural transformations $K \to X$, i.e., the set of maps from $K$ to $X$ in the category of simplicial sets. For instance,  the weighted limit $\wlim{\partial\Delta^n}{X}_{\Del\op}$ is the set of $n$-spheres in $X$.
\end{ex}

\begin{ex}
Applying the coend formula \eqref{eq:yoneda} pointwise, one sees that the colimit of the Yoneda embedding $\Del^\bullet\colon\Del\inc\Set^{\Del\op}$ weighted by a simplicial set $X$ is isomorphic to $X$.
\end{ex}

\begin{obs}[weighted limits as ordinary limits]\label{obs:weighted.as.ordinary}
  Suppose $D$ is a diagram $\lcat{M}^{\scat{C}}$ and $W$ is a weight in $\Set^{\scat{C}}$ then the weighted limit $\wlim{W}{D}_\scat{C}$ is equally the ordinary limit of the composite $\el W \to \scat{C} \xrightarrow{D} \lcat{M}$. Dually if $V$ is a weight in $\Set^{\scat{S}\op}$ then the weighted colimit $V \wcolim_\scat{C}D$ is the ordinary colimit of the composite diagram $\el V \to \scat{C} \xrightarrow{D} \lcat{M}$.
\end{obs}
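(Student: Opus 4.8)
The plan is to establish the claimed identification by comparing universal properties and then invoking the Yoneda lemma. Write $P_W\colon\el W\to\scat{C}$ for the evident projection from the category of elements of Definition~\ref{defn:cat-of-elts}, so that $D\circ P_W\colon\el W\to\lcat{M}$ is the composite appearing in the statement. Since $\lcat{M}$ is complete the ordinary limit $\lim(D\circ P_W)$ exists, and it is characterised by the property that arrows $M\to\lim(D\circ P_W)$ in $\lcat{M}$ correspond naturally in $M$ to cones over $D\circ P_W$ with summit $M$. It therefore suffices to exhibit a bijection, natural in $M$, between the set of such cones and the set $\Set^{\scat{C}}\bigl(W,\lcat{M}(M,D)\bigr)$ featuring in the defining universal property of $\wlim{W}{D}_{\scat{C}}$.

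First I would unwind the notion of a cone over $D\circ P_W$ with summit $M$: it consists of a family of arrows $\lambda_{(c,x)}\colon M\to D^c$ of $\lcat{M}$, indexed by the objects $(c,x)$ of $\el W$ --- equivalently, by pairs comprising an object $c\in\scat{C}$ and an element $x\in W^c$ --- subject to the condition that $D^f\circ\lambda_{(c,x)}=\lambda_{(\bar c,\bar x)}$ whenever $f$ is an arrow of $\el W$ from $(c,x)$ to $(\bar c,\bar x)$, which by Definition~\ref{defn:cat-of-elts} means that $f\colon c\to\bar c$ in $\scat{C}$ with $W^f(x)=\bar x$. On the other hand, unwinding the definition of weighted limits shows that a natural transformation $W\To\lcat{M}(M,D)$ in $\Set^{\scat{C}}$ is precisely a family of functions $W^c\to\lcat{M}(M,D^c)$, one for each $c\in\scat{C}$, compatible with the covariant actions of the arrows of $\scat{C}$; writing $\lambda_{(c,x)}$ for the image of $x$ under the $c$-component, this compatibility says exactly that $D^f\circ\lambda_{(c,x)}=\lambda_{(c,W^f(x))}$ for every $f\colon c\to\bar c$ in $\scat{C}$. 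This is manifestly the same data as a cone over $D\circ P_W$, and the resulting bijection is plainly natural in $M$ (as well as in $D$ and in $W$).

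Hence the representable functors $\lcat{M}\bigl(-,\wlim{W}{D}_{\scat{C}}\bigr)$ and $\lcat{M}\bigl(-,\lim(D\circ P_W)\bigr)$ are naturally isomorphic, and the Yoneda lemma delivers the asserted canonical isomorphism $\wlim{W}{D}_{\scat{C}}\cong\lim(D\circ P_W)$; in particular each side exists whenever the other does, which under our standing completeness hypothesis is always. The colimit statement follows by dualising --- a $V$-weighted colimit in $\lcat{M}$ is by construction a $V$-weighted limit in $\lcat{M}\op$, and the category of elements $\el V$ of a presheaf $V$ carries its own evident projection down to $\scat{C}$ --- so the same argument applies verbatim. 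The one point that repays a little care, and the closest thing to an obstacle in this proof, is the bookkeeping of variances: one must verify that the arrows of $\el W$, which by Definition~\ref{defn:cat-of-elts} are controlled by the covariant action $W^f$, match up precisely with the naturality constraints of a morphism in $\Set^{\scat{C}}$ rather than with their transposes, and similarly for the contravariant weight in the colimit case. Once the index conventions are pinned down this is immediate, and the argument requires no genuine computation.
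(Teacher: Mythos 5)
Your proof is correct. The paper states this as an observation and supplies no argument of its own, so there is nothing to diverge from: your route --- unwinding a cone over $D\circ P_W$ into exactly the data of a natural transformation $W\To\lcat{M}(M,D)$, checking naturality in $M$, and concluding by Yoneda --- is the standard and expected justification, and your attention to the variance bookkeeping (that the arrows of $\el W$ encode precisely the naturality squares rather than their transposes) is exactly the point worth checking. The only blemish is a typo: the naturality condition should read $D^f\circ\lambda_{(c,x)}=\lambda_{(\bar{c},W^f(x))}$, with the intermediate object $\bar{c}$ rather than $c$ in the subscript, since $W^f(x)$ lives in $W^{\bar{c}}$.
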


\begin{ex} The category of elements of $\scat{C}^c \in \Set^{\scat{C}\op}$ is isomorphic to the slice category $\scat{C}/c$. This category has a terminal object: the identity at $c$. Hence, the colimit of $\el \scat{C}^c \to \scat{C} \xrightarrow{D} \lcat{M}$ is $Dc$, recovering the fact observed above that $Dc \cong \scat{C}^c \wcolim_\scat{C} D$. Dual remarks apply to covariant representables.
\end{ex}

  \begin{obs}[weighted colimits in sets]\label{obs:wcolim-in-sets}
    Suppose that $W\colon\scat{C}\op\to\Set$ is a weight and that $D\colon\scat{C}\to\Set$ is a diagram of sets. We may apply observation~\ref{obs:coend-in-sets} to provide an explicit computation of the defining coend of the weighted colimit $W\wcolim_{\scat{C}} D$. In this case, the coend we are computing is $\int^{c\in\scat{C}} W_c\times D^c$ and it is easily checked that the corresponding category given in~\eqref{eq:pb-coends-in-sets} may also be constructed by forming the following pullback
    \begin{equation*}
      \xymatrix@=2em{
        {\el{W}\times_{\scat{C}}\el{D}}\pbexcursion
        \ar[r]\ar[d] & {\el{D}}\ar[d] \\
        {\el{W}}\ar[r] & {\scat{C}}
      }
    \end{equation*}
    of categories. In other words, this has:
    \begin{itemize}
      \item objects which are triples $(c,x,y)$ in which $c$ is an object of $\scat{C}$, $x$ is an element of $W_c$, and $y$ is an element of $D^c$, and
      \item arrows $f\colon(c,x,y)\to(\bar{c},\bar{x},\bar{y})$ which consist of an arrow $f\colon c\to\bar{c}$ of $\scat{C}$ satisfying the equations $W_f(\bar{x})=x$ and $D^f(y)=\bar{y}$, whose composites and identities are as in $\scat{C}$.
    \end{itemize}
    So $W\wcolim_{\scat{C}} D$ is isomorphic to the set of equivalence classes $[c,x,y]$ of objects in this category, wherein two objects are equivalent if and only if they are connected by a zig-zag of maps in there.
  \end{obs}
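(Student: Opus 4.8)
The plan is to recognize the coend $\int^{c\in\scat C}W_c\times D^c$ appearing here as a \emph{diagonal} coend $\int^{c}H^c_c$ of a suitable auxiliary bifunctor and then invoke Observation~\ref{obs:coend-in-sets} verbatim. Take $H\defeq W\etimes D\colon\scat C\op\times\scat C\to\Set$ to be the exterior product, i.e.\ the bifunctor with $H^a_b\defeq W_b\times D^a$; it is contravariant in its subscript (through $W$) and covariant in its superscript (through $D$), and on the diagonal it restricts to $H^c_c=W_c\times D^c$. Since the tensor $S\tns Y$ of a set $Y$ by a set $S$ is nothing but the cartesian product $S\times Y$ (Example~\ref{ex:tensor-cotensor}), the defining formula for the weighted colimit reads $W\wcolim_{\scat C}D=\int^{c\in\scat C}W_c\tns D^c=\int^{c\in\scat C}W_c\times D^c=\int^{c\in\scat C}H^c_c$. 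Observation~\ref{obs:coend-in-sets} therefore applies and identifies this set with $\pi_0(d^*(\el H))$, where $d\colon\scat C\to\scat C\times\scat C$ is the diagonal, and moreover supplies the explicit description of $d^*(\el H)$ we shall exploit.

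It then remains only to match $d^*(\el H)$ against the displayed pullback $\el W\times_{\scat C}\el D$. The cleanest route is to observe that the two-sided category of elements of an exterior product splits, $\el(W\etimes D)\cong\el W\times\el D$ over $\scat C\times\scat C$, with $\el W\to\scat C$ and $\el D\to\scat C$ the forgetful functors of Definition~\ref{defn:cat-of-elts}: an object $(c,\bar c,z)$ of $\el(W\etimes D)$ has $z\in H^{\bar c}_c=W_c\times D^{\bar c}$, so writing $z=(x,y)$ it corresponds to the pair $\bigl((c,x),(\bar c,y)\bigr)$, and the evident coordinatewise bookkeeping identifies arrows too. Pulling back along the diagonal then gives $d^*(\el H)\cong d^*(\el W\times\el D)=\el W\times_{\scat C}\el D$, which is exactly the pullback square in the statement. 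Alternatively one may sidestep the exterior product entirely and simply unwind Observation~\ref{obs:coend-in-sets} directly: its objects are pairs $(c,z)$ with $z\in H^c_c$, hence triples $(c,x,y)$ with $x\in W_c$ and $y\in D^c$, and its arrows $f\colon(c,x,y)\to(\bar c,\bar x,\bar y)$ are maps $f\colon c\to\bar c$ satisfying $H^f_c(x)=H^{\bar c}_f(\bar x)$; evaluating the two actions on the $W$- and $D$-coordinates separately turns this single equation into the pair of equations $W_f(\bar x)=x$ and $D^f(y)=\bar y$, reproducing the bulleted description word for word.

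Combining the two identifications yields $W\wcolim_{\scat C}D\cong\pi_0(\el W\times_{\scat C}\el D)$, and by the last sentence of Observation~\ref{obs:coend-in-sets} this set of connected components is precisely the set of equivalence classes $[c,x,y]$ of objects modulo finite zig-zags of arrows, which is the assertion. The argument is wholly formal, so there is no genuine obstacle; the only points demanding care are keeping the two variances straight --- in particular remembering that $\el W$, as the category of elements of the \emph{contravariant} weight $W$, has its arrows running against the direction in which $W$ acts --- and translating faithfully between the index notation $H^f_c$, $H^{\bar c}_f$ and the honest maps $W_f$ and $D^f$.
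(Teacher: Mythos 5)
Your proposal is correct and follows exactly the route the paper intends: instantiate observation~\ref{obs:coend-in-sets} at the exterior-product bifunctor $(c,\bar{c})\mapsto W_{\bar{c}}\times D^c$, identify the diagonal category of elements with the pullback $\el{W}\times_{\scat{C}}\el{D}$, and read off the description of $\pi_0$. The variance bookkeeping, which is the only delicate point, is handled correctly.
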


  \section{Reedy categories}\label{sec:reedy}

  Let $\mathbb{N}$ denote the natural numbers, including zero.

  \begin{defn}[Reedy categories]\label{defn:reedy}
    Let $\scat{C}$ be a small category, equipped with a {\em degree function\/} $\deg\colon\obj(\scat{C})\to\mathbb{N}$ on objects, and suppose that $\direct{\scat{C}}$ and $\inverse{\scat{C}}$ are subcategories of $\scat{C}$ which contain all of its objects. Then we say that $\reedycat{C}$ is a {\em Reedy category\/} if and only if these structures satisfy:
    \begin{itemize}
    \item if $\alpha\colon \bar{c}\to c$ is a {\em non-identity\/} arrow in $\direct{\scat{C}}$ (respectively $\inverse{\scat{C}}$) then $\deg(\bar{c})<\deg(c)$ (respectively $\deg(\bar{c})>\deg(c)$), and
    \item every arrow $\alpha$ of $\scat{C}$ has a {\em unique\/} factorisation $\alpha=\direct\alpha\circ\inverse\alpha$ where $\direct\alpha\in\direct{\scat{C}}$ and $\inverse\alpha\in\inverse{\scat{C}}$.
    \end{itemize}

We say that an arrow \emph{strictly raises} (respectively \emph{strictly lowers}) \emph{degrees} precisely when it is a  non-identity arrow of $\direct{\scat{C}}$ (respectively of $\inverse{\scat{C}}$).
  \end{defn}
	
\begin{ex}[discrete categories] Discrete categories are Reedy categories. Our preference is to regard every object as having degree zero.
\end{ex}

\begin{ex}[finite posets]\label{ex:poset.reedy}
Let $\scat{C}$ be a finite poset. Declare any minimal element to have degree zero.
Define the degree of $c \in \scat{C}$ to be the length of the longest path of non-identity arrows from an element of degree zero to $c$. Note that if there is a morphism $c \to \bar{c}$ between distinct elements, then necessarily $\deg{c} < \deg{\bar{c}}$. It follows that $\scat{C}$ has the structure of a Reedy category with $\scat{C} = \direct{\scat{C}}$ and $\inverse{\scat{C}}$ consisting of identity arrows only.

This example can be extended without change to include infinite posets such as $(\omega,\leq)$ provided that each object has finite degree.
\end{ex}

\begin{ex}[the pushout diagram]\label{ex:pushout.reedy.alt}
The previous example gives the category $b \leftarrow a \rightarrow c$ a Reedy structure in which $\deg(a)=0$ and $\deg(b)= \deg(c)=1$. There is another Reedy category structure in which $\deg(b)=0$, $\deg(a)=1$, and $\deg(c)=2$. \end{ex}

\begin{ex}[the parallel pair]\label{ex:parallel.pair.reedy}
The category $a \rightrightarrows b$ is a Reedy category with $\deg(a)=0$, $\deg(b)=1$, and both non-identity arrows said to strictly raise degrees.
\end{ex}
	
	\begin{ex}[\protect{$\Del$} and \protect{$\Del+$}]
		The category $\Del$ of finite non-empty ordinals and the category $\Del+$ of finite ordinals and order-preserving maps  both support canonical Reedy category structures, for which we take $\direct{\Del}$ and $\direct{\Del+}$ to be the subcategories of face operators (monomorphisms) and $\inverse{\Del}$ and $\inverse{\Del+}$ to be the subcategories of degeneracy operators (epimorphisms).  	
		\end{ex}

  \begin{obs}
    Many fundamental operations on Reedy categories yield Reedy categories, most importantly:
      \begin{itemize}
      \item If $\reedycat{C}$ is a Reedy category then so is its dual $(\scat{C}\op,(\inverse{\scat{C}})\op,(\direct{\scat{C}})\op)$.
      \item If $\reedycat{C}$ and $\reedycat{D}$ are Reedy categories then so is the product $(\scat{C}\times\scat{D}, \direct{\scat{C}}\times\direct{\scat{D}}, \inverse{\scat{C}}\times\inverse{\scat{D}})$ with $\deg(c,d) = \deg(c) + \deg(d)$.
      \end{itemize}
		In particular, these observations tell us that $\Del\op$ and $\Del+\op$ also carry canonical Reedy structures as do arbitrary products of these with themselves and with $\Del$ and $\Del+$.
  \end{obs}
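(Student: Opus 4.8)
The plan is to check, for each of the two constructions, the three clauses of Definition~\ref{defn:reedy} directly, and then to read off the ``in particular'' assertion by combining them. Neither verification should be difficult; the only place I would slow down is the product case, and there only to pin down the meaning of ``non-identity arrow'' in a product category.

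For the dual $(\scat{C}\op,(\inverse{\scat{C}})\op,(\direct{\scat{C}})\op)$ I would retain the degree function $\deg\colon\obj(\scat{C})\to\mathbb{N}$ unchanged, note that $(\inverse{\scat{C}})\op$ and $(\direct{\scat{C}})\op$ are subcategories of $\scat{C}\op$ containing all its objects because $\inverse{\scat{C}}$ and $\direct{\scat{C}}$ contain all objects of $\scat{C}$, and then observe that the remaining clauses are obtained by applying $(-)\op$ to clauses already known in $\scat{C}$. Concretely, a non-identity arrow of $\direct{(\scat{C}\op)}=(\inverse{\scat{C}})\op$ has the form $\alpha\op\colon c\to\bar c$ for a non-identity $\alpha\colon\bar c\to c$ in $\inverse{\scat{C}}$, and the inequality $\deg(c)<\deg(\bar c)$, valid in $\scat{C}$ because $\alpha$ strictly lowers degrees, is precisely the required strict rise of degree along $\alpha\op$; the case of $\inverse{(\scat{C}\op)}=(\direct{\scat{C}})\op$ is symmetric. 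For the factorisation clause I would dualise the unique factorisation in $\scat{C}$: an arrow of $\scat{C}\op$ is $\beta\op$ for some $\beta$ in $\scat{C}$, and $\beta=\direct\beta\circ\inverse\beta$ dualises to $\beta\op=(\inverse\beta)\op\circ(\direct\beta)\op$ with $(\inverse\beta)\op\in(\inverse{\scat{C}})\op=\direct{(\scat{C}\op)}$ and $(\direct\beta)\op\in(\direct{\scat{C}})\op=\inverse{(\scat{C}\op)}$; uniqueness transports back across $(-)\op$ from the uniqueness in $\scat{C}$.

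For the product $(\scat{C}\times\scat{D},\direct{\scat{C}}\times\direct{\scat{D}},\inverse{\scat{C}}\times\inverse{\scat{D}})$ with $\deg(c,d)=\deg(c)+\deg(d)$ I would argue one coordinate at a time. The degree function is visibly $\mathbb{N}$-valued, and the two subcategories contain all objects of $\scat{C}\times\scat{D}$. The one observation worth isolating first is that an arrow $(\alpha,\beta)$ of the product is a non-identity exactly when at least one of $\alpha,\beta$ is; hence for a non-identity arrow $(\alpha,\beta)\colon(\bar c,\bar d)\to(c,d)$ of $\direct{\scat{C}}\times\direct{\scat{D}}$ we get $\deg(\bar c)\le\deg(c)$ and $\deg(\bar d)\le\deg(d)$ with at least one inequality strict, so $\deg(\bar c)+\deg(\bar d)<\deg(c)+\deg(d)$, and dually on $\inverse{\scat{C}}\times\inverse{\scat{D}}$. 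The factorisation clause follows from $(\alpha,\beta)=(\direct\alpha,\direct\beta)\circ(\inverse\alpha,\inverse\beta)$ together with the remark that any factorisation of $(\alpha,\beta)$ of the prescribed type projects onto factorisations of $\alpha$ in $\scat{C}$ and of $\beta$ in $\scat{D}$ of the prescribed types, so that uniqueness descends from the two coordinates.

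The stated consequences are then immediate: $\Del\op$ and $\Del+\op$ are Reedy by applying the dual construction to the canonical Reedy structures on $\Del$ and $\Del+$, and every finite product of copies of $\Del$, $\Del+$, $\Del\op$, $\Del+\op$ is Reedy by iterating the product construction, with degree the sum of the coordinate degrees. The main obstacle, such as it is, is purely bookkeeping in the product case: one must remember that ``non-identity'' in a product means ``non-identity in some coordinate'', since it is exactly this fact that makes the strict-degree clause go through.
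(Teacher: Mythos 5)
Your proof is correct. The paper states this result as an observation with no proof at all, and your direct verification of the clauses of definition~\ref{defn:reedy} for the dual and product structures --- including the one genuinely noteworthy point, that a non-identity arrow of $\scat{C}\times\scat{D}$ need only be a non-identity in one coordinate, which is what makes the strict inequality $\deg(\bar c)+\deg(\bar d)<\deg(c)+\deg(d)$ go through --- is precisely the routine check the authors leave to the reader.
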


Given an arrow $f \colon \bar{c} \to c$ in a Reedy category $\scat{C}$, we shall call the unique factorisation $(\inverse{f},\direct{f})$ stipulated by definition \ref{defn:reedy} the {\em canonical Reedy factorisation\/} of $f$. Our aim, realised in lemma \ref{lem:reedy-fact-connect}, is to show that the canonical Reedy factorisation may also  be characterised as the unique factorisation of $f$ through an object of minimal degree. 

  \begin{defn}[categories of factorisations]\label{defn:factorisations}
To that end, for each arrow $f\colon \bar{c}\to c\in\scat{C}$, we define the category $\fact(f)$ of {\em factorisations of $f$} to be the category whose objects are pairs $(g\colon \bar{c}\to d, h\colon d\to c)$ of arrows of $\scat{C}$ with $f=h\circ g$ and whose arrows $k\colon (g,h)\to (g',h')$ are arrows $k\colon d\to d'$ which make the following triangles commute:
    \begin{equation*}
    \xy
      0;<1cm,0cm>: (1,2)*+{\bar{c}}="a", (0,1)*+{d}="b", (2,1)*+{d'}="c", (1,0)*+{c}="d",
      \ar "a";"b"_g \ar "a";"c"^{g'} \ar "b";"d"_h \ar "c";"d"^{h'} \ar "b";"c"^{k}
    \endxy
    \end{equation*}
For each object $(g,h) \in \fact(f)$,  we shall take its {\em degree\/} $\deg(g,h)$ to simply be the degree of the intermediate object $\cod(g)=\dom(h)$. 
  \end{defn}
  
  Now we have the following result, which provides much more information about the structure of the categories of factorisations of our Reedy category $\scat{C}$. 
  
  \begin{lem}\label{lem:reedy-fact-connect}
    Every factorisation $(g,h)$ of $f$ is connected to the canonical Reedy factorisation $(\inverse{f},\direct{f})$ by a zig-zag path of maps in the category $\fact(f)$ which passes only through factorisations of degree greater than or equal to that of $(\inverse{f},\direct{f})$ and less than or equal to the degree of $(g,h)$ itself. Furthermore, the Reedy factorisation $(\inverse{f},\direct{f})$ is the unique factorisation of minimal degree in $\fact(f)$.
  \end{lem}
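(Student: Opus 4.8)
The plan is to attach to an arbitrary factorisation $(g,h)$ of $f$, with intermediate object $d$, an explicit zig-zag of length two in $\fact(f)$ joining it to $(\inverse{f},\direct{f})$. The idea is to correct the left leg $g$ and then the right leg $h$ of $(g,h)$ using the canonical Reedy factorisations of those individual arrows, and to invoke the uniqueness clause of Definition~\ref{defn:reedy} only once, at the very end.

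First I would take the Reedy factorisation $g=\direct{g}\circ\inverse{g}$ of the left leg, through an intermediate object $e$. Since $\direct{g}\circ\inverse{g}=g$, the arrow $\direct{g}\colon e\to d$ defines a morphism $(\inverse{g},\,h\circ\direct{g})\to(g,h)$ in $\fact(f)$; and as $\direct{g}\in\direct{\scat{C}}$ we have $\deg(e)\le\deg(d)$, so this new factorisation has degree at most $\deg(g,h)$. Abbreviating $(g_1,h_1)\defeq(\inverse{g},\,h\circ\direct{g})$, I would next take the Reedy factorisation $h_1=\direct{h_1}\circ\inverse{h_1}$ of the right leg, through an intermediate object $e'$. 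The arrow $\inverse{h_1}\colon e\to e'$ then defines a morphism $(g_1,h_1)\to(\inverse{h_1}\circ g_1,\,\direct{h_1})$ in $\fact(f)$, with $\deg(e')\le\deg(e)$ because $\inverse{h_1}\in\inverse{\scat{C}}$. The crucial point is that $\inverse{\scat{C}}$ is a subcategory, so $\inverse{h_1}\circ g_1=\inverse{h_1}\circ\inverse{g}$ again lies in $\inverse{\scat{C}}$, while $\direct{h_1}\in\direct{\scat{C}}$; since their composite is $f$, uniqueness of the Reedy factorisation forces $(\inverse{h_1}\circ g_1,\,\direct{h_1})=(\inverse{f},\direct{f})$. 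This produces the span $(g,h)\leftarrow(g_1,h_1)\rightarrow(\inverse{f},\direct{f})$ in $\fact(f)$, whose three vertices have degrees $\deg(d)$, $\deg(e)$, $\deg(e')$ satisfying $\deg(e')\le\deg(e)\le\deg(d)$ — precisely the connectivity together with the asserted degree sandwich.

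For the last sentence of the lemma, the chain of inequalities produced in this way shows $\deg(\inverse{f},\direct{f})=\deg(e')\le\deg(g,h)$ for \emph{every} $(g,h)\in\fact(f)$, so the minimal degree in $\fact(f)$ is exactly $\deg(\inverse{f},\direct{f})$. If $(g,h)$ attains this minimum, then $\deg(e')\le\deg(e)\le\deg(d)$ collapses to a string of equalities; since a non-identity arrow of $\direct{\scat{C}}$ strictly raises degree and a non-identity arrow of $\inverse{\scat{C}}$ strictly lowers it, both $\direct{g}$ and $\inverse{h_1}$ must be identities, whence $g=\inverse{g}\in\inverse{\scat{C}}$ and $h=h_1=\direct{h_1}\in\direct{\scat{C}}$; a final appeal to uniqueness of the Reedy factorisation identifies $(g,h)$ with $(\inverse{f},\direct{f})$.

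\textbf{Main obstacle.} I anticipate no conceptual difficulty here: the content is simply ``shrink the intermediate object once using the left leg, once more using the right leg, then recognise the result via closure of $\inverse{\scat{C}}$ under composition and uniqueness of the Reedy factorisation.'' The only thing demanding care is the bookkeeping inside $\fact(f)$ — correctly orienting the comparison arrows (a morphism $(g,h)\to(g',h')$ is an arrow $d\to d'$ of intermediate objects, compatible on both sides) and keeping track of the auxiliary objects $e$, $e'$ and their degree inequalities.
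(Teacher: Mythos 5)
Your proof is correct and follows essentially the same strategy as the paper's: factorise the legs of $(g,h)$ via the Reedy factorisation axiom, identify the corrected factorisation with $(\inverse{f},\direct{f})$ by uniqueness, and read off the degree sandwich from the fact that arrows of $\direct{\scat{C}}$ and $\inverse{\scat{C}}$ only raise or lower degrees, with strictness forcing the uniqueness-of-minimum claim. The only difference is cosmetic: the paper applies the factorisation axiom three times (to $g$, to $h$, and to the middle composite $\inverse{h}\circ\direct{g}$) while you apply it twice (to $g$ and then to $h\circ\direct{g}$), but the resulting span $(g,h)\leftarrow(\inverse{g},\,h\circ\direct{g})\rightarrow(\inverse{f},\direct{f})$ is literally the same zig-zag the paper exhibits.
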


  \begin{proof}
    Observe that we may (repeatedly) apply the factorisation property of the Reedy category $\scat{C}$ to obtain the following commutative diagram:
    \begin{equation*}
    \xy
      0,<1.5cm,0cm>:
      (0,0)*+{\bar{c}}="(0,0)", (2,0)*+{d}="(2,0)", (4,0)*+{c}="(4,0)",
      (1,1)*+{\bar{a}}="(1,1)", (3,1)*+{a}="(3,1)", (2,2)*+{b}="(2,2)"
      \ar "(0,0)";"(2,0)"^g \ar "(2,0)";"(4,0)"^h
      \ar "(0,0)";"(1,1)"|{\inverse{g}} \ar "(1,1)";"(2,0)"|{\direct{g}}
      \ar "(2,0)";"(3,1)"|{\inverse{h}} \ar "(3,1)";"(4,0)"|{\direct{h}}
      \ar "(1,1)";"(2,2)"|{\inverse{k}} \ar "(2,2)";"(3,1)"|{\direct{k}}
      \ar "(1,1)";"(3,1)"^{k \, = \, \inverse{h}\circ\direct{g}}
      \ar@/^2pc/"(0,0)";"(2,2)"^{\inverse{f}}
      \ar@/^2pc/"(2,2)";"(4,0)"^{\direct{f}}
      \ar@/_2pc/"(0,0)";"(4,0)"_{{f}}
    \endxy
    \end{equation*}
    Now $\inverse{k}\circ\inverse{g}$ is in the subcategory $\inverse{\scat{C}}$ and $\direct{h}\circ\direct{k}$ is in the subcategory $\direct{\scat{C}}$, because these are composites of arrows already known to be in those subcategories, so it follows, by the uniqueness property of the Reedy factorisation of $f$ in $\scat{C}$, that $\inverse{f}=\inverse{k}\circ\inverse{g}$ and $\direct{f}=\direct{h}\circ\direct{k}$. Also observe that $\deg(\bar{a})\leq\deg(d)$ because $\direct{g}\in\direct{\scat{C}}$, $\deg(a)\leq\deg(d)$ because $\inverse{h}\in\inverse{\scat{C}}$, $\deg(b)\leq\deg(\bar{a})$ because $\inverse{k}\in\inverse{\scat{C}}$ and $\deg(b)\leq\deg(a)$ because $\direct{k}\in\direct{\scat{C}}$. Furthermore, these inequalities are strict unless the maps are identities. In particular, $\deg(b) < \deg(d)$ unless $(g,h) = (\inverse{f},\direct{f})$, proving the last statement.
    
 Examining the diagram, it follows that $f$ has factorisations $(g,h)$, $(\inverse{g},\direct{h}\circ k)$, $(k\circ\inverse{g},\direct{h})$, for which $\deg(\inverse{f},\direct{f}) \leq \deg(\inverse{g},\direct{h}\circ k)\leq \deg(g,h)$ and $\deg(\inverse{f},\direct{f}) \leq \deg(k\circ\inverse{g},\direct{h}) \leq \deg(g,h)$, and that these are connected by arrows $\direct{g}\colon(\inverse{g},\direct{h}\circ k)\to(g,h)$, $\inverse{h}\colon(g,h)\to(k\circ\inverse{g},\direct{h})$, $k\colon(\inverse{g},\direct{h}\circ k)\to(k\circ\inverse{g},\direct{h})$, $\inverse{k}\colon(\inverse{g},\direct{h}\circ k)\to(\inverse{f},\direct{f})$, and $\direct{k}\colon(\inverse{f},\direct{f})\to(k\circ\inverse{g},\direct{h})$ in $\fact(f)$ as required. 
  \end{proof}

  \section{Latching and matching objects}\label{sec:latching}

Our description of the inductive procedure by which a diagram indexed over a Reedy category may be defined, which is instrumental for the characterisation of the homotopical properties of limits and colimits, relies upon the notions of {\em latching\/} and {\em matching\/} objects. These definitions in turn rely upon the notions of skeleta and coskeleta, which we now review.

\subsection*{\S\ Skeleta and coskeleta}

  \begin{rec}[skeleta and coskeleta]\label{rec:skel-coskel}
    Let $\scat{D}$ be a full subcategory of a small category $\scat{C}$ and let $\lcat{M}$ be a complete and cocomplete category. Then we know that pre-composition with the inclusion functor $\scat{D}\inc\scat{C}$ gives rise to a functor $\lcat{M}^\scat{C} \to \lcat{M}^\scat{D}$   
    which in this context we refer to as {\em restriction\/} or occasionally as {\em truncation\/} and which has left and right adjoints, called {\em skeleton\/} and {\em coskeleton\/} respectively:
         \begin{equation*}
      \xymatrix@R=0ex@C=12em{
      {\lcat{M}^{\scat{C}}}\ar[r]^{}="two"|*+{\labelstyle\res}_{}="three" &
      {\lcat{M}^{\scat{D}}}
      \ar@/_3ex/[l]_{\sk}^{}="one"
      \ar@/^3ex/[l]^{\cosk}_{}="four"
      \ar@{}"one";"two"|{\textstyle\bot}
      \ar@{}"three";"four"|{\textstyle\bot}
      }
    \end{equation*}     
    These adjoints are simply (pointwise) left and right Kan extension along $\scat{D}\inc\scat{C}$ respectively, and may thus be given, for $X \in \lcat{M}^{\scat{D}}$, by the coend and end formulae:
    \begin{equation}\label{eq:4}
    \sk(X)^c \cong \int^{d\in\scat{D}} \scat{C}(d,c)\tns X^d \mkern40mu
    \cosk(X)^c \cong \int_{d\in\scat{D}} \scat{C}(c,d)\pwr X^d.
    \end{equation}
    Here the symbols $\tns\colon \Set\times\lcat{M}\to\lcat{M}$ and $\pwr\colon\Set\op\times\lcat{M}\to\lcat{M}$ denote the tensor and cotensor bifunctors introduced in example \ref{ex:tensor-cotensor}.
    
  Because $\scat{D}\inc\scat{C}$ is fully faithful, the functors $\sk$ and $\cosk$ are too, which means that the unit of $\sk\dashv\res$ and the counit of $\res\dashv\cosk$ are invertible. Hence, there exists a natural transformation $\tau\colon\sk\to\cosk$ which is equal to both of the composites
    \begin{equation*}
    \sk\xrightarrow{\unit^c\circ\sk}\cosk\circ\res\circ\sk
    \xrightarrow{\cosk\circ(\unit^s)^{-1}}\cosk
    \end{equation*}
    and
    \begin{equation*}
    \sk\xrightarrow{\sk\circ(\counit^c)^{-1}}\sk\circ\res\circ\cosk
    \xrightarrow{\counit^s\circ\cosk}\cosk.
    \end{equation*}
    Here $\unit^s$ and $\counit^s$ are the unit and counit (respectively) of the adjunction $\sk\dashv\res$,  and $\unit^c$ and $\counit^c$ are the unit and counit (respectively) of the adjunction $\res\dashv\cosk$.
\end{rec}

\begin{obs}\label{obs:skel-colim}
We may recast the formulae of \eqref{eq:4} as weighted limits and colimits:
    \begin{equation}\label{eq:5}
    \sk(X)^c \cong \res(\scat{C}^c)\wcolim_{\scat{D}} X \mkern40mu
    \cosk(X)^c \cong \wlim{\res(\scat{C}_c)}{X}_{\scat{D}}.
    \end{equation}
    Notice here that the representable $\scat{C}^c$ is contravariant, so in the expression $\res(\scat{C}^c)$ of~\eqref{eq:5} the symbol ``$\res$'' denotes the restriction functor associated with the dual inclusion $\scat{D}\op\inc\scat{C}\op$. 

    Now suppose that $d$ is an object of the full subcategory $\scat{D}$. Because the inclusion $\scat{D} \inc \scat{C}$ is full it is clear that $\res(\scat{C}^d)=\scat{D}^d$, and so we may apply Yoneda's lemma  \ref{ex:weighted-yoneda} to show that:
    \begin{gather*}
    \sk(X)^d \cong \res(\scat{C}^d)\wcolim_{\scat{D}} X = \scat{D}^d\wcolim_{\scat{D}} X \cong X^d \\
    \cosk(X)^d \cong \wlim{\res(\scat{C}_d)}{X}_{\scat{D}} = \wlim{\scat{D}_d}{X}_{\scat{D}} \cong X^d.
    \end{gather*}
    Indeed, this computation demonstrates that the unit $\unit^s_X\colon X\to\res(\sk(X))$ of the adjunction $\sk\dashv\res$ and the counit $\counit^c_X\colon\res(\cosk(X))\to X$ of the adjunction $\res\dashv\cosk$ are both isomorphisms or, equivalently, that the functors $\sk$ and $\cosk$ are both fully faithful. 
    \end{obs}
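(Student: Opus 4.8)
The plan is to read the observation as two claims and dispatch them in order. The first is that the Kan-extension coend and end formulae of~\eqref{eq:4} are nothing but instances of the weighted colimit and weighted limit of~\eqref{eq:wlimformula}, once the weights are taken to be suitable restricted representables; this is pure bookkeeping. The second is that evaluating these weighted (co)limits at an object of the subcategory $\scat{D}$ returns $X$ on the nose, via Yoneda, and that the resulting isomorphisms are exactly the unit of $\sk\dashv\res$ and the counit of $\res\dashv\cosk$, so that those functors are fully faithful.

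For the first claim I would simply unwind the defining formula~\eqref{eq:wlimformula}. The weighted colimit $\res(\scat{C}^c)\wcolim_\scat{D}X$ is by definition the coend $\int^{d\in\scat{D}}\res(\scat{C}^c)_d\tns X^d$, and the weight $\res(\scat{C}^c)\colon\scat{D}\op\to\Set$ has value $\scat{C}^c_d=\scat{C}(d,c)$ at $d$; substituting reproduces the coend $\int^{d\in\scat{D}}\scat{C}(d,c)\tns X^d$ defining $\sk(X)^c$. The coskeleton case is formally dual: the weight $\res(\scat{C}_c)\colon\scat{D}\to\Set$ has value $\scat{C}(c,d)$ at $d$, and the end $\int_{d\in\scat{D}}\scat{C}(c,d)\pwr X^d$ computing $\wlim{\res(\scat{C}_c)}{X}_\scat{D}$ is precisely the formula for $\cosk(X)^c$. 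The only subtlety is the variance bookkeeping the statement already flags: $\scat{C}^c$ is contravariant, hence $\res(\scat{C}^c)$ means restriction along $\scat{D}\op\inc\scat{C}\op$ and serves as a colimit weight, whereas the covariant $\scat{C}_c$ restricts along $\scat{D}\inc\scat{C}$ to a limit weight.

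For the second claim I would exploit that $\scat{D}\inc\scat{C}$ is full and faithful to identify, for $d\in\scat{D}$, the restricted weight $\res(\scat{C}^d)$ with the representable $\scat{D}^d$: on any object $d'\in\scat{D}$ both take the value $\scat{C}(d',d)=\scat{D}(d',d)$, and dually $\res(\scat{C}_d)=\scat{D}_d$. The representable-weight instance of Yoneda's lemma recorded in example~\ref{ex:weighted-yoneda} then yields $\sk(X)^d\cong\scat{D}^d\wcolim_\scat{D}X\cong X^d$ and $\cosk(X)^d\cong\wlim{\scat{D}_d}{X}_\scat{D}\cong X^d$, naturally in $X$. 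Since $\res$ is evaluation at the objects of $\scat{D}$, these pointwise isomorphisms say that $\res(\sk(X))\cong X\cong\res(\cosk(X))$, and the standard fact that a left (respectively right) adjoint is fully faithful exactly when its unit (respectively counit) is invertible would then deliver the conclusion.

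The step I expect to be the crux is the last one, because knowing only that $\res(\sk(X))$ is \emph{abstractly} isomorphic to $X$ does not immediately force the \emph{unit} $\unit^s_X$ to be that isomorphism. To close this gap I would appeal to observation~\ref{obs:weighted.as.ordinary}, which presents $\scat{D}^d\wcolim_\scat{D}X$ as the ordinary colimit of the composite $\el(\scat{D}^d)\to\scat{D}\xrightarrow{X}\lcat{M}$; as $\el(\scat{D}^d)\cong\scat{D}/d$ carries a terminal object $\id_d$, this colimit is computed by the cocone leg at $\id_d$, and that leg is precisely the component at $d$ of $\unit^s_X$. Hence the Yoneda isomorphism \emph{is} the unit, so the unit is invertible; the dual argument, using the initial object $\id_d$ of the coslice category $d/\scat{D}\cong\el(\scat{D}_d)$, identifies $\counit^c_X$ with the corresponding limit projection and shows it invertible.
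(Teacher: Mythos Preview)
Your proposal is correct and follows essentially the same approach as the paper: unwind the definitions to recast~\eqref{eq:4} as weighted (co)limits, then use fullness of $\scat{D}\inc\scat{C}$ to identify the restricted representables with $\scat{D}$-representables and invoke the Yoneda instance of example~\ref{ex:weighted-yoneda}. The paper treats this as an observation rather than a theorem, and simply asserts that ``this computation demonstrates that the unit\ldots and the counit\ldots are both isomorphisms''; you go further by explicitly checking, via observation~\ref{obs:weighted.as.ordinary} and the terminal object of $\scat{D}/d$, that the Yoneda isomorphism \emph{is} the unit component rather than merely some abstract isomorphism. That extra care is well placed---the paper leaves this identification implicit, relying tacitly on the standard fact (already recorded in recollection~\ref{rec:skel-coskel}) that Kan extensions along fully faithful functors are fully faithful---but it does not amount to a different route.
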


    From hereon we will adopt the traditional convention of blurring the distinction between the functor $\sk$ (respectively $\cosk$), which maps $\lcat{M}^{\scat{D}}$ to $\lcat{M}^{\scat{C}}$, and the corresponding endo-functor $\sk\circ\res$ (respectively $\cosk\circ\res$) on $\lcat{M}^{\scat{C}}$. So we will write $\sk$ (respectively $\cosk$) for either of these and allow the context to disambiguate each instance.

  \begin{lem}\label{lem:skeleta-colimit}
Let $X \in \lcat{M}^{\scat{C}}$ and let $c \in \scat{C}$. The object $\sk(X)^c$ is the colimit of $X$ weighted by $\sk(\scat{C}^c)$ and the object $\cosk(X)^c$ is the limit of $X$ weighted by $\sk(\scat{C}_c)$, i.e., there exist isomorphisms:
    \begin{equation}\label{eq:6}
    \sk(X)^c \cong \sk(\scat{C}^c)\wcolim_{\scat{C}} X \mkern40mu
    \cosk(X)^c \cong \wlim{\sk(\scat{C}_c)}{X}_{\scat{C}}
    \end{equation}
    which are natural in $X\in\lcat{M}^{\scat{C}}$ and $c\in\scat{C}$. 
    \end{lem}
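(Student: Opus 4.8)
The plan is to deduce the two isomorphisms of \eqref{eq:6} from the pointwise Kan-extension formulae \eqref{eq:5} of observation~\ref{obs:skel-colim} by a single ``change of weights'' manoeuvre: replacing a weight defined on a full subcategory $\scat{D}$ by its left Kan extension to $\scat{C}$, while simultaneously restricting the diagram from $\scat{C}$ to $\scat{D}$, leaves the weighted colimit (resp.\ limit) unchanged. Precisely, I would first isolate the following fact. For a full inclusion $\scat{D}\inc\scat{C}$, writing $\res$ for restriction of $\Set$-valued functors and $\sk$ for its left adjoint (left Kan extension), there are isomorphisms
\[ \sk(W)\wcolim_{\scat{C}} X \;\cong\; W\wcolim_{\scat{D}}\res(X) \qquad \wlim{\sk(V)}{X}_{\scat{C}} \;\cong\; \wlim{V}{\res(X)}_{\scat{D}},\]
natural in $X\in\lcat{M}^{\scat{C}}$ and in the weights $W\in\Set^{\scat{D}\op}$ and $V\in\Set^{\scat{D}}$. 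In the colimit formula $\res$ and $\sk$ refer to the dual inclusion $\scat{D}\op\inc\scat{C}\op$, while in the limit formula they refer to $\scat{D}\inc\scat{C}$ itself; in both cases it is the \emph{left} adjoint of restriction that intervenes, because the weighted colimit and limit bifunctors are respectively covariant and contravariant in their weight argument.

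Both isomorphisms follow from a short chain of bijections natural in a test object $M\in\lcat{M}$, which the Yoneda lemma then converts into isomorphisms of objects. For the colimit: $\lcat{M}(\sk(W)\wcolim_{\scat{C}} X,M)$ equals $\Set^{\scat{C}\op}(\sk(W),\lcat{M}(X,M))$ by the universal property of weighted colimits, equals $\Set^{\scat{D}\op}(W,\res\lcat{M}(X,M))$ by the adjunction $\sk\dashv\res$, equals $\Set^{\scat{D}\op}(W,\lcat{M}(\res(X),M))$ by the evident identity $\res(\lcat{M}(X,M))=\lcat{M}(\res(X),M)$ of $\Set$-valued functors, and equals $\lcat{M}(W\wcolim_{\scat{D}}\res(X),M)$ by the universal property once more. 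The limit case runs identically, with $\lcat{M}(-,M)$ replaced by $\lcat{M}(M,-)$ and the universal property of weighted limits in place of that of weighted colimits. Every bijection in the chain is visibly natural in $X$ and in the weight, so the resulting isomorphism of objects is too.

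Granting this fact, the lemma is immediate. Applying \eqref{eq:5} and then the colimit isomorphism above with $W=\res(\scat{C}^c)$, and recalling the standing ``blurring'' convention under which $\sk(\scat{C}^c)$ abbreviates $\sk(\res(\scat{C}^c))$ formed along $\scat{D}\op\inc\scat{C}\op$, we obtain
\[ \sk(X)^c\;\cong\;\res(\scat{C}^c)\wcolim_{\scat{D}}\res(X)\;\cong\;\sk(\scat{C}^c)\wcolim_{\scat{C}} X;\]
dually, \eqref{eq:5} together with the limit isomorphism applied to $V=\res(\scat{C}_c)$ yields $\cosk(X)^c\cong\wlim{\sk(\scat{C}_c)}{X}_{\scat{C}}$. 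Naturality in $X$ is inherited from composing the two displayed isomorphisms, and naturality in $c$ follows because $c\mapsto\scat{C}^c$ and $c\mapsto\scat{C}_c$ are functorial and $\sk$, $\res$ and the weighted (co)limit bifunctors are functorial in their arguments.

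I do not anticipate a genuine difficulty: the content is a routine exercise in adjunctions and the Yoneda lemma. The only thing that needs care is the bookkeeping — distinguishing the functor $\sk\colon\lcat{M}^{\scat{D}}\to\lcat{M}^{\scat{C}}$ (blurred with $\sk\circ\res$) on the left-hand side of \eqref{eq:6} from the $\Set$-valued left Kan extensions concealed inside the weights $\sk(\scat{C}^c)$ and $\sk(\scat{C}_c)$, and tracking whether the inclusion $\scat{D}\inc\scat{C}$ or its opposite is operative in each place. An equally short alternative bypasses the general fact and computes with coends directly: substitute the coend formula \eqref{eq:4} for $\sk(\scat{C}^c)$ into $\sk(\scat{C}^c)\wcolim_{\scat{C}} X$, interchange the two coends, use cocontinuity of $\tns$ to extract the $\scat{C}$-indexed coend, collapse it by the Yoneda isomorphism $\int^{e\in\scat{C}}\scat{C}(e,d)\tns X^e\cong X^d$ of \eqref{eq:yoneda}, and land back on $\res(\scat{C}^c)\wcolim_{\scat{D}}\res(X)$, which is $\sk(X)^c$ by \eqref{eq:5}.
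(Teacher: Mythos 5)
Your proof is correct, but it takes a genuinely different route from the paper's. The paper establishes \eqref{eq:6} by a direct coend calculation: it starts from the pointwise Kan extension formula \eqref{eq:4}, expands $X^d$ as a coend over $\scat{C}$ via the Yoneda isomorphism \eqref{eq:yoneda}, interchanges the two coends using cocontinuity of $\tns$ and Fubini, and recognises the inner coend as $\sk(\scat{C}^c)$ --- this is precisely the ``equally short alternative'' you sketch in your final paragraph, run in the opposite direction. Your primary argument instead isolates and proves the general change-of-weights fact $\sk(W)\wcolim_{\scat{C}}X\cong W\wcolim_{\scat{D}}\res(X)$ by a chain of bijections natural in a test object $M$, using the universal property of the weighted colimit, the adjunction $\sk\dashv\res$ on $\Set$-valued presheaves, and the identity $\res(\lcat{M}(X,M))=\lcat{M}(\res(X),M)$; all of these steps are valid, and the specialisation to $W=\res(\scat{C}^c)$ together with \eqref{eq:5} and the blurring convention correctly recovers the statement. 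Notably, the paper asserts exactly your general fact in the sentence immediately following the lemma (``the weighted limit or colimit of the restriction of a diagram is isomorphic to the limit or colimit of the original diagram weighted by the left Kan extension of the weight'') but does not prove it, preferring the concrete computation for the special case at hand. Your representability argument supplies a proof of that more general statement, needs neither fullness of $\scat{D}\inc\scat{C}$ nor any explicit coend formula, and makes the naturality claims essentially automatic; the paper's calculation is more computational but stays entirely within machinery already displayed in \eqref{eq:4} and \eqref{eq:yoneda}. Your bookkeeping about which inclusion ($\scat{D}\inc\scat{C}$ or its opposite) governs each occurrence of $\sk$ and $\res$ is also handled correctly.
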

 
 Lemma \ref{lem:skeleta-colimit} is a special case of a general result: the weighted limit or colimit of the restriction of a diagram is isomorphic to the limit or colimit of the original diagram weighted by the left Kan extension of the weight.

    \begin{proof} The isomorphisms \eqref{eq:6} are constructed in the following calculation (and its dual):
    \begin{align*}
    \sk(X)^c  &\cong \int^{d\in \scat{D}} \scat{C}(d,c)\tns X^d && 
          \text{\eqref{eq:4}}\\
          &\cong \int^{d\in \scat{D}} \scat{C}(d,c)\tns\left( \int^{\bar{c}\in \scat{C}}
          \scat{C}(\bar{c},d)\tns X^{\bar{c}}\right) && 
          \text{by Yoneda's lemma} \\
          &\cong \int^{\bar{c}\in\scat{C}} \left(\int^{d\in\scat{D}}\scat{C}(d,c)\times
          \scat{C}(\bar{c},d)\right)\tns X^{\bar{c}} &&
          \text{cocontinuity of $\tns$ and Fubini} \\
          &\cong \sk(\scat{C}^c)\wcolim_{\scat{C}} X &&
          \text{definitions of $\sk$ and $\wcolim_{\scat{C}}$}\qedhere
    \end{align*}
    \end{proof}
    
    \begin{obs}[skeleta of representables and factorisations]\label{obs:skel-reps}
    Given the importance of the skeleta of the representables $\scat{C}^c$ and $\scat{C}_{\bar{c}}$ in these expressions, it will be useful to analyse these in a little more detail. To that end, observe that formula~\eqref{eq:4} tells us that the skeleton $\sk(\scat{C}_{\bar{c}})$ is given by the following coend formula;
    \begin{equation}\label{eq:sk-rep-coend}
      \sk(\scat{C}_{\bar{c}})^c \cong 
      \int^{d\in\scat{D}} \scat{C}(d,c)\times
      \scat{C}(\bar{c},d)
    \end{equation}
    Furthermore, the counit map $\sk(\scat{C}_{\bar{c}})\to\scat{C}_{\bar{c}}$ is induced, via the universal property of that coend, by the family of composition maps
    \[ \xymatrix{ {\scat{C}(d,c)\times\scat{C}(\bar{c},d)} \ar[r]^-{\circ} & \scat{C}(\bar{c},c)}\] 
    which are natural in $c,\bar{c}\in\scat{C}$ and dinatural in $d\in\scat{D}$. It will be useful to know when this counit map is a monomorphism. 
   
   Applying observation~\ref{obs:coend-in-sets} to the coend in~\eqref{eq:sk-rep-coend} we see that it is isomorphic to the set of connected components of a category $\fact_{\scat{C}}(\bar{c},c)$ which has
   \begin{itemize}
     \item objects triples $(d,g,h)$ comprising an object $d$ of $\scat{D}$ and a pair of arrows $g\colon\bar{c}\to d$ and $h\colon d\to c$ in $\scat{C}$, and
     \item arrows $k\colon (d,g,h)\to(d',g',h')$ which consist of an arrow $k\colon d\to d'$ of $\scat{D}$ making the following triangles commute
    \begin{equation*}
    \xy
      0;<1cm,0cm>: (1,2)*+{\bar{c}}="a", (0,1)*+{d}="b", (2,1)*+{d'}="c", (1,0)*+{c}="d",
      \ar "a";"b"_g \ar "a";"c"^{g'} \ar "b";"d"_h \ar "c";"d"^{h'} \ar "b";"c"^{k}
    \endxy
    \end{equation*}
    whose composition and identities are as in $\scat{D}$.
   \end{itemize}
  Under this presentation of that coend the counit map $\sk(\scat{C}_{\bar{c}})^c\to\scat{C}_{\bar{c}}^c$ carries an equivalence class (connected component) $[d,g,h]$ to the composite map $hg\colon\bar{c}\to c$. 

  Observe that the category $\fact_{\scat{C}}(\bar{c},c)$ discussed in the last paragraph splits up into a disjoint union $\coprod_{f\in\scat{C}(\bar{c},c)}\fact_{\scat{D}}(f)$, where $\fact_{\scat{D}}(f)$ is its full sub-category determined by those objects $(d,g,h)$ for which $hg = f$. We call $\fact_{\scat{D}}(f)$ the category of {\em factorisations of $f$ through $\scat{D}$}. Then it is clear that the set $\pi_0(\fact_{\scat{D}}(f))$ of connected components of that category of factorisations is the subset of $\sk(\scat{C}_{\bar{c}})^c\cong\pi_0(\fact_{\scat{C}}(\bar{c},c))$ which is mapped to the element $f\in\scat{C}_{\bar{c}}^c$ under the action of the counit $\sk(\scat{C}_{\bar{c}})^c\to\scat{C}_{\bar{c}}^c$. The dual result for $\sk(\scat{C}^c)$ tells us that the fibre of the counit $\sk(\scat{C}^{c})_{\bar{c}}\to\scat{C}^{c}_{\bar{c}}$ over $f\in\scat{C}^{c}_{\bar{c}}$ may also be described as the set of connected components of $\fact_{\scat{D}}(f)$.

  It follows therefore that counit $\sk(\scat{C}_{\bar{c}})\to\scat{C}_{\bar{c}}$ (respectively $\sk(\scat{C}^{c})\to\scat{C}^{c}$) is a monomorphism if and only if for each arrow $f\colon \bar{c}\to c$ in $\scat{C}$ the category $\fact_{\scat{D}}(f)$ is either empty or connected. Furthermore, an arrow $f\colon \bar{c}\to c\in\scat{C}$ is in the image of $\sk(\scat{C}_{\bar{c}})\to\scat{C}_{\bar{c}}$ (respectively $\sk(\scat{C}^{c})\to\scat{C}^{c}$) if and only if $\fact_{\scat{D}}(f)$ is non-empty.
  \end{obs}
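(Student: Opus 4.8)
The plan is to unwind the coend presentation of the skeleton of a representable, evaluate that coend in $\Set$ using the connected-components recipe of observation~\ref{obs:coend-in-sets}, and then read off both assertions by sorting the resulting category of elements according to the composite arrow $h\circ g$. Concretely, I would first specialise the coend formula~\eqref{eq:4} for $\sk$ to the covariant representable $\scat{C}_{\bar c}\in\Set^{\scat{C}}$, whose value at an object $d$ of $\scat{D}$ is the hom-set $\scat{C}(\bar c,d)$; since the tensor $\tns$ on $\Set$ is cartesian product, this produces at once the coend $\int^{d\in\scat{D}}\scat{C}(d,c)\times\scat{C}(\bar c,d)$ of~\eqref{eq:sk-rep-coend}, whose universal property exhibits the counit $\sk(\scat{C}_{\bar c})\to\scat{C}_{\bar c}$ of $\sk\dashv\res$ as the map classified by the family of composition maps $\circ\colon\scat{C}(d,c)\times\scat{C}(\bar c,d)\to\scat{C}(\bar c,c)$, the requisite dinaturality in $d$ and naturality in $c$ and $\bar c$ being immediate.

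Next I would feed this coend into observation~\ref{obs:coend-in-sets}. Its integrand is the bifunctor $H\colon\scat{D}\op\times\scat{D}\to\Set$ carrying $(d_1,d_2)$ to $\scat{C}(d_1,c)\times\scat{C}(\bar c,d_2)$, contravariant by precomposition in the first factor and covariant by postcomposition in the second. Transcribing the description given there of the associated pullback category---objects are pairs $(d,(h,g))$ with $h\colon d\to c$ and $g\colon\bar c\to d$, while a morphism $k\colon(d,(h,g))\to(d',(h',g'))$ is an arrow $k\colon d\to d'$ of $\scat{D}$ satisfying $(h,k\circ g)=(h'\circ k,g')$---one recognises precisely the category $\fact_{\scat{C}}(\bar c,c)$, under which identification the counit $\sk(\scat{C}_{\bar c})^c\to\scat{C}_{\bar c}^c$ carries the component $[d,(h,g)]$ to $h\circ g$. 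Getting the two variances to line up correctly in this transcription is the only delicate point; everything else is bookkeeping.

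Finally I would observe that the assignment $(d,(h,g))\mapsto h\circ g$ is constant along morphisms of $\fact_{\scat{C}}(\bar c,c)$---from $k\circ g=g'$ and $h'\circ k=h$ one gets $h'\circ g'=h'\circ k\circ g=h\circ g$---hence constant on connected components, so $\fact_{\scat{C}}(\bar c,c)$ is the disjoint union $\coprod_{f\in\scat{C}(\bar c,c)}\fact_{\scat{D}}(f)$ of its full subcategories on the objects with $h\circ g=f$, each of which is a union of connected components. Passing to $\pi_0$ then identifies the counit with the canonical function $\coprod_f\pi_0(\fact_{\scat{D}}(f))\to\coprod_f\{f\}$, whose fibre over $f$ is $\pi_0(\fact_{\scat{D}}(f))$; such a function is injective exactly when each $\pi_0(\fact_{\scat{D}}(f))$ is empty or a singleton---equivalently, each $\fact_{\scat{D}}(f)$ is empty or connected---and $f$ lies in its image exactly when $\fact_{\scat{D}}(f)$ is inhabited. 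The parenthesised statements for the contravariant representable $\scat{C}^c$ then follow by rerunning the argument with $(\scat{C},\scat{D})$ replaced by $(\scat{C}\op,\scat{D}\op)$, since the notion of a factorisation $\bar c\to d\to c$ with $d$ in $\scat{D}$ is manifestly self-dual.
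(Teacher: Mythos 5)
Your proposal is correct and follows essentially the same route as the paper's own inline argument: specialise the coend formula~\eqref{eq:4} to the representable, evaluate the resulting coend via observation~\ref{obs:coend-in-sets} to obtain $\pi_0$ of the category $\fact_{\scat{C}}(\bar{c},c)$, split that category as $\coprod_{f}\fact_{\scat{D}}(f)$ along the locally constant assignment $(d,g,h)\mapsto hg$, and read off the injectivity and image criteria fibrewise. The variance bookkeeping you flag as the one delicate point is handled correctly, so there is nothing to add.
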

  
 \subsection*{\S\ Latching and matching objects} 
  
  We now explore the consequences of these observations in the Reedy setting. Henceforth, we shall assume that $\scat{C}$ is a Reedy category with a given Reedy structure. Let $\reedyfilt\scat{C}_n$ (for $n\in\mathbb{N}$) denote the full subcategory of $\scat{C}$ whose objects are those $c\in\scat{C}$ with $\deg(c)\leq n$. 

  \begin{obs}[skeleta and coskeleta in a Reedy setting]
    Suppose that $\lcat{M}$ is a category which possesses all limits and colimits. Then just as in recollection~\ref{rec:skel-coskel}, we obtain a pair of adjunctions:
    \begin{equation*}
    \xymatrix@R=0ex@C=12em{
      {\lcat{M}^{\scat{C}}}\ar[r]^{}="two"|{\res_n}_{}="three" &
      {\lcat{M}^{\reedyfilt\scat{C}_n}}
      \ar@/_3ex/[l]_{\sk_n}^{}="one"
      \ar@/^3ex/[l]^{\cosk_n}_{}="four"
      \ar@{}"one";"two"|{\textstyle\bot}
      \ar@{}"three";"four"|{\textstyle\bot}
    }
    \end{equation*}
    called {\em $n$-skeleton}, {\em $n$-truncation\/}, and {\em $n$-coskeleton\/} respectively. As the inclusion $\reedyfilt\scat{C}_n\inc\scat{C}$ is fully faithful, there exists a natural transformation $\tau_n\colon\sk_n\to\cosk_n$,  defined as in recollection \ref{rec:skel-coskel} from the units and counts $\eta^s_n$, $\eta^c_n$, $\epsilon^s_n$, and $\epsilon^c_n$ of the adjunctions $\sk_n\dashv\res_n\dashv\cosk_n$.
    \end{obs}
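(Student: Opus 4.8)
The plan is to obtain this observation as a direct instantiation of recollection~\ref{rec:skel-coskel}. First I would note that $\reedyfilt\scat{C}_n$ is, by its very definition, the full subcategory of $\scat{C}$ spanned by those objects $c$ with $\deg(c)\leq n$; in particular the inclusion $\reedyfilt\scat{C}_n\inc\scat{C}$ is fully faithful and $\reedyfilt\scat{C}_n$ is again small. Since $\lcat{M}$ is assumed to possess all small limits and colimits, recollection~\ref{rec:skel-coskel} applies verbatim with $\scat{D}\defeq\reedyfilt\scat{C}_n$, yielding the restriction (truncation) functor $\res_n\colon\lcat{M}^{\scat{C}}\to\lcat{M}^{\reedyfilt\scat{C}_n}$ together with its left and right adjoints $\sk_n$ and $\cosk_n$, the latter computed as pointwise left and right Kan extension along $\reedyfilt\scat{C}_n\inc\scat{C}$ by means of the coend and end formulae~\eqref{eq:4}. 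This establishes the adjoint triple $\sk_n\dashv\res_n\dashv\cosk_n$ depicted in the statement.

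Next, because the inclusion $\reedyfilt\scat{C}_n\inc\scat{C}$ is fully faithful, the general discussion in recollection~\ref{rec:skel-coskel} supplies the comparison natural transformation $\tau_n\colon\sk_n\to\cosk_n$, realised as either of the two equal composites assembled from the units and counits $\eta^s_n$, $\eta^c_n$, $\epsilon^s_n$, $\epsilon^c_n$ of the adjunctions $\sk_n\dashv\res_n$ and $\res_n\dashv\cosk_n$. The one input needed here is that $\eta^s_n\colon\id\to\res_n\sk_n$ and $\epsilon^c_n\colon\res_n\cosk_n\to\id$ are invertible, and this is precisely observation~\ref{obs:skel-colim} specialised to $\scat{D}=\reedyfilt\scat{C}_n$: one evaluates at an object of $\reedyfilt\scat{C}_n$ and applies Yoneda's lemma exactly as there. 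Following the convention announced just before lemma~\ref{lem:skeleta-colimit}, I would finally record that $\sk_n$ and $\cosk_n$ may equally be regarded as the endofunctors $\sk_n\circ\res_n$ and $\cosk_n\circ\res_n$ on $\lcat{M}^{\scat{C}}$, so that $\tau_n$ is likewise a natural transformation of endofunctors on $\lcat{M}^{\scat{C}}$.

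There is essentially no obstacle. The only points requiring even momentary attention are that $\reedyfilt\scat{C}_n$ genuinely \emph{is} a full subcategory of $\scat{C}$---which holds by fiat, since it is defined as the full subcategory on objects of bounded degree---and that the relevant Kan extensions exist, which is guaranteed by the blanket completeness and cocompleteness hypothesis on $\lcat{M}$ (the tensors and cotensors, and hence the defining coends and ends, then all exist). Everything else in the statement is a transcription of recollection~\ref{rec:skel-coskel} with the Reedy filtration stage $\reedyfilt\scat{C}_n$ playing the role of $\scat{D}$, the subscript $n$ decorating the resulting functors and (co)units.
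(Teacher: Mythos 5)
Your proposal is correct and follows exactly the route the paper intends: the observation is stated as a direct instantiation of recollection~\ref{rec:skel-coskel} with $\scat{D}=\reedyfilt\scat{C}_n$, relying only on the fullness of the inclusion and the completeness and cocompleteness of $\lcat{M}$, which is precisely what you verify. The paper offers no further argument, so your additional care in checking the hypotheses (smallness, existence of the Kan extensions, invertibility of $\eta^s_n$ and $\epsilon^c_n$ via observation~\ref{obs:skel-colim}) is simply a more explicit transcription of the same reasoning.
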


    The maps $\tau_n\colon\sk_n\to\cosk_n$ are of particular interest to us here since we may show that each extension of a diagram $X \in \lcat{M}^{\reedyfilt\scat{C}_{n-1}}$ to an object of $\lcat{M}^{\reedyfilt\scat{C}_n}$ corresponds to and is uniquely determined by a family of factorisations $\sk_n(X)^c\stackrel{i^c}\to X^c\stackrel{p^c}\to\cosk_n(X)^c$ of the maps $(\tau_{n-1})^{X,c}\colon\sk_{n-1}(X)^c\to\cosk_{n-1}(X)^c$ for each $c\in\obj(\scat{C})$ with $\deg(c)=n$.

    \begin{lem}[inductive definition of diagrams]\label{lem:inductive.defn} A diagram $X \in \lcat{M}^{\reedyfilt\scat{C}_{n-1}}$ together with a family of factorisations $\sk_n(X)^c\stackrel{i^c}\to X^c\stackrel{p^c}\to\cosk_n(X)^c$ of the maps $(\tau_{n-1})^{X,c}\colon\sk_{n-1}(X)^c\to\cosk_{n-1}(X)^c$ for each $c\in\obj(\scat{C})$ with $\deg(c)=n$, uniquely determines a diagram $X \in \lcat{M}^{\reedyfilt\scat{C}_n}$ whose restriction to degree $n-1$ coincides with the original diagram. 
    \end{lem}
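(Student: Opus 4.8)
The plan is to build the extended diagram $\bar X\in\lcat{M}^{\reedyfilt\scat{C}_n}$ directly, prescribing its value on every object and every arrow, and then to observe that the hypotheses leave no freedom; since the statement asks only for the object $\bar X$, nothing is gained by repackaging it as a universal property. To fix notation: for $c\in\obj(\scat{C})$ with $\deg(c)=n$ abbreviate $L_c\defeq\sk_n(X)^c$ and $M_c\defeq\cosk_n(X)^c$. By~\eqref{eq:5} together with observation~\ref{obs:weighted.as.ordinary} (and the identification of the category of elements of a representable with a slice), $L_c$ is the colimit of $X$ over the comma category $\reedyfilt\scat{C}_{n-1}\comma c$ and $M_c$ is the limit of $X$ over $c\comma\reedyfilt\scat{C}_{n-1}$. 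Thus $L_c$ carries a coprojection $\lambda_\gamma\colon X^d\to L_c$ for every arrow $\gamma\colon d\to c$ of $\scat{C}$ with $\deg(d)<n$ (compatibly with the comma-category morphisms), $M_c$ carries a projection $\mu_\gamma\colon M_c\to X^d$ for every $\gamma\colon c\to d$ with $\deg(d)<n$, and the given factorisation datum at $c$ is a pair $L_c\xrightarrow{i^c}X^c\xrightarrow{p^c}M_c$ with $p^c\circ i^c=(\tau_{n-1})^{X,c}$.

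First I would record the single structural fact about $\reedyfilt\scat{C}_n$ that is needed, which is immediate from the degree inequalities of definition~\ref{defn:reedy} and the constraint $\deg\le n$ inside $\reedyfilt\scat{C}_n$: if $\alpha$ is a non-identity arrow of $\reedyfilt\scat{C}_n$ whose codomain has degree $n$ then $\direct\alpha$ is non-identity, and symmetrically if its domain has degree $n$ then $\inverse\alpha$ is non-identity; in particular the Reedy-intermediate object $\cod(\inverse\alpha)$ then has degree $<n$. Using this I define $\bar X$ on objects by $\bar X^b\defeq X^b$ for all $b$ (reading $X^c$, for $\deg(c)=n$, as the middle object of the given factorisation), and on a non-identity arrow $\alpha\colon c'\to c''$ according to the canonical Reedy factorisation: $\bar X^\alpha\defeq X^\alpha$ if $\deg(c'),\deg(c'')<n$; $\bar X^\alpha\defeq i^{c''}\circ\lambda_\alpha$ if $\deg(c'')=n>\deg(c')$; $\bar X^\alpha\defeq\mu_\alpha\circ p^{c'}$ if $\deg(c')=n>\deg(c'')$; and $\bar X^\alpha\defeq i^{c''}\circ\lambda_{\direct\alpha}\circ\mu_{\inverse\alpha}\circ p^{c'}$ if $\deg(c')=\deg(c'')=n$, where the previous remark guarantees that the Reedy-intermediate object has degree $<n$ so that $\lambda_{\direct\alpha}$ and $\mu_{\inverse\alpha}$ are defined; and $\bar X$ sends identities to identities. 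Each clause depends only on $\alpha$, because the canonical Reedy factorisation is unique.

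The real work is to verify that $\bar X$ is functorial, i.e.\ that $\bar X^{\beta\circ\alpha}=\bar X^\beta\circ\bar X^\alpha$ for composable $c'\xrightarrow{\alpha}c''\xrightarrow{\beta}c'''$, and I would organise the check by the degree of the middle object $c''$. When $\deg(c'')<n$, both sides reduce — using the compatibility of the coprojections $\lambda_\bullet$ and projections $\mu_\bullet$ with the morphisms of the relevant comma categories, and using the observation (visible in the diagram in the proof of lemma~\ref{lem:reedy-fact-connect}) that the canonical Reedy factorisation of a composite is obtained by composing and re-factorising — to $\bar X$ applied to a single arrow, and hence agree. The decisive case is $\deg(c'')=n$: then $\bar X^\beta\circ\bar X^\alpha$ contains the segment $p^{c''}\circ i^{c''}$, which by the hypothesis of the lemma equals $(\tau_{n-1})^{X,c''}\colon L_{c''}\to M_{c''}$, and substituting this and invoking the defining property of that comparison map with respect to the colimit/limit presentations of $L_{c''}$ and $M_{c''}$ from recollection~\ref{rec:skel-coskel} — namely $\mu_{\gamma'}\circ(\tau_{n-1})^{X,c''}\circ\lambda_\gamma=X^{\gamma'\circ\gamma}$ for composable $\gamma\colon d\to c''$, $\gamma'\colon c''\to d'$ with $\deg(d),\deg(d')<n$ — collapses $\bar X^\beta\circ\bar X^\alpha$ to $\bar X^{\beta\circ\alpha}$. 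Marshalling the Reedy factorisations of $\alpha$, $\beta$ and $\beta\circ\alpha$ so that this substitution is legitimate is where the argument has teeth, and is the step I expect to be the main obstacle; everything else is bookkeeping. (That $\bar X$ restricts along $\reedyfilt\scat{C}_{n-1}\inc\reedyfilt\scat{C}_n$ to the original $X$ is visible from the first clause.)

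Finally, for uniqueness, suppose $\bar X'\in\lcat{M}^{\reedyfilt\scat{C}_n}$ is any diagram restricting to $X$, with $(\bar X')^c=X^c$ for $\deg(c)=n$, and whose canonical latching and matching comparisons $L_c\to(\bar X')^c\to M_c$ are the prescribed $i^c$ and $p^c$. The latching condition says exactly that $(\bar X')^\gamma=i^c\circ\lambda_\gamma$ for each $\gamma\colon d\to c$ with $\deg(d)<n=\deg(c)$, and dually for the matching condition; feeding an arbitrary arrow through its canonical Reedy factorisation and using functoriality of $\bar X'$ then forces $(\bar X')^\alpha$ to equal the clause defining $\bar X^\alpha$ in each case, while arrows with both endpoints of degree $<n$ are handled because $\bar X'$ restricts to $X$. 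Hence $\bar X'=\bar X$, which is the uniqueness asserted in the statement.
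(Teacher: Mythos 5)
Your construction is essentially the paper's own proof: you define the extension on arrows through the canonical Reedy factorisation using the latching coprojections, matching projections, and the given pairs $(i^c,p^c)$, and you reduce functoriality to the identity $p^c\circ i^c=(\tau_{n-1})^{X,c}$ together with the connectedness of the categories of factorisations through objects of degree $<n$ supplied by lemma~\ref{lem:reedy-fact-connect} — exactly the two ingredients the paper invokes. Your explicit four-case definition and the closing uniqueness check merely spell out what the paper's diagram and its appeal to the connectedness of $\fact_{n-1}(gf)$ leave implicit.
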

    \begin{proof}
    It remains to define the action of $X$ on (non-identity) morphisms whose domain or codomain has degree $n$. Given such a map $f \colon \bar{c} \to c$, its Reedy factorisation $(\inverse{f},\direct{f})$ is through an object of degree less than $n$. There exist unique dotted-arrow maps making the following diagram commute
        \begin{equation*}
    \xymatrix@R=2.5em@C=5em{
      {\sk_{n-1}(X)^{\bar{c}}}\ar[r]^-{i^{\bar{c}}}
      \ar[d]_{\sk_{n-1}(X)^{\inverse{f}}} &
      {X^{\bar{c}}}\ar[r]^-{p^{\bar{c}}}
      \ar@{-->}[d]_{X^{\inverse{f}}} &
      {\cosk_{n-1}(X)^{\bar{c}}}\ar[d]^{\cosk_{n-1}(X)^{\inverse{f}}} \\
      {\sk_{n-1}(X)^d}\ar[r]_-{=}^-{i^{d}} 
         \ar[d]_{\sk_{n-1}(X)^{\direct{f}}} &
     {X^d}\ar[r]_-{=}^-{p^d}       \ar@{-->}[d]_{X^{\direct{f}}}&
      {\cosk_{n-1}(X)^d}
      \ar[d]^{\cosk_{n-1}(X)^{\direct{f}} }     \\
           {\sk_{n-1}(X)^{d}}\ar[r]^-{i^{c}}
    &
      {X^{c}}\ar[r]^-{p^{c}}
      &
      {\cosk_{n-1}(X)^{c}}
    }
    \end{equation*}
    defined to be the composites of the maps in the upper-right and lower-left squares respectively. The functoriality of this definition, in a pair of composable maps $(f,g)$ follows from connectedness of the category $\fact_{n-1}(gf)$. 
    \end{proof}
    
    \begin{obs}[inductive definition of natural transformations]\label{obs:inductive.defn}
        More specifically, we may apply this result to show that if $X$ and $Y$ are objects of $\lcat{M}^{\scat{C}}$ then each extension of a natural transformation $\phi\colon \res_{n-1}(X)\to\res_{n-1}(Y)$ to a natural transformation $\phi\colon\res_n(X)\to\res_n(Y)$ corresponds to a unique family of maps $\{\phi^c\colon X^c\to Y^c \mid c\in\obj(\scat{C}), \deg(c)=n\}$ which make  the following diagrams commute:
    \begin{equation*}
    \xymatrix@R=2.5em@C=5em{
      {\sk_{n-1}(\res_{n-1}(X))^c}\ar[r]^-{(\counit^s_{n-1})^{X,c}}
      \ar[d]_{\sk_{n-1}(\phi)^c} &
      {X^c}\ar[r]^-{(\unit^c_{n-1})^{X,c}}
      \ar@{-->}[d]_{\phi^c} &
      {\cosk_{n-1}(\res_{n-1}(X))^c}\ar[d]^{\cosk_{n-1}(\phi)^c} \\
      {\sk_{n-1}(\res_{n-1}(Y))^c}\ar[r]_-{(\counit^s_{n-1})^{Y,c}} &
      {Y^c}\ar[r]_-{(\unit^c_{n-1})^{Y,c}} &
      {\cosk_{n-1}(\res_{n-1}(Y))^c}
    }
    \end{equation*}
    This follows by applying lemma \ref{lem:inductive.defn} to the $\scat{C}$-shaped diagram $c \mapsto \phi^c$ taking values in the arrow category $\lcat{M}^\cattwo$.
       \end{obs}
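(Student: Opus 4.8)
The plan is to deduce the statement directly from Lemma~\ref{lem:inductive.defn}, applied not to $\lcat{M}$ itself but to its arrow category $\lcat{M}^{\cattwo}$. First I would record that $\lcat{M}^{\cattwo}$ is again complete and cocomplete, with all limits and colimits computed pointwise from those of $\lcat{M}$, so that Lemma~\ref{lem:inductive.defn} applies to it verbatim. Next I would invoke the standard isomorphisms $(\lcat{M}^{\cattwo})^{\scat{C}}\cong(\lcat{M}^{\scat{C}})^{\cattwo}$ and $(\lcat{M}^{\cattwo})^{\reedyfilt\scat{C}_n}\cong(\lcat{M}^{\reedyfilt\scat{C}_n})^{\cattwo}$, under which a diagram valued in the arrow category is precisely a natural transformation of diagrams valued in $\lcat{M}$, and the restriction functor $\res_n$ of $\lcat{M}^{\cattwo}$ corresponds to $\res_n$ applied to domain and codomain. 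Thus the $\reedyfilt\scat{C}_{n-1}$-indexed diagram $Z$ in $\lcat{M}^{\cattwo}$ with $Z^c=(\phi^c\colon X^c\to Y^c)$ is exactly the natural transformation $\phi\colon\res_{n-1}(X)\to\res_{n-1}(Y)$, and an extension of $\phi$ to $\res_n$ is exactly an extension of $Z$ to a $\reedyfilt\scat{C}_n$-diagram.

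The key step is to observe that the skeleton, the coskeleton, and the transformations $\tau_{n-1}$, $\counit^s_{n-1}$, $\unit^c_{n-1}$ in $\lcat{M}^{\cattwo}$ are all computed componentwise in domain and codomain. This is because the evaluation functors $\dom,\cod\colon\lcat{M}^{\cattwo}\to\lcat{M}$ preserve all small limits and colimits (each is evaluation at an object of $\cattwo$, equivalently each is simultaneously a left and a right adjoint) and jointly reflect them; since $\sk_{n-1}$ and $\cosk_{n-1}$ are pointwise left and right Kan extensions, they are preserved by $\dom$ and $\cod$. Consequently $\sk_{n-1}(Z)$ is the arrow $\sk_{n-1}(\phi)\colon\sk_{n-1}(X)\to\sk_{n-1}(Y)$, $\cosk_{n-1}(Z)$ is the arrow $\cosk_{n-1}(\phi)$, and $(\tau_{n-1})^{Z,c}$ is the commuting square whose two vertical legs are $\sk_{n-1}(\phi)^c$ and $\cosk_{n-1}(\phi)^c$.

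With this in hand I would apply Lemma~\ref{lem:inductive.defn} to $Z$: its extensions to degree $n$ correspond bijectively to families of factorisations of $(\tau_{n-1})^{Z,c}$ in $\lcat{M}^{\cattwo}$, one for each $c$ with $\deg(c)=n$. It remains to unwind such a factorisation. Since a morphism of $\lcat{M}^{\cattwo}$ is a commuting square, a factorisation $\sk_{n-1}(Z)^c\xrightarrow{i^c}Z^c\xrightarrow{p^c}\cosk_{n-1}(Z)^c$ comprises a factorisation of $\tau_{n-1}^{X,c}$ (its domain component), a factorisation of $\tau_{n-1}^{Y,c}$ (its codomain component), and a middle arrow joining them. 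Here the hypothesis that $X$ and $Y$ are the already-given objects of $\lcat{M}^{\scat{C}}$ does the real work: applying $\dom$ and $\cod$ to an extension of $Z$ and invoking Lemma~\ref{lem:inductive.defn} for $X$ and for $Y$ separately, the extension restricts to $X$ and $Y$ if and only if its domain component is the canonical factorisation of $\tau_{n-1}^{X,c}$ through $X^c$, namely $(\counit^s_{n-1})^{X,c}$ followed by $(\unit^c_{n-1})^{X,c}$, and its codomain component is the analogous factorisation for $Y$. Pinning down these two outer legs, the only free datum left in the $\lcat{M}^{\cattwo}$-factorisation is the middle map $\phi^c\colon X^c\to Y^c$, and the only surviving conditions are that the defining squares of the morphisms $i^c$ and $p^c$ commute --- which are precisely the left-hand and right-hand squares displayed in the statement. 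This establishes the asserted bijection.

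I expect the main obstacle to be bookkeeping rather than anything deep: the point that must be got right is that a bare application of Lemma~\ref{lem:inductive.defn} in the arrow category would also re-extend the diagrams $X$ and $Y$, since the domain and codomain of the middle object are themselves part of the factorisation data. The content of the argument is therefore the reduction above, whereby fixing the domain to be $X$ and the codomain to be $Y$ forces the two outer legs of the factorisation to be the counit and unit legs already determined by $X$ and $Y$, collapsing the data to a single middle map subject to the two naturality squares. This reduction is exactly what the componentwise computation of $\sk_{n-1}$, $\cosk_{n-1}$, and $\tau_{n-1}$ in $\lcat{M}^{\cattwo}$ --- that is, the preservation of the relevant limits and colimits by $\dom$ and $\cod$ --- is there to license.
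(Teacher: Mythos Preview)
Your proposal is correct and follows exactly the approach indicated in the paper, which offers only the one-line hint ``apply lemma~\ref{lem:inductive.defn} to the $\scat{C}$-shaped diagram $c\mapsto\phi^c$ taking values in the arrow category $\lcat{M}^\cattwo$''. You have faithfully unpacked this hint, including the genuinely necessary bookkeeping point that fixing the domain and codomain extensions to be $\res_n(X)$ and $\res_n(Y)$ forces the outer legs of the $\lcat{M}^\cattwo$-factorisation to be the canonical counit and unit maps, leaving only the middle arrow $\phi^c$ and the two displayed squares as data and conditions.
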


This observation lies at the very heart of the application of the theory of Reedy categories, wherein it is used to construct factorisations, lifts, and so forth via a process of iterated extension from one degree to the next.
 
\begin{ex}\label{ex:sequence-induction}
Consider a diagram $X$ indexed over the poset $\omega$. The colimit defining $\sk_{n-1}X^n$ has a terminal object, from which we see that $\sk_{n-1}X^n = X^{n-1}$. By contrast, $\cosk_{n-1}X^n$ is the terminal object because the hom-sets inside the end \eqref{eq:4} are empty. Hence, observation \ref{obs:inductive.defn} specialises to the obvious fact that to extend a natural transformation between diagrams $X$ and $Y$ indexed by $\omega_{\leq n-1}$ to diagrams indexed by $\omega_{\leq n}$ one must choose objects $X^n$ and $Y^n$ as well as the dotted arrow maps 
\[\xymatrix{ X^0 \ar[d]_{\phi^0} \ar[r] & X^1 \ar[r] \ar[d]_{\phi^1} & X^2 \ar[d]_{\phi^2}\ar[r] & \cdots \ar[r]  & X^{n-1} \ar@{-->}[r] \ar[d]_{\phi^{n-1}} & X^n \ar@{-->}[d]^{\phi^n} \\ Y^0 \ar[r] & Y^1 \ar[r] & Y^2 \ar[r] & \cdots \ar[r]  & Y^{n-1} \ar@{-->}[r] & Y^n}\] 
\end{ex}

\begin{defn}[latching and matching objects]\label{defn:latching}
    Given the centrality of the objects $\sk_{n-1}(\res_{n-1}(X))^c$ and $\cosk_{n-1}(\res_{n-1}(X))^c$ with $n=\deg(c)$ in such extension arguments, these have been dubbed the {\em latching\/} and {\em matching objects\/}, respectively, of $X$ at $c\in\obj(\scat{C})$. Furthermore, the associated functors $\sk_{n-1}(\res_{n-1}({-}))^c$ and $\cosk_{n-1}(\res_{n-1}({-}))^c$ often go by the abbreviated names $\latch^c$ and $\match^c$ (respectively). The associated counit $l^{X,c}\colon\latch^c X\to X^c$ and unit $m^{X,c}\colon X^c\to\match^c X$ maps are called the {\em latching\/} and {\em matching maps\/}, respectively, of $X$ at the object $c\in\scat{C}$.
\end{defn} 

\begin{ex}\label{ex:simp-latching-1}
 Let $X$ be a simplicial set. Examining the defining formulae with great persistence, we see that the $n^{\text{th}}$ latching object $L_nX$ is the set of degenerate $n$-simplices and the latching map $L_nX \to X_n$ is the natural inclusion. Similarly, we intuit that the matching object $M_nX$ is somehow the set of boundary data built from $(n-1)$ and $(n-2)$-simplices for a hypothetical $n$-simplex and that the matching map $X_n \to M_nX$ sends an existing $n$-simplex to the collection of lower-dimensional simplices that define its boundary.
\end{ex}

These computations are simplified once we recast latching and matching objects as weighted colimits and weighted limits, respectively. We will revisit the situation of example \ref{ex:simp-latching-1} in \ref{ex:simp-latching-2}, by which point we will be able to reach the same conclusions in a much more satisfactory manner.
  
\begin{obs}[weights for latching and matching objects]\label{obs:weights-latching}

The latching object of the Yoneda embedding $\scat{C}^\bullet\colon\scat{C}\to\Set^{\scat{C}\op}$ (resp.\ $\scat{C}_\bullet\colon\scat{C}\op\to\Set^{\scat{C}}$) at $c\in\scat{C}$ is called the {\em boundary\/} of the representable $\scat{C}^c$ (resp.\ $\scat{C}_c$) and it is  denoted $\boundary\scat{C}^c$ (resp.\ $\boundary\scat{C}_c$). 
   
Explicitly, fixing an object $c \in \scat{C}$ with degree $n$, $\boundary\scat{C}^c = \latch^c\scat{C}^\bullet= \sk_{n-1}(\scat{C}^c) \in \Set^{\scat{C}}$ and $\boundary\scat{C}_c = \latch_c\scat{C}_\bullet = \sk_{n-1}(\scat{C}_c)$. The counit of the adjunction between left Kan extension and restriction gives rise to the latching maps, which take the form of canonical inclusions $\boundary\scat{C}^c \inc \scat{C}^c$ and $\boundary\scat{C}_c \inc \scat{C}_c$ that we will describe shortly.
    
    Using this notation and the isomorphisms of~\eqref{eq:6}, we obtain the following expressions for the latching and matching objects of an object $X$ of $\lcat{M}^{\scat{C}}$:
    \begin{equation}\label{eq:7}
    \latch^c X \cong \boundary\scat{C}^c\wcolim_{\scat{C}} X \mkern40mu
    \match^c X \cong \wlim{\boundary\scat{C}_c}{X}_{\scat{C}}
    \end{equation}
    The latching and matching maps $\latch^cX \to X^c$ and $X^c \to \match^cX$ are the maps between the weighted limits and weighted colimits \eqref{eq:7} induced from the canonical inclusions $\boundary\scat{C}^c \inc \scat{C}^c$ and $\boundary\scat{C}_c \inc \scat{C}_c$.
  \end{obs}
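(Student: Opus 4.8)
The plan is to establish the left-hand isomorphism by a short Fubini-plus-Yoneda manipulation of coends --- precisely the kind of ``the weight does the work'' computation the paper advertises --- and to obtain the right-hand isomorphism by the formally dual computation with ends. Naturality in $X$ and in $c$ will be automatic, since every step is an instance of a natural isomorphism.

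For the skeleton, I would begin from the coend formula \eqref{eq:4}, applied to $\res X$, which presents $\sk(X)^c$ as $\int^{d\in\scat{D}}\scat{C}(d,c)\tns X^d$. The one idea needed is to rewrite the factor $X^d$ using the coend form of the Yoneda lemma (example~\ref{ex:weighted-yoneda}), namely $X^d\cong\int^{\bar c\in\scat{C}}\scat{C}(\bar c,d)\tns X^{\bar c}$, valid for each $d\in\scat{D}$. Substituting this in, using that the tensor $\tns$ is cocontinuous in its set variable --- so that it commutes past the outer coend and $(\scat{C}(d,c)\times\scat{C}(\bar c,d))\tns X^{\bar c}\cong\scat{C}(d,c)\tns(\scat{C}(\bar c,d)\tns X^{\bar c})$ --- and interchanging the two coends (Fubini), one rewrites $\sk(X)^c$ as
\[
\int^{\bar c\in\scat{C}}\Bigl(\int^{d\in\scat{D}}\scat{C}(d,c)\times\scat{C}(\bar c,d)\Bigr)\tns X^{\bar c}.
\]
The inner coend is exactly formula \eqref{eq:4} applied in $\Set$ to the restriction of the contravariant representable $\scat{C}^c$ along $\scat{D}\op\inc\scat{C}\op$; that is, it is the value $\sk(\scat{C}^c)_{\bar c}$ of the left Kan extension defining $\sk(\scat{C}^c)$. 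Hence the displayed expression is the defining coend of the weighted colimit $\sk(\scat{C}^c)\wcolim_{\scat{C}}X$, which gives the first isomorphism.

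For the coskeleton I would run the mirror-image argument: start from $\cosk(X)^c\cong\int_{d\in\scat{D}}\scat{C}(c,d)\pwr X^d$, replace $X^d$ by the end form of the Yoneda lemma $\int_{\bar c\in\scat{C}}\scat{C}(d,\bar c)\pwr X^{\bar c}$, and use that the cotensor $\pwr$ is \emph{contravariantly} cocontinuous in its set variable --- so that a $\Set$-coend in that slot becomes an $\lcat{M}$-end --- together with Fubini for ends, to reach $\int_{\bar c\in\scat{C}}\bigl(\int^{d\in\scat{D}}\scat{C}(d,\bar c)\times\scat{C}(c,d)\bigr)\pwr X^{\bar c}=\wlim{\sk(\scat{C}_c)}{X}_{\scat{C}}$. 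Equivalently, one can read the coskeleton case off the skeleton case applied in $\lcat{M}\op$ along $\scat{D}\op\inc\scat{C}\op$, where $\cosk$ becomes $\sk$, the $\scat{C}$-weighted limit becomes a $\scat{C}\op$-weighted colimit, and $\sk(\scat{C}_c)$ becomes the skeleton of the appropriate representable. Naturality in $X\in\lcat{M}^{\scat{C}}$ and $c\in\scat{C}$ needs no further comment: each isomorphism used --- the presentation \eqref{eq:4} of the Kan extensions, Yoneda, the coherence isomorphisms for $\tns$ and $\pwr$, and the interchange of (co)ends --- is natural in both variables, hence so is their composite. I do not anticipate a genuine obstacle: this lemma is, as the remark after the statement records, just the instance (taken once in $\Set$, once in $\lcat{M}$) of the general principle that the weighted colimit of a restricted diagram equals the weighted colimit by the left Kan extension of the weight, and since \eqref{eq:4} already exhibits $\sk$ as exactly that Kan extension, no separate appeal to the general principle is needed. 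The only matters requiring care are bookkeeping ones: applying Fubini in the correct direction, tracking the variance when the cotensor converts a $\Set$-coend into an $\lcat{M}$-end, and keeping in mind that $d$ plays a double role, both as the representing object for a Yoneda reduction and as an argument of the diagram $X$.
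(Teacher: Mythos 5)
Your proposal is correct and follows essentially the same route as the paper: the paper derives \eqref{eq:7} by specialising lemma~\ref{lem:skeleta-colimit} (equation \eqref{eq:6}) to the subcategory $\reedyfilt{\scat{C}}_{n-1}$, and that lemma is proved by precisely your Fubini-plus-Yoneda coend manipulation, with the coskeleton/matching case handled by the formal dual. The only difference is presentational: you re-derive \eqref{eq:6} inline rather than citing it, and you write out the dual end computation that the paper leaves implicit.
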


  Now observation~\ref{obs:skel-reps} provides us with the following concrete description of the skeleta, and hence the boundary, of the representables $\scat{C}^c$ and $\scat{C}_{\bar{c}}$. 

  \begin{lem}[skeleta of the representables of a Reedy category]\label{lem:sk-rep-reedy}
  For each $n\in\mathbb{N}$, the induced map $\sk_n(\scat{C}^c)\to\scat{C}^c$ (resp.\ $\sk_n(\scat{C}_{\bar{c}})\to\scat{C}_{\bar{c}}$) is a monomorphism under which we may identify $\sk_n(\scat{C}^c)$ (resp.\ $\sk_n(\scat{C}_{\bar{c}})$) with the sub-presheaf of $\scat{C}^c$ (resp.\ $\scat{C}_{\bar{c}}$) consisting of those maps $f\colon \bar{c}\to c$ in $\scat{C}$ that factorise through some object of degree at most $n$. More specifically, this latter condition holds if and only if the canonical Reedy factorisation $(\inverse{f},\direct{f})$ factorises $f$ through an object of degree at most $n$. 
  \end{lem}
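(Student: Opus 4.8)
The plan is to deduce the lemma from observation~\ref{obs:skel-reps}, applied to the full subcategory $\scat{D}=\reedyfilt\scat{C}_n\inc\scat{C}$, together with the connectivity result of lemma~\ref{lem:reedy-fact-connect}. That observation tells us that the counit $\sk_n(\scat{C}_{\bar c})\to\scat{C}_{\bar c}$ (and dually $\sk_n(\scat{C}^c)\to\scat{C}^c$) is a monomorphism precisely when, for every arrow $f\colon\bar c\to c$ of $\scat{C}$, the category $\fact_{\reedyfilt\scat{C}_n}(f)$ of factorisations of $f$ through objects of degree at most $n$ is empty or connected, and that the fibre of this counit over $f$ is inhabited exactly when $\fact_{\reedyfilt\scat{C}_n}(f)$ is inhabited. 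So it suffices to prove: (i) whenever $\fact_{\reedyfilt\scat{C}_n}(f)$ is inhabited it is connected; and (ii) $\fact_{\reedyfilt\scat{C}_n}(f)$ is inhabited if and only if the canonical Reedy factorisation $(\inverse{f},\direct{f})$ has its intermediate object of degree at most $n$, equivalently if and only if $f$ factors through \emph{some} object of degree at most $n$.

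The key point is that $\fact_{\reedyfilt\scat{C}_n}(f)$ is, up to the redundant bookkeeping of the intermediate object, the full subcategory of the category $\fact(f)$ of definition~\ref{defn:factorisations} spanned by the factorisations of degree at most $n$. So let $(g,h)$ be any object of $\fact_{\reedyfilt\scat{C}_n}(f)$, with $\deg(g,h)\le n$. Lemma~\ref{lem:reedy-fact-connect} provides a zig-zag in $\fact(f)$ joining $(g,h)$ to $(\inverse{f},\direct{f})$ and passing only through factorisations of degree between $\deg(\inverse{f},\direct{f})$ and $\deg(g,h)\le n$; every vertex of that zig-zag therefore lies in $\fact_{\reedyfilt\scat{C}_n}(f)$, so $(\inverse{f},\direct{f})$ itself lies in $\fact_{\reedyfilt\scat{C}_n}(f)$ and is connected there to $(g,h)$. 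As $(g,h)$ was arbitrary, the category is connected, giving~(i). The same computation shows $\fact_{\reedyfilt\scat{C}_n}(f)$ is inhabited if and only if it contains $(\inverse{f},\direct{f})$, that is, if and only if $\deg(\inverse{f},\direct{f})\le n$; and since lemma~\ref{lem:reedy-fact-connect} also identifies $(\inverse{f},\direct{f})$ as the unique factorisation of minimal degree, $\deg(\inverse{f},\direct{f})\le n$ holds if and only if $f$ admits any factorisation through an object of degree at most $n$. This proves~(ii).

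Feeding (i) and (ii) back through observation~\ref{obs:skel-reps}: (i) shows $\sk_n(\scat{C}_{\bar c})\to\scat{C}_{\bar c}$ is a monomorphism, and (ii) identifies its image --- the sub-presheaf of $\scat{C}_{\bar c}$ with which we are to identify $\sk_n(\scat{C}_{\bar c})$ --- as exactly the set of arrows $f$ that factor through an object of degree at most $n$, equivalently those whose canonical Reedy factorisation does. The dual clause of observation~\ref{obs:skel-reps} describes the fibres of $\sk_n(\scat{C}^c)\to\scat{C}^c$ by the \emph{same} categories $\fact_{\reedyfilt\scat{C}_n}(f)$, so the identical argument establishes the parallel statement for $\sk_n(\scat{C}^c)$; alternatively one may simply invoke the result just proved for the dual Reedy category $(\scat{C}\op,(\inverse{\scat{C}})\op,(\direct{\scat{C}})\op)$.

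I do not expect a real obstacle: the substantive work has already been carried out in lemma~\ref{lem:reedy-fact-connect} and observation~\ref{obs:skel-reps}. The only points that need a moment's care are verifying that the zig-zag supplied by lemma~\ref{lem:reedy-fact-connect} stays inside the degree-$\le n$ subcategory --- which is exactly the clause of that lemma bounding the degrees of its intermediate factorisations --- and using the minimality of $\deg(\inverse{f},\direct{f})$ to see that the two phrasings in the last sentence of the statement (``factors through some object of degree at most $n$'' versus ``the Reedy factorisation factors through an object of degree at most $n$'') coincide.
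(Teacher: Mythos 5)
Your proposal is correct and takes essentially the same route as the paper: the paper's proof is the single sentence ``This follows immediately from lemma~\ref{lem:reedy-fact-connect} and observation~\ref{obs:skel-reps},'' and your argument is precisely the expansion of that sentence, applying observation~\ref{obs:skel-reps} to the full subcategory $\reedyfilt\scat{C}_n\inc\scat{C}$ and using the degree bounds on the zig-zag from lemma~\ref{lem:reedy-fact-connect} to see that each inhabited $\fact_{\reedyfilt\scat{C}_n}(f)$ contains the Reedy factorisation and is connected.
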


  \begin{proof}
  This follows immediately from lemma \ref{lem:reedy-fact-connect} and observation \ref{obs:skel-reps}.
  \end{proof}
   	
	\begin{obs}[characterising the boundary of a representable]\label{obs:cons-skeleta}
By lemma \ref{lem:sk-rep-reedy}, we know that a map $f\colon\bar{c}\to c$ is in $\sk_n(\scat{C}^c)$ if and only if the degree of its Reedy factorisation is less than or equal to $n$. Consequently, the map $f$ is in $\sk_n(\scat{C}^c)$ and is not in $\sk_{n-1}(\scat{C}^c)$ if and only if the degree of its Reedy factorisation is actually equal to $n$. In particular, applying these observations to the case $n=\deg(c)$, it follows that $f$ is in $\scat{C}^c$ and is not in $\boundary\scat{C}^c$ if and only if its Reedy factorisation is $(f,\id_c)$ which in turn happens if and only if it is a member of $\inverse{\scat{C}}$. 

Dually, $\boundary\scat{C}_c \subset\scat{C}_c$ is the sub-presheaf of arrows with domain $c$ that do not lie in $\direct{\scat{C}}$.
	\end{obs}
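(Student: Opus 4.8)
The plan is to deduce everything from Lemma~\ref{lem:sk-rep-reedy}, which identifies $\sk_n(\scat{C}^c)$ with the sub-presheaf of $\scat{C}^c$ on those arrows $f\colon\bar c\to c$ whose canonical Reedy factorisation $(\inverse f,\direct f)$ passes through an object of degree at most $n$, i.e.\ for which $\deg(\inverse f,\direct f)\le n$. First I would record the ``graded'' consequence: since $\sk_{n-1}(\scat{C}^c)$ and $\sk_n(\scat{C}^c)$ are the sub-presheaves of $\scat{C}^c$ cut out by the conditions $\deg(\inverse f,\direct f)\le n-1$ and $\deg(\inverse f,\direct f)\le n$ respectively, an arrow $f$ lies in $\sk_n(\scat{C}^c)$ but not in $\sk_{n-1}(\scat{C}^c)$ precisely when $\deg(\inverse f,\direct f)=n$.

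Next I would specialise to $n=\deg(c)$. Here $\sk_{\deg(c)}(\scat{C}^c)=\scat{C}^c$, because every $f\colon\bar c\to c$ factors through $c$ itself, an object of degree $\deg(c)$; in particular $\deg(\inverse f,\direct f)\le\deg(c)$ for every such $f$. Moreover $\sk_{\deg(c)-1}(\scat{C}^c)=\boundary\scat{C}^c$ by the definition of the boundary of a representable recalled in Observation~\ref{obs:weights-latching}. Combining these facts with the previous step, $f$ lies in $\scat{C}^c$ but not in $\boundary\scat{C}^c$ precisely when $\deg(\inverse f,\direct f)=\deg(c)$, and it only remains to identify this condition with the requirement that $f\in\inverse{\scat{C}}$. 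For the forward implication, $\deg(\inverse f,\direct f)=\deg(c)$ says that $\direct f\in\direct{\scat{C}}$ has domain of the same degree as its codomain $c$; since every non-identity arrow of $\direct{\scat{C}}$ strictly raises degree, $\direct f$ must be an identity, hence $\direct f=\id_c$ and $f=\inverse f\in\inverse{\scat{C}}$. Conversely, if $f\in\inverse{\scat{C}}$ then $(f,\id_c)$ is a Reedy factorisation of $f$, so by the uniqueness clause of Definition~\ref{defn:reedy} it is \emph{the} canonical factorisation $(\inverse f,\direct f)$, whose intermediate object is $c$, of degree $\deg(c)$.

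Finally, the dual statement for $\boundary\scat{C}_c\subset\scat{C}_c$ follows by running the same argument in the dual Reedy category $(\scat{C}\op,(\inverse{\scat{C}})\op,(\direct{\scat{C}})\op)$: under the identification of the covariant representable $\scat{C}_c$ with the contravariant representable $(\scat{C}\op)^c$, an arrow of $\scat{C}$ with domain $c$ fails to lie in $\boundary\scat{C}_c$ precisely when, viewed as an arrow of $\scat{C}\op$, it lies in the degree-lowering subcategory $(\direct{\scat{C}})\op$ of the dual Reedy structure --- that is, precisely when it lies in $\direct{\scat{C}}$. I do not expect a genuine obstacle here: the only step that goes beyond unwinding definitions is the equivalence $\deg(\inverse f,\direct f)=\deg(c)\iff f\in\inverse{\scat{C}}$, whose two directions rest respectively on the strict degree-raising property of non-identity arrows of $\direct{\scat{C}}$ and on the uniqueness of Reedy factorisations, so the only care required is to keep straight which half of the Reedy structure governs which inequality.
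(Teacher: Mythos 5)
Your argument is correct and follows the same route as the paper: both deduce everything from lemma \ref{lem:sk-rep-reedy}, observe that $f$ lies in $\sk_n(\scat{C}^c)\setminus\sk_{n-1}(\scat{C}^c)$ exactly when its Reedy factorisation has degree $n$, specialise to $n=\deg(c)$, and identify that condition with $f\in\inverse{\scat{C}}$ via the strict degree-raising property of $\direct{\scat{C}}$ and the uniqueness of Reedy factorisations. You merely make explicit the steps the paper leaves implicit, and your dualisation is the standard one the paper also intends.
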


In summary, we have seen that the latching object $\latch^cX$ may be computed as the colimit of $X \in \lcat{M}^{\scat{C}}$ weighted by the sub-presheaf of the representable $\scat{C}^c$ consisting of all arrows with codomain $c$ except for those in $\inverse{\scat{C}}$. Dually, the matching object $\match^cX$ is the limit of $X$ weighted by the sub-presheaf of $\scat{C}_c$ consisting of all arrows with domain $c$ except for those in $\direct{\scat{C}}$. 

\begin{ex}\label{ex:sequence.latching}
Let $X \in \lcat{M}^\omega$, where the poset $\omega$ is given the Reedy category structure described in example \ref{ex:poset.reedy}. We have an isomorphism $\boundary\omega^n\cong\omega^{n-1}$ while $\boundary\omega_n$ is empty. Hence, by the Yoneda lemma \ref{ex:weighted-yoneda}, the $n^{\text{th}}$ latching object is $\latch^nX = X^{n-1}$ and the $n^{\text{th}}$ latching map is the arrow $X^{n-1} \to X^n$ given in the diagram $X$, while the $n^{\text{th}}$ matching object is terminal and the $n^{\text{th}}$ matching map is the unique map $m^n \colon X^n \to *$. Happily, this accords with the observations made in example \ref{ex:sequence-induction}.
\end{ex}

\begin{ex}\label{ex:pushout.latching}
Let $X$ be a diagram of shape $b \leftarrow a \rightarrow c$ in $\lcat{M}$. The latching and matching objects depend on the Reedy structure assigned to the indexing category. Using the Reedy structure of example \ref{ex:poset.reedy}, we compute that $\latch^bX = \latch^c X = X^a$ and that $\latch^aX$ is the initial object $\emptyset$. The maps $l^b$ and $l^c$ are the maps in the diagram; $l^a$ is the unique map from the initial object to $X^a$. The matching objects $\match^aX$, $\match^bX$, and $\match^cX$ are all terminal, again because the hom-sets indexing the products inside the end are empty. 

By contrast, when $b \leftarrow a \rightarrow c$ is given the Reedy structure of example \ref{ex:pushout.reedy.alt}, we have $\latch^aX = \latch^b X = \emptyset$, as the boundaries of the contravariant representables at $a$ and $b$ are empty,  and $\latch^cX=X^a$, as the boundary of the representable at $c$ is isomorphic to the representable at $a$; again the latching maps are the obvious ones. Similarly, $\match^cX=\match^bX = *$ but now $\match^aX=X^b$ and the matching map $m^a \colon X^a \to \match^aX$ is the map in the diagram.
\end{ex}

\begin{ex}\label{ex:parallel.pair.latching}
Let $X$ be a diagram of shape $a \rightrightarrows b$ in $\lcat{M}$. Using the Reedy structure described in example \ref{ex:parallel.pair.reedy}, we see that the boundary of the contravariant representable at $a$ is empty while the boundary of the contravariant representable at $b$ is isomorphic to the coproduct of two copies of the representable at $a$. By concontinuity of the weighted colimit bifunctor, we deduce that $\latch^aX = \emptyset$ and $\latch^b X = X^a \coprod X^a$ with the legs of the latching map $l^b$ defined to be the images of the parallel pair under $X$. 

The matching objects with respect to this Reedy structure are both terminal, as the boundaries of the relevant representables are empty. More interestingly, when we give $a \rightrightarrows b$ the opposite Reedy structure with $\deg(a)=1$ and $\deg(b)=0$, the matching objects are constructed dually to the latching objects above: $\match^b X=*$ and $m^{X,a} \colon X^a \to \match^a X = X^b \times X^b$.
\end{ex}

\begin{ex}\label{ex:simp-latching-2}
Consider the Reedy category $\Del$. By lemma \ref{lem:sk-rep-reedy} and observation \ref{obs:cons-skeleta}, $\boundary\Delta^n$ is the subpresheaf of $\Delta^n$ consisting of those maps which factorise through $[n-1]$, i.e., the subpresheaf generated by the monomorphisms $[k] \inc [n]$ with $k <n$. Happily, this agrees precisely with the simplicial set that is commonly signified by this notation. Dually, write $\boundary\Delta_n$ for the subfunctor of $\Delta_n \colon \Del \to \Set$ whose value at $[k]$ is the set of maps $[n] \to [k]$ in $\Del$ that are not monomorphisms.

Now let $X$ be a simplicial set.  From the weighted limit formula for matching objects \eqref{eq:7} and example \ref{ex:matching.preview}, we see that an element of the $n$-matching object $M_nX$ is a map $\boundary\Delta^n \to X$, i.e., a set of ``boundary data'' in $X$. The matching map $m_n \colon X_n \to M_nX$ sends an $n$-simplex to its boundary. To identify the $n^{\text{th}}$ latching object from the weighted colimit formula \eqref{eq:7}, we make use of the following observation.
\end{ex}

\begin{obs}[latching objects as ordinary colimits]\label{obs:latching.ordinary.colimit}
Observation \ref{obs:weighted.as.ordinary} can be used to express latching and matching objects as ordinary colimits and limits. The category $\el \boundary\scat{C}^c$ is the full subcategory of $\el \scat{C}^c \cong \scat{C}/c$ whose objects are not in $\inverse{\scat{C}}$. As in the proof of lemma \ref{lem:reedy-fact-connect}, the existence and uniqueness of Reedy factorisations implies that this category has a final subcategory which we denote by $\boundary (\direct{\scat{C}}/c)$: it is the full subcategory of the slice category $\direct{\scat{C}}/c$ containing all objects except for the terminal object $\id_c$. 

Given $X \in \lcat{M}^\scat{C}$, $\latch^c X \cong \colim(\boundary (\direct{\scat{C}}/c) \to \scat{C} \xrightarrow{X} \lcat{M})$.  Dually, $\match^c X \cong \lim ( \partial(c/\inverse{\scat{C}}) \to \scat{C} \xrightarrow{X} \lcat{M})$, where $\partial(c/\inverse{\scat{C}})$ is the category whose objects are arrows with domain $c$ that strictly lower degrees and whose morphisms are commuting maps in $\inverse{\scat{C}}$.
\end{obs}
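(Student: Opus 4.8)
The plan is to reduce the statement to the reinterpretation of weighted (co)limits as ordinary (co)limits recorded in observation~\ref{obs:weighted.as.ordinary}, to identify the resulting indexing categories using the explicit description of the boundaries $\boundary\scat{C}^c$ and $\boundary\scat{C}_c$ from observation~\ref{obs:cons-skeleta}, and then to cut those indexing categories down to $\boundary(\direct{\scat{C}}/c)$ and $\partial(c/\inverse{\scat{C}})$ by means of a finality (respectively initiality) argument, powered — exactly as in the proof of lemma~\ref{lem:reedy-fact-connect} — by the existence and uniqueness of Reedy factorisations.

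First I would fix the starting point. By observation~\ref{obs:weights-latching} we have $\latch^cX\cong\boundary\scat{C}^c\wcolim_{\scat{C}}X$, so observation~\ref{obs:weighted.as.ordinary} presents $\latch^cX$ as the ordinary colimit of $\el\boundary\scat{C}^c\to\scat{C}\xrightarrow{X}\lcat{M}$, the first functor being the canonical forgetful functor. Since $\el\scat{C}^c\cong\scat{C}/c$ and $\boundary\scat{C}^c\inc\scat{C}^c$ is a sub-presheaf, $\el\boundary\scat{C}^c$ is the full subcategory of $\scat{C}/c$ on those $f\colon\bar c\to c$ which, by observation~\ref{obs:cons-skeleta}, are not in $\inverse{\scat{C}}$, and its forgetful functor is the restriction of the domain projection $\scat{C}/c\to\scat{C}$. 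Dually, $\match^cX\cong\wlim{\boundary\scat{C}_c}{X}_{\scat{C}}$ is the ordinary limit of $\el\boundary\scat{C}_c\to\scat{C}\xrightarrow{X}\lcat{M}$, where $\el\boundary\scat{C}_c$ is the full subcategory of $c/\scat{C}$ on the arrows out of $c$ that are not in $\direct{\scat{C}}$.

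Next I would observe that $\boundary(\direct{\scat{C}}/c)$ sits inside $\el\boundary\scat{C}^c$: its objects are the non-identity arrows of $\direct{\scat{C}}$ with codomain $c$, which by the degree axiom lie outside $\inverse{\scat{C}}$, and its morphisms are in particular morphisms of $\scat{C}$ (so the inclusion $\iota$ is a subcategory inclusion, not a full one, which is harmless). The heart of the matter is that $\iota$ is a \emph{final} functor. I would prove this by exhibiting a left adjoint $R\colon\el\boundary\scat{C}^c\to\boundary(\direct{\scat{C}}/c)$, $f\mapsto\direct f$ — well-defined on objects because $f\notin\inverse{\scat{C}}$ forces $\direct f\neq\id$, and on a morphism $m\colon f\to f'$ of $\scat{C}/c$ by factorising $\inverse{f'}\circ m$ and taking its $\direct{\scat{C}}$-part, which is the content of the diagram-chase of lemma~\ref{lem:reedy-fact-connect} together with uniqueness of the Reedy factorisation of $f$. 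The unit $\mathrm{id}\To\iota R$ has component at $f$ the arrow $\inverse f\colon f\to\direct f$, which is a legitimate morphism of $\scat{C}/c$ because $\direct f\circ\inverse f=f$; concretely this says $(\direct f,\inverse f)$ is an initial object of the comma category $f\comma\iota$. Since $\iota$ is a right adjoint it is final, so $\latch^cX\cong\colim(\boundary(\direct{\scat{C}}/c)\to\scat{C}\xrightarrow{X}\lcat{M})$. The matching statement is the formal dual: $f\mapsto\inverse f$ furnishes a left adjoint to the inclusion $\partial(c/\inverse{\scat{C}})\inc\el\boundary\scat{C}_c$, which is therefore initial, whence $\match^cX\cong\lim(\partial(c/\inverse{\scat{C}})\to\scat{C}\xrightarrow{X}\lcat{M})$.

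I expect the main obstacle to be the careful verification that $R$ is genuinely functorial — equivalently, that $(\direct f,\inverse f)$ is truly initial, rather than merely weakly initial, in $f\comma\iota$, and that the candidate unit is natural — since this is precisely where the \emph{uniqueness} clause of the Reedy axioms does all the work, and one must be mildly attentive to the fact that $\iota$ is a non-full inclusion. Everything else — the passage from weighted to ordinary colimits, the identification of the categories of elements, and the dualisation — is formal.
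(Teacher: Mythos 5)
Your proposal is correct and fleshes out exactly the argument the paper only sketches: reduce to observation \ref{obs:weighted.as.ordinary}, identify $\el\boundary\scat{C}^c$ via observation \ref{obs:cons-skeleta}, and deduce finality of $\boundary(\direct{\scat{C}}/c)\inc\el\boundary\scat{C}^c$ from the existence and uniqueness of Reedy factorisations. Your route to finality---exhibiting $f\mapsto\direct{f}$ as a left adjoint, so that each comma category $f\comma\iota$ has an initial object---is if anything slightly sharper than the zig-zag connectivity that the paper's appeal to lemma \ref{lem:reedy-fact-connect} suggests, but it rests on the same uniqueness clause and is essentially the intended argument.
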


\begin{ex}
Let $X$ be a simplicial set. We have \[\latch^nX \cong \boundary\Del_n \wcolim_{\Del\op} X \cong \colim(\boundary(\inverse{\Del}/[n])\op \to \Del\op \xrightarrow{X} \Set),\] from which we see that $\latch^nX$ is a quotient of the coproduct of $k < n$ of the set of $k$-simplices of $X$ paired with an epimorphism $[n]\twoheadrightarrow[k]$. The quotienting encoded by the coend formula for the weighted colimit identifies those $k$-simplices in the image of a degeneracy map $[k]\twoheadrightarrow[m]$ with the $m$-simplex preimage paired with the composite epimorphism. By the Eilenberg-Zilber lemma, any degenerate $n$-simplex is uniquely expressible as the image of a non-degenerate simplex acted on by a degeneracy map \cite{GabrielZisman:1967:CFHT}. From this, we deduce that the $n^{\text{th}}$ latching object is the subset of degenerate $n$-simplices and the $n^{\text{th}}$ latching map is the canonical inclusion. 
\end{ex}

  \section{Leibniz constructions and the Reedy model structure}\label{sec:Leibniz-Reedy}
  
  If $\scat{C}$ is a Reedy category and $\lcat{M}$ is any model category, then the functor category $\lcat{M}^{\scat{C}}$ admits a \emph{Reedy model structure} in which the weak equivalences are the pointwise weak equivalences. In sections \ref{sec:hocolim} and \ref{sec:connected-weights}, we will see that Reedy model structures are particularly suitable for defining homotopy limits and colimits: The constant diagram functor $\lcat{M} \to \lcat{M}^\scat{C}$ always preserves weak equivalences. Depending on the diagram shape and the chosen Reedy category structure, it is frequently the case that the constant diagram functor is left or right Quillen with respect to the Reedy model structure for \textbf{any} model category $\lcat{M}$. When this is the case, the limit or colimit functor will preserve pointwise weak equivalences between Reedy fibrant or Reedy cofibrant diagrams, respectively.
  
  The (dual) definitions of the cofibrations and fibrations in the Reedy model structures make use of the notions of \emph{relative latching} and \emph{relative matching} maps, which are in turn defined using the \emph{Leibniz construction}, a subject to which we now turn.

\subsection*{\S\ Leibniz constructions}

	\begin{ntn}[exterior products]
		If $\scat{C}$ and $\scat{D}$ are any two small categories and $\lcat{M}$ is a category which possesses all finite products then it will be of some utility to define an {\em exterior product\/} bi-functor $\etimes\colon\lcat{M}^{\scat{C}}\times\lcat{M}^{\scat{D}} \longrightarrow\lcat{M}^{\scat{C}\times\scat{D}}$ which simply carries a pair of functors $X\in\lcat{M}^{\scat{C}}$ and $Y\in\lcat{M}^{\scat{D}}$ to the functor in $\lcat{M}^{\scat{C}\times\scat{D}}$ given by $(X\etimes Y)^{\bar{c},\bar{d}} \defeq X^{\bar{c}}\times Y^{\bar{d}}$. 
	\end{ntn}
	
	For example,  any representable $(\scat{C}\times\scat{D})_{c,d}$ in $\Set^{\scat{C}\times\scat{D}}$ is equal to the exterior product $\scat{C}_c\etimes\scat{D}_d$ of the corresponding representables in $\Set^{\scat{C}}$ and $\Set^{\scat{D}}$. 	
	
	\begin{obs}[Leibniz's formula]\label{obs:box-product}
	Furthermore if $\scat{C}$ and $\scat{D}$ are Reedy categories then we know from observation~\ref{obs:cons-skeleta} that an element $(f,g)$ of $(\scat{C}\times\scat{D})_{c,d}$ is in $\boundary(\scat{C}\times\scat{D})_{c,d}$ if and only if it is not a member of $\direct{\scat{C}\times\scat{D}} = \direct{\scat{C}}\times\direct{\scat{D}}$. This happens if and only if either $f$ is not in $\direct{\scat{C}}$, and is thus an element of $\boundary{\scat{C}_c}$, or $g$ is not in $\direct{\scat{D}}$, and is thus an element of $\boundary{\scat{D}}$. In other words, we see that $\boundary(\scat{C}\times\scat{D})_{c,d}$ is equal to the union $(\boundary\scat{C}_c\etimes\scat{D}_d)\cup(\scat{C}_c\etimes\boundary\scat{D}_d)$. This is simply {\em Leibniz's formula\/} for the boundary of a product of two polygons, or indeed his formula for the derivative of a product of functions!
	\end{obs}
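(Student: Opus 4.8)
The plan is to reduce the statement to the pointwise characterisation of the boundary of a representable recorded in observation~\ref{obs:cons-skeleta}, together with the elementary fact that subpresheaves in a presheaf category, and unions of such, are computed objectwise.

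First I would fix an object $(\bar{c},\bar{d})$ of $\scat{C}\times\scat{D}$ and spell out the three subsets of the hom-set $(\scat{C}\times\scat{D})((c,d),(\bar{c},\bar{d})) = \scat{C}(c,\bar{c})\times\scat{D}(d,\bar{d})$ that are in play. Since $\scat{C}\times\scat{D}$ is a Reedy category with direct subcategory $\direct{\scat{C}}\times\direct{\scat{D}}$, observation~\ref{obs:cons-skeleta} identifies the fibre of $\boundary(\scat{C}\times\scat{D})_{c,d}$ over $(\bar{c},\bar{d})$ with the set of pairs $(f,g)$ that fail to lie in $\direct{\scat{C}}\times\direct{\scat{D}}$. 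Meanwhile, by the definition of the exterior product, the fibre of $\boundary\scat{C}_c\etimes\scat{D}_d$ over $(\bar{c},\bar{d})$ is $(\boundary\scat{C}_c)^{\bar{c}}\times\scat{D}_d^{\bar{d}}$ — the pairs $(f,g)$ with $f\in\boundary\scat{C}_c$ and $g$ arbitrary — and dually for $\scat{C}_c\etimes\boundary\scat{D}_d$. Because $\boundary\scat{C}_c\inc\scat{C}_c$ and $\boundary\scat{D}_d\inc\scat{D}_d$ are monomorphisms (lemma~\ref{lem:sk-rep-reedy}), both of these are subpresheaves of $\scat{C}_c\etimes\scat{D}_d = (\scat{C}\times\scat{D})_{c,d}$, and their union is formed objectwise as the union of the corresponding subsets.

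The heart of the argument is then a single application of De Morgan's law: a pair $(f,g)$ fails to lie in $\direct{\scat{C}}\times\direct{\scat{D}}$ precisely when $f\notin\direct{\scat{C}}$ or $g\notin\direct{\scat{D}}$. Invoking observation~\ref{obs:cons-skeleta} once more — now for $\scat{C}$ at $c$ and for $\scat{D}$ at $d$ — the condition $f\notin\direct{\scat{C}}$ is equivalent to $f\in(\boundary\scat{C}_c)^{\bar{c}}$, and $g\notin\direct{\scat{D}}$ is equivalent to $g\in(\boundary\scat{D}_d)^{\bar{d}}$. Thus the objectwise description of $\boundary(\scat{C}\times\scat{D})_{c,d}$ agrees with that of $(\boundary\scat{C}_c\etimes\scat{D}_d)\cup(\scat{C}_c\etimes\boundary\scat{D}_d)$, and since both sides are subpresheaves of the same representable this objectwise equality upgrades at once to an equality of subpresheaves; naturality in $(\bar{c},\bar{d})$ is inherited from that of the ambient representable and needs no separate check.

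There is no genuine obstacle here: the only points requiring care are bookkeeping ones — that $\direct{\scat{C}\times\scat{D}}=\direct{\scat{C}}\times\direct{\scat{D}}$ (already noted), that the two halves of the union are honest subobjects of $(\scat{C}\times\scat{D})_{c,d}$ rather than merely presheaves mapping to it, and that unions of subpresheaves are taken pointwise. All the conceptual content was already discharged in observation~\ref{obs:cons-skeleta}, which presents $\boundary\scat{C}_c$ as the complement of $\direct{\scat{C}}$ inside the representable; after that, the present statement is just the set-theoretic identity $\overline{A\times B}=(\overline{A}\times Y)\cup(X\times\overline{B})$ transported into $\Set^{\scat{C}\times\scat{D}}$ — which is, as the observation remarks, exactly Leibniz's rule.
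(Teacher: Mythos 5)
Your argument is correct and is essentially the paper's own: both reduce the claim, fibrewise over an object $(\bar{c},\bar{d})$, to the characterisation in observation~\ref{obs:cons-skeleta} of $\boundary\scat{C}_c$ as the complement of $\direct{\scat{C}}$ inside the representable, combined with the identity $\direct{\scat{C}\times\scat{D}}=\direct{\scat{C}}\times\direct{\scat{D}}$ and De Morgan's law. The additional bookkeeping you make explicit --- that both halves of the union are honest subpresheaves of $(\scat{C}\times\scat{D})_{c,d}$ and that unions of subpresheaves are computed objectwise --- is left implicit in the paper's one-line treatment but is exactly what justifies it.
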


	We may place this observation in the following much more general context:

  \begin{ntn}[arrow categories]
  We use the notation $\cattwo$ to denote the {\em generic arrow}, i.e., the category which has two objects $0$ and $1$ and a single non-identity arrow $0\to 1$. If $\lcat{M}$ is a category, then the functor category $\lcat{M}\mapcat$ is known as its {\em arrow category}. 

  The objects of $\lcat{M}\mapcat$ are in bijective correspondence with the maps of $\lcat{M}$; we shall generally identify these notions. A map of $\lcat{M}\mapcat$ from $f$ to $g$ consists of a pair $(u,v)$ of maps of $\lcat{M}$ which fit into a commutative square:
  \begin{equation*}
    \xymatrix@=2em{
    {A} \ar@{.>}[d]_f \ar[r]^u & {C} \ar@{.>}[d]^g \\
    {B} \ar[r]_v & {D}
    }
  \end{equation*}
  Notice here that we adopt the diagrammatic convention of using dotted arrows to denote those maps that we are regarding as being objects of $\lcat{M}\mapcat$.
  \end{ntn}

  \begin{defn}[the Leibniz construction]\label{defn:leibniz}
  Given a bifunctor
$ \otimes \colon \lcat{K} \times \lcat{L} \to \lcat{M}$ 
  whose codomain  possesses all pushouts, then the {\em Leibniz construction\/} provides us with a bifunctor $ \leib\otimes \colon \lcat{K}\mapcat \times \lcat{L}\mapcat \to \lcat{M}\mapcat$
  between arrow categories which carries a pair of objects $f \in \lcat{K}\mapcat$ and $g \in \lcat{L}\mapcat$ to an object $f\leib\otimes g \in \lcat{M}\mapcat$ which corresponds to the map induced by the universal property of the pushout in the following diagram:
  \begin{equation}\label{eq:13}
  \xymatrix@=2em{
    {K\otimes L} \ar[d]_{K\otimes g} \ar[r]^{f\otimes L} &
    {K'\otimes L} \ar[d] \ar@/^4ex/[ddr]^{K'\otimes g} & \\
    {K\otimes L'} \ar[r] \ar@/_4ex/[rrd]_{f\otimes L'} &
    {(K'\otimes L) \cup_{K\otimes L} (K\otimes L') } \poexcursion
    \ar@{-->}[dr]_{f\leib\otimes g} & \\
    & & {K'\otimes L'}
  }
  \end{equation}
  The action of this functor on the arrows of $\lcat{K}\mapcat$ and $\lcat{L}\mapcat$ is the canonical one induced by the functoriality of $\otimes$ and the universal property of the pushout in the diagram above.
  \end{defn}
	
Particularly in the case where the bifunctor $\otimes$ defines a monoidal product, the Leibniz bifunctor is frequently called the \emph{pushout product}.
	
	\begin{ex}\label{ex:boundary-prod}
		We may apply the Leibniz construction to the exterior product and recast the result of observation~\ref{obs:box-product}  regarding boundaries of representables to say that the boundary inclusion $\boundary(\scat{C}\times\scat{D})_{c,d}\inc(\scat{C}\times\scat{D})_{c,d}$ is canonically isomorphic to the exterior Leibniz product $(\boundary\scat{C}_c\inc\scat{C}_c)\leib\etimes (\boundary\scat{D}_d\inc\scat{D}_d)$.
	\end{ex}

	\begin{obs}[Leibniz and structural isomorphisms]\label{obs:leibniz-isos}
  It is common to work in categories equipped with a range of different bifunctors related by various canonical natural isomorphisms. It is a general fact that when we pass to the corresponding Leibniz bifunctors, we may also construct corresponding natural isomorphisms relating these in a similar fashion, provided that the original bifunctors preserve pushouts in both variables.

  To illustrate this process, suppose we are given a pair of pushout-preserving bifunctors $\tns \colon \lcat{K} \times \lcat{L} \to \lcat{L}$ and $\otimes \colon \lcat{L} \times \lcat{M} \to \lcat{L}$ together with a natural isomorphism \[(K \tns L) \otimes M \cong K \tns (L \otimes M).\] Then it follows from the naturality of these isomorphisms, the definition \ref{defn:leibniz}, and the commutativity of $\tns$ and $\otimes$ with pushouts that these structural isomorphism extend to the Leibniz tensors to give isomorphisms \[(f \leib\tns g) \leib\otimes h \cong f\leib\tns(g\leib\otimes h)\] which are natural in $f \in \lcat{K}\mapcat$, $g \in \lcat{L}\mapcat$, and $h \in \lcat{M}\mapcat$. For example, it follows that if $\otimes$ defines a monoidal structure on a category $\lcat{M}$ with pushouts, then $\leib\otimes$ defines a monoidal structure on $\lcat{M}\mapcat$, with the identity on the unit object serving as the monoidal unit.
	\end{obs}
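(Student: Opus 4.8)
The plan is to exhibit both $(f\leib\tns g)\leib\otimes h$ and $f\leib\tns(g\leib\otimes h)$ as the canonical comparison map out of a single colimit indexed by the ``punctured cube'' $\scat{Q}$, i.e.\ the full subcategory of $\cattwo^{\times 3}$ on all objects except the terminal one $(1,1,1)$, and then to observe that the structural isomorphism makes the two diagrams so indexed isomorphic. (Equivalently, $\scat{Q}$ is the category indexing the top-degree latching object of the product Reedy category $\cattwo^{\times 3}$, in the sense of observation~\ref{obs:latching.ordinary.colimit}.) Write $f\colon K\to K'$, $g\colon L\to L'$, $h\colon M\to M'$, set $K_0=K$, $K_1=K'$ and similarly for $L$ and $M$, and let $D\colon\cattwo^{\times 3}\to\lcat{L}$ be the diagram $D^{i,j,k}\defeq(K_i\tns L_j)\otimes M_k$, with functoriality induced by $f$, $g$, and $h$.

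First I would unfold the left-hand side using definition~\ref{defn:leibniz}: since $f\leib\tns g$ is the comparison out of the pushout $P\defeq(K'\tns L)\cup_{K\tns L}(K\tns L')$ into $K'\tns L'$, a second application of the definition presents $(f\leib\tns g)\leib\otimes h$ as the comparison out of the pushout of the span $(K'\tns L')\otimes M\leftarrow P\otimes M\rightarrow P\otimes M'$ into $(K'\tns L')\otimes M'=D^{1,1,1}$. Now I would use the hypothesis that $({-})\otimes M$ and $({-})\otimes M'$ preserve pushouts to distribute them over the pushout defining $P$; a routine interchange-of-colimits argument then recognises the resulting iterated pushout as $\colim_{\scat{Q}}D$, with $(f\leib\tns g)\leib\otimes h$ identified with the canonical map $\colim_{\scat{Q}}D\to D^{1,1,1}$. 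Symmetrically --- this time using that $K\tns({-})$ and $K'\tns({-})$ preserve pushouts --- the right-hand side $f\leib\tns(g\leib\otimes h)$ is identified with the canonical map $\colim_{\scat{Q}}{D'}\to {D'}^{1,1,1}$ for the diagram ${D'}^{i,j,k}\defeq K_i\tns(L_j\otimes M_k)$. Finally, the structural isomorphism $(K\tns L)\otimes M\cong K\tns(L\otimes M)$, being natural in $K$, $L$, and $M$, is exactly an isomorphism of $\cattwo^{\times 3}$-diagrams $D\cong D'$; restricting to $\scat{Q}$ and passing to colimits, compatibly with the canonical maps to $D^{1,1,1}\cong {D'}^{1,1,1}$, produces the asserted isomorphism $(f\leib\tns g)\leib\otimes h\cong f\leib\tns(g\leib\otimes h)$ in $\lcat{L}\mapcat$. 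Naturality in $f$, $g$, and $h$ is then automatic, since every construction used --- Leibniz product, pushout, colimit over $\scat{Q}$, and the structural isomorphism --- is functorial.

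The one genuinely fiddly point, which I expect to be the main obstacle, is the interchange-of-colimits bookkeeping that identifies each twice-iterated pushout with the single colimit over the punctured cube while tracking the comparison maps correctly; this is where the commutativity of $\tns$ and $\otimes$ with pushouts earns its keep, and although the step is entirely formal it must be set up with some care. A convenient way to package it is the lemma that, for a bifunctor preserving pushouts in its first variable, the Leibniz product of a top-degree latching map of a $\cattwo^{\times n}$-diagram with an arrow is again a top-degree latching map, of an evident $\cattwo^{\times(n+1)}$-diagram; applying this once with $n=1$ and once with $n=2$ yields the two presentations above. Every other structural isomorphism is handled the same way --- for instance a symmetry $K\tns L\cong L\tns K$ gives $f\leib\tns g\cong g\leib\tns f$, here needing only that a pushout is unchanged under reflecting its defining span. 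For the concluding assertion, one takes $\lcat{K}=\lcat{L}=\lcat{M}$ and $\tns=\otimes$ a monoidal product preserving pushouts in each variable, with the associator as the structural isomorphism, to obtain the associativity constraint for $\leib\otimes$; the pentagon identity for $\leib\otimes$ then follows from that for $\otimes$, being the image of the (commuting) pentagon under the same colimit-preserving procedure, and the unit axioms are checked by a direct computation from definition~\ref{defn:leibniz}.
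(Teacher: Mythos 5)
Your argument is correct; note that the paper offers no detailed proof of this observation, only the assertion that the isomorphism follows from naturality, definition~\ref{defn:leibniz}, and the preservation of pushouts by $\tns$ and $\otimes$. Your identification of the domain of each iterated Leibniz tensor with the colimit of the evident diagram over the punctured cube is the standard way of making that sketch precise, and the pushout-preservation hypotheses enter exactly where the authors indicate, so this is in substance the intended argument carried out in full.
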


  \begin{lem}[Leibniz and colimit preservation]\label{lem:leibniz-cocts}
  Suppose that $\lcat{K}$ (resp.\ $\lcat{L}$) and $\lcat{M}$ are cocomplete and that the bifunctor $\otimes\colon\lcat{K}\times\lcat{L}\to\lcat{M}$ is cocontinuous in its first (resp.\ second) variable, i.e., suppose that for each object $L\in\lcat{L}$ the functor ${-}\otimes L\colon\lcat{K}\to\lcat{M}$ preserves all colimits. Then the Leibniz bifunctor $\leib\otimes\colon\lcat{K}\mapcat\times\lcat{L}\mapcat\to\lcat{M}\mapcat$ is also cocontinuous in its first (resp.\ second) variable.
\end{lem}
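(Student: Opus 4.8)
The plan is to realise the functor ${-}\leib\otimes g\colon\lcat{K}\mapcat\to\lcat{M}\mapcat$, for a fixed $g\colon L\to L'$ in $\lcat{L}\mapcat$, as a composite of two colimit-preserving functors, and then invoke the elementary fact that a composite of colimit-preserving functors preserves colimits. The first functor $S\colon\lcat{K}\mapcat\to\lcat{M}^{\cattwo\times\cattwo}$ sends $f\colon K\to K'$ to the commuting square with corners $K\otimes L$, $K'\otimes L$, $K\otimes L'$, $K'\otimes L'$ and edges $f\otimes L$, $f\otimes L'$, $K\otimes g$, $K'\otimes g$, i.e.\ to the outer square of diagram~\eqref{eq:13}. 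The second functor $\Lambda\colon\lcat{M}^{\cattwo\times\cattwo}\to\lcat{M}\mapcat$ performs the Leibniz construction \emph{on a square}: it carries a commuting square with corners $A,B,C,D$ to the induced map $B\cup_A C\to D$, with the evident action on morphisms of squares. Unwinding Definition~\ref{defn:leibniz} shows that ${-}\leib\otimes g$ is naturally isomorphic to $\Lambda\circ S$.

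First I would check that $S$ preserves colimits. Since $\lcat{M}$ is cocomplete, colimits in $\lcat{M}^{\cattwo\times\cattwo}$ are computed pointwise, so it suffices to check that $\ev_v\circ S$ preserves colimits for each of the four objects $v$ of $\cattwo\times\cattwo$. Each such composite equals one of $({-}\otimes L)\circ\dom$, $({-}\otimes L)\circ\cod$, $({-}\otimes L')\circ\dom$, $({-}\otimes L')\circ\cod$; here $\dom$ and $\cod$ preserve colimits because colimits in $\lcat{K}\mapcat$ are pointwise, and ${-}\otimes L$, ${-}\otimes L'$ preserve colimits by the cocontinuity-in-the-first-variable hypothesis. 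Hence $S$ preserves colimits.

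Next I would check that $\Lambda$ preserves colimits. A functor into the arrow category $\lcat{M}\mapcat$ preserves colimits precisely when its composites with $\dom$ and with $\cod$ do, since colimits in $\lcat{M}\mapcat$ are pointwise and so the canonical comparison map out of a colimit is invertible iff it becomes invertible after each of the two evaluations. Now $\cod\circ\Lambda=\ev_{(1,1)}$ preserves colimits; and $\dom\circ\Lambda$, the functor sending a square to the pushout of its upper-left span, factors as the restriction $\lcat{M}^{\cattwo\times\cattwo}\to\lcat{M}^{\scat{P}}$ along the inclusion of the full subcategory $\scat{P}$ of $\cattwo\times\cattwo$ spanned by the three objects of that span (so $\scat{P}$ is a span category $\bullet\leftarrow\bullet\to\bullet$), followed by $\colim\colon\lcat{M}^{\scat{P}}\to\lcat{M}$. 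Restriction preserves colimits (computed pointwise on both sides), and $\colim$ is a left adjoint; so the composite, and hence $\Lambda$, preserves colimits. This settles the first-variable claim, and the second-variable claim follows by the symmetric argument, now holding $f$ fixed and letting $g$ vary.

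The only step calling for any care is the initial identification ${-}\leib\otimes g\cong\Lambda\circ S$: one must confirm that the comparison map produced by the universal property of the pushout in~\eqref{eq:13}, together with its action on morphisms of $\lcat{K}\mapcat$, coincides with what $\Lambda$ assigns to $S(f)$. This is immediate from Definition~\ref{defn:leibniz}, which constructs $f\leib\otimes g$ from the outer square of~\eqref{eq:13} by exactly the recipe defining $\Lambda$, with functoriality dictated by that of $\otimes$ and by the universal property of the pushout; no new content is needed. I therefore anticipate no real obstacle here: the argument reduces entirely to the pointwise computation of colimits in functor categories and to the commutation of colimits with colimits that underlies $\colim$ being a left adjoint.
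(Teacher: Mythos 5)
Your proof is correct and takes essentially the same route as the paper's, which disposes of the lemma in two sentences by noting that colimits in the arrow categories are computed pointwise and that the defining pushout \eqref{eq:13} commutes with the colimit in question. Your factorisation of ${-}\leib\otimes g$ as $\Lambda\circ S$ is simply a careful unpacking of that same observation, with each ingredient (pointwise colimits, cocontinuity of ${-}\otimes L$, cocontinuity of the pushout functor) verified explicitly.
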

\begin{proof}
Colimits in the arrow categories $\lcat{K}\mapcat$ and $\lcat{M}\mapcat$ are computed pointwise in $\lcat{K}$ and $\lcat{M}$, respectively. Hence, the proof is completed by the observation that the defining pushout~\eqref{eq:13} commutes with the colimit whose preservation we wish to establish.
 \end{proof}

Frequently, a bifunctor $\otimes \colon \lcat{K} \times \lcat{L} \to \lcat{M}$ is cocontinuous in its first variable because it is right closed, meaning that for each $L \in \lcat{L}$ the functor $-\otimes L$ admits a right adjoint $\hom_r(L,-) \colon \lcat{M} \to \lcat{K}$. By a well-known result of MacLane \cite[IV.7.3]{Maclane:1971:CWM} these right closures assemble into a unique bifunctor $\hom_r \colon \lcat{L}\op \times\lcat{M} \to\lcat{K}$ so that there exist isomorphisms \begin{equation}\label{eq:2varadj} \lcat{M}(K \otimes L,M) \cong \lcat{K}(K,\hom_r(L,M))\end{equation} natural in $K \in \lcat{K}$, $L \in \lcat{L}$, and $M \in \lcat{M}$. 
  
  Given a bifunctor such as $\hom_r$ that is contravariant in one of its variables, it is conventional to apply the Leibniz construction \ref{defn:leibniz} to the opposite functor $\hom_r \colon \lcat{L}\times\lcat{M}\op \to \lcat{K}\op$. Assuming $\lcat{K}$ has pullbacks, the result of this construction applied to maps $g \colon L \to L'$ and $h \colon M \to M'$ yields a diagram in $\lcat{K}$: \[\xymatrix@=2em{ \hom_r(L',M) \ar@/^4ex/[drr]^{\hom_r(g,M)} \ar@/_4ex/[ddr]_{\hom_r(L',h)} \ar@{-->}[dr]^{\wleib{\hom}_r(g,h)} \\ & \hom_r(L',M') \times_{\hom_r(L,M')} \hom_r(L,M) \pbexcursion \ar[d] \ar[r] & \hom_r(L,M) \ar[d]^{\hom_r(L,h)} \\ & \hom_r(L',M') \ar[r]_{\hom_r(g,M')} & \hom_r(L,M')}\] 
  
\begin{lem}[Leibniz and closures]\label{lem:leibniz-close}
The isomorphisms \eqref{eq:2varadj} induce isomorphisms \[ \lcat{M}\mapcat(f \leib\otimes g, h) \cong \lcat{K}\mapcat(f,\wleib{\hom}_r(g,h))\] natural in $f \in \lcat{K}\mapcat$, $g \in \lcat{L}\mapcat$, and $h \in \lcat{M}\mapcat$. In particular, for each $g \in \lcat{L}\mapcat$ there is an adjunction \[ 
\adjdisplay -\leib\otimes g -| \wleib{\hom}_r(g,-) : \lcat{M}\mapcat -> \lcat{K}\mapcat .\]
   \end{lem}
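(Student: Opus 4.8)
The plan is to prove the isomorphism by directly transposing morphisms across the two-variable adjunction~\eqref{eq:2varadj}, unwinding the left-hand side via the universal property of the pushout defining $f\leib\otimes g$ and the right-hand side via the universal property of the pullback defining $\wleib{\hom}_r(g,h)$. Note that cocontinuity of $-\leib\otimes g$ (Lemma~\ref{lem:leibniz-cocts}) already makes the existence of \emph{some} right adjoint plausible; the real content here is to identify it explicitly as $\wleib{\hom}_r(g,-)$.

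Fix $f\colon K\to K'$ in $\lcat{K}$, $g\colon L\to L'$ in $\lcat{L}$, and $h\colon M\to M'$ in $\lcat{M}$. First I would unpack a morphism $f\leib\otimes g\to h$ of $\lcat{M}\mapcat$: by the universal property of the pushout $(K'\otimes L)\cup_{K\otimes L}(K\otimes L')$ it amounts to a triple $\alpha_1\colon K'\otimes L\to M$, $\alpha_2\colon K\otimes L'\to M$, $\beta\colon K'\otimes L'\to M'$ subject to three equations: (1) $\alpha_1\circ(f\otimes L)=\alpha_2\circ(K\otimes g)$, recording that $\alpha_1$ and $\alpha_2$ glue to a map out of the pushout, together with (2) $h\circ\alpha_1=\beta\circ(K'\otimes g)$ and (3) $h\circ\alpha_2=\beta\circ(f\otimes L')$, obtained by precomposing the defining commutative square of the morphism with the two coprojections into the pushout. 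Dually, by the universal property of the pullback $\hom_r(L',M')\times_{\hom_r(L,M')}\hom_r(L,M)$, a morphism $f\to\wleib{\hom}_r(g,h)$ of $\lcat{K}\mapcat$ amounts to a triple $u\colon K\to\hom_r(L',M)$, $v_1\colon K'\to\hom_r(L',M')$, $v_2\colon K'\to\hom_r(L,M)$ subject to (a) $\hom_r(g,M')\circ v_1=\hom_r(L,h)\circ v_2$, (b) $\hom_r(L',h)\circ u=v_1\circ f$, and (c) $\hom_r(g,M)\circ u=v_2\circ f$.

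The heart of the argument is to transpose $(\alpha_1,\alpha_2,\beta)$ across~\eqref{eq:2varadj}, obtaining $\bar\alpha_1\colon K'\to\hom_r(L,M)$, $\bar\alpha_2\colon K\to\hom_r(L',M)$, $\bar\beta\colon K'\to\hom_r(L',M')$, and then to check, using only the naturality of~\eqref{eq:2varadj} in each of its three variables and the functoriality of $\hom_r$, that under the assignment $u:=\bar\alpha_2$, $v_1:=\bar\beta$, $v_2:=\bar\alpha_1$ equation (1) transposes to (c), equation (2) to (a), and equation (3) to (b). Since transposition is a bijection this yields the isomorphism $\lcat{M}\mapcat(f\leib\otimes g,h)\cong\lcat{K}\mapcat(f,\wleib{\hom}_r(g,h))$; its naturality in $f$, $g$, and $h$ follows from naturality of~\eqref{eq:2varadj} together with the functoriality of the pushout- and pullback-corner constructions in their inputs, and specialising to a fixed $g$ gives the stated adjunction $-\leib\otimes g\dashv\wleib{\hom}_r(g,-)$. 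I expect the only real obstacle to be the bookkeeping: matching each source equation to the correct target equation and verifying that the identifications $u=\bar\alpha_2$, $v_1=\bar\beta$, $v_2=\bar\alpha_1$ are forced rather than merely available. This is routine once organised correctly; conceptually the step merely says that the two-variable adjunction is compatible with forming pushout-corners on one side and pullback-corners on the other, which is the ``calculus of mates'' underlying all Leibniz constructions, so no genuinely deep difficulty arises.
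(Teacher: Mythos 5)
Your proposal is correct: the paper itself dismisses this as ``a straightforward verification left to the reader,'' and what you have written is precisely that verification, carried out carefully and with the equations (1)--(3) and (a)--(c) matched up correctly under transposition. There is nothing to add; your identification $u=\bar\alpha_2$, $v_1=\bar\beta$, $v_2=\bar\alpha_1$ and the pairing $(1)\leftrightarrow(\mathrm{c})$, $(2)\leftrightarrow(\mathrm{a})$, $(3)\leftrightarrow(\mathrm{b})$ is exactly what the naturality of \eqref{eq:2varadj} forces.
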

   \begin{proof} A straightforward verification left to the reader.
  \end{proof}
  
 \begin{obs}[Leibniz and lifting properties]\label{obs:leibniz-lifting-properties} As an immediate corollary of lemma \ref{lem:leibniz-close}, observe that there exists a lift in the left-hand diagram if and only if there is also a lift in the right-hand one.
 \[ \xymatrix{ (K' \otimes L) \cup_{K \otimes L} (K \otimes L') \ar[d]_{f \leib\otimes g} \ar[r]^-{(x, y)} & M \ar[d]^h  & K \ar[d]_f \ar[r]^-{\bar{y}} & \hom_r(L',M) \ar[d]^{\wleib{\hom}_r(g,h)} \\ K' \otimes L' \ar@{-->}[ur] \ar[r]_z & M' & K' \ar@{-->}[ur] \ar[r]_-{(\bar{z},\bar{x})} & \hom_r(L',M') \times_{\hom_r(L,M')} \hom_r(L,M)}\] Here the horizontal maps and the dotted arrow lifts are transposes with respect to the adjunctions $-\otimes L \dashv \hom_r(L,-)$ and $-\otimes L' \dashv \hom_r(L',-)$.
 \end{obs}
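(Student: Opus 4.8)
The plan is to deduce this immediately from lemma~\ref{lem:leibniz-close}, which exhibits $-\leib\otimes g$ as left adjoint to $\wleib{\hom}_r(g,-)\colon\lcat{M}\mapcat\to\lcat{K}\mapcat$. First I would note that, under this adjunction, a morphism $f\leib\otimes g\to h$ in $\lcat{M}\mapcat$ --- that is, a commutative square with left edge $f\leib\otimes g$ and right edge $h$, the domain of $f\leib\otimes g$ being the pushout $(K'\otimes L)\cup_{K\otimes L}(K\otimes L')$ of definition~\ref{defn:leibniz} --- transposes to a morphism $f\to\wleib{\hom}_r(g,h)$ in $\lcat{K}\mapcat$, i.e.\ a commutative square with left edge $f$ and right edge $\wleib{\hom}_r(g,h)$, whose codomain is the pullback $\hom_r(L',M')\times_{\hom_r(L,M')}\hom_r(L,M)$. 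These are precisely the two squares drawn in the statement. Unwinding how the transposition of lemma~\ref{lem:leibniz-close} is assembled from the two-variable adjunction~\eqref{eq:2varadj}, one sees that on edges it acts by transposition along $-\otimes L\dashv\hom_r(L,-)$ and $-\otimes L'\dashv\hom_r(L',-)$: the bottom map $z$ passes to $\bar z$, the two legs $x,y$ of the pushout-indexed top map pass to $\bar x,\bar y$ respectively, and a candidate diagonal $K'\otimes L'\to M$ passes to its transpose $K'\to\hom_r(L',M)=\dom(\wleib{\hom}_r(g,h))$.

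Next I would check that a diagonal solves the left-hand square exactly when its transpose solves the right-hand one. A diagonal $\ell\colon K'\otimes L'\to M$ is a solution precisely when the two triangle identities $\ell\circ(f\leib\otimes g)=(x,y)$ and $h\circ\ell=z$ hold; since the transposition bijections are natural in every variable (and the first identity may be tested separately on the two legs $K'\otimes L$ and $K\otimes L'$ of the pushout), these hold if and only if the transposed identities $\overline{\ell}\circ f=\bar y$ and $\wleib{\hom}_r(g,h)\circ\overline{\ell}=(\bar z,\bar x)$ hold, which say exactly that $\overline{\ell}$ is a diagonal for the right-hand square. As transposition is a bijection, a solution exists on one side if and only if one exists on the other, which is the claim.

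There is no genuine obstacle here: the whole argument is the formal transposition calculus for the adjunctions in play, nothing more. The one point that repays a moment's care --- already flagged parenthetically above --- is verifying that the transposition packaged by lemma~\ref{lem:leibniz-close} restricts, edge by edge, to the transpositions along $-\otimes L\dashv\hom_r(L,-)$ and $-\otimes L'\dashv\hom_r(L',-)$ named in the statement; but even this is routine bookkeeping, inherited from the construction of $\leib\otimes$ from $\otimes$ in definition~\ref{defn:leibniz}, of $\wleib{\hom}_r$ from $\hom_r$, and from the naturality squares comparing those two adjunctions.
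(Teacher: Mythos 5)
Your argument is correct and is exactly the route the paper intends: the observation is stated as an immediate corollary of lemma \ref{lem:leibniz-close}, and you have simply spelled out the transposition of the lifting problem across the adjunction $-\leib\otimes g\dashv\wleib{\hom}_r(g,-)$ between arrow categories, checking edge by edge that it restricts to the adjunctions $-\otimes L\dashv\hom_r(L,-)$ and $-\otimes L'\dashv\hom_r(L',-)$. Nothing is missing.
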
 
 
 \begin{obs}[Leibniz two-variable adjunctions]
 In the case where the bifunctor $\otimes$ is both left and right closed, the bifunctor $\otimes$, the left closure $\hom_l \colon \lcat{K}\op \times \lcat{M} \to \lcat{L}$, and the right closure $\hom_r$ assemble into a \emph{two-variable adjunction}, i.e., there exist isomorphisms \[ \lcat{M}(K \otimes L,M) \cong \lcat{L}(L, \hom_l(K,M)) \cong \lcat{K}(K,\hom_r(L,M))\] natural in all three variables. It follows from lemma \ref{lem:leibniz-close} and the uniqueness statement in \cite[IV.7.3]{Maclane:1971:CWM} that the Leibniz bifunctors $\leib\otimes$, $\wleib{\hom}_l$, and $\wleib{\hom}_r$ also form a two-variable adjunction between the arrow categories.
 \end{obs}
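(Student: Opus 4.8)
The plan is to deduce the two-variable Leibniz adjunction directly from Lemma~\ref{lem:leibniz-close}, applying that lemma twice --- once for the right closure and once for the left --- together with the elementary observation that the Leibniz construction is insensitive to transposing the arguments of the bifunctor from which it is built. Recall first that to exhibit $(\leib\otimes,\wleib{\hom}_l,\wleib{\hom}_r)$ as a two-variable adjunction between $\lcat{K}\mapcat$, $\lcat{L}\mapcat$ and $\lcat{M}\mapcat$ is to produce a pair of natural isomorphisms
\[
\lcat{M}\mapcat(f\leib\otimes g,\,h)\;\cong\;\lcat{L}\mapcat\bigl(g,\,\wleib{\hom}_l(f,h)\bigr)\;\cong\;\lcat{K}\mapcat\bigl(f,\,\wleib{\hom}_r(g,h)\bigr)
\]
natural in $f\in\lcat{K}\mapcat$, $g\in\lcat{L}\mapcat$ and $h\in\lcat{M}\mapcat$, the contravariant bifunctor structures on $\wleib{\hom}_l$ and $\wleib{\hom}_r$ then being pinned down by the uniqueness clause of \cite[IV.7.3]{Maclane:1971:CWM}. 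The right-hand isomorphism is precisely Lemma~\ref{lem:leibniz-close}, which needs only that $\otimes$ be right closed (with right closure $\hom_r$); so half of the work is already done.

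For the left-hand isomorphism I would re-view $\otimes$ as the bifunctor $\otimes'\colon\lcat{L}\times\lcat{K}\to\lcat{M}$ obtained by transposing arguments, $g\otimes'f\defeq f\otimes g$. Since $\otimes$ participates in a two-variable adjunction with left closure $\hom_l$, the functor $K\otimes{-}\colon\lcat{L}\to\lcat{M}$ is right adjoint to $\hom_l(K,{-})$; that is, $\otimes'$ is right closed and its right closure is exactly $\hom_l$, as $\lcat{M}(g\otimes'f,M)=\lcat{M}(f\otimes g,M)\cong\lcat{L}(g,\hom_l(f,M))$. Inspecting the defining pushout square~\eqref{eq:13}, the pushout corner $(K'\otimes L)\cup_{K\otimes L}(K\otimes L')$ is manifestly symmetric under interchanging the two factors, and under that symmetry the induced map $f\leib\otimes g$ is carried to $g\leib{\otimes'}f$; hence $\leib{\otimes'}$ is simply $\leib\otimes$ precomposed with the coordinate swap $\lcat{L}\mapcat\times\lcat{K}\mapcat\to\lcat{K}\mapcat\times\lcat{L}\mapcat$ (a routine diagram chase, and an instance of the principle recorded in Observation~\ref{obs:leibniz-isos}). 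Applying Lemma~\ref{lem:leibniz-close} to $\otimes'$ then gives
\[
\lcat{M}\mapcat(f\leib\otimes g,\,h)\;\cong\;\lcat{M}\mapcat(g\leib{\otimes'}f,\,h)\;\cong\;\lcat{L}\mapcat\bigl(g,\,\wleib{\hom}_l(f,h)\bigr),
\]
natural in $f$, $g$ and $h$.

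Concatenating the two displays produces the required chain of natural isomorphisms, and a final appeal to the uniqueness statement of \cite[IV.7.3]{Maclane:1971:CWM} --- exactly as in the proof of Lemma~\ref{lem:leibniz-close} --- identifies the bifunctor structures on $\wleib{\hom}_l$ and $\wleib{\hom}_r$ with the canonical ones obtained from the pullback constructions above. Throughout one uses the standing hypotheses that $\lcat{M}$ admits pushouts and $\lcat{K}$, $\lcat{L}$ admit pullbacks, so that all the Leibniz bifunctors in play are defined.

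I do not expect any serious obstacle: the one point that demands a moment's care is the transposition identification $\leib{\otimes'}\cong\leib\otimes\circ(\text{swap})$ in the second step, together with the verification that the right closure of $\otimes'$ is indeed $\hom_l$; once those are in hand the statement is pure bookkeeping on top of Lemma~\ref{lem:leibniz-close}.
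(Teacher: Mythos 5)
Your argument is correct and is essentially the paper's own proof, which likewise consists of invoking lemma \ref{lem:leibniz-close} (once directly for $\hom_r$ and once, via the argument-transposition you spell out, for $\hom_l$) together with the uniqueness clause of \cite[IV.7.3]{Maclane:1971:CWM}; the symmetry of the defining pushout under swapping the two variables is exactly the point that makes the second application legitimate. The only blemish is the phrase ``$K\otimes{-}$ is right adjoint to $\hom_l(K,{-})$'', which should read \emph{left} adjoint --- your displayed isomorphism already has the correct direction, so nothing downstream is affected.
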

 
\begin{ex} In the case where the bifunctor is a closed monoidal product or, more generally, the tensor bifunctor for a tensored, cotensored, and enriched category, the corresponding Leibniz two-variable adjunction appears in the definition of a monoidal (resp.~enriched) model category; see \cite[\S 4.2]{Hovey:1999fk}. The most familiar example is the product and internal hom on the category of simplicial sets with respect to which the Quillen model structure is a simplicial model category.
\end{ex}

\subsection*{\S\ The Reedy model structure}

Consider a Reedy category $\scat{C}$ and a model category $\lcat{M}$.

  \begin{defn}[relative latching and matching maps]\label{defn:relative-maps}
  It is of interest to apply the Leibniz construction in a context where the bifunctor in question is the weighted colimit $\wcolim_{\scat{C}}\colon \Set^{\scat{C}\op}\times \lcat{M}^{\scat{C}}\to \lcat{M}$. In this case, if $f\colon X\to Y$ is a map in $\lcat{M}^{\scat{C}}$ then the Leibniz colimit\footnote{The Leibniz weighted colimit, which we denote by $\leib\wcolim_{\scat{C}}$, might be more clearly written as $\widehat{\wcolim_{\scat{C}}}$, but we find this latter notation ugly.} $(\boundary\scat{C}^c\inc\scat{C}^c)\leib\wcolim_{\scat{C}} (f\colon X\to Y)$ is called the {\em relative latching map\/} of $f$ at $c$.
  
  To identify the domain and codomain of this map more explicitly, observe that $\scat{C}^c\wcolim_{\scat{C}} X\cong X^c$ and $\scat{C}^c\wcolim_{\scat{C}} Y\cong Y^c$, by Yoneda's lemma, and that we have $\boundary\scat{C}^c\wcolim_{\scat{C}} X\cong \latch^c X$ and $\boundary\scat{C}^c\wcolim_{\scat{C}} Y\cong \latch^c Y$ by~\eqref{eq:7}, so it follows that:
  \begin{equation*}
    (\scat{C}^c \wcolim_{\scat{c}} X) \cup_{\boundary\scat{C}^c \wcolim_{\scat{c}} X} (\boundary\scat{C}^c \wcolim_{\scat{c}} Y) \cong X^c\cup_{\latch^c X} \latch^c Y
  \end{equation*}
  Consequently we find that the relative latching map $(\boundary\scat{C}^c\inc\scat{C}^c)\leib\wcolim_{\scat{C}} f$ is isomorphic to a map of the form:
  \[  \xymatrix{ X^c\cup_{\latch^c X} \latch^c Y \ar[r]^-{\leib{\latch^c}f} & Y^c.}\] 

  Dually we may define the  relative matching maps using the bifunctor obtained by applying the Leibniz construction to the weighted limit bifunctor. Specifically, the {\em relative matching map\/} of $f$ at $c$ is the Leibniz limit $\leibwlim{\boundary\scat{C}_c\inc\scat{C}_c}{f}_\scat{C}$ which is isomorphic to a map of the form:
  \[\xymatrix{ X^c \ar[r]^-{\leib{\match^c}f} & Y^c \times_{\match^c Y} \match^c X. }\] 
  \end{defn}

\begin{ex}
The latching and matching maps of definition \ref{defn:latching} are special cases of the relative latching and matching maps of definition \ref{defn:relative-maps}. The relative latching map of  $\emptyset\to X$ in $\lcat{M}^{\scat{C}}$ at an object $c\in\scat{C}$ is isomorphic to the latching map $\latch^c X\to X^c$, and the relative matching map of $X \to *$ at $c$ is isomorphic to the matching map $X^c \to \match^c X$.
\end{ex}

\begin{obs}[relative latching and matching maps and lifting problems]\label{obs:relative.lifting}
Consider a lifting problem 
\begin{equation}\label{eq:i-lift-p} \xymatrix{ U \ar[d]_i \ar[r]^u & E \ar[d]^p \\ V \ar[r]_v \ar@{-->}[ur]_t & B}\end{equation} 
between maps $i$ and $p$ in $\lcat{M}^\scat{C}$, and suppose that the relative latching maps of $i$ lift against the relative matching maps of $p$.  We might try to construct the components of the lift $t$ inductively by degree: If $c$ has degree zero, then $\leib{\latch^c} i = i^c$ and $\leib{\match^c}p = p^c$. By hypothesis, the maps $i^c$ and $p^c$ lift against each other.

Suppose now that we have constructed a lift $t^d$ for all objects with $\deg(d) < \deg(c)=n$. By observation \ref{obs:inductive.defn}, to define $t^c \colon V^c \to E^c$ it suffices to define any map so that the diagrams
\begin{equation*} \xymatrix{ \latch^c V \ar[d]_{\latch^c(t)} \ar[r]^{l^{V,c}} & V^c \ar@{-->}[d]^{t^c} \ar[r]^{m^{V,c}} & \match^cV \ar[d]^{\match^c(t)} & &U^c \ar[d]_{i^c} \ar[r]^{u^c} & E^c \ar[d]^{p^c} \\ \latch^c E \ar[r]_{l^{E,c}} & E^c \ar[r]_{m^{E,c}} & \match^cE && V^c \ar[r]_{v^c} \ar@{-->}[ur]_{t^c} & B^c}\end{equation*}
commute. We may satisfy both conditions simultaneously by choosing $t^c$ to be a lift in the diagram \[ \xymatrix@C=40pt{ U^c \cup_{\latch^cU} \latch^cV \ar[d]_{\leib{\latch^c}i} \ar[r]^-{(u^c,l^{E,c}\latch^c(t))}& E^c \ar[d]^{\leib{\match^c}p} \\ V^c \ar@{-->}[ur]_{t^c} \ar[r]_-{(v^c,\match^c(t)m^{V,c})} & B^c \times_{\match^c B} \match^cE}\]  By the hypothesis on the relative latching maps of $i$ and the relative matching maps of $p$, such a lift exists.
\end{obs}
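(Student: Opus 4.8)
The plan is to build the lift $t\colon V\to E$ of the outer square~\eqref{eq:i-lift-p} one degree at a time, treating $t$ as a natural transformation and constructing its components $t^c$ by induction on $\deg(c)$. The organising principle is observation~\ref{obs:inductive.defn}: extending a natural transformation already defined on all objects of degree $<n$ to the objects of degree $n$ is the same as choosing, for each $c$ with $\deg(c)=n$, a single component $t^c\colon V^c\to E^c$ making two squares commute---one expressing compatibility with the latching maps and one expressing compatibility with the matching maps. Granting this, no naturality has to be verified by hand at the end of the induction.

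For the base case $\deg(c)=0$, the latching object $\latch^c$ is the initial object and the matching object $\match^c$ is terminal, so the relative latching map $\leib{\latch^c}i$ is just $i^c$ and the relative matching map $\leib{\match^c}p$ is just $p^c$; the hypothesis then supplies a lift $t^c$ of $i^c$ against $p^c$ directly from the $c$-component of~\eqref{eq:i-lift-p}. For the inductive step, assume $t^d$ has been defined for every $d$ with $\deg(d)<n=\deg(c)$ in a way that is natural on degrees $<n$. Since the functors $\latch^c$ and $\match^c$ see only the restriction of a diagram to degrees $<n$, the maps $\latch^c(t)\colon\latch^c V\to\latch^c E$ and $\match^c(t)\colon\match^c V\to\match^c E$ are already determined, and we may form the square displayed in the observation, with left edge $\leib{\latch^c}i$ and right edge $\leib{\match^c}p$.

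First I would check that this combined square commutes. Its commutativity reduces to the $c$-component identity $p^c u^c=v^c i^c$ coming from~\eqref{eq:i-lift-p} together with the naturality of $t$ already secured on degrees $<n$, which is exactly what forces the two legs into the pullback $B^c\times_{\match^c B}\match^c E$ to agree. By the standing hypothesis that $\leib{\latch^c}i$ lifts against $\leib{\match^c}p$, a diagonal filler $t^c\colon V^c\to E^c$ then exists. Finally, unwinding this filler against the pushout description of the domain of $\leib{\latch^c}i$ and the pullback description of the codomain of $\leib{\match^c}p$ from definition~\ref{defn:relative-maps}, one reads off the four equations $t^c i^c=u^c$, $p^c t^c=v^c$, $t^c l^{V,c}=l^{E,c}\latch^c(t)$, and $m^{E,c}t^c=\match^c(t)m^{V,c}$. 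The last two are precisely the commutativity conditions of observation~\ref{obs:inductive.defn}, so $t^c$ validly extends $t$ to degree $n$, while the first two record that $t^c$ solves the outer lifting problem at $c$; the induction therefore closes.

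The hard part will be the bookkeeping that identifies a single solution of the relative lifting problem with a choice of component meeting both conditions of observation~\ref{obs:inductive.defn} at once. Concretely, one must match the pair $(u^c,l^{E,c}\latch^c(t))$ defining the map out of the pushout $U^c\cup_{\latch^c U}\latch^c V$ and the pair $(v^c,\match^c(t)m^{V,c})$ defining the map into the pullback $B^c\times_{\match^c B}\match^c E$ against the universal properties recorded in definition~\ref{defn:relative-maps}, and confirm that the induced restrictions and projections of a filler $t^c$ reproduce exactly the latching- and matching-compatibility squares. Once this translation is in hand, the existence of $t^c$ is immediate from the hypothesis, so the whole argument is a careful reorganisation of the data rather than any genuinely new construction.
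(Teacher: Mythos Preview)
Your proposal is correct and follows essentially the same approach as the paper: the observation in the paper is itself the argument, proceeding by induction on degree, invoking observation~\ref{obs:inductive.defn} to reduce the extension of $t$ at each $c$ to the choice of a single component compatible with the latching and matching data, and then obtaining that component as a diagonal filler of the relative latching map against the relative matching map. You have simply made explicit the commutativity check for the combined square and the unpacking of the filler into the four required equations, which the paper leaves to the reader.
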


Observation \ref{obs:relative.lifting} motivates the appearance of the relative latching and matching maps in the definition of the Reedy model structure, which we now introduce. This model structure was first described in the special case of the dual Reedy categories $\Del$ and $\Del\op$ in \cite{reedy1973htm}.

  \begin{thm}[the Reedy model structure]\label{thm:model-structure}
  Suppose that $\lcat{M}$ is a model category. Then there exists a model structure on $\lcat{M}^{\scat{C}}$ which has:
  \begin{itemize}
  \item Weak equivalences which are those maps $w\colon X\to Y$ in $\lcat{M}^{\scat{C}}$ which are {\em pointwise weak equivalences\/} in the sense that each of its components $w^c\colon X^c\to Y^c$ is a weak equivalence in $\lcat{M}$,
  \item Fibrations, called {\em Reedy fibrations}, which are those maps $p\colon E\to B$ in $\lcat{M}^{\scat{C}}$ for which the relative matching map $\leib{\match^c}(p)$ is a fibration in $\lcat{M}$ for all objects $c\in\scat{C}$, and
  \item Cofibrations, called {\em Reedy cofibrations}, which are those maps $i\colon U\to V$ in $\lcat{M}^{\scat{C}}$ for which the relative latching map $\leib{\latch^c}(i)$ is a cofibration in $\lcat{M}$ for all objects $c\in\scat{C}$.
  \end{itemize} 
  \end{thm}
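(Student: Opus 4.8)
The plan is to verify the model-category axioms for the three indicated classes, arranging matters so that every substantive step is reduced to the model structure on $\lcat{M}$, applied degreewise to relative latching and matching maps. The easy axioms come first: the weak equivalences satisfy the two-out-of-three property and are closed under retracts because they are detected pointwise and the weak equivalences of $\lcat{M}$ have these properties; the Reedy fibrations and cofibrations are closed under retracts because the constructions $\leib{\latch^c}({-})$ and $\leib{\match^c}({-})$ are functorial and the (co)fibrations of $\lcat{M}$ are retract-closed. All the real content then lies in the two lifting axioms and the two factorisation axioms.

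The key preliminary I would establish is a \emph{building-up lemma}: if $i\colon U\to V$ in $\lcat{M}^{\scat{C}}$ has the property that every relative latching map $\leib{\latch^c}i$ is a cofibration (respectively a trivial cofibration) of $\lcat{M}$, then for every $c$ the component $i^c\colon U^c\to V^c$ and the comparison $\latch^c(i)\colon\latch^c U\to\latch^c V$ are cofibrations (respectively trivial cofibrations), and dually for relative matching maps. I would prove this by induction on $\deg(c)=n$. Granting the statement below degree $n$, the point is that $\latch^c(i)$ is itself built as a relative cell complex whose cells are cobase changes of the relative latching maps $\leib{\latch^d}i$ at objects $d$ with $\deg(d)<n$; this follows from the description of latching objects as ordinary colimits (observation~\ref{obs:latching.ordinary.colimit}) together with the cocontinuity of the Leibniz weighted colimit in its diagram variable (lemma~\ref{lem:leibniz-cocts}), once one has the cellular presentation of $\scat{C}^c$ afforded by the skeleton filtration $\sk_{m-1}(\scat{C}^c)\inc\sk_m(\scat{C}^c)$ of lemma~\ref{lem:sk-rep-reedy} --- the general analogue of the pushout~\eqref{eq:reedy's.delta}, which I would set up first. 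Then $\latch^c(i)$ is a (trivial) cofibration, and since $i^c$ factors as the cobase change $U^c\to U^c\cup_{\latch^c U}\latch^c V$ of $\latch^c(i)$ followed by $\leib{\latch^c}i$, so is $i^c$. The same induction, combined with the two-out-of-three property, then yields the two characterisations I actually need: a map is a trivial Reedy cofibration if and only if it is a Reedy cofibration all of whose relative latching maps are trivial cofibrations --- here two-out-of-three is exactly what lets us avoid any left-properness hypothesis on $\lcat{M}$ --- and dually a map is a trivial Reedy fibration if and only if it is a Reedy fibration all of whose relative matching maps are trivial fibrations. I expect the combinatorial heart of this building-up lemma, namely the cellular presentation of $\scat{C}^c$ by its skeleta, to be the main obstacle; this is where the Reedy axioms and the connectedness of $\fact(f)$ from lemma~\ref{lem:reedy-fact-connect} genuinely get used, everything else being formal manipulation of weighted colimits and their Leibniz versions.

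With these characterisations in place the lifting axioms are immediate from observation~\ref{obs:relative.lifting}: a trivial Reedy cofibration $i$ has relative latching maps that are trivial cofibrations of $\lcat{M}$ and a Reedy fibration $p$ has relative matching maps that are fibrations of $\lcat{M}$, so these lift against one another in $\lcat{M}$, whence observation~\ref{obs:relative.lifting} produces a lift in $\lcat{M}^{\scat{C}}$; the case of a Reedy cofibration against a trivial Reedy fibration is verbatim the same.

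For the factorisation axioms, given $f\colon X\to Y$ in $\lcat{M}^{\scat{C}}$ I would build a factorisation $X\xrightarrow{\,j\,}Z\xrightarrow{\,q\,}Y$ by induction on degree, extending from degree $n-1$ to degree $n$ by means of lemma~\ref{lem:inductive.defn}, applied as in observation~\ref{obs:inductive.defn} to a diagram valued in a suitable functor category of $\lcat{M}$. Since $\latch^c Z$ and $\match^c Z$ depend only on the already-constructed restriction of $Z$ to degrees below $n$, for each $c$ of degree $n$ there is a canonical comparison map
\[
  X^c\cup_{\latch^c X}\latch^c Z\longrightarrow Y^c\times_{\match^c Y}\match^c Z,
\]
and factoring it in $\lcat{M}$ as a trivial cofibration followed by a fibration (respectively a cofibration followed by a trivial fibration) determines $Z^c$ together with the components $j^c$ and $q^c$. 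One then checks that the two chosen factors are precisely the relative latching map $\leib{\latch^c}j$ and the relative matching map $\leib{\match^c}q$, so that $j$ is a Reedy cofibration and $q$ a Reedy fibration (respectively), while the appropriate one of the two is a pointwise weak equivalence by the building-up lemma and its dual. This completes the verification of all the axioms.
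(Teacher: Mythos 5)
Your proposal is correct and follows essentially the same route as the paper: the characterisation of Reedy trivial cofibrations via relative latching maps proved by induction on degree using the skeletal filtration of representables (the paper's lemma~\ref{lem:triv-cof-char}), the degreewise inductive factorisation (lemma~\ref{lem:factorisation}), and retract-closure from functoriality. The only divergence is that you prove lifting via the inductive construction of observation~\ref{obs:relative.lifting}, whereas the paper transposes against the cellular presentation of proposition~\ref{prop:building-up} --- a variation the paper itself notes is equally valid.
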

  
 The key component in the proof of theorem \ref{thm:model-structure} is the fact that any natural transformation $f \colon X \to Y$ in $\lcat{M}^{\scat{C}}$ can be expressed as a countable composite of pushouts of cells built from its relative latching maps (see proposition \ref{prop:building-up}). The proof of this result is a formal calculation with weights: we show in observation \ref{obs:building-skeleta-rep} that the inclusion $\emptyset \inc \scat{C}$ is a countable composite of pushouts of coproducts of exterior Leibniz products $(\boundary\scat{C}_c\inc\scat{C}_c)\leib\etimes(\boundary\scat{C}^c\inc\scat{C}^c)$. 
  
 These technical results are established in the following two sections, which also provides the foundation for the proofs of several key applications of the Reedy model structure appearing below in sections \ref{sec:hocolim}--\ref{sec:simplicial}.
    
\section{Leibniz constructions and cell complexes}\label{sec:Leibniz-rel-cell-cx}

In this section, we investigate the behavior of the Leibniz construction with respect to composites, transfinite composites, and cell complexes in its domain categories. So that we need not continually restate our hypotheses, let us suppose for the duration of this section that we are given a bifunctor $\otimes \colon \lcat{K} \times \lcat{L} \to \lcat{M}$ between cocomplete categories that is cocontinuous in each variable.  It follows from lemma \ref{lem:leibniz-cocts} that the Leibniz tensor $\leib\otimes$ preserves colimits in both variables. Our goal is to describe formulae for several colimits of interest.

Several of the preliminary results appearing below are true under weaker hypotheses. When this is the case, it is fairly obvious, so we are content to leave these formulations to the reader.

  \begin{obs}[Leibniz and composites]\label{obs:leibniz-comp}
Suppose that $f\colon K\to K'$ and $f'\colon K'\to K''$ are maps of $\lcat{K}$ and that $g\colon L\to L'$ is a map in $\lcat{L}$. Let us investigate the relationship between the Leibniz tensor $(f'\circ f)\leib\otimes g$ and the individual Leibniz tensors $f\leib\otimes g$ and $f'\leib\otimes g$.

  To uncover this relationship, we build the following commutative diagram
  \begin{equation*}
    \xymatrix@R=1.5em@C=2.5em{
    {K\otimes L} \ar[r]^{f\otimes L}\ar[d]_{K\otimes g}\ar@{}[dr]|{\textstyle (A)} &
    {K'\otimes L} \ar[r]^{f'\otimes L}\ar[d]\ar@{}[dr]|{\textstyle (B)} &
    {K''\otimes L}\ar[d] \\
    {K\otimes L'} \ar[r]\ar[dr]_{f\otimes L'} &
    {(K'\otimes L)\cup_{K\otimes L} (K\otimes L')}\poexcursion
    \ar[r]\ar[d]^{f\leib\otimes g}\ar@{}[dr]|{\textstyle (C)} &
    {(K''\otimes L)\cup_{K\otimes L} (K\otimes L')}\poexcursion\ar[d]^k
    \ar@/^7em/[dd]^{(f'\circ f)\leib\otimes g} \\
    & {K'\otimes L'}\ar[r]\ar[dr]_{f'\otimes L'} &
    {(K''\otimes L)\cup_{K'\otimes L} (K'\otimes L')}\poexcursion\ar[d]^{f'\leib\otimes g} \\
    && {K''\otimes L'}
    }
  \end{equation*}
  in which the pushout labelled (A) is that used to define $f\leib\otimes g$, the squares labelled (A) and (B) collectively form the pushout used to define $(f'\circ f)\leib\otimes g$ and the squares labelled (B) and (C) collectively form the pushout used to define $f'\leib\otimes g$. Reading from left to right, the vertical composites from top to bottom in this diagram are simply $K\otimes g$, $K'\otimes g$, and $K''\otimes g$ respectively. This diagram then demonstrates that the Leibniz tensor $(f'\circ f)\leib\otimes g$ can be expressed as a composite of the Leibniz tensor $f'\leib\otimes g$ with the map labelled $k$, which is itself a pushout of the Leibniz tensor $f\leib\otimes g$ along the induced map:
  \begin{equation*}
    (f'\otimes L)\cup_{K\otimes L} (K\otimes L')\colon
    \xy <0pt,0pt>*+{(K'\otimes L)\cup_{K\otimes L} (K\otimes L')}\ar 
      <15em,0pt>*+{(K''\otimes L)\cup_{K\otimes L} (K\otimes L')}
    \endxy
  \end{equation*}
 \end{obs}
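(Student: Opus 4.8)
The plan is to derive the asserted factorisation from the pasting lemma for pushouts, applied to the single commutative diagram drawn above. All of that diagram's rows commute by functoriality of $\otimes$ in its first variable, so that in particular $(f'\circ f)\otimes L = (f'\otimes L)\circ(f\otimes L)$, and likewise with $L'$ in place of $L$.

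First I would pin down the three vertical composites. The left-hand column is literally $K\otimes g$. For the other two, the defining property of the induced map in~\eqref{eq:13} --- applied to $f$, respectively to $f'\circ f$ --- says precisely that the left-hand coprojection into $(K'\otimes L)\cup_{K\otimes L}(K\otimes L')$ postcomposed with $f\leib\otimes g$ equals $K'\otimes g$, and that the right-hand coprojection into $(K''\otimes L)\cup_{K\otimes L}(K\otimes L')$ postcomposed with $k$ and then with $f'\leib\otimes g$ equals $K''\otimes g$.

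Next comes the pasting step, which is the heart of the matter. Square (A) is the defining pushout of $f\leib\otimes g$, and the composite rectangle (A)+(B) is, by construction, the defining pushout of $(f'\circ f)\leib\otimes g$, its top edge being $(f'\otimes L)\circ(f\otimes L)=(f'\circ f)\otimes L$ and its left edge $K\otimes g$; since (A) is a pushout, the pasting lemma forces (B) to be one too. Symmetrically, the rectangle (B)+(C) has left edge $K'\otimes g$ (again by~\eqref{eq:13}) and top edge $f'\otimes L$, so it is the defining pushout of $f'\leib\otimes g$, with comparison map $f'\leib\otimes g$ down to $K''\otimes L'$; since (B) is now known to be a pushout, a second application of the pasting lemma shows that (C) is a pushout. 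Consequently $k$ is a pushout of $f\leib\otimes g$ along the top edge of (C), and that edge is, by construction, the map between pushouts induced by $f'\otimes L$ on $K'\otimes L$ together with the identities on $K\otimes L$ and $K\otimes L'$, i.e.\ the map $(f'\otimes L)\cup_{K\otimes L}(K\otimes L')$ named in the statement.

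Finally I would identify $(f'\circ f)\leib\otimes g$ itself. Its domain is the pushout (A)+(B), so the map is determined by its two restrictions along the coprojections from $K''\otimes L$ and from $K\otimes L'$; by~\eqref{eq:13} these restrictions are $K''\otimes g$ and $(f'\circ f)\otimes L' = (f'\otimes L')\circ(f\otimes L')$. A short diagram chase, using commutativity of (B) and (C) together with the defining properties of $f\leib\otimes g$ and $f'\leib\otimes g$, shows that $(f'\leib\otimes g)\circ k$ has exactly the same two restrictions, whence the two maps coincide. This exhibits $(f'\circ f)\leib\otimes g$ as $f'\leib\otimes g$ precomposed with $k$, which is in turn a pushout of $f\leib\otimes g$, as claimed. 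The only real obstacle is bookkeeping: one must keep straight which squares are pushouts by hypothesis and which are deduced, so that the two invocations of the pasting lemma are oriented correctly, and one must track the comparison maps of~\eqref{eq:13} faithfully through the diagram. There is no conceptual difficulty, and no hypothesis beyond cocontinuity of $\otimes$ in its first variable is used.
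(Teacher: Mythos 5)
Your proposal is correct and follows essentially the same route as the paper: both build the same three-by-three diagram, identify (A), (A)+(B), and (B)+(C) as the defining pushouts of the three Leibniz tensors, and read off the factorisation of $(f'\circ f)\leib\otimes g$ through $k$; your explicit invocations of the pasting lemma to deduce that (B) and then (C) are pushouts merely make precise what the paper leaves implicit. The only nitpick is your closing remark: cocontinuity of $\otimes$ is not needed here at all, only functoriality and the existence of the relevant pushouts in $\lcat{M}$.
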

 
 \begin{ex}[relative latching maps of composites]
 In particular, suppose $f \colon X \to X'$ and  $g\colon X'\to X''$ are natural transformations in $\lcat{M}^{\scat{C}}$, with $\scat{C}$ a Reedy category and $\lcat{M}$ cocomplete. The relative latching map $\leib{\latch^c}(g\circ f)$ may be expressed as a composite of the relative latching map $\leib{\latch^c}(g)$ with a pushout of the relative latching map $\leib{\latch^c}(f)$.
  \end{ex}

  Under appropriate conditions, we can generalise observation \ref{obs:leibniz-comp} to certain important composites of transfinite sequences of maps:

  \begin{defn}[transfinite composites]\label{defn:transfinite-composites}
  Suppose that $\lcat{K}$ is cocomplete category. If $\alpha$ is a ordinal then an {\em $\alpha$-sequence\/} in $\lcat{K}$ is simply a functor $X\colon\alpha\to\lcat{K}$. This is a {\em $\alpha$-composite\/} if and only if for all limit ordinals $\beta<\alpha$ the induced map $\colim_{i<\beta} X^i \to X^\beta$ is an isomorphism. In such a transfinite composite, we will  use the notation $f^{i,j}\colon X^i\to X^j$ to denote the connecting map obtained by applying the functor $X$ to the map corresponding to a pair $i<j \leq\alpha$.

  The term {\em transfinite sequence\/} (resp. {\em transfinite composite}) in $\lcat{K}$ is simply used to denote an object which is an $\alpha$-sequence (resp.\ $\alpha$-composite) in $\lcat{K}$ for some ordinal $\alpha$. A class $\mclass{I}$ of maps of $\lcat{K}$ is {\em closed under transfinite composites\/} if and only if whenever a transfinite composite $X$ has all of its {\em one-step connecting maps\/} $f^{i,i+1}\colon X^i\to X^{i+1}$ (for $i$ with $i+1 < \alpha$) in $\mclass{I}$ then {\em every\/} one of its connecting maps $f^{i,j}\colon X^i\to X^j$ (for $i$ and $j$ with $i < j < \alpha$) is also in $\mclass{I}$.
  \end{defn}

  \begin{defn}[cell complexes]\label{def:rel-cell}
  Now suppose that $\mclass{I}$ is a set (or class) of maps in the cocomplete category $\lcat{K}$. Let $\cell(\mclass{I})$ denote the smallest class of maps of $\lcat{K}$ which contains $\mclass{I}$ and is closed under transfinite composites and pushouts of $\mclass{I}$ along arbitrary maps. It follows that $\cell(\mclass{I})$ is closed under transfinite composites and pushouts and also under coproducts. Furthermore, this construction is order-preserving and idempotent, in the sense that if $\mclass{I}\subseteq\mclass{J}$ then $\cell(\mclass{I})\subseteq\cell(\mclass{J})$ and that $\cell(\cell(\mclass{I})) = \cell(\mclass{I})$ respectively.

  The maps in $\cell(\mclass{I})$ are called {\em relative $\mclass{I}$-cell complexes} or simply {\em $\mclass{I}$-cell complexes}. An object $K$ of $\lcat{K}$ is an {\em $\mclass{I}$-cell complex\/} if the unique map $\emptyset\to K$ is an $\mclass{I}$-cell complex.

  It is a routine matter to show that a map $f\colon X\to X'$ of $\lcat{K}$ is an $\mclass{I}$-cell complex if and only if there exists some transfinite composite $X\colon (\alpha+1)\to\lcat{K}$ with $f = f^{0,\alpha}$ and in which each one-step connecting map $f^{i,i+1}\colon X^i\to X^{i+1}$ is obtained as a pushout
  \begin{equation}\label{eq:10}
    \xymatrix@=2.5em{
    {U_i}\ar[r]^{f'_i}\ar[d]_{u_i} & {V_i} \ar[d]^{v_i} \\
    {X^i}\ar[r]_-{f^{i,i+1}} & {X^{i+1}}\poexcursion
    }
  \end{equation}
  of some map $f'_i\colon U_i\to V_i$ which can be expressed as a coproduct $f'_i\cong\coprod_k f'_{i,k}$ of maps $f'_{i,k}\colon U_{i,k}\to V_{i,k}$ in $\mclass{I}$. In this situation we say that this information {\em presents\/} $f$ as an $\mclass{I}$-cell complex and we refer to the maps $f'_{i,k}$ as the {\em cells\/} of that presentation.
  \end{defn}

  In the proof of the next lemma we will have use for:

  \begin{ntn}\label{ntn:phi-map}
    Now if $f\colon U\to V$ is a map in $\lcat{K}$ then let $\phi(f)$ denote the arrow $(f,\id_V)$ of $\lcat{K}\mapcat$ with domain $f$ and codomain $\id_V$ given by the trivially commutative square:
  \begin{equation*}
    \xymatrix@=2em{
    {U} \ar@{.>}[d]_f \ar[r]^f & {V} \ar@{.>}[d]^{\id_V} \\
    {V} \ar[r]_{\id_V} & {V}
    }
  \end{equation*}
    Suppose also that $g\colon X\to Y$ is another map, and observe that the Leibniz tensor $\id_V\leib\otimes g$ is isomorphic to $\id_{V\otimes Y}$ and that the map $\phi(f)\leib\otimes g\colon f\leib\otimes g\to \id_V\leib\otimes g$ is isomorphic to the map $\phi(f\leib\otimes g)\colon f\leib\otimes g\to \id_{V\otimes Y}$.
  \end{ntn}

  \begin{lem}[Leibniz bifunctors and cell complexes]\label{lem:leibniz-tcofp}
  Fix two maps $f\colon X\to X'$ in $\lcat{K}$ and $g\colon Y\to Y'$ in $\lcat{L}$ and suppose that we are given a presentation of $f$ as a cell complex with cells $f'_{i,k}$. Then we may present the Leibniz tensor $f\leib\otimes g$ as a cell complex with cells $f'_{i,k}\leib\otimes g$.
  \end{lem}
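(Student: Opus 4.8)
The plan is to carry out the whole cell-complex construction one level up, inside the arrow category $\lcat{K}\mapcat$, and then transport it down to $\lcat{M}$ using the cocontinuity of the Leibniz bifunctor (lemma \ref{lem:leibniz-cocts}) together with the compatibility of $\leib\otimes$ with the arrows $\phi(-)$ recorded in notation \ref{ntn:phi-map}. So, fix a presentation of $f$: a transfinite composite $X\colon(\alpha+1)\to\lcat{K}$ with $f=f^{0,\alpha}$ in which each one-step connecting map $f^{i,i+1}$ is a pushout of $f'_i=\coprod_k f'_{i,k}$ along some $u_i\colon U_i\to X^i$, with cobase change $v_i\colon V_i\to X^{i+1}$. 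I would first repackage this as a transfinite composite $F\colon(\alpha+1)\to\lcat{K}\mapcat$ defined by $F(i)\defeq(f^{i,\alpha}\colon X^i\to X^\alpha)$ and $F(i\le j)\defeq(f^{i,j},\id_{X^\alpha})$; since colimits in arrow categories are computed pointwise and $X$ is a transfinite composite, $F$ is one too, with $F(0)=f$, $F(\alpha)=\id_{X^\alpha}$, and total composite $F(0\le\alpha)=\phi(f)$.

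The crucial observation is then that each one-step map $F(i)\to F(i+1)$ in $\lcat{K}\mapcat$ is a pushout of the arrow $\phi(f'_i)$ along the morphism $(u_i,f^{i+1,\alpha}v_i)\colon f'_i\to F(i)$. This is a short pointwise check: at the domain level the span defining the pushout is $V_i\xleftarrow{f'_i}U_i\xrightarrow{u_i}X^i$, whose pushout is $X^{i+1}$ by the defining square of $f^{i,i+1}$; at the codomain level one leg of the span is $\id_{V_i}$, so the pushout is $X^\alpha$; and one checks that the induced map $X^{i+1}\to X^\alpha$ is $f^{i+1,\alpha}$ and that the coprojection $F(i)\to F(i+1)$ is $(f^{i,i+1},\id_{X^\alpha})$, as required.

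Now I would apply $-\leib\otimes g\colon\lcat{K}\mapcat\to\lcat{M}\mapcat$, which preserves colimits by lemma \ref{lem:leibniz-cocts}. Hence $i\mapsto F(i)\leib\otimes g$ is a transfinite composite in $\lcat{M}\mapcat$, running from $f\leib\otimes g$ to $\id_{X^\alpha}\leib\otimes g\cong\id_{X^\alpha\otimes Y'}$, with total composite $\phi(f)\leib\otimes g\cong\phi(f\leib\otimes g)$ by notation \ref{ntn:phi-map}, and with each one-step map a pushout of $\phi(f'_i)\leib\otimes g\cong\phi(f'_i\leib\otimes g)$, where $f'_i\leib\otimes g\cong\coprod_k(f'_{i,k}\leib\otimes g)$ again by cocontinuity. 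Finally I would apply the colimit-preserving functor $\dom\colon\lcat{M}\mapcat\to\lcat{M}$: this yields a transfinite composite $W$ in $\lcat{M}$ with $W^0=\dom(f\leib\otimes g)$, $W^\alpha=X^\alpha\otimes Y'=\cod(f\leib\otimes g)$ and total composite $f\leib\otimes g$ (since $\dom$ carries the arrow $\phi(h)$ to $h$), and with each one-step map equal to the domain-component of a pushout of $\phi(f'_i\leib\otimes g)$. The last ingredient is the elementary fact that the domain-component of any pushout of an arrow $\phi(h)\colon h\to\id_D$ is a cobase change of $h$ — a one-line pasting argument, since such a pushout glues $\cod(h)$ on trivially on the codomain side. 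Thus each one-step map of $W$ is a pushout of $\coprod_k(f'_{i,k}\leib\otimes g)$, so $W$ presents $f\leib\otimes g$ as a cell complex with cells $f'_{i,k}\leib\otimes g$ in the sense of definition \ref{def:rel-cell}.

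Everything in sight is formal; the only steps that demand care are the bookkeeping in the crucial observation (identifying the various coprojections in the pointwise pushouts) and the final passage through $\dom$. An alternative that avoids the arrow-category reformulation is to build $W$ directly, taking $W^i\defeq(X^\alpha\otimes Y)\cup_{X^i\otimes Y}(X^i\otimes Y')\cong\dom(f^{i,\alpha}\leib\otimes g)$, using observation \ref{obs:leibniz-comp} applied to $f^{i,\alpha}=f^{i+1,\alpha}\circ f^{i,i+1}$ to recognise the one-step maps as pushouts of $f^{i,i+1}\leib\otimes g$, and using cocontinuity of $-\otimes Y$ and $-\otimes Y'$ to verify the transfinite-composite condition at limit ordinals; but routing through $\phi$ keeps the gluing bookkeeping to a minimum, which is presumably why notation \ref{ntn:phi-map} was set up in advance.
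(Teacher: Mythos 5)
Your proposal is correct and follows essentially the same route as the paper's own proof: lift the presentation of $f$ to a presentation of $\phi(f)$ in $\lcat{K}\mapcat$ with cells $\phi(f'_{i,k})$, apply the cocontinuous functor $-\leib\otimes g$ (lemma \ref{lem:leibniz-cocts}) to present $\phi(f\leib\otimes g)$, and then project down along the colimit-preserving $\dom\colon\lcat{M}\mapcat\to\lcat{M}$. The ``alternative'' direct construction you sketch at the end is also what the paper records afterwards in observation \ref{obs:leibniz-tcofp}.
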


  \begin{proof}
  To fix notation, suppose that $f$ is presented by a transfinite composite $X\colon (\alpha+1)\to \lcat{K}$ in which each one-step connecting map $f^{i,i+1}$ is displayed as a pushout of the coproduct $f'_i\defeq\coprod_k f'_{i,k}$. Observe that we may construct corresponding gadgets in the arrow category $\lcat{K}\mapcat$. Specifically we may construct an $(\alpha+1)$-composite $X/X^\alpha\colon(\alpha+1)\to\lcat{K}\mapcat$ whose object at $i$ is $f^{i,\alpha}$ and which carries the map $i\leq j$ of $(\alpha+1)$ to the arrow
  \begin{equation*}
    \xymatrix@=3em{
      {X^i}\ar[r]^{f^{i,j}}\ar@{.>}[d]_{f^{i,\alpha}} & 
      {X^j}\ar@{.>}[d]^{f^{j,\alpha}} \\
      {X^\alpha}\ar@{=}[r] & {X^\alpha}
    }
  \end{equation*}
  in $\lcat{K}\mapcat$. Notice here that the connecting map between $i$ and $\alpha$ of this transfinite sequence is simply the map $\phi(f^{i,\alpha})$ introduced in notation~\ref{ntn:phi-map}.

  To check that this is indeed a transfinite composite, we must show that for each limit ordinal $\beta\leq\alpha$ the cocone of maps $(f^{i,\beta},\id_{X^\alpha})\colon f^{i,\alpha}\to f^{\beta,\alpha}$ induces an isomorphism $\colim_{i<\beta}  f^{i,\alpha}\cong f^{\beta,\alpha}$. However this result holds immediately simply because the corresponding property holds for the transfinite composite $X$ in $\lcat{K}$ and colimits of sequences in $\lcat{K}\mapcat$ are constructed pointwise in $\lcat{K}$. Furthermore, each pushout of~\eqref{eq:10} gives rise to a corresponding cube
  \begin{equation}\label{eq:11}
    \xymatrix@R=2em@C=3em{
      {U_i}\ar[rr]^{f'_i}\ar[dd]_{u_i}\ar@{.>}[dr]^{f'_i} &&
      {V_i}\ar'[d]!/u 4pt/[dd]^{v_i}
      \ar@{:}[dr]^{\id_{V_i}} & \\
      & {V_i} \ar@{=}[rr]\ar[dd]_(0.3){f^{i+1,\alpha}\circ v_i}
      && {V_i}\ar[dd]^{f^{i+1,\alpha}\circ v_i} \\
      {X^i}\ar'[r][rr]_-{f^{i,i+1}}\ar@{.>}[dr]_{f^{i,\alpha}} && 
      {X^{i+1}}\poexcursion\ar@{.>}[dr]^(0.4){f^{i+1,\alpha}} & \\
      & {X^\alpha}\ar@{=}[rr] && \poexcursion{X^\alpha} 
    }
  \end{equation}
  which is again a pushout in $\lcat{K}\mapcat$, simply because such things are computed pointwise in $\lcat{K}$. 
  
  Observe now that the upper face of the cube~\eqref{eq:11} is simply the map $\phi(f'_i)$ of $\lcat{K}\mapcat$ as defined in notation~\ref{ntn:phi-map}. Furthermore, expanding $f'_i$ as a coproduct of cells we see that $\phi(f'_i)\cong\phi(\coprod_k f'_{i,k})\cong\coprod_k \phi(f'_{i,k})$. So, summarising all of the information of the last few paragraphs, we have shown that the map $\phi(f)$ of $\lcat{K}\mapcat$ can be presented as a cell complex with cells $\phi(f'_{i,k})$.

  Now we know that the Leibniz functor ${-}\leib\otimes g\colon\lcat{K}\mapcat\to\lcat{M}\mapcat$ is cocontinuous, by lemma~\ref{lem:leibniz-cocts}. So when we apply it to the structures derived in the last few paragraphs it preserves the colimits there and carries that information to a presentation of $\phi(f)\leib\otimes g\cong \phi(f\leib\otimes g)$ (cf.\ notation~\ref{ntn:phi-map}) as a cell complex with cells $\phi(f'_{i,k})\leib\otimes g\cong \phi(f'_{i,k}\leib\otimes g)$. 

  Finally, the domain projection functor $\dom\colon\lcat{M}\mapcat\to\lcat{M}$ also preserves colimits, since these are constructed pointwise in $\lcat{M}$. Consequently it too preserves the colimits involved in the presentation of $\phi(f\leib\otimes g)$ derived in the last paragraph, and so it carries that information to a presentation of $f\leib\otimes g = \dom(\phi(f\leib\otimes g))$ as a cell complex with cells $f'_{i,k}\leib\otimes g = \dom(\phi(f'_{i,k}\leib\otimes g))$ as required.
  \end{proof}

  \begin{obs}\label{obs:leibniz-tcofp}
    Tracing through the argument of lemma \ref{lem:leibniz-tcofp}, we see that the transfinite composite we constructed there, whose composite is $f\leib\otimes g$, carries the index $i\leq \alpha$ to the domain of $f^{i,\alpha}\leib\otimes g$. In other words, this is the object given by the following pushout:
    \begin{equation*}
      \xymatrix@=1.5em{
        {X^i\otimes Y}\ar[r]^{X^i\otimes g}
        \ar[d]_{f^{i,\alpha}\otimes Y} &
        {X^i\otimes Y'}\ar[d] \\
        {X^\alpha\otimes Y}\ar[r] &
        {(X^\alpha\otimes Y)\cup_{X^i\otimes Y}
        (X^i\otimes Y')}\poexcursion
      }
    \end{equation*}
    The connecting map from index $i$ to index $i+1$ in this transfinite sequence is given by a pushout:
    \begin{equation*}
      \xymatrix@R=1.5em@C=2em{
        {\coprod_k (V_{i,k}\otimes Y)
        \cup_{U_{i,k}\otimes Y} (U_{i,k}\otimes Y')}
        \ar[r]^-{\coprod_k f'_{i,k}\leib\otimes g}\ar[d] &
        {\coprod_k V_{i,k}\otimes Y'}\ar[d] \\
        {(X^\alpha\otimes Y)\cup_{X^i\otimes Y}
        (X^i\otimes Y')}\ar[r] &
        {(X^\alpha\otimes Y)\cup_{X^{i+1}\otimes Y}
        (X^{i+1}\otimes Y')}\poexcursion
      }
    \end{equation*}
  \end{obs}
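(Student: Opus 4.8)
The plan is to track the two cocontinuous functors ${-}\leib\otimes g\colon\lcat{K}\mapcat\to\lcat{M}\mapcat$ and $\dom\colon\lcat{M}\mapcat\to\lcat{M}$ through the construction carried out in the proof of lemma~\ref{lem:leibniz-tcofp} and simply read off the resulting data. Recall from that proof that the transfinite composite presenting $f\leib\otimes g$ is the composite functor
\[
(\alpha+1)\xrightarrow{\;X/X^\alpha\;}\lcat{K}\mapcat\xrightarrow{\;{-}\leib\otimes g\;}\lcat{M}\mapcat\xrightarrow{\;\dom\;}\lcat{M},
\]
in which $X/X^\alpha$ sends the index $i$ to the object $f^{i,\alpha}\colon X^i\to X^\alpha$ of $\lcat{K}\mapcat$ and carries each one-step map $i\to i+1$ to the pushout square displayed as the cube~\eqref{eq:11}.

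First I would evaluate at an index $i\leq\alpha$. The functor ${-}\leib\otimes g$ carries $f^{i,\alpha}$ to the object $f^{i,\alpha}\leib\otimes g$ of $\lcat{M}\mapcat$, and $\dom$ returns its domain. By the defining pushout~\eqref{eq:13} of the Leibniz construction, taken with $f^{i,\alpha}$ in the role of ``$f$'' (so $K=X^i$, $K'=X^\alpha$) and $g$ in the role of ``$g$'' (so $L=Y$, $L'=Y'$), this domain is exactly $(X^\alpha\otimes Y)\cup_{X^i\otimes Y}(X^i\otimes Y')$, which is the first pushout in the statement. As a sanity check, $i=0$ recovers the domain and $i=\alpha$ the codomain of $f\leib\otimes g$, as one expects of a transfinite composite; and the limit-stage colimit conditions survive the two functors because colimits of sequences in arrow categories are computed pointwise and both ${-}\leib\otimes g$ and $\dom$ are cocontinuous, a point already noted in the proof of lemma~\ref{lem:leibniz-tcofp}.

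Next I would treat the one-step connecting map $i\to i+1$. That proof identifies the cube~\eqref{eq:11} as a pushout square in $\lcat{K}\mapcat$ whose top edge is $\phi(f'_i)\cong\coprod_k\phi(f'_{i,k})$ and whose bottom edge is the connecting map $f^{i,\alpha}\to f^{i+1,\alpha}$. Applying ${-}\leib\otimes g$, cocontinuous by lemma~\ref{lem:leibniz-cocts}, yields a pushout square in $\lcat{M}\mapcat$ whose top edge is $\phi(f'_i)\leib\otimes g\cong\phi(f'_i\leib\otimes g)$ by notation~\ref{ntn:phi-map}, where moreover $f'_i\leib\otimes g\cong\coprod_k f'_{i,k}\leib\otimes g$ since $\leib\otimes$ preserves coproducts in its first variable. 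Applying $\dom$, cocontinuous since colimits in $\lcat{M}\mapcat$ are pointwise, then produces a pushout square in $\lcat{M}$; using the identity $\dom(\phi(h))=h$, the defining pushout~\eqref{eq:13} applied to each of the Leibniz products appearing, and the cocontinuity of $\otimes$ in its first variable to distribute the tensors over the coproduct $V_i\cong\coprod_k V_{i,k}$, one reads off that its four corners and its top edge are precisely those of the second pushout in the statement. This displays the connecting map $i\to i+1$ in the template form~\eqref{eq:10}, as a pushout of the coproduct $\coprod_k(f'_{i,k}\leib\otimes g)$ along an arbitrary map, consistent with the conclusion of lemma~\ref{lem:leibniz-tcofp} that the cells of the resulting presentation are the maps $f'_{i,k}\leib\otimes g$.

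The argument is pure bookkeeping; the only thing that needs care is keeping the three ambient categories $\lcat{K}\mapcat$, $\lcat{M}\mapcat$, and $\lcat{M}$ distinct and invoking the appropriate cocontinuity at each passage, together with the two routine identifications $\dom(\phi(h))=h$ and $\phi(h)\leib\otimes g\cong\phi(h\leib\otimes g)$ from notation~\ref{ntn:phi-map}. There is no genuine obstacle.
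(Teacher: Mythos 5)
Your proposal is correct and follows exactly the route the paper intends: the observation is by design a matter of tracing the cocontinuous functors ${-}\leib\otimes g$ and $\dom$ through the transfinite composite and pushout cube constructed in the proof of lemma~\ref{lem:leibniz-tcofp}, and your bookkeeping of the three ambient categories, the identifications $\phi(h)\leib\otimes g\cong\phi(h\leib\otimes g)$ and $\dom(\phi(h))=h$, and the distribution of $\leib\otimes$ over the coproduct of cells reproduces both displayed pushouts precisely.
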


  \begin{ex}[relative latching maps of composites II]\label{ex:latch-comp}
    Suppose that a natural transformation $f\colon X\to X'$ in $\lcat{M}^{\scat{C}}$ admits a presentation as a cell complex with cells $f'_{i,k}\colon U_{i,k}\to V_{i,k}$. Then we may apply the last result to show that the relative latching map $\leib{\latch^c}(f)\cong(\boundary\scat{C}^c\inc\scat{C}^c)\leib\wcolim_{\scat{C}} f$ admits a presentation as a cell complex with cells the relative latching maps $\leib{\latch^c}(f'_{i,k})\cong(\boundary\scat{C}^c\inc\scat{C}^c)\leib\wcolim_{\scat{C}} f'_{i,k}$.
  \end{ex}
	
	In summary, the Leibniz tensor $-\leib\otimes g$ preserves cell structures. It is now straightforward to extend this result to cell complexes in both variables.

	\begin{cor}\label{cor:leibniz-tcofp}
  Suppose that $f\colon X\to X'$ and $g\colon Y\to Y'$ admit presentations as cell complexes with cells $f'_{i,k}\colon U_{i,k}\to V_{i,k}$ and $g'_{j,l}\colon A_{j,l}\to B_{j,l}$ respectively. Then the Leibniz tensor $f\leib\otimes g$ admits a presentation as a cell complex with cells $f'_{i,k}\leib\otimes g'_{j,l}$.
	\end{cor}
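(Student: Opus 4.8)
The plan is to reduce everything to the single-variable statement already proved in Lemma \ref{lem:leibniz-tcofp}, together with the fact — recorded in Definition \ref{def:rel-cell} — that the operator $\cell(-)$ is monotone and idempotent. The one preliminary observation needed is that Lemma \ref{lem:leibniz-tcofp} is symmetric in its two arguments. Indeed, let $\otimes'\colon\lcat{L}\times\lcat{K}\to\lcat{M}$ denote the bifunctor obtained from $\otimes$ by transposing its variables. Our standing hypotheses on $\otimes$ (cocontinuity in each variable, between cocomplete categories) are invariant under this transposition, and since pushouts are insensitive to the order of the two ``legs'' being glued, comparing the defining diagram \eqref{eq:13} for $f\leib\otimes g$ with that for $g\leib\otimes' f$ exhibits a natural isomorphism $f\leib\otimes g\cong g\leib\otimes' f$ of arrows of $\lcat{M}\mapcat$. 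Applying Lemma \ref{lem:leibniz-tcofp} to $\otimes'$ in place of $\otimes$ then yields the transposed conclusion: for any single map $h$ of $\lcat{K}\mapcat$, the functor $h\leib\otimes-\colon\lcat{L}\mapcat\to\lcat{M}\mapcat$ carries a presentation of $g$ as a cell complex with cells $g'_{j,l}$ to a presentation of $h\leib\otimes g$ as a cell complex with cells $h\leib\otimes g'_{j,l}$.

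Now write $\mclass{I}\defeq\{\,f'_{i,k}\leib\otimes g'_{j,l}\,\}$. First I would apply Lemma \ref{lem:leibniz-tcofp} directly to the given presentation of $f$ (with $g$ held fixed), obtaining a presentation of $f\leib\otimes g$ as a cell complex with cells $f'_{i,k}\leib\otimes g$, so that $f\leib\otimes g\in\cell(\{\,f'_{i,k}\leib\otimes g\,\})$. Second I would apply the transposed statement from the previous paragraph to each individual cell $f'_{i,k}$, using the given presentation of $g$; this presents each $f'_{i,k}\leib\otimes g$ as a cell complex with cells $f'_{i,k}\leib\otimes g'_{j,l}$, i.e.\ shows $f'_{i,k}\leib\otimes g\in\cell(\mclass{I})$ for every $i,k$. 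Hence $\{\,f'_{i,k}\leib\otimes g\,\}\subseteq\cell(\mclass{I})$, so by monotonicity and idempotence of the $\cell$-operator $\cell(\{\,f'_{i,k}\leib\otimes g\,\})\subseteq\cell(\cell(\mclass{I}))=\cell(\mclass{I})$, and therefore $f\leib\otimes g\in\cell(\mclass{I})$. By the explicit description of relative $\mclass{I}$-cell complexes in Definition \ref{def:rel-cell}, membership in $\cell(\mclass{I})$ is precisely the existence of a presentation as a cell complex with cells in $\mclass{I}$, which is the assertion of the corollary.

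I do not expect any serious obstacle. The only point requiring a little care is the symmetry step: one must check that the argument of Lemma \ref{lem:leibniz-tcofp} really does apply with the two variables interchanged, and the transposition trick above makes this immediate rather than requiring the proof to be rewritten. If a fully explicit transfinite presentation were wanted in place of the abstract conclusion ``$f\leib\otimes g\in\cell(\mclass{I})$'', one could splice together the transfinite composites produced by Observation \ref{obs:leibniz-tcofp} in the two variables, but the closure properties of $\cell(-)$ make this extra bookkeeping unnecessary.
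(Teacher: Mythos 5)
Your proof is correct and follows essentially the same route as the paper's: apply lemma \ref{lem:leibniz-tcofp} in the first variable to present $f\leib\otimes g$ with cells $f'_{i,k}\leib\otimes g$, then apply it again in the second variable to each such cell, and splice. The only differences are cosmetic and to your credit: you make explicit the transposition needed to apply the lemma in the second variable (which the paper silently assumes), and you package the final splicing step via the monotonicity and idempotence of $\cell(-)$ rather than the paper's direct remark that transfinite composites of pushouts compose.
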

	
	\begin{proof}
		Simply apply the result in the last lemma to first show that $f\leib\otimes g$ admits a presentation as a cell complex with cells $f'_{i,k}\leib\otimes g$. Now apply that same result again to each of these latter Leibniz tensors to show that each $f'_{i,k}\leib\otimes g$ admits a presentation as a cell complex with cells $f'_{i,k}\leib\otimes g'_{j,l}$. Finally, observe that pushouts of transfinite composites of pushouts are again transfinite composites of pushouts and that transfinite composites of transfinite composites are transfinite composites. So our result follows.
	\end{proof}

  If $\mclass{I}$ is a set of maps in $\lcat{K}$ and $\mclass{J}$ is a set of maps in $\lcat{L}$ then let $\mclass{I}\leib\otimes\mclass{J}$ denote the set $\{f\leib\otimes g \mid f\in\mclass{I}, g\in\mclass{J}\}$. Now corollary~\ref{cor:leibniz-tcofp} leads immediately to the following proposition, the applications of which are myriad in homotopy theory:

  \begin{prop}\label{prop:leibniz-tcofp}
  Let $\otimes \colon \lcat{K} \times \lcat{L} \to \lcat{M}$ be a cocontinuous bifunctor between cocomplete categories, and let $\mclass{I}$ and $\mclass{J}$ be any sets of maps in $\lcat{K}$ and $\lcat{L}$ respectively. It follows that $\cell(\mclass{I})\leib\otimes\cell(\mclass{J}) \subseteq \cell(\mclass{I}\leib\otimes\mclass{J})$.  
   \end{prop}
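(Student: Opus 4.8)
The plan is to deduce this immediately from Corollary~\ref{cor:leibniz-tcofp} together with the presentation-based characterisation of $\cell$ recorded in Definition~\ref{def:rel-cell}. Recall from there that a map belongs to $\cell(\mclass{K})$ precisely when it admits a presentation as a cell complex all of whose cells lie in $\mclass{K}$; this is the ``routine matter'' asserted just after the definition, and I would simply cite it.

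So suppose $f\colon X\to X'$ lies in $\cell(\mclass{I})$ and $g\colon Y\to Y'$ lies in $\cell(\mclass{J})$. First I would invoke that characterisation to fix presentations of $f$ and $g$ as cell complexes whose cells $f'_{i,k}\colon U_{i,k}\to V_{i,k}$ and $g'_{j,l}\colon A_{j,l}\to B_{j,l}$ all belong to $\mclass{I}$ and $\mclass{J}$ respectively. Next I would apply Corollary~\ref{cor:leibniz-tcofp} to these two presentations, which produces a presentation of the Leibniz tensor $f\leib\otimes g$ as a cell complex whose cells are exactly the maps $f'_{i,k}\leib\otimes g'_{j,l}$. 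By construction each such cell lies in the set $\mclass{I}\leib\otimes\mclass{J} = \{f\leib\otimes g \mid f\in\mclass{I},\ g\in\mclass{J}\}$. Hence $f\leib\otimes g$ admits a presentation whose cells all lie in $\mclass{I}\leib\otimes\mclass{J}$, and therefore $f\leib\otimes g\in\cell(\mclass{I}\leib\otimes\mclass{J})$ by the same characterisation used at the outset. Since $f$ and $g$ were arbitrary, this gives the claimed inclusion $\cell(\mclass{I})\leib\otimes\cell(\mclass{J}) \subseteq \cell(\mclass{I}\leib\otimes\mclass{J})$.

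There is essentially no obstacle here: all of the real work has already been carried out in Lemma~\ref{lem:leibniz-tcofp} and Corollary~\ref{cor:leibniz-tcofp}, and the only point requiring a moment's care is the bookkeeping translation between ``membership in $\cell(-)$'' and ``admitting a presentation with cells in $-$'', which is exactly the equivalence spelled out in Definition~\ref{def:rel-cell}. If one wished to avoid citing that equivalence, one could instead argue directly that the class of $f$ for which $f\leib\otimes g\in\cell(\mclass{I}\leib\otimes\mclass{J})$ for all $g\in\cell(\mclass{J})$ contains $\mclass{I}$ and is closed under pushouts and transfinite composites, using that $\leib\otimes$ is cocontinuous in its first variable by Lemma~\ref{lem:leibniz-cocts}; but the presentation-level argument via Corollary~\ref{cor:leibniz-tcofp} is cleaner, so I would present that.
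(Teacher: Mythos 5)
Your proposal is correct and is essentially the paper's own argument: the paper's proof likewise just applies corollary~\ref{cor:leibniz-tcofp} to the presentation-based characterisation of $\cell(-)$ from definition~\ref{def:rel-cell}, and you have merely spelled out the bookkeeping in more detail.
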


  \begin{proof}
  Simply apply  corollary~\ref{cor:leibniz-tcofp} directly to the characterisations of $\cell(\mclass{I})$, $\cell(\mclass{J})$, and $\cell(\mclass{I}\leib\otimes\mclass{J})$ given in definition~\ref{def:rel-cell}.
  \end{proof}

\section{Cellular presentations and Reedy categories}\label{sec:cellular}

    The work of the current section provides an ideal exemplar of a philosophy which lies at the very core of our approach throughout this paper, and which we might summarise in the phrase ``It's all in the weights!'' Under this philosophy we take a two step approach:
    \begin{enumerate}[label=\arabic*.]
      \item first establish a corresponding result for certain set-valued presheaves by straightforward direct computation, then
      \item extend that result to general diagrams indexed by a Reedy category using weighted (co)limits and Leibniz constructions.
    \end{enumerate}
    When combined with the Yoneda lemma, in the form given in example~\ref{ex:weighted-yoneda}, this approach often allows us to reduce results involving objects in general functor categories $\lcat{M}^{\scat{C}}$, where $\scat{C}$ is a Reedy category, to explicit computations involving (sub-objects of) representables.

Our immediate aim is to apply this philosophy to showing that any natural transformation $f\colon X\to Y$ in $\lcat{M}^{\scat{C}}$ can be expressed as a countable composite of pushouts of cells built from its relative latching maps. Our first step towards this result is to provide a combinatorially explicit result of this type for the skeleta of representables:

  \begin{obs}[skeleta of two-sided representables]\label{obs:two-sided}
    It will be convenient to re-express our results about skeleta of representables, as discussed in lemma~\ref{lem:sk-rep-reedy} and observation~\ref{obs:cons-skeleta}, in a more symmetric {\em two-sided\/} form. So start with the two variable hom-functor $\scat{C}$ in $\Set^{\scat{C}\op\times\scat{C}}$ and define $\sk_n(\scat{C})$ to be its subobject of those maps $f\colon\bar{c}\to c$ which factorise through some object of degree less than or equal to $n$. As one might hope, the explicit description furnished by lemma~\ref{lem:sk-rep-reedy} tells us that the skeleta of covariant and contravariant representables may both be captured in terms of these two-sided skeleta, specifically $\sk_n(\scat{C}^c) \cong \sk_n(\scat{C})^c$ and $\sk_n(\scat{C}_{\bar{c}}) \cong \sk_n(\scat{C})_{\bar{c}}$.

When $W$ is an object in $\Set^{\scat{C}\times\scat{D}\op}$ and $X$ is an object in $\lcat{M}^{\scat{D}}$ we shall extend our weighted colimit notation in an obvious fashion and write $W\wcolim_{\scat{D}} X$ to denote the object of $\lcat{M}^{\scat{C}}$ given by $(W\wcolim_{\scat{D}} X)^c \defeq W^c\wcolim_{\scat{D}} X$. 

    Armed with these conventions, we may write the Yoneda lemma, as expressed in example~\ref{ex:weighted-yoneda}, and the formula for the $n$-skeleton of an object $X$ of $\lcat{M}^{\scat{C}}$, as given in lemma~\ref{lem:skeleta-colimit}, in the following particularly simple forms:
    \begin{equation*}
      X\cong\scat{C}\wcolim_{\scat{C}} X \mkern60mu
      \sk_n(X)\cong\sk_n(\scat{C})\wcolim_{\scat{C}} X. 
    \end{equation*}
  \end{obs}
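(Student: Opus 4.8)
The plan is to record that nothing new happens here: the observation merely repackages lemma~\ref{lem:sk-rep-reedy}, lemma~\ref{lem:skeleta-colimit} and the weighted Yoneda lemma (example~\ref{ex:weighted-yoneda}) in two-sided notation, so the argument has three parts, each citing one of these and then discharging a routine naturality check. First I would establish the fibrewise identifications $\sk_n(\scat{C})^c\cong\sk_n(\scat{C}^c)$ and $\sk_n(\scat{C})_{\bar c}\cong\sk_n(\scat{C}_{\bar c})$; then the ``simple form'' $X\cong\scat{C}\wcolim_{\scat{C}} X$; and finally $\sk_n(X)\cong\sk_n(\scat{C})\wcolim_{\scat{C}} X$, which follows by combining the first two with lemma~\ref{lem:skeleta-colimit}.

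For the fibrewise identifications I would first check that $\sk_n(\scat{C})$ really is a sub-bifunctor of the hom-bifunctor $\scat{C}\in\Set^{\scat{C}\op\times\scat{C}}$, i.e.\ that the property ``$f\colon\bar c\to c$ factorises through an object of degree $\le n$'' is stable under pre- and post-composition with arbitrary arrows of $\scat{C}$; this is immediate since inserting a map on either side of a factorisation $f = h g$ with $\deg(\dom h)\le n$ produces a factorisation through the \emph{same} intermediate object (and by lemma~\ref{lem:sk-rep-reedy} and observation~\ref{obs:cons-skeleta} this subobject is equivalently described via canonical Reedy factorisations). Evaluating this sub-bifunctor in its covariant slot at a fixed $c$ yields exactly the subpresheaf of $\scat{C}^c=\scat{C}(-,c)$ spanned by the arrows into $c$ factoring through degree $\le n$, and lemma~\ref{lem:sk-rep-reedy} identifies that subpresheaf with $\sk_n(\scat{C}^c)$ (where, as in observation~\ref{obs:skel-colim}, the skeleton of the contravariant representable is formed along the dual truncation inclusion into $\scat{C}\op$); the dual computation in the contravariant slot gives $\sk_n(\scat{C}_{\bar c})$. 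Since both $c\mapsto\sk_n(\scat{C})^c$ and $c\mapsto\sk_n(\scat{C}^c)$ are functors $\scat{C}\to\Set^{\scat{C}\op}$ and the identification is literally an equality of subpresheaves of $\scat{C}^c$ compatible with the action of arrows $c\to\bar c$, it is natural in $c$.

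For the two remaining isomorphisms I would unwind the extended notation pointwise. The identity $X\cong\scat{C}\wcolim_{\scat{C}} X$ asserts $(\scat{C}\wcolim_{\scat{C}} X)^c=\scat{C}^c\wcolim_{\scat{C}} X\cong X^c$, which is exactly the colimit half of the weighted Yoneda lemma (example~\ref{ex:weighted-yoneda}); these isomorphisms are natural in $c$ and in $X$, so they assemble into an isomorphism in $\lcat{M}^{\scat{C}}$. For the last one I would write $(\sk_n(\scat{C})\wcolim_{\scat{C}} X)^c=\sk_n(\scat{C})^c\wcolim_{\scat{C}} X$, replace $\sk_n(\scat{C})^c$ by $\sk_n(\scat{C}^c)$ using the identification just established, and then invoke lemma~\ref{lem:skeleta-colimit}, which says precisely that $\sk_n(\scat{C}^c)\wcolim_{\scat{C}} X\cong\sk_n(X)^c$; naturality in $c$ again upgrades this to an isomorphism $\sk_n(\scat{C})\wcolim_{\scat{C}} X\cong\sk_n(X)$ of objects of $\lcat{M}^{\scat{C}}$, natural in $X$.

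The ``hard part'' is a hazard rather than a genuine difficulty: one must keep straight that in $\Set^{\scat{C}\op\times\scat{C}}$ the covariant variable is the superscripted one, that the skeleton of a contravariant representable is formed along $\scat{C}\op$'s truncation, and that in the extended colimit $W\wcolim_{\scat{D}} X$ (for $W\in\Set^{\scat{C}\times\scat{D}\op}$) the weighting is performed in the $\scat{D}$-slot while the $\scat{C}$-slot is carried along. Once the conventions are fixed, every step is an instance of a result already proved, and the naturality assertions are the usual diagram chases, which I would leave to the reader.
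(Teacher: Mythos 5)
Your proposal is correct and follows the same route the paper takes implicitly: observation~\ref{obs:two-sided} is stated without proof precisely because it is the pointwise assembly of lemma~\ref{lem:sk-rep-reedy} (for the fibrewise identifications), example~\ref{ex:weighted-yoneda} (for $X\cong\scat{C}\wcolim_{\scat{C}}X$), and lemma~\ref{lem:skeleta-colimit} (for $\sk_n(X)\cong\sk_n(\scat{C})\wcolim_{\scat{C}}X$), exactly as you organise it. The sub-bifunctor and naturality checks you spell out are the routine details the paper leaves to the reader.
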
 

	\begin{obs}[building up for skeleta of representables]\label{obs:building-skeleta-rep}
		For each object $c\in\scat{C}$ there is a map $\circ\colon\scat{C}_c\etimes\scat{C}^c\to\scat{C}$ which simply carries a pair of maps $f\colon c\to c'$ in $\scat{C}_c$ and $g\colon \bar{c}\to c$ in $\scat{C}^c$ to their composite $f\circ g\colon \bar{c}\to c'$ in $\scat{C}$. When it is the case that $\deg(c)\leq n$ then this composition map factorises through the $n$-skeleton $\sk_n(\scat{C})\inc\scat{C}$ and we may collect such maps together to give an induced map \[\xymatrix{ \left(\displaystyle\coprod_{c\in\scat{C}, \deg(c)=n} \scat{C}_c\etimes\scat{C}^c\right) \ar[r]^-{\circ}  & \sk_n(\scat{C}).} \] 
		
		Now consider a pair $(f,g)$ in some $\scat{C}_c\etimes\scat{C}^c$ with $\deg(c)=n$. If $f\in\boundary\scat{C}_c = \sk_{n-1}(\scat{C}_c)$ or if $g\in\boundary\scat{C}^c = \sk_{n-1}(\scat{C}^c)$ then the map that lies in the boundary  factorises through an object of degree less than $n$. Hence,  the composite $f\circ g$ also factorises through that same object and is thus an element of $\sk_{n-1}(\scat{C})$. Conversely when $f\notin\boundary\scat{C}_c$ and $g\notin\boundary\scat{C}^c$ then observation~\ref{obs:cons-skeleta} tells us that $f\in\direct{\scat{C}}$ and $g\in\inverse{\scat{C}}$ so it follows that $(f,g)$ is the Reedy factorisation of $f\circ g$. However, according to lemma~\ref{lem:reedy-fact-connect} this Reedy factorisation has minimal degree amongst all factorisations of $f\circ g$ and its degree is equal to $\deg(c)=n$, so this composite cannot be in $\sk_{n-1}(\scat{C})$. So in summary we have shown that $(f,g)$ is in $(\boundary\scat{C}_c\etimes\scat{C}^c) \cup (\scat{C}_c\etimes\boundary\scat{C}^c)$ if and only if $f\circ g$ is in $\sk_{n-1}(\scat{C})$. This tells us precisely that our composition map restricts to give a commutative square
		\begin{equation*}
			\xymatrix@=3em{
				{\displaystyle\coprod_{\hbox to 2em{\hss$\mathsurround=0pt\scriptstyle c\in\scat{C}, \deg(c)=n$\hss}} (\boundary\scat{C}_c\etimes\scat{C}^c) \cup (\scat{C}_c\etimes\boundary\scat{C}^c)} \pbexcursion\ar@{u(->}[r]\ar"1,1"-<0ex,2.5ex>;"2,1"_-{\circ} &
				{\displaystyle\coprod_{\hbox to 2em{\hss$\mathsurround=0pt\scriptstyle c\in\scat{C}, \deg(c)=n$\hss}} \scat{C}_c\etimes\scat{C}^c}
			\ar[d]^-{\circ} \\
			{\sk_{n-1}(\scat{C})}\ar@{u(->}[r] & {\sk_n(\scat{C})}}
		\end{equation*}
		which is a pullback in $\Set^{\scat{C}\op\times\scat{C}}$. 
		
		Finally observation~\ref{obs:cons-skeleta} also tells us that a map $h$ is in $\sk_n(\scat{C})$ and is not in $\sk_{n-1}(\scat{C})$ if and only if its unique Reedy factorisation $(\inverse{h},\direct{h})$ factorises it through an object $c$ of degree $n$. It follows therefore that this $c$ is the unique object of degree $n$ for which $h$ appears in the image of $\circ\colon\scat{C}_c\etimes\scat{C}^c\to\sk_n(\scat{C})$. These results are now enough to demonstrate that the square above is also a pushout.

In summary, the skeleta of the two-sided representable $\scat{C}$ define an $\omega$-composite \[\emptyset \inc \sk_0(\scat{C})\inc\sk_1(\scat{C})\inc\cdots \inc \sk_n(\scat{C}) \inc \cdots \colim = \scat{C}.\] What we have done here is to demonstrate that the inclusion $\emptyset\inc\scat{C}$ in $\Set^{\scat{C}\op\times\scat{C}}$ admits a presentation as a cell complex whose cells at the $n^{\text{th}}$ step are the exterior Leibniz products $(\boundary\scat{C}_c\inc\scat{C}_c)\leib\etimes(\boundary\scat{C}^c\inc\scat{C}^c)$ for $c\in\scat{C}$ with $\deg(c)=n$.
	\end{obs}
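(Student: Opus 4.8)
The plan is to realise the two-variable hom-functor $\scat{C}\in\Set^{\scat{C}\op\times\scat{C}}$ as the union of its skeleta $\sk_n(\scat{C})$ and to identify each inclusion $\sk_{n-1}(\scat{C})\inc\sk_n(\scat{C})$ as a pushout of a coproduct of exterior Leibniz products, whence definition~\ref{def:rel-cell} delivers the asserted cellular presentation of $\emptyset\inc\scat{C}$. Concretely, for each $c$ with $\deg(c)=n$ the composition map $\scat{C}_c\etimes\scat{C}^c\to\scat{C}$, $(f,g)\mapsto f\circ g$, lands in $\sk_n(\scat{C})$ by lemma~\ref{lem:sk-rep-reedy} (the pair $(g,f)$ witnesses a factorisation of $f\circ g$ through $c$), so these assemble into a map $\coprod_{\deg(c)=n}\scat{C}_c\etimes\scat{C}^c\to\sk_n(\scat{C})$. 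I would set up the evident commutative square with this as its right-hand leg, the inclusion $\sk_{n-1}(\scat{C})\inc\sk_n(\scat{C})$ as its lower leg, the coproduct of the boundary inclusions $(\boundary\scat{C}_c\etimes\scat{C}^c)\cup(\scat{C}_c\etimes\boundary\scat{C}^c)\inc\scat{C}_c\etimes\scat{C}^c$ across the top, and the restricted composition map on the left. Since colimits in $\Set^{\scat{C}\op\times\scat{C}}$ are computed pointwise, it suffices to check the pushout (and pullback) property objectwise, and since the top map is a monomorphism (a coproduct of subset inclusions) the pointwise pushout is simply $\sk_{n-1}(\scat{C})$ together with the complement, inside $\coprod_{\deg(c)=n}\scat{C}_c\etimes\scat{C}^c$, of its boundary part.

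The combinatorial heart is therefore twofold. First, by observation~\ref{obs:cons-skeleta} a pair $(f,g)\in\scat{C}_c\etimes\scat{C}^c$ fails to lie in the boundary exactly when $f\in\direct{\scat{C}}$ and $g\in\inverse{\scat{C}}$, in which case $(g,f)$ is \emph{the} canonical Reedy factorisation of $f\circ g$, through the object $c$ of degree $n$; if instead $f\in\boundary\scat{C}_c$ or $g\in\boundary\scat{C}^c$ then the offending map already factors through an object of degree ${<}\,n$, so $f\circ g\in\sk_{n-1}(\scat{C})$. By the minimality clause of lemma~\ref{lem:reedy-fact-connect}, in the former case no factorisation of $f\circ g$ has degree ${<}\,n$, so $f\circ g\notin\sk_{n-1}(\scat{C})$; this yields both the pullback statement (the boundary part is precisely the preimage of $\sk_{n-1}(\scat{C})$) and the fact that composition carries the non-boundary pairs into $\sk_n(\scat{C})\setminus\sk_{n-1}(\scat{C})$. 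Second, I would argue that $(f,g)\mapsto f\circ g$ restricts to a \emph{bijection} from $\coprod_{\deg(c)=n}\{(f,g)\in\scat{C}_c\etimes\scat{C}^c : f\in\direct{\scat{C}},\ g\in\inverse{\scat{C}}\}$ onto $\sk_n(\scat{C})\setminus\sk_{n-1}(\scat{C})$: surjectivity holds because any $h$ in this difference has, by observation~\ref{obs:cons-skeleta}, a Reedy factorisation $(\inverse{h},\direct{h})$ through an object of degree exactly $n$, and injectivity — including the fact that the middle object $c$ is determined — is uniqueness of the Reedy factorisation.

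Putting these together shows the square is a pushout for every $n\geq 0$, where for $n=0$ one uses that $\boundary\scat{C}_c=\boundary\scat{C}^c=\emptyset$ whenever $\deg(c)=0$ and that $\sk_{-1}(\scat{C})=\emptyset$. By example~\ref{ex:boundary-prod} each summand inclusion $(\boundary\scat{C}_c\etimes\scat{C}^c)\cup(\scat{C}_c\etimes\boundary\scat{C}^c)\inc\scat{C}_c\etimes\scat{C}^c$ is canonically isomorphic to the exterior Leibniz product $(\boundary\scat{C}_c\inc\scat{C}_c)\leib\etimes(\boundary\scat{C}^c\inc\scat{C}^c)$, so $\sk_{n-1}(\scat{C})\inc\sk_n(\scat{C})$ is a pushout of the coproduct $\coprod_{\deg(c)=n}(\boundary\scat{C}_c\inc\scat{C}_c)\leib\etimes(\boundary\scat{C}^c\inc\scat{C}^c)$. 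Finally every arrow of $\scat{C}$ has a Reedy factorisation through an object of some finite degree, so pointwise $\scat{C}=\bigcup_n\sk_n(\scat{C})$ and, colimits being pointwise, the chain $\emptyset\inc\sk_0(\scat{C})\inc\sk_1(\scat{C})\inc\cdots$ is an $\omega$-composite with colimit $\scat{C}$; definition~\ref{def:rel-cell} then exhibits $\emptyset\inc\scat{C}$ as a cell complex whose cells at stage $n$ are exactly the stated Leibniz products.

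I expect the only genuine obstacle to be the two combinatorial identifications in the middle paragraph — in particular the clean splitting of $\scat{C}_c\etimes\scat{C}^c$ into boundary pairs and Reedy-factorisation pairs, and the fact that the latter biject with the genuinely new arrows of $\sk_n(\scat{C})$. Both are controlled entirely by lemma~\ref{lem:reedy-fact-connect} (uniqueness together with minimal degree of the canonical Reedy factorisation) and observation~\ref{obs:cons-skeleta} (which pins down when an arrow lies in a boundary of a representable); everything surrounding them — the pointwise pushout criterion, the passage through example~\ref{ex:boundary-prod}, and the assembly into a transfinite composite — is routine bookkeeping.
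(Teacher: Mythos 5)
Your proposal is correct and follows essentially the same route as the paper: the same skeletal filtration, the same combinatorial dichotomy (boundary pairs compose into $\sk_{n-1}(\scat{C})$, non-boundary pairs give the canonical Reedy factorisation, whose minimal degree $n$ keeps the composite out of $\sk_{n-1}(\scat{C})$), and the same appeal to existence and uniqueness of Reedy factorisations to see that the new arrows of $\sk_n(\scat{C})$ are hit bijectively, whence the square is both a pullback and a pushout. Your explicit bijection argument merely spells out what the paper leaves as "these results are now enough to demonstrate that the square above is also a pushout."
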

		
With this inductive presentation of the skeleta of representables under our belts, we can now use lemma~\ref{lem:leibniz-tcofp} to construct a corresponding presentation of the skeleta of an arbitrary object in $\lcat{M}^{\scat{C}}$. In this regard, we might say that the following proposition expresses the key geometric character of the relative latching maps:
	
\begin{prop}[general building up]\label{prop:building-up} Any natural transformation $f \colon X \to Y$ in $\lcat{M}^{\scat{C}}$ admits a presentation as a cell complex whose countable composite is of the form
\begin{equation}\label{eq:skel-seq}
  \xymatrix@C=2em@R=1ex{
    {X}\ar[r] &
    {X\mathop{\cup}\limits_{\sk_0X} \sk_0Y}\ar[r] &
    {X\mathop{\cup}\limits_{\sk_1X} \sk_1Y}\ar[r] &
    {\cdots}\ar[r] &
    {X\mathop{\cup}\limits_{\sk_nX} \sk_nY}\ar[r] &
    {\cdots}
  }
\end{equation}
and for which the cells at the $n^{\text{th}}$ step are the natural transformations
\begin{equation}\label{eq:c-cell}
  (\boundary\scat{C}_c\inc\scat{C}_c)\leib\tns \leib{\latch^c}(f)
\end{equation}
associated with the latching maps of $f$ at objects $c\in\scat{C}$ with $\deg(c)=n$.
\end{prop}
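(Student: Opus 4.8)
The plan is to carry out the two-step ``it's all in the weights'' programme set out at the start of this section, with the combinatorial input already supplied by observation~\ref{obs:building-skeleta-rep}. That observation presents the inclusion $\emptyset\inc\scat{C}$ in $\Set^{\scat{C}\op\times\scat{C}}$ as an $\omega$-composite of pushouts, with underlying sequence $\emptyset\inc\sk_0(\scat{C})\inc\sk_1(\scat{C})\inc\cdots$ and with cells at the $n^{\text{th}}$ stage the exterior Leibniz products $(\boundary\scat{C}_c\inc\scat{C}_c)\leib\etimes(\boundary\scat{C}^c\inc\scat{C}^c)$ ranging over the objects $c$ with $\deg(c)=n$. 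I would transport this presentation into $\lcat{M}^{\scat{C}}$ along the two-sided weighted colimit bifunctor $\wcolim_{\scat{C}}\colon\Set^{\scat{C}\op\times\scat{C}}\times\lcat{M}^{\scat{C}}\to\lcat{M}^{\scat{C}}$ of observation~\ref{obs:two-sided}. This bifunctor is cocontinuous in each variable --- in the weight by the observation following example~\ref{ex:weighted-yoneda}, and in the diagram because weighted colimits are themselves colimits, which commute with colimits --- so the standing hypotheses of this section apply to it.

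The one substantive step is then to invoke lemma~\ref{lem:leibniz-tcofp} with this bifunctor, with its first argument the cell complex $\emptyset\inc\scat{C}$ just recalled, and with its second argument our map $f\colon X\to Y$. The lemma immediately yields a cell complex presentation of the Leibniz weighted colimit $(\emptyset\inc\scat{C})\leib\wcolim_{\scat{C}}f$ whose cells at the $n^{\text{th}}$ stage are the maps $\bigl[(\boundary\scat{C}_c\inc\scat{C}_c)\leib\etimes(\boundary\scat{C}^c\inc\scat{C}^c)\bigr]\leib\wcolim_{\scat{C}}f$ for $\deg(c)=n$; everything else is recognition. Since $\emptyset\wcolim_{\scat{C}}(-)$ is the initial object of $\lcat{M}^{\scat{C}}$ (cocontinuity in the weight) and $\scat{C}\wcolim_{\scat{C}}(-)$ is naturally isomorphic to the identity functor (the two-sided Yoneda lemma of observation~\ref{obs:two-sided}), the Leibniz construction collapses $(\emptyset\inc\scat{C})\leib\wcolim_{\scat{C}}f$ to $\scat{C}\wcolim_{\scat{C}}f\cong f$. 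For the stages of the composite, observation~\ref{obs:leibniz-tcofp} identifies the object at the stage by which all cells of degree $\leq n$ have been attached as the domain of $(\sk_n(\scat{C})\inc\scat{C})\leib\wcolim_{\scat{C}}f$, namely the pushout of $\sk_n(\scat{C})\wcolim_{\scat{C}}X\to\scat{C}\wcolim_{\scat{C}}X$ along $\sk_n(\scat{C})\wcolim_{\scat{C}}X\to\sk_n(\scat{C})\wcolim_{\scat{C}}Y$; feeding in $\sk_n(\scat{C})\wcolim_{\scat{C}}Z\cong\sk_nZ$ and $\scat{C}\wcolim_{\scat{C}}Z\cong Z$ from observation~\ref{obs:two-sided} rewrites this as $X\cup_{\sk_nX}\sk_nY$, reproducing the sequence~\eqref{eq:skel-seq}.

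It remains to match up the cells, and this is where the only real care is needed. The key ingredient is the structural isomorphism $(V\etimes W)\wcolim_{\scat{C}}X\cong V\tns(W\wcolim_{\scat{C}}X)$, natural in $V\in\Set^{\scat{C}}$, $W\in\Set^{\scat{C}\op}$ and $X\in\lcat{M}^{\scat{C}}$, which I would verify pointwise using the fact that copowers compose: at $c\in\scat{C}$, $\int^{\bar c}(V^c\times W_{\bar c})\tns X^{\bar c}\cong V^c\tns\int^{\bar c}W_{\bar c}\tns X^{\bar c}$. All of the bifunctors in play --- the exterior product $\etimes$, the two-sided $\wcolim_{\scat{C}}$, the exterior tensor $\tns\colon\Set^{\scat{C}}\times\lcat{M}\to\lcat{M}^{\scat{C}}$, and the ordinary $\wcolim_{\scat{C}}$ --- preserve pushouts, indeed all colimits, in each variable, so observation~\ref{obs:leibniz-isos} promotes this isomorphism to the Leibniz level and gives $\bigl[(\boundary\scat{C}_c\inc\scat{C}_c)\leib\etimes(\boundary\scat{C}^c\inc\scat{C}^c)\bigr]\leib\wcolim_{\scat{C}}f\cong(\boundary\scat{C}_c\inc\scat{C}_c)\leib\tns\bigl[(\boundary\scat{C}^c\inc\scat{C}^c)\leib\wcolim_{\scat{C}}f\bigr]$. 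By definition~\ref{defn:relative-maps} the bracketed factor on the right is exactly the relative latching map $\leib{\latch^c}(f)$, so the cells at stage $n$ are precisely the maps~\eqref{eq:c-cell}, as claimed.

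I expect the main obstacle to be this last application of observation~\ref{obs:leibniz-isos}: one must check that the ``mixed associativity'' constraint relating the two-sided and one-sided weighted colimit bifunctors genuinely fits the template there. The cleanest route is probably to first record that the three-variable functor $(V,W,X)\mapsto(V\etimes W)\wcolim_{\scat{C}}X$ is separately cocontinuous and naturally isomorphic to $(V,W,X)\mapsto V\tns(W\wcolim_{\scat{C}}X)$, and only then pass to the corresponding Leibniz bifunctors; the remaining verifications --- the behaviour of $\emptyset\wcolim_{\scat{C}}(-)$ and $\scat{C}\wcolim_{\scat{C}}(-)$, and the bookkeeping of stages via observation~\ref{obs:leibniz-tcofp} --- are routine once this is in hand.
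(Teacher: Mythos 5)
Your proposal is correct and follows essentially the same route as the paper's proof: express $f$ as $(\emptyset\inc\scat{C})\leib\wcolim_{\scat{C}}f$, transport the cell presentation of observation~\ref{obs:building-skeleta-rep} along the weighted colimit via lemma~\ref{lem:leibniz-tcofp}, identify the cells using the structural isomorphism $(V\etimes W)\wcolim_{\scat{C}}X\cong V\tns(W\wcolim_{\scat{C}}X)$ promoted to the Leibniz level by observation~\ref{obs:leibniz-isos}, and read off the stages from the skeleton formulae. The one point you flag as a potential obstacle --- verifying that the mixed associativity constraint fits the template of observation~\ref{obs:leibniz-isos} --- is handled in the paper exactly as you propose, by checking the unhatted isomorphism pointwise and then passing to Leibniz bifunctors.
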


\begin{proof}
By the Yoneda lemma, $f$ is isomorphic to the Leibniz tensor with the inclusion of the empty set into the hom bifunctor, i.e., \[ f \cong (\emptyset \inc \scat{C}) \leib\wcolim_{\scat{C}} f.\] Now we know from observation~\ref{obs:building-skeleta-rep} that the inclusion $\emptyset\inc\scat{C}$ admits a presentation as a cell complex whose cells at $n^{\text{th}}$ step are the exterior Leibniz products $(\boundary\scat{C}_c\inc\scat{C}_c)\leib\etimes(\boundary\scat{C}^c\inc\scat{C}^c)$ for $c\in\scat{C}$ with $\deg(c)=n$. So, applying lemma~\ref{lem:leibniz-tcofp}, we find that $f\cong (\emptyset \inc \scat{C}) \leib\wcolim_{\scat{C}} f$ admits a presentation as a cell complex whose cells at the $n^{\text{th}}$ step are the Leibniz colimits
\begin{equation}\label{eq:skel-seq-cells}
  ((\boundary\scat{C}_c\inc\scat{C}_c)\leib\etimes(\boundary\scat{C}^c\inc\scat{C}^c)) \leib\wcolim_{\scat{C}} f
\end{equation}
for $c\in\scat{C}$ with $\deg(c)=n$.

Now an easy computation verifies that if $U$ is an object of $\Set^{\scat{C}}$, $V$ is an object of $\Set^{\scat{D}\op}$ and $X$ is an object of $\lcat{M}^{\scat{D}}$ then the object $(U\etimes V)\wcolim_{\scat{D}} X$ is naturally isomorphic to $U\tns(V\wcolim_{\scat{D}} X)$. As observed in \ref{obs:leibniz-isos}, these isomorphisms pass to the corresponding Leibniz operations, giving isomorphisms 
\begin{align*}
  ((\boundary\scat{C}_c\inc\scat{C}_c)\leib\etimes(\boundary\scat{C}^c\inc\scat{C}^c))\leib\wcolim_{\scat{C}}f &\cong (\boundary\scat{C}_c\inc\scat{C}_c)\leib\tns((\boundary\scat{C}^c\inc\scat{C}^c)\leib\wcolim_{\scat{C}}f) \\
  &\cong (\boundary\scat{C}_c\inc\scat{C}_c)\leib\tns \leib{\latch^c}(f)
\end{align*}
which reduce the cells of the presentation we've constructed, as displayed in~\eqref{eq:skel-seq-cells}, to the form given in equation~\eqref{eq:c-cell} of the statement.

  Now applying the Yoneda lemma and our formulae for skeleta as expressed in observation~\ref{obs:two-sided}, we find that the Leibniz colimit $(\sk_n(\scat{C})\inc \scat{C})\leib\wcolim_{\scat{C}} f$ is isomorphic to the unique (dashed) map induced by the pushout in the following diagram:
\begin{equation*}
  \xymatrix{
    {\sk_nX}\ar[r]^{\sk_n(f)}\ar[d] &
    {\sk_nY}\ar[d]\ar@/^1ex/[ddr] \\
    {X}\ar[r]\ar@/_1ex/[drr]_f & 
    {X\mathop{\cup}\limits_{\sk_nX} \sk_nY}\poexcursion 
    \ar@{-->}[dr] \\
    && {Y}
  }
\end{equation*}
It follows that when we apply the argument in the proof of lemma~\ref{lem:leibniz-tcofp} to the countable sequence of skeleta $\sk_n(\scat{C})$ of observation~\ref{obs:building-skeleta-rep} we obtain the countable sequence displayed in equation~\eqref{eq:skel-seq} of the statement as required.
\end{proof}

  \begin{cor}\label{cor:building-up}
    Let $\mclass{B}$ denote the set of boundary inclusions $\{\boundary\scat{C}_c\inc\scat{C}_c\mid c\in\scat{C}\}$ in $\Set^{\scat{C}}$ and suppose that $\mclass{I}$ is a class of maps in $\lcat{M}$. Then a map $f$ of $\lcat{M}^{\scat{C}}$ is in $\cell(\mclass{B}\leib\tns\mclass{I})$ if and only if its relative latching maps are all in $\cell(\mclass{I})$.
  \end{cor}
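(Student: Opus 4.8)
The plan is to establish the two implications separately, in each case reducing a statement about general diagrams to a set-level computation, in the ``it's all in the weights'' spirit of Section~\ref{sec:cellular}.

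For the ``if'' implication, suppose that every relative latching map $\leib{\latch^c}(f)$ lies in $\cell(\mclass{I})$. Proposition~\ref{prop:building-up} already exhibits $f$ as a cell complex whose cell indexed by $c\in\scat{C}$ is $(\boundary\scat{C}_c\inc\scat{C}_c)\leib\tns\leib{\latch^c}(f)$. The bifunctor $\tns\colon\Set^{\scat{C}}\times\lcat{M}\to\lcat{M}^{\scat{C}}$ is cocontinuous in each variable, so proposition~\ref{prop:leibniz-tcofp} gives $\cell(\mclass{B})\leib\tns\cell(\mclass{I})\subseteq\cell(\mclass{B}\leib\tns\mclass{I})$; since $(\boundary\scat{C}_c\inc\scat{C}_c)\in\mclass{B}$ and $\leib{\latch^c}(f)\in\cell(\mclass{I})$, every such cell lies in $\cell(\mclass{B}\leib\tns\mclass{I})$, and then the characterisation of cell complexes in definition~\ref{def:rel-cell} together with the idempotence of $\cell$ force $f\in\cell(\mclass{B}\leib\tns\mclass{I})$.

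For the ``only if'' implication, fix $c\in\scat{C}$. The relative latching functor $\leib{\latch^c}(-)=(\boundary\scat{C}^c\inc\scat{C}^c)\leib\wcolim_{\scat{C}}(-)\colon(\lcat{M}^{\scat{C}})\mapcat\to\lcat{M}\mapcat$ is cocontinuous by lemma~\ref{lem:leibniz-cocts}, since $\wcolim_{\scat{C}}$ is cocontinuous in its $\lcat{M}^{\scat{C}}$-variable. Given $f\in\cell(\mclass{B}\leib\tns\mclass{I})$, choose a presentation of $f$ with cells in $\mclass{B}\leib\tns\mclass{I}$ (possible by definition~\ref{def:rel-cell}); then, exactly as in example~\ref{ex:latch-comp}, $\leib{\latch^c}(f)$ acquires a presentation whose cells are the maps $\leib{\latch^c}\bigl((\boundary\scat{C}_{c'}\inc\scat{C}_{c'})\leib\tns i\bigr)$ for $c'\in\scat{C}$ and $i\in\mclass{I}$, so it suffices to show that each such map lies in $\cell(\mclass{I})$. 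Using the evident coend isomorphism $W\wcolim_{\scat{C}}(U\tns A)\cong(W\wcolim_{\scat{C}}U)\tns A$, natural in $W\in\Set^{\scat{C}\op}$, $U\in\Set^{\scat{C}}$, $A\in\lcat{M}$, and the transfer of such isomorphisms to Leibniz operations recorded in observation~\ref{obs:leibniz-isos}, this map is isomorphic to $\gamma_{c,c'}\leib\tns i$, where $\gamma_{c,c'}:=(\boundary\scat{C}^c\inc\scat{C}^c)\leib\wcolim_{\scat{C}}(\boundary\scat{C}_{c'}\inc\scat{C}_{c'})$ is a map of sets.

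The remaining task, and the only point at which any Reedy combinatorics enters, is the identification of $\gamma_{c,c'}$, which I will carry out in the style of observation~\ref{obs:building-skeleta-rep}. By the Yoneda lemma~\ref{ex:weighted-yoneda}, the codomain of $\gamma_{c,c'}$ is $\scat{C}(c',c)$, and the two outer corners of its defining pushout are the set of arrows $c'\to c$ not lying in $\direct{\scat{C}}$ and the set of those not lying in $\inverse{\scat{C}}$ (using observation~\ref{obs:cons-skeleta} to describe $\boundary\scat{C}_{c'}$ and $\boundary\scat{C}^c$). Factoring an arrow $e\colon c'\to c$ through the intermediate object of its canonical Reedy factorisation shows that this square is a pullback, whence $\gamma_{c,c'}$ is a monomorphism whose image in $\scat{C}(c',c)$ is the union of those two subsets; its complement is therefore $\{\id_c\}$ when $c'=c$ and empty otherwise. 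Consequently $\gamma_{c,c'}\leib\tns i$ is an isomorphism when $c'\neq c$ and a pushout of a single copy of $i$ when $c'=c$, and in either case lies in $\cell(\mclass{I})$; since $\leib{\latch^c}(f)$ is a transfinite composite of pushouts of coproducts of such maps, $\leib{\latch^c}(f)\in\cell(\mclass{I})$ as required. I expect the pullback claim for $\gamma_{c,c'}$ to be the main obstacle; everything else is bookkeeping with cell presentations, but this last point is exactly where the uniqueness and minimality of Reedy factorisations from lemma~\ref{lem:reedy-fact-connect} are needed.
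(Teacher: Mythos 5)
Your proof is correct and follows essentially the same route as the paper's: the ``if'' direction via proposition~\ref{prop:building-up} and proposition~\ref{prop:leibniz-tcofp}, and the ``only if'' direction via example~\ref{ex:latch-comp}, the associativity isomorphisms of observation~\ref{obs:leibniz-isos}, and the identification of $(\boundary\scat{C}^c\inc\scat{C}^c)\leib\wcolim_{\scat{C}}(\boundary\scat{C}_{c'}\inc\scat{C}_{c'})$ as the inclusion of the union $(\boundary\scat{C}_{c'})^c\cup(\boundary\scat{C}^c)_{c'}\inc\scat{C}^c_{c'}$. Your only departure is to pin down the complement of that union as $\{\id_c\}$ or $\emptyset$, where the paper merely invokes the general decomposition of a Leibniz tensor with a set inclusion into a coproduct of cells and identities; both suffice.
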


  \begin{proof} This is now a straightforward matter of applying formal manipulations with the Leibniz operation:
    \begin{description} 
    \item[\textbf{``if''}] We know, from proposition~\ref{prop:building-up}, that $f$ admits a presentation as a cell complex whose cells are maps of the form $(\boundary\scat{C}_c\inc\scat{C}_c)\leib\tns \leib{\latch^c}f$. So if $f$ is a map which satisfies the assumption of the statement that each of its latching maps is in $\cell(\mclass{I})$ then the presentation of the last sentence suffices to demonstrate that $f$ is an element of $\cell(\mclass{B}\leib\tns\cell(\mclass{I}))$. Now $\mclass{B}\subseteq \cell(\mclass{B})$ from which we may infer that $\mclass{B}\leib\tns\cell(\mclass{I})\subseteq\cell(\mclass{B})\leib\tns\cell(\mclass{I})$, furthermore proposition~\ref{prop:leibniz-tcofp} demonstrates that $\cell(\mclass{B})\leib\tns\cell(\mclass{I})\subseteq\cell(\mclass{B}\leib\tns\mclass{I})$ so combining these inclusions it follows that $\mclass{B}\leib\tns\cell(\mclass{I})\subseteq\cell(\mclass{B}\leib\tns\mclass{I})$. Taking cell complexes on both sides of this latter inclusion we get $\cell(\mclass{B}\leib\tns\cell(\mclass{I}))\subseteq\cell(\cell(\mclass{B}\leib\tns\mclass{I}))=\cell(\mclass{B}\leib\tns\mclass{I})$ so since we have already shown that $f$ is in $\cell(\mclass{B}\leib\tns\cell(\mclass{I}))$ it follows therefore that it is in $\cell(\mclass{B}\leib\tns\mclass{I})$ as required.

    \item[\textbf{``only if''}] If $f$ is in $\cell(\mclass{B}\leib\tns\mclass{I})$ then it admits a presentation as a cell complex whose cells are of the form $(\boundary\scat{C}_{\bar{c}}\inc\scat{C}_{\bar{c}})\leib\tns f'_i$ with $f'_i\in\mclass{I}$. So, by example~\ref{ex:latch-comp}, we know that the relative latching map $\leib{\latch^c}f \cong (\boundary\scat{C}^c\inc\scat{C}^c)\leib\wcolim_{\scat{C}} f$ admits a presentation as a cell complex whose cells are of the form $(\boundary\scat{C}^c\inc\scat{C}^c)\leib\wcolim_{\scat{C}}((\boundary\scat{C}_{\bar{c}}\inc\scat{C}_{\bar{c}})\leib\tns f'_i)$ with $f'_i\in\mclass{I}$.

    Now if $U$ is an object in $\Set^{\scat{C}\op}$, $V$ is an object of $\Set^{\scat{C}}$ and $X$ is an object of $\lcat{M}$ then it is easily checked that the object $U\wcolim_{\scat{C}} (V\tns X)$ is naturally isomorphic to $(U\wcolim_{\scat{C}} V)\tns X$ in $\lcat{M}$. As observed in~\ref{obs:leibniz-isos}, these isomorphisms pass to the corresponding Leibniz operations, which in particular provide us with an isomorphism:
    \begin{equation*}
      (\boundary\scat{C}^c\inc\scat{C}^c)\leib\wcolim_{\scat{C}}((\boundary\scat{C}_{\bar{c}}\inc\scat{C}_{\bar{c}})\leib\tns f'_i)
      \cong
      ((\boundary\scat{C}^c\inc\scat{C}^c)\leib\wcolim_{\scat{C}}((\boundary\scat{C}_{\bar{c}}\inc\scat{C}_{\bar{c}}))\leib\tns f'_i
    \end{equation*}
    Furthermore a simple computation, using Yoneda's lemma in the form of example~\ref{ex:latch-comp}, reveals that the Leibniz colimit $(\boundary\scat{C}^c\inc\scat{C}^c)\leib\wcolim_{\scat{C}}(\boundary\scat{C}_{\bar{c}}\inc\scat{C}_{\bar{c}})$ is isomorphic to the set inclusion $(\boundary\scat{C}_{\bar{c}})^c\cup(\boundary\scat{C}^c)_{\bar{c}}\inc\scat{C}_{\bar{c}}^c$. So we have succeeded in showing that the relative latching map $\latch^c(f)$ admits a presentation as a cell complex whose cells are of the form $((\boundary\scat{C}_{\bar{c}})^c\cup(\boundary\scat{C}^c)_{\bar{c}}\inc\scat{C}_{\bar{c}}^c)\leib\tns f'_i$ with $f'_i\in\mclass{I}$.
  
  Now observe that if $i\colon U\inc V$ is an inclusion of sets and $g\colon A\to B$ is any map in $\lcat{M}$ then the Leibniz tensor $(i\colon U\inc V)\leib\tns (g\colon A\to B)$ is isomorphic to a coproduct $((V\setminus U)\tns g) \cprod (U\tns B)$. In particular, it follows that each $((\boundary\scat{C}_{\bar{c}})^c\cup(\boundary\scat{C}^c)_{\bar{c}}\inc\scat{C}_{\bar{c}}^c)\leib\tns f'_i$ may be expressed as a coproduct of a certain number of copies of $f'_i$ and a certain number of copies of the identity on the codomain of $f'_i$. However $\cell(\mclass{I})$ is closed under coproducts and identities, so since $f'_i$ is in there so is the coproduct of the last sentence. Finally, we have now shown that $\leib{\latch^c}f$ admits a presentation as a cell complex all of whose cells are in $\cell(\mclass{I})$, from which we may infer that $\leib{\latch^c}f$ is a member of $\cell(\cell(\mclass{I}))=\cell(\mclass{I})$ as required.\qedhere
    \end{description}
  \end{proof}

  \begin{cor}\label{cor:B-cell-complex}
    A map $f\colon X\to Y$ in $\Set^{\scat{C}}$ is a $\mclass{B}$-cell complex if and only if its relative latching maps are all monomorphisms.
  \end{cor}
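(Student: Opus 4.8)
The plan is to obtain this as the special case $\lcat{M}=\Set$ of Corollary~\ref{cor:building-up}, applied to a carefully chosen class $\mclass{I}$ of maps in $\Set$. I would take $\mclass{I}$ to be the singleton $\{\emptyset\inc *\}$, where $\emptyset$ is the initial (empty) set and $*$ is a one-point set. Corollary~\ref{cor:building-up} then asserts that a map $f$ of $\Set^{\scat{C}}$ lies in $\cell(\mclass{B}\leib\tns\{\emptyset\inc *\})$ if and only if all of its relative latching maps lie in $\cell(\{\emptyset\inc *\})$, so everything reduces to two identifications: that $\cell(\{\emptyset\inc *\})$ is exactly the class of monomorphisms of $\Set$, and that $\cell(\mclass{B}\leib\tns\{\emptyset\inc *\})$ is exactly $\cell(\mclass{B})$, the class of $\mclass{B}$-cell complexes.

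The first identification is elementary. Since $\emptyset\inc *$ is a monomorphism and monomorphisms in $\Set$ are stable under coproducts, pushout along arbitrary maps, and transfinite composition, the smallest class containing $\emptyset\inc*$ and closed under these operations consists of monomorphisms, giving $\cell(\{\emptyset\inc *\})\subseteq\{\text{monos}\}$. Conversely, any monomorphism may be taken to be a genuine subset inclusion $A\inc B$, and the square
\begin{equation*}
  \xymatrix@=2em{
    {\emptyset}\ar[r]\ar[d] & {B\setminus A}\ar[d] \\
    {A}\ar[r] & {B}
  }
\end{equation*}
exhibits $A\inc B$ as a single pushout of the $(B\setminus A)$-indexed coproduct of copies of $\emptyset\inc *$; hence $A\inc B$ lies in $\cell(\{\emptyset\inc *\})$. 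This is the one step where even a modicum of care is required, and it is immediate from the most basic properties of $\Set$, so there is no genuine obstacle here.

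The second identification uses only that the copower bifunctor $\tns\colon\Set^{\scat{C}}\times\Set\to\Set^{\scat{C}}$ has $-\tns *$ naturally isomorphic to the identity and $-\tns\emptyset$ constant at the initial object. Feeding this into the defining pushout~\eqref{eq:13} of the Leibniz tensor shows that, for each boundary inclusion $b_c=(\boundary\scat{C}_c\inc\scat{C}_c)$, the map $b_c\leib\tns(\emptyset\inc *)$ is isomorphic to $b_c$ itself: the pushout in~\eqref{eq:13} reduces to the span $\emptyset\leftarrow\emptyset\to\boundary\scat{C}_c$, and the induced map from it into $\scat{C}_c$ is just $b_c$. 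Thus $\mclass{B}\leib\tns\{\emptyset\inc *\}$ and $\mclass{B}$ agree up to isomorphism of their members, and since any presentation of a map as a cell complex may be transported along such isomorphisms of its cells, we conclude $\cell(\mclass{B}\leib\tns\{\emptyset\inc *\})=\cell(\mclass{B})$. Combining the two identifications with Corollary~\ref{cor:building-up} then gives the stated equivalence.
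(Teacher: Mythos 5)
Your proof is correct and follows essentially the same route as the paper: the paper's proof is precisely the one-line observation that one should apply corollary~\ref{cor:building-up} with $\mclass{I}=\{\emptyset\inc 1\}$, for which $\cell(\mclass{I})$ is the class of injections in $\Set$. You have merely made explicit the two identifications ($\cell(\{\emptyset\inc 1\})=$ monomorphisms, and $b_c\leib\tns(\emptyset\inc 1)\cong b_c$ so that $\cell(\mclass{B}\leib\tns\mclass{I})=\cell(\mclass{B})$) that the paper leaves implicit, and both are verified correctly.
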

  
  \begin{proof}
    Immediate from the last proposition by simply taking $\mclass{I}$ to be the set $\{\emptyset\inc 1\}$ for which $\cell(\mclass{I})$ is the class of injective maps in $\Set$.
  \end{proof}

  \begin{ex}\label{ex:EZ.simp.set}
    The Eilenberg-Zilber lemma implies that any monomorphism $m\colon X\to Y$ in the category of simplicial sets has monomorphic relative latching maps \cite{GabrielZisman:1967:CFHT}. So on applying corollary \ref{cor:B-cell-complex} we recover the usual skeletal decomposition of $m$ and the fact that we can build up $m$ by successively adjoining standard simplices along their boundaries.

More generally, the conditions of corollary \ref{cor:B-cell-complex} are satisfied whenever $\scat{C}\op$ is an \emph{elegant Reedy category} in the sense of \cite{BergnerRezk:2012rc}.
  \end{ex}
  
\section{Proof of the Reedy model structure}\label{sec:proof}

  Suppose now that $\lcat{M}$ is a model category and $\scat{C}$ is a Reedy category. As an example of the utility of our presentation of the theory of Reedy categories, we now present a relatively efficient proof of theorem \ref{thm:model-structure}, establishing the Reedy model structure on  $\lcat{M}^{\scat{C}}$.

  \begin{lem}\label{lem:triv-cof-char}
A map $f \in \lcat{M}^{\scat{C}}$ is both a Reedy cofibration and a pointwise weak equivalence if and only if the relative latching map $\leib{\latch^c}f$ is a trivial cofibration in $\lcat{M}$ for all objects $c\in\scat{C}$.
  \end{lem}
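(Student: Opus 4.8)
The plan is to prove the two implications separately: the ``if'' direction will follow almost formally from the cellular presentation of Proposition~\ref{prop:building-up}, while the ``only if'' direction will require an induction over degree. In both parts I will use without comment the standard fact that the trivial cofibrations $\mclass{T}$ of the model category $\lcat{M}$, forming the left class of a weak factorisation system, are closed under coproducts, pushouts and transfinite composites, so that $\cell(\mclass{T})=\mclass{T}$; and I will use repeatedly that for a monomorphism $i$ of sets and a map $g$ of $\lcat{M}$ the Leibniz tensor $i\leib\tns g$ is a coproduct of copies of $g$ and of identities on the codomain of $g$ (as in the proof of Corollary~\ref{cor:building-up}), so that $i\leib\tns g\in\mclass{T}$ whenever $g\in\mclass{T}$.

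For the ``if'' direction, suppose every relative latching map $\leib{\latch^c}f$ is a trivial cofibration. Each is then in particular a cofibration, so $f$ is a Reedy cofibration and it remains only to check that $f$ is a pointwise weak equivalence. By Proposition~\ref{prop:building-up}, $f$ admits a presentation as a cell complex whose cells are the maps $(\boundary\scat{C}_c\inc\scat{C}_c)\leib\tns\leib{\latch^c}(f)$. Fix $c\in\scat{C}$: evaluation at $c$ is a cocontinuous functor $\lcat{M}^{\scat{C}}\to\lcat{M}$, hence carries this presentation to a presentation of $f^c$ as a cell complex whose cells are the images $((\boundary\scat{C}_d)^c\inc\scat{C}(d,c))\leib\tns\leib{\latch^d}(f)$ of the original cells, that is, monomorphisms of sets Leibniz-tensored with trivial cofibrations of $\lcat{M}$. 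By the opening remarks each such cell lies in $\mclass{T}$, so $f^c\in\cell(\mclass{T})=\mclass{T}$; in particular $f^c$ is a weak equivalence.

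For the ``only if'' direction, suppose $f$ is a Reedy cofibration and a pointwise weak equivalence; then each $\leib{\latch^c}f$ is a cofibration, and I would prove it is a weak equivalence by induction on $n=\deg(c)$. The first step is to factor $f^c$: applying Observation~\ref{obs:leibniz-comp} to the $\wcolim_{\scat{C}}$-Leibniz tensor (in the weight variable) of the composite $\emptyset\inc\boundary\scat{C}^c\inc\scat{C}^c$ with $f$, and using, as in the proof of Proposition~\ref{prop:building-up}, that $(\emptyset\inc A)\leib\wcolim_{\scat{C}}f\cong A\wcolim_{\scat{C}}f$ together with $\scat{C}^c\wcolim_{\scat{C}}f\cong f^c$ (Yoneda, \ref{ex:weighted-yoneda}), exhibits $f^c$ as a composite $\leib{\latch^c}f\circ k$ in which $k$ is a pushout of $\latch^c(f)=\boundary\scat{C}^c\wcolim_{\scat{C}}f$. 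If $\latch^c(f)$ is a trivial cofibration then so is $k$, hence $k$ is a weak equivalence, and the two-out-of-three property applied to $f^c=\leib{\latch^c}f\circ k$ --- with $f^c$ a weak equivalence --- shows $\leib{\latch^c}f$ is a weak equivalence, closing the induction. So the substance of this direction is the claim that $\latch^c(f)\in\mclass{T}$. To prove it, recall from Observation~\ref{obs:building-skeleta-rep} that $\emptyset\inc\sk_{n-1}(\scat{C})$ is a cell complex in $\Set^{\scat{C}\op\times\scat{C}}$ whose cells are the exterior Leibniz products $(\boundary\scat{C}_d\inc\scat{C}_d)\leib\etimes(\boundary\scat{C}^d\inc\scat{C}^d)$ with $\deg(d)\le n-1$; evaluating at $c$ in the covariant slot (a cocontinuous operation) and identifying $\boundary\scat{C}^c=\sk_{n-1}(\scat{C})^c$, then applying Lemma~\ref{lem:leibniz-tcofp} to the bifunctor $\wcolim_{\scat{C}}$ (cocontinuous in each variable) and the associativity isomorphisms of Observation~\ref{obs:leibniz-isos}, exactly as in the proof of Proposition~\ref{prop:building-up}, presents $\latch^c(f)=(\emptyset\inc\boundary\scat{C}^c)\leib\wcolim_{\scat{C}}f$ as a cell complex whose cells are monomorphisms of sets Leibniz-tensored with the maps $\leib{\latch^d}(f)$ for $\deg(d)\le n-1$. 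By the inductive hypothesis each $\leib{\latch^d}(f)$ with $\deg(d)<n$ is a trivial cofibration, hence so is each cell, hence $\latch^c(f)\in\cell(\mclass{T})=\mclass{T}$. The base case $n=0$ is the instance of this argument with no cells, where $\latch^c(f)$ is the identity on the initial object of $\lcat{M}$.

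I expect the main obstacle to be purely one of bookkeeping in the ``only if'' direction: one must verify that evaluating the two-sided presentation of Observation~\ref{obs:building-skeleta-rep} at $c$ really does yield the asserted cellular presentation of $\emptyset\inc\boundary\scat{C}^c$ --- i.e.\ that the cocontinuous evaluation functor intertwines the exterior product $\leib\etimes$ with the set-copower $\leib\tns$ --- and that the isomorphism $(S\tns W)\wcolim_{\scat{C}}X\cong S\tns(W\wcolim_{\scat{C}}X)$, for a set $S$, weight $W$ and diagram $X$, passes to the Leibniz operations in the manner of Observation~\ref{obs:leibniz-isos}. These checks are routine once the variances are pinned down, but keeping them straight is the one place where care is needed.
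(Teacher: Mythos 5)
Your proof is correct, and your ``only if'' direction coincides with the paper's argument: the paper runs a single induction on degree showing (for a Reedy cofibration $f$) that $\leib{\latch^c}f$ is a weak equivalence if and only if $f^c$ is, using exactly your factorisation of $f^c$ as $\leib{\latch^c}f$ composed with a pushout of $\latch^c(f)=(\emptyset\inc\boundary\scat{C}^c)\leib\wcolim_{\scat{C}}f$, together with the truncated cellular presentation of $\emptyset\inc\boundary\scat{C}^c$ from observation~\ref{obs:building-skeleta-rep}. Your ``if'' direction, obtained by evaluating the cellular presentation of proposition~\ref{prop:building-up} at $c$, is a mild streamlining of what the paper gets as the other half of that same biconditional induction; both routes are sound.
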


In other words, a map is a Reedy trivial cofibration just when each of its relative latching maps is a trivial cofibration.

  \begin{proof}
  Suppose $f$ is a Reedy cofibration. We show that $f$ is a pointwise weak equivalence if and only if each relative latching map is a weak equivalence. Note that if $c$ has degree zero, the relative latching map $\leib{\latch^c}f$ is simply the map $f^c$, so these conditions coincide. We proceed inductively by considering an object $c\in\scat{C}$ and assuming that the relative latching map $\leib{\latch^d}f$ and the component $f^d$ are both weak equivalences whenever $d\in\scat{C}$ is an object with $\deg(d) < \deg(c)$. 

  By Yoneda's lemma we know that the Leibniz colimit $(\emptyset\inc\scat{C}^c)\leib\wcolim_{\scat{C}} f\cong \scat{C}^c\wcolim_{\scat{C}}f$ is simply isomorphic to the component $f^c$ of our map $f$. Furthermore we may decompose the inclusion $\emptyset\inc\scat{C}^c$ as a composite of $\emptyset\inc\boundary\scat{C}^c$ and $\boundary\scat{C}^c\inc\scat{C}^c$ thus, applying observation~\ref{obs:leibniz-comp}, we see that $f^c$ is isomorphic to a composite of the relative latching map $\leib{\latch^c}f$ and a pushout of the Leibniz tensor $(\emptyset\inc\boundary\scat{C}^c)\leib\wcolim_{\scat{C}}f \cong \boundary\scat{C}^c \wcolim_{\scat{C}}f	$. 

  Now observation~\ref{obs:building-skeleta-rep}, truncated at $\sk_{\deg(c)-1}\scat{C}$, tells us that the inclusion $\emptyset\inc\boundary\scat{C}^c$ may be expressed as a transfinite composite of pushouts of boundary maps $\boundary\scat{C}^d\inc\scat{C}^d$ in which $d$ occurs as the domain  of some non-identity map $ d\to c$ in $\direct{\scat{C}}$, which means in particular that $\deg(d) < \deg(c)$. Now applying lemma~\ref{lem:leibniz-tcofp} this in turn implies that the Leibniz tensor $(\emptyset\inc\boundary\scat{C}^c)\leib\wcolim_{\scat{C}}f$ may be expressed as a transfinite composite of pushouts of relative latching maps $\leib{\latch^d}f=(\boundary\scat{C}^d\inc\scat{C}^d)\leib\wcolim_{\scat{C}}f$ for which $\deg(d)<\deg(c)$. However, by the inductive hypothesis we have already shown that for each object $d\in\scat{C}$ with $\deg(d) < \deg(c)$ the relative latching map $\leib{\latch^d}f$ is a trivial cofibration so it follows that $(\emptyset\inc\boundary\scat{C}^c)\leib\wcolim_{\scat{C}}f$ is a transfinite composite of pushouts of trivial cofibrations and is thus itself a trivial cofibration.

  So, in summary, we discover that $f^c$ is a composite of the relative latching map $\leib{\latch^c}f$ and a pushout of the trivial cofibration $(\emptyset\inc\boundary\scat{C}^c)\leib\wcolim_{\scat{C}}f$. Of course, any pushout of a trivial cofibration is a trivial cofibration, and thus a weak equivalence, so by the 2-of-3 property for weak equivalences it follows that the relative latching map $\leib{\latch^c}f$ is a weak equivalence if and only if $f^c$ is a weak equivalence.
  \end{proof}
  
  \begin{obs}[Reedy cofibrations are pointwise cofibrations]
Note that the proof of this result, which expresses $f^c$ as a composite of the relative latching map $\leib{\latch^c}f$ with a transfinite composite of pushouts of relative latching maps, also demonstrates that a Reedy cofibration is a pointwise cofibration. Dually, a Reedy fibration is a pointwise fibration. In particular, a Reedy cofibrant diagram is pointwise cofibrant and a Reedy fibrant diagram is pointwise fibrant. The converse implications do not hold.
  \end{obs}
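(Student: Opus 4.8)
The plan is to observe that the assertion is an immediate consequence of the factorisation of $f^c$ already exhibited in the proof of lemma~\ref{lem:triv-cof-char}, the only difference being that the hypothesis of a Reedy cofibration supplies the relevant property \emph{unconditionally at every degree}, so that no inductive bootstrapping is required. Throughout, write $f\colon X\to Y$ for a Reedy cofibration, so that by definition each relative latching map $\leib{\latch^c}f$ is a cofibration in $\lcat{M}$.

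First I would recall the factorisation of the components. For an arbitrary $c\in\scat{C}$, Yoneda's lemma (example~\ref{ex:weighted-yoneda}) gives $f^c\cong(\emptyset\inc\scat{C}^c)\leib\wcolim_{\scat{C}}f$, and decomposing the inclusion as $\emptyset\inc\boundary\scat{C}^c\inc\scat{C}^c$ and applying observation~\ref{obs:leibniz-comp} exhibits $f^c$ as a composite of the relative latching map $\leib{\latch^c}f=(\boundary\scat{C}^c\inc\scat{C}^c)\leib\wcolim_{\scat{C}}f$ with a pushout of the map $(\emptyset\inc\boundary\scat{C}^c)\leib\wcolim_{\scat{C}}f\cong\boundary\scat{C}^c\wcolim_{\scat{C}}f$. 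Next I would analyse this second factor using observation~\ref{obs:building-skeleta-rep}, truncated at $\sk_{\deg(c)-1}\scat{C}$: the inclusion $\emptyset\inc\boundary\scat{C}^c$ is a transfinite composite of pushouts of boundary inclusions $\boundary\scat{C}^d\inc\scat{C}^d$ in which $d$ occurs as the domain of a non-identity arrow of $\direct{\scat{C}}$ into $c$, so in particular $\deg(d)<\deg(c)$. Feeding this cell presentation into lemma~\ref{lem:leibniz-tcofp} (with the cocontinuous bifunctor $\wcolim_{\scat{C}}$ in its first variable) shows that $(\emptyset\inc\boundary\scat{C}^c)\leib\wcolim_{\scat{C}}f$ is itself a transfinite composite of pushouts of the relative latching maps $\leib{\latch^d}f$ with $\deg(d)<\deg(c)$.

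The proof is then completed by a closure argument. Since $f$ is a Reedy cofibration, every $\leib{\latch^d}f$ is a cofibration in $\lcat{M}$, and the class of cofibrations in any model category is closed under pushout and transfinite composition. Hence $(\emptyset\inc\boundary\scat{C}^c)\leib\wcolim_{\scat{C}}f$ is a cofibration, any pushout of it is a cofibration, and therefore $f^c$---being a composite of the cofibration $\leib{\latch^c}f$ with such a pushout---is a cofibration as well. As $c$ was arbitrary, $f$ is a pointwise cofibration. The dual statement follows by running the identical argument in the model category $\lcat{M}\op$ over the Reedy category $\scat{C}\op$, under which relative matching maps of $p$ are identified with relative latching maps and fibrations with cofibrations; thus a Reedy fibration is a pointwise fibration. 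The two \emph{in particular} claims are the special cases $f=(\emptyset\to X)$ and $p=(X\to *)$, for which $\leib{\latch^c}f$ and $\leib{\match^c}p$ reduce to the ordinary latching and matching maps.

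The main point---rather than an obstacle---is to notice that the inductive hypothesis of lemma~\ref{lem:triv-cof-char} plays no role here. In that lemma induction was needed precisely because the property ``weak equivalence'' had to be propagated upward through the degree filtration; by contrast, ``cofibration'' is built into the definition of a Reedy cofibration and so is available at \emph{all} degrees at once. Everything else is simply the stability of the class of cofibrations under the colimit-type constructions produced by the Leibniz machinery.
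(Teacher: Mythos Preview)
Your argument is correct and follows precisely the approach the paper intends: the observation has no separate proof in the paper but simply points out that the decomposition of $f^c$ established in the proof of lemma~\ref{lem:triv-cof-char} already exhibits each component as a transfinite composite of pushouts of relative latching maps, and that closure of cofibrations under these operations yields the pointwise conclusion. Your remark that no induction is needed here because the cofibration hypothesis is available at all degrees simultaneously is exactly the point.
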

  
\begin{lem}[lifting]\label{lem:lifting} Suppose $i$ is a Reedy cofibration and $p$ is a Reedy fibration in $\lcat{M}^{\scat{C}}$. If either $i$ or $p$ is a pointwise weak equivalence, then any lifting problem \eqref{eq:i-lift-p} has a solution.
\end{lem}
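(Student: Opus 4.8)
The plan is to reduce this single lifting problem in $\lcat{M}^{\scat{C}}$ to a whole family of lifting problems in $\lcat{M}$, indexed by the objects of $\scat{C}$, using the degreewise inductive construction of observation~\ref{obs:relative.lifting}. That observation tells us that the square~\eqref{eq:i-lift-p} admits a solution provided that, for every $c\in\scat{C}$, the relative latching map $\leib{\latch^c}i$ has the left lifting property against the relative matching map $\leib{\match^c}p$. So the entire proof boils down to verifying this one lifting property for each $c$.

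To do that I would invoke the characterisations of the Reedy classes in terms of these relative maps. Since $i$ is a Reedy cofibration and $p$ is a Reedy fibration, theorem~\ref{thm:model-structure} gives that $\leib{\latch^c}i$ is a cofibration and $\leib{\match^c}p$ is a fibration in $\lcat{M}$, for every $c$. Now split into the two cases. If in addition $i$ is a pointwise weak equivalence, then lemma~\ref{lem:triv-cof-char} promotes each $\leib{\latch^c}i$ to a \emph{trivial} cofibration, and the desired lift exists because trivial cofibrations lift against fibrations in any model category. Dually, if $p$ is a pointwise weak equivalence, then the dual of lemma~\ref{lem:triv-cof-char} shows that each $\leib{\match^c}p$ is a trivial fibration, and the lift exists because cofibrations lift against trivial fibrations. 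Either way, observation~\ref{obs:relative.lifting} assembles these pointwise lifts into a solution $t$ of~\eqref{eq:i-lift-p}.

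There is really no serious obstacle here once lemma~\ref{lem:triv-cof-char} and observation~\ref{obs:relative.lifting} are available; the lemma is essentially a formal consequence of the bookkeeping already done. The only point needing a word of care is the appeal to the dual of lemma~\ref{lem:triv-cof-char}, which is not stated explicitly above: its proof is obtained from that of lemma~\ref{lem:triv-cof-char} by the evident dualisation (replacing skeleta of representables by coskeleta, latching objects by matching objects, pushouts by pullbacks, transfinite composites of pushouts by transfinite ``Postnikov'' towers of pullbacks, and trivial cofibrations by trivial fibrations throughout), so I would simply remark that it holds by duality rather than reproving it.
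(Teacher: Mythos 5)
Your argument is correct, but it takes the route that the paper explicitly mentions and then declines: the authors write that the lemma ``follows from the construction of observation~\ref{obs:relative.lifting}, but we prefer a different argument.'' You build the lift $t$ degreewise by the inductive procedure of observation~\ref{obs:relative.lifting}, reducing everything to the pointwise lifting problems of $\leib{\latch^c}i$ against $\leib{\match^c}p$; the paper instead invokes proposition~\ref{prop:building-up} to present $i$ as a cell complex with cells $(\boundary\scat{C}_c\inc\scat{C}_c)\leib\tns\leib{\latch^c}i$, so that it suffices to lift each cell against $p$, and then transposes via the Leibniz adjunction of observation~\ref{obs:leibniz-lifting-properties} to arrive at exactly the same pointwise problem between $\leib{\latch^c}i$ and $\leib{\match^c}p$. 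Both arguments then close in the same way, citing lemma~\ref{lem:triv-cof-char} and its dual. Your version is more elementary and self-contained but carries the inductive bookkeeping (including the appeal to observation~\ref{obs:inductive.defn} to assemble the components into a natural transformation); the paper's version is slicker once the cellular presentation is in hand, fits the ``it's all in the weights'' philosophy, and is the form that feeds directly into the cofibrant generation result of proposition~\ref{prop:reedy.cof.gen}. One small phrasing point: the fact that $\leib{\latch^c}i$ is a cofibration and $\leib{\match^c}p$ is a fibration is the \emph{definition} of Reedy cofibration and fibration as recorded in the statement of theorem~\ref{thm:model-structure}, not a consequence of that theorem --- citing the theorem here could be misread as circular, since the lemma is a step in its proof. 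Your remark that the dual of lemma~\ref{lem:triv-cof-char} is obtained by the evident dualisation is exactly what the paper intends by ``and its dual.''
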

\begin{proof} This result follows from the construction of observation~\ref{obs:relative.lifting}, but we prefer a different argument. By proposition \ref{prop:building-up}, $i$ can be expressed as a cell complex whose cells have the form $(\boundary\scat{C}_c\inc\scat{C}_c) \leib\tns \leib{\latch^c}i$. Hence, it suffices to show that for any $c \in \scat{C}$, the map $(\boundary\scat{C}_c\inc\scat{C}_c) \leib\tns \leib{\latch^c}i$ lifts against $p$. By observation \ref{obs:leibniz-lifting-properties} a lifting problem of the form displayed on the left transposes to a lifting problem between $\leib{\latch^c} i$ and $\leibwlim{\boundary\scat{C}_c\inc\scat{C}}{p}_\scat{C} \cong \leib{\match^c}p$. By lemma \ref{lem:triv-cof-char} and its dual, the model structure on $\lcat{M}$ provides a solution to this lifting problem.
\end{proof}

\begin{lem}[factorisation]\label{lem:factorisation} Any map $f \colon X \to Y$ in $\lcat{M}^{\scat{C}}$ can be factorised as a Reedy trivial cofibration followed by a Reedy fibration and as a Reedy cofibration followed by a Reedy trivial fibration.
\end{lem}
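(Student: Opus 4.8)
The plan is to build the factorisation by induction on the degree filtration $\reedyfilt\scat{C}_0 \subseteq \reedyfilt\scat{C}_1 \subseteq \cdots$ of $\scat{C}$, reducing the construction at each stage to a single factorisation in the model category $\lcat{M}$ and then gluing the stages together by means of Lemma~\ref{lem:inductive.defn} and Observation~\ref{obs:inductive.defn}. I will describe the construction of a factorisation $X \xrightarrow{i} Z \xrightarrow{p} Y$ in which $i$ is a Reedy trivial cofibration and $p$ a Reedy fibration; replacing ``trivial cofibration followed by fibration'' by ``cofibration followed by trivial fibration'' throughout gives the other factorisation.

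For the inductive step, suppose $Z$ has been constructed on $\reedyfilt\scat{C}_{n-1}$ together with natural transformations $\res_{n-1}X \xrightarrow{i} Z \xrightarrow{p} \res_{n-1}Y$ whose composite is $\res_{n-1}f$ (for $n=0$ there is nothing to assume, as $\reedyfilt\scat{C}_{-1}$ is empty). Fix $c \in \scat{C}$ with $\deg(c) = n$. Since the functors $\latch^c(-)$ and $\match^c(-)$ depend only on the restriction to degrees $< n$, the objects $\latch^c Z, \match^c Z$ and the maps $\latch^c i, \latch^c p, \match^c i, \match^c p$ are already defined, and I can form the pushout $L_c \defeq X^c \cup_{\latch^c X} \latch^c Z$ and the pullback $M_c \defeq Y^c \times_{\match^c Y} \match^c Z$, which will be the domain of $\leib{\latch^c}i$ and the codomain of $\leib{\match^c}p$. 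The key point is that there is a canonical comparison map $\gamma_c \colon L_c \to M_c$: on the summand $X^c$ it is $(f^c,\ \match^c i \circ m^{X,c})$, and on the summand $\latch^c Z$ it is $(l^{Y,c}\circ\latch^c p,\ (\tau_{n-1})^{Z,c})$, where $(\tau_{n-1})^{Z,c}\colon \latch^c Z \to \match^c Z$ is the component at $c$ of the canonical natural transformation $\tau_{n-1}\colon\sk_{n-1}\to\cosk_{n-1}$, whose component at any degree-$n$ object factors as a latching map followed by a matching map. That each of these partial maps lands in the pullback $M_c$, and that they agree on $\latch^c X$, are exactly the naturality statements for the latching and matching maps of $i$ and $p$ and for $\tau_{n-1}$. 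I would then use the model structure on $\lcat{M}$ to factor $\gamma_c$ as a trivial cofibration $j_c\colon L_c \to Z^c$ followed by a fibration $q_c\colon Z^c \to M_c$, and read off $i^c$ and $l^{Z,c}$ by composing $j_c$ with the two coprojections into $L_c$, and $p^c$ and $m^{Z,c}$ by composing $q_c$ with the two projections out of $M_c$.

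It then remains to see that this local data assembles into the desired global objects. Because $q_c j_c = \gamma_c$ restricts on $\latch^c Z$ to a map whose $\match^c Z$-component is $(\tau_{n-1})^{Z,c}$, the data $\latch^c Z \xrightarrow{l^{Z,c}} Z^c \xrightarrow{m^{Z,c}} \match^c Z$ (for $\deg(c)=n$) are factorisations of the maps $(\tau_{n-1})^{Z,c}$, so Lemma~\ref{lem:inductive.defn} extends $Z$ to a diagram on $\reedyfilt\scat{C}_n$. Likewise $p^c i^c = f^c$, and the squares relating $i^c$ and $p^c$ to the latching and matching maps of $i$ and $p$ commute --- this is immediate from the universal properties of the pushout $L_c$ and the pullback $M_c$ together with the defining property of $\gamma_c$ --- so Observation~\ref{obs:inductive.defn} extends $i$ and $p$ to natural transformations on $\reedyfilt\scat{C}_n$ with composite $\res_n f$. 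By construction $\leib{\latch^c}i = j_c$ is a trivial cofibration and $\leib{\match^c}p = q_c$ is a fibration in $\lcat{M}$. Since every object of $\scat{C}$ has finite degree, iterating over all $n$ yields a diagram $Z \in \lcat{M}^{\scat{C}}$ and a factorisation $X \xrightarrow{i} Z \xrightarrow{p} Y$ of $f$ all of whose relative latching maps $\leib{\latch^c}i$ are trivial cofibrations and all of whose relative matching maps $\leib{\match^c}p$ are fibrations; by Lemma~\ref{lem:triv-cof-char} the former makes $i$ a Reedy trivial cofibration, and by the definition of the Reedy fibrations the latter makes $p$ a Reedy fibration. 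Running the same argument with each $\gamma_c$ factored instead as a cofibration followed by a trivial fibration, and applying the dual of Lemma~\ref{lem:triv-cof-char}, produces the factorisation of $f$ as a Reedy cofibration followed by a Reedy trivial fibration.

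The one step that requires genuine care, rather than bookkeeping, is the construction of the comparison maps $\gamma_c$ and the verification that their defining properties imply all of the commutativity hypotheses needed to invoke Lemma~\ref{lem:inductive.defn} and Observation~\ref{obs:inductive.defn}. Each such verification is a diagram chase with the universal properties of $L_c$ and $M_c$ and the naturality of the latching and matching maps and of $\tau_{n-1}$; none of it is deep, but it is where the real work lies. Once the $\gamma_c$ are in hand, the passage from the $\lcat{M}$-factorisations to the Reedy classes of maps is purely formal, supplied by Lemma~\ref{lem:triv-cof-char} and its dual.
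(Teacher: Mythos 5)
Your proposal is correct and follows essentially the same route as the paper: an induction over degree in which, at each new object $c$, one factorises the canonical comparison map $X^c \cup_{\latch^c X} \latch^c Z \to Y^c \times_{\match^c Y} \match^c Z$ in $\lcat{M}$, identifies the two resulting maps as the relative latching map of $X \to Z$ and the relative matching map of $Z \to Y$, glues the stages via lemma~\ref{lem:inductive.defn} and observation~\ref{obs:inductive.defn}, and concludes with lemma~\ref{lem:triv-cof-char}. The paper leaves the construction of the comparison map implicit (``defined using the solid arrows''), whereas you spell out its components and the compatibilities they must satisfy; this is a faithful elaboration rather than a different argument.
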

\begin{proof}
As one might expect, we define these factorisations inductively using observation \ref{obs:inductive.defn}. The factorisations on $\lcat{M}^{\scat{C}}$ may be defined functorially if the corresponding factorisations in the model structure on $\lcat{M}$ are functorial. To begin, we factorise the maps $f^c$ for all objects $c$ with degree zero using the factorisation on $\lcat{M}$.

Suppose now that we have defined the components of an appropriate factorisation $X^d \to Z^d \to Y^d$ of $f^d$ for each object $d$ with $\deg(d) < \deg(c)$. By lemma \ref{lem:inductive.defn} and observation \ref{obs:inductive.defn}, to define the attendant factorisation of $f^c$, it suffices to define an object $Z^c$ of $\lcat{M}$ together with the dotted arrow maps \[ \xymatrix{ \latch^c X \ar[r] \ar[d] & X^c \ar@{-->}[d] \ar[r] & \match^c X \ar[d] \\ \latch^c Z \ar@{-->}[r] \ar[d] & Z^c \ar@{-->}[r] \ar@{-->}[d] & \match^c Z \ar[d] \\ \latch^c Y \ar[r] & Y^c \ar[r] & \match^c Y}\] The object $Z^c$ and the dotted arrows are defined by using the model structure on $\lcat{M}$ to factorise the map \begin{equation}\label{eq:factorisation-defn} \xymatrix{ X^c \cup_{\latch^c X} \latch^c Z \ar@{-->}[r] & Z^c \ar@{-->}[r] &  Y^c \times_{\match^c Y} \match^c Z} \end{equation} defined using the solid arrows. Note that by construction, the left-hand map of \eqref{eq:factorisation-defn} is the relative latching map of $X^c \to Z^c$, while the right-hand map is the relative matching map of $Z^c \to Y^c$. Hence, lemma \ref{lem:triv-cof-char} implies that this construction defines the desired Reedy factorisation.
\end{proof}

With these lemmas, it is straightforward to establish the Reedy model structure.

\begin{proof}[Proof of theorem \ref{thm:model-structure}] 
In the presence of a class of weak equivalences satisfying the 2-of-3 property, a class of cofibrations and a class of fibrations define a model structure if and only if there are a pair of \emph{weak factorisation systems} given by the trivial cofibrations and fibrations and the cofibrations and trivial fibrations \cite{Joyal:2007kk}. Two classes of maps form a weak factorisation system if they satisfy the lifting and factorisation properties of lemmas \ref{lem:lifting} and \ref{lem:factorisation} and if each class is closed under retracts. This final property follows from the functoriality of the constructions of relative latching and matching maps and lemma \ref{lem:triv-cof-char}.
\end{proof}

\begin{rec}[cofibrantly generated model categories]
  A model category $\lcat{M}$ is {\em cofibrantly generated\/} if there exist sets $\mclass{I}$ and $\mclass{J}$ of cofibrations and trivial cofibrations for which the retract closures of $\cell(\mclass{I})$ and $\cell(\mclass{J})$ are the classes of cofibrations and trivial cofibrations of $\lcat{M}$, respectively.
\end{rec}

Given the hard work already undertaken in sections~\ref{sec:Leibniz-rel-cell-cx} and~\ref{sec:cellular}, the following important proposition is now somewhat of a triviality to prove:

\begin{prop}\label{prop:reedy.cof.gen}
  Suppose that $\lcat{M}$ is a cofibrantly generated model category, with sets $\mclass{I}$ and $\mclass{J}$ of generating cofibrations and trivial cofibrations respectively. Then the corresponding Reedy model category $\lcat{M}^{\scat{C}}$ is also cofibrantly generated, with sets $\mclass{B}\leib\tns\mclass{I}$ and $\mclass{B}\leib\tns\mclass{J}$ of generating cofibrations and trivial cofibrations respectively.
\end{prop}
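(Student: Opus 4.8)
The plan is to verify the two defining clauses of cofibrant generation recalled above for the Reedy model structure of theorem~\ref{thm:model-structure}: that $\mclass{B}\leib\tns\mclass{I}$ (respectively $\mclass{B}\leib\tns\mclass{J}$) is a set of Reedy cofibrations (respectively Reedy trivial cofibrations), and that the retract closure of $\cell(\mclass{B}\leib\tns\mclass{I})$ (respectively of $\cell(\mclass{B}\leib\tns\mclass{J})$) is precisely the class of Reedy cofibrations (respectively Reedy trivial cofibrations). Since $\scat{C}$ is small and $\mclass{I},\mclass{J}$ are sets, these are genuinely sets of maps; and the two cases are formally parallel, one simply replacing the characterisation of Reedy cofibrations in theorem~\ref{thm:model-structure} by the characterisation of Reedy trivial cofibrations in lemma~\ref{lem:triv-cof-char}, so I will spell out only the cofibration case. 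Throughout I use, without comment, that because $\lcat{M}$ is cofibrantly generated $\cell(\mclass{I})$ consists of cofibrations of $\lcat{M}$ and $\cell(\mclass{J})$ of trivial cofibrations.

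First I would dispose of one inclusion together with the ``set of Reedy cofibrations'' requirement in a single step, using corollary~\ref{cor:building-up}: that corollary says that a map of $\lcat{M}^{\scat{C}}$ lies in $\cell(\mclass{B}\leib\tns\mclass{I})$ exactly when all of its relative latching maps lie in $\cell(\mclass{I})$, so every such map has cofibrant relative latching maps and is therefore a Reedy cofibration by definition. Hence $\cell(\mclass{B}\leib\tns\mclass{I})$, and a fortiori $\mclass{B}\leib\tns\mclass{I}$ itself, consists of Reedy cofibrations, and since the Reedy cofibrations are closed under retracts (a model category axiom) the retract closure of $\cell(\mclass{B}\leib\tns\mclass{I})$ is still contained in the Reedy cofibrations. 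The identical argument with $\mclass{J}$ in place of $\mclass{I}$, invoking lemma~\ref{lem:triv-cof-char} instead of the definition, handles the trivial cofibration case.

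The substantive half is the reverse inclusion: every Reedy cofibration $f\colon X\to Y$ should be exhibited as a retract of a member of $\cell(\mclass{B}\leib\tns\mclass{I})$. Here I would run proposition~\ref{prop:building-up} through the cofibrant generation of $\lcat{M}$. Proposition~\ref{prop:building-up} presents $f$ as a cell complex whose cells are the maps $(\boundary\scat{C}_c\inc\scat{C}_c)\leib\tns\leib{\latch^c}(f)$; since $f$ is a Reedy cofibration each relative latching map $\leib{\latch^c}(f)$ is a cofibration of $\lcat{M}$, hence a retract of some $j_c\in\cell(\mclass{I})$. As $(\boundary\scat{C}_c\inc\scat{C}_c)\leib\tns(-)$ is a functor it carries this retract to one exhibiting the cell $(\boundary\scat{C}_c\inc\scat{C}_c)\leib\tns\leib{\latch^c}(f)$ as a retract of $(\boundary\scat{C}_c\inc\scat{C}_c)\leib\tns j_c$, and the latter lies in $\cell(\mclass{B}\leib\tns\mclass{I})$ by proposition~\ref{prop:leibniz-tcofp} applied with the singleton class $\{\boundary\scat{C}_c\inc\scat{C}_c\}\subseteq\mclass{B}$ in the first variable. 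Thus $f$ is a transfinite composite of pushouts of retracts of members of $\cell(\mclass{B}\leib\tns\mclass{I})$, and I would then conclude that $f$ is itself a retract of a member of $\cell(\mclass{B}\leib\tns\mclass{I})$. The trivial cofibration case is identical, using lemma~\ref{lem:triv-cof-char} to see that the relative latching maps are now trivial cofibrations and hence retracts of $\cell(\mclass{J})$-maps.

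The one point requiring care --- though it is care, not difficulty --- is this last conclusion: commuting the retract structures past the ``transfinite composite of pushouts'' presentation, i.e.\ the standard and purely formal fact that the retract closure of a class of the form $\cell(-)$ is again closed under pushouts and transfinite composites. I expect this to be the only genuine obstacle, and it is entirely routine. Should one prefer to avoid even this bookkeeping, there is an alternative: the bifunctor $\tns\colon\Set^{\scat{C}}\times\lcat{M}\to\lcat{M}^{\scat{C}}$ underlying the cells is part of a two-variable adjunction, its relevant closure being the weighted limit $\wlim{\,}{\,}_{\scat{C}}$, so by lemma~\ref{lem:leibniz-close} a map $p$ of $\lcat{M}^{\scat{C}}$ has the right lifting property against every $(\boundary\scat{C}_c\inc\scat{C}_c)\leib\tns i$ with $i\in\mclass{I}$ if and only if each relative matching map $\leib{\match^c}(p)\cong\leibwlim{\boundary\scat{C}_c\inc\scat{C}_c}{p}_{\scat{C}}$ has the right lifting property against $\mclass{I}$, i.e.\ is a trivial fibration, i.e.\ (by the dual of lemma~\ref{lem:triv-cof-char}) $p$ is a Reedy trivial fibration; combining this identification with the lifting statement of lemma~\ref{lem:lifting} and a small-object-argument factorisation, the usual retract argument again shows that every Reedy cofibration is a retract of a member of $\cell(\mclass{B}\leib\tns\mclass{I})$. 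Either way, essentially all the real work was already carried out in corollary~\ref{cor:building-up} and proposition~\ref{prop:building-up}, which is precisely why this proposition is, as advertised, ``somewhat of a triviality''.
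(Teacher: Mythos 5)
Your proposal is correct and follows essentially the same route as the paper: corollary~\ref{cor:building-up} gives that $\cell(\mclass{B}\leib\tns\mclass{I})$ and $\cell(\mclass{B}\leib\tns\mclass{J})$ consist of Reedy (trivial) cofibrations, and conversely proposition~\ref{prop:building-up} together with the fact that retracts commute with the formation of cell complexes exhibits every Reedy (trivial) cofibration as a retract of a cell complex on the proposed generators. The one step you flag as requiring care is exactly the step the paper also passes over with the remark that ``retracts commute with the formation of cell complexes,'' so no gap relative to the paper's own argument.
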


\begin{proof}
  First observe that, since $\mclass{B}$ is a set (it has only as many elements as the small category $\scat{C}$ has objects), it follows therefore that $\mclass{B}\leib\tns\mclass{I}$ and $\mclass{B}\leib\tns\mclass{J}$ are sets as required.

An immediately corollary of \ref{cor:building-up} is that the maps in $\cell(\mclass{B}\leib\tns\mclass{I})$ (respectively in $\cell(\mclass{B}\leib\tns\mclass{J})$) are Reedy (trivial) cofibrations. Conversely, a map $f\colon X\to Y$ of $\lcat{M}^{\scat{C}}$ is a cofibration (respectively a trivial cofibration) in the Reedy model structure if and only if each of its relative latching maps is a retract of a map in $\cell(\mclass{I})$ (respectively in $\cell(\mclass{J})$). Any functor preserves retracts; in particular, it is well known that retracts commute with the formation of cell complexes. It follows, as in the proof of corollary \ref{cor:building-up} that proposition \ref{prop:building-up} implies that $f$ is the retract of a map in $\cell(\mclass{B}\leib\tns\mclass{I})$ (respectively in $\cell(\mclass{B}\leib\tns\mclass{J})$), as claimed.
\end{proof}

\begin{ex} Example \ref{ex:EZ.simp.set} extends to simplicial objects in $\Set$-valued functor categories. In particular, the Eilenberg-Zilber lemma implies that a map of bisimplicial sets is a monomorphism if and only if its relative latching maps are monomorphisms in $\sSet$. Hence, the Reedy model structure coincides with the \emph{injective model structure}, whose weak equivalences and cofibrations are defined pointwise. Proposition \ref{prop:reedy.cof.gen} implies further that this model structure is cofibrantly generated.
\end{ex}

\section{Homotopy limits and colimits}\label{sec:hocolim}

By hypothesis, a model category $\lcat{M}$ necessarily has all limits and colimits. However, it need not be the case that the limits or colimits of pointwise weakly equivalent diagrams are themselves weakly equivalent. Informally, it is common to say that for certain special diagrams, the  limit or colimit somehow has the ``correct'' homotopy type, in which case it is called an \emph{homotopy limit} or \emph{homotopy colimit}. As this terminology suggests,  pointwise weakly equivalent diagrams of this type will have weakly equivalent homotopy limits or colimits.

In good settings there are formulae to compute the homotopy limit or homotopy colimit of any diagram, regardless of whether the ordinary limit or colimit happen to be homotopically correct. The homotopy limit is defined to be a right derived functor of the limit functor, and the homotopy colimit is defined to be a left derived functor of the colimit functor. Here we mean ``point-set level'' derived functors, whose output is an object of $\lcat{M}$ rather than an object of the homotopy category. (As a caveat, this use of ``functor'' should only be interpreted literally in the case where the model category $\lcat{M}$ is supposed to have functorial factorisations; for convenience of language, let us tacitly suppose this is the case henceforth.)

\begin{defn}[homotopy limits and colimits]
The special cases of homotopy limits and colimits considered here are defined via the following definition-schema. Observe that the constant diagram functor $\lcat{M} \to \lcat{M}^\scat{C}$ carries weak equivalences to pointwise weak equivalences. If $\scat{C}$ is a category admitting a Reedy structure in such a way that the constant diagram functor carries cofibrations in $\lcat{M}$ to Reedy cofibrations in $\lcat{M}^\scat{C}$, then the constant diagram functor is left Quillen  with respect to the Reedy model structure. It follows that its right adjoint $\lim \colon \lcat{M}^\scat{C} \to \lcat{M}$ is right Quillen. Hence, by Ken Brown's lemma, the limits of pointwise weakly equivalent Reedy fibrant diagrams are weakly equivalent. These Reedy fibrant diagrams are those diagrams whose limits are understood to be ``homotopically correct''. The homotopy limit functor is defined by replacing a given diagram by a pointwise weakly equivalent Reedy fibrant diagram and then computing the limit. This replacement is computed via a fibrant replacement in the Reedy model structure, which is functorially constructed by lemma \ref{lem:factorisation}.

Dually, when the constant diagram functor carries fibrations in $\lcat{M}$ to Reedy fibrations in $\lcat{M}^\scat{C}$, its left adjoint $\colim \colon \lcat{M}^\scat{C} \to \lcat{M}$ is left Quillen with respect to the Reedy model structure. Hence, colimits of weakly equivalent Reedy cofibrant diagrams are weakly equivalent and understood to be ``homotopically correct''. The homotopy colimit functor is defined to be the colimit of a functorial Reedy cofibrant replacement of the original diagram.
\end{defn}

Let us now implement this outline to deduce formulae for homotopy limits and homotopy colimits of diagrams indexed by particular Reedy categories.

\begin{ex}[homotopy coequalisers]\label{ex:hocoeq}
Give the category $a \rightrightarrows b$ the Reedy structure described in example \ref{ex:parallel.pair.reedy}. As described in example \ref{ex:parallel.pair.latching}, for any diagram $X$ with this shape, the matching objects $\match^aX$ and $\match^bX$ are terminal, from which we deduce that the relative matching maps associated to a natural transformation $X \to Y$ are just the components of that natural transformation. The constant diagram functor is manifestly right Quillen, from which we conclude that the coequaliser of the diagram $X^a \rightrightarrows X^b$ is the homotopy coequaliser if it is Reedy cofibrant: i.e., if $X^a$ is cofibrant and $X^a \coprod X^a \to X^b$ is a cofibration.

Given an arbitrary diagram $X^a \rightrightarrows X^b$, its Reedy cofibrant replacement is  defined by first taking a cofibrant replacement $\overline{X}^a \xrightarrow{\sim} X^a$ and then factoring the natural map \[ \xymatrix{ \overline{X}^a \coprod \overline{X}^a \ar[d]_{\rotatebox{90}{$\sim$}} \ar@{>->}[r] & \overline{X}^b \ar[d]^{\rotatebox{90}{$\sim$}} \\ X^a \coprod X^a \ar[r] & X^b}\] as a cofibration followed by a weak equivalence. The coequaliser of $\overline{X}^a \rightrightarrows \overline{X}^b$ is the homotopy coequaliser of $X^a \rightrightarrows X^b$.
\end{ex}

\begin{ex}[homotopy equalisers]\label{ex:hoeq}
By contrast, the constant diagram functor is unlikely to be \emph{left} Quillen when $a \rightrightarrows b$ is given the Reedy category structure of example \ref{ex:parallel.pair.reedy}. The $b$th relative latching map associated to the image of a cofibration $U \rightarrowtail V$ is \[ \xymatrix{ U \coprod U \ar@{>->}[d] \ar[r]^-\nabla & U \ar[d] \ar@{>->}@/^/[ddr] \\ V \coprod V \ar@/_/[drr]_\nabla \ar[r] & \cdot \poexcursion \ar@{-->}[dr]|{\leib{\latch^b}} \\ & & V}\] where ``$\nabla$'' denotes the fold map. This is unlikely to be a cofibration; for instance, if $U \rightarrowtail V$ is a monomorphism, $\leib{\latch^b}$ need not be a monomorphism.

By contrast if $a \rightrightarrows b$ is given the opposite Reedy category structure, as described in example \ref{ex:parallel.pair.latching}, then the constant diagram functor is left Quillen, and hence we see that the homotopy equaliser of a diagram $X^a \rightrightarrows X^b$ is defined to be the equaliser of its Reedy fibrant replacement $\overline{X}^a \rightrightarrows \overline{X}^b$, constructed from a fibrant replacement $X^b \xrightarrow{\sim} \overline{X}^b$ via the factorisation \[ \xymatrix{ X^a \ar[r] \ar[d]_{\rotatebox{90}{$\sim$}} & X^b \times X^b \ar[d]^{\rotatebox{90}{$\sim$}} \\ \overline{X}^a \ar@{->>}[r] & \overline{X}^b \times \overline{X}^b.}\] 
\end{ex}

The general form of the dualisation just observed is worth recording:

\begin{prop}\label{prop:reedy.model.dual} Let $\scat{C}$ be a Reedy category. If the constant diagram functor $\lcat{M} \to \lcat{M}^\scat{C}$ is right Quillen with respect to the Reedy model structure associated to any model category $\lcat{M}$, then the constant diagram functor $\lcat{M} \to \lcat{M}^{\scat{C}\op}$ is left Quillen with respect to the Reedy model structure defined with respect to the dual Reedy category $\scat{C}\op$.
\end{prop}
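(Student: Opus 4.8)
The plan is to exploit the duality between a Reedy category and its opposite, as recorded in the earlier observation that $(\scat{C}\op,(\inverse{\scat{C}})\op,(\direct{\scat{C}})\op)$ is a Reedy category whenever $\reedycat{C}$ is. The key point is that the Reedy model structure on $\lcat{M}^{\scat{C}\op}$ is, by construction, literally the ``opposite'' of the Reedy model structure on $(\lcat{M}\op)^{\scat{C}}$: a map in $\lcat{M}^{\scat{C}\op}$ is a Reedy cofibration exactly when the corresponding map in $(\lcat{M}\op)^{\scat{C}}$ is a Reedy fibration, since the relative latching maps with respect to the dual Reedy structure on $\scat{C}\op$ and the category $\lcat{M}$ are the opposites of the relative matching maps with respect to the original Reedy structure on $\scat{C}$ and the opposite model category $\lcat{M}\op$ (the boundary inclusion $\boundary(\scat{C}\op)^c\inc(\scat{C}\op)^c$ is the opposite of $\boundary\scat{C}_c\inc\scat{C}_c$, and Leibniz colimit in one becomes Leibniz limit in the other). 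This observation is essentially immediate from the definitions of latching and matching objects in definitions~\ref{defn:latching} and~\ref{defn:relative-maps} together with the description of the boundary in observation~\ref{obs:cons-skeleta}.

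First I would record the self-duality statement: the Reedy model structure on $\lcat{M}^{\scat{C}\op}$ coincides, under the canonical isomorphism of categories $\lcat{M}^{\scat{C}\op}\cong((\lcat{M}\op)^{\scat{C}})\op$, with the opposite of the Reedy model structure on $(\lcat{M}\op)^{\scat{C}}$ associated to the original Reedy category $\scat{C}$ and the model category $\lcat{M}\op$. Second, I would observe that the constant diagram functor $\lcat{M}\to\lcat{M}^{\scat{C}\op}$ is, under this same identification, the opposite of the constant diagram functor $\lcat{M}\op\to(\lcat{M}\op)^{\scat{C}}$. Third, I would apply the hypothesis: since the constant diagram functor $\lcat{N}\to\lcat{N}^{\scat{C}}$ is right Quillen for \emph{every} model category $\lcat{N}$, in particular it is right Quillen for $\lcat{N}=\lcat{M}\op$. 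Finally, a functor is right Quillen for a pair of model categories if and only if its opposite is left Quillen for the opposite pair of model categories; combining this with the first two steps yields that the constant diagram functor $\lcat{M}\to\lcat{M}^{\scat{C}\op}$ is left Quillen for the Reedy model structure on $\scat{C}\op$, which is exactly the claim.

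The main obstacle — really the only nontrivial bookkeeping — is verifying the duality of the Reedy model structures in the first step, i.e.\ checking carefully that relative latching maps for $\reedycat{C}\op$ valued in $\lcat{M}$ correspond to relative matching maps for $\reedycat{C}$ valued in $\lcat{M}\op$. This amounts to tracing through: (a) $(\scat{C}\op)^c_{\bar c} = \scat{C}(c,\bar c) = (\scat{C}_c)^{\bar c}$, so the contravariant representable at $c$ in $\Set^{(\scat{C}\op)\op}=\Set^{\scat{C}}$ is the covariant representable $\scat{C}_c$; (b) under the dual Reedy structure on $\scat{C}\op$, an arrow strictly raises degree iff the corresponding arrow in $\scat{C}$ lies in $\inverse{\scat{C}}$, so by observation~\ref{obs:cons-skeleta} the boundary $\boundary(\scat{C}\op)^c$ is $\boundary\scat{C}_c$; (c) weighted colimits in $\lcat{M}$ are weighted limits in $\lcat{M}\op$ (the coend defining $\wcolim$ becomes the end defining $\wlim{-}{-}$), and the Leibniz construction commutes with passing to opposite categories (a pushout becomes a pullback). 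Granting these, the relative latching map $\leib{\latch^c}$ in $\lcat{M}^{\scat{C}\op}$ is precisely the opposite of the relative matching map $\leib{\match^c}$ in $(\lcat{M}\op)^{\scat{C}}$, and since weak equivalences are pointwise on both sides, the two model structures are mutually opposite. Everything else is the purely formal observation that ``right Quillen'' dualises to ``left Quillen''.
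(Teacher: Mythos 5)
Your proposal is correct and follows exactly the route the paper intends: the paper's own proof is left as an exercise with precisely the two hints you elaborate, namely that passing from $\lcat{M}$ to $\lcat{M}\op$ exchanges cofibrations with fibrations, and that the identification $\lcat{M}^{\scat{C}\op}\cong((\lcat{M}\op)^{\scat{C}})\op$ exchanges relative latching maps for the dual Reedy structure with relative matching maps for the original one. Your bookkeeping in steps (a)--(c) correctly fills in the details the paper omits.
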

\begin{proof}
The proof is an exercise in the application of the principle of duality, left to the reader with the following hints: the passage from a model category $\lcat{M}$ to its opposite exchanges the cofibrations and the fibrations, while the passage from $(\lcat{M}\op)^\scat{C}$ to its opposite $\lcat{M}^{(\scat{C}\op)}$ exchanges relative matching maps with relative latching maps defined with respect to the dual Reedy category structure.
\end{proof}

\begin{ex}[mapping telescopes]\label{ex:mapping}
Suppose $X$ is a sequence of maps \begin{equation}\label{eq:sequence} X^0 \to X^1 \to X^2 \to \cdots \end{equation} in a model category $\lcat{M}$. Assigning the poset $\omega$ the Reedy category structure of example \ref{ex:poset.reedy}, we deduce from example \ref{ex:sequence.latching} that the relative latching maps associated to a natural transformation $X \to Y$ are the components of the natural transformation with shifted index. In particular, the constant diagram functor is right Quillen with respect to the Reedy model structure, from which we deduce that the homotopy colimit of $X$ is computed by the sequential colimit of its Reedy cofibrant replacement. By example \ref{ex:sequence.latching},  \eqref{eq:sequence} is Reedy cofibrant just when it is a sequence of cofibrations between cofibrant objects. By lemma \ref{lem:factorisation}, the Reedy cofibrant replacement is defined inductively by taking a cofibrant replacement of $X^0$, and then replacing each map in turn by a cofibration whose domain is the previously defined cofibrant object:
\begin{equation}\label{eq:sequence.replacement} \xymatrix{ \overline{X}^0 \ar[d]_{\rotatebox{90}{$\sim$}} \ar@{>->}[r] & \overline{X}^1 \ar[d]_{\rotatebox{90}{$\sim$}} \ar@{>->}[r] & \overline{X}^2 \ar[d]_{\rotatebox{90}{$\sim$}} \ar@{>->}[r] & \cdots \\ X^0 \ar[r] & X^1 \ar[r] & X^2 \ar[r] & \cdots}\end{equation} 
In the category of topological spaces, this homotopy colimit is called the mapping telescope.

Dually, proposition \ref{prop:reedy.model.dual} implies that the limit of a diagram \[ \cdots \to X_2 \to X_1 \to X_0 \] is its homotopy limit if $X$ consists of fibrations between fibrant objects. The homotopy limit is defined to be the limit of a Reedy fibrant replacement, constructed dually to \eqref{eq:sequence.replacement}.
\end{ex}

\begin{ex}[homotopy pushouts]\label{ex:hopushout}
Consider the Reedy structures assigned to the category $b \leftarrow a \to c$ in examples \ref{ex:poset.reedy} and \ref{ex:pushout.reedy.alt}. A pushout diagram $X$ is Reedy cofibrant in the first case just when $X^a$, $X^b$, and $X^c$ are cofibrant objects and both maps are cofibrations; in the second case, the diagram must again be pointwise cofibrant, but only $X^a \to X^c$ must be a cofibration.

We claim that the constant diagram functor is right Quillen with respect to this latter Reedy category structure; the result in the other case is easier. Given a natural transformation \begin{equation}\label{eq:pushout.compare} \xymatrix{ X^b \ar[d]_{f^b} & X^a \ar[l] \ar[r] \ar[d]^{f^a} & X^c \ar[d]^{f^c} \\ Y^b & Y^a \ar[l] \ar[r] & Y^c}\end{equation} The relative matching maps are the components $f^a$ and $f^c$ together with the map from $X^a$ to the pullback of $f^b$ along $Y^a \to Y^b$. In the image of the constant diagram functor, the horizontal maps are identities, and $f^a$, $f^b$, and $f^c$ coincide. In particular, the left-hand square of \eqref{eq:pushout.compare} is a pullback square, so the relative matching map at $b$ is an isomorphism. It follows that the constant diagram functor preserves fibrations, as claimed.

The upshot is that the pushout of any diagram whose objects are cofibrant and in which at least one of the maps is a cofibration is a homotopy pushout. The homotopy pushout of a generic diagram $X$ can be formed by replacing $X^a$ by a cofibrant object $\overline{X}^a$ and then factorising the composite maps $X^b \leftarrow \overline{X}^a \to X^c$ as cofibrations followed by weak equivalences. 

Dually, proposition \ref{prop:reedy.model.dual} implies that the pullback of a diagram consisting of at least one fibration between three fibrant objects is a homotopy pullback, and the homotopy pullback of a generic diagram can be computed by replacing the objects by fibrant objects and at least one of the maps by a fibration.
\end{ex}

\begin{ex}\label{ex:stupid-simplicial} 
The constant diagram functor is not right Quillen with respect to the Reedy model structure on category of simplicial objects. This is perhaps unsurprising: only rarely would one expect the ordinary colimit of a simplicial object $X$,  isomorphic to the coequaliser of the two face maps $X_1 \rightrightarrows X_0$, to have the correct homotopy type. By contrast, it is left Quillen because the positive-degree latching maps of a constant simplicial object are isomorphisms. However, the associated homotopy limits are not very interesting: the limit of a simplicial object is computed by evaluating at $[0]$, the initial object in $\Del\op$. Dual remarks of course apply to cosimplicial objects.
\end{ex}

\section{Connected weights}\label{sec:connected-weights}

In this section, we apply the the theory developed in the previous sections of this paper to unify, extend, and clarify the computations just given of homotopy limits and colimits of diagrams indexed by Reedy categories. Our methods are, unsurprisingly, all in the weights. More precisely, we shall see that there is a simple condition on the weights for the latching or matching objects associated to a Reedy category $\scat{C}$ that is necessary and sufficient for  the limit or colimit functors $\lim,\colim \colon \lcat{M}^\scat{C} \to \lcat{M}$ to be, respectively, right and left Quillen for \emph{any} model category $\lcat{M}$. This calculation with the weights illustrates why our ad-hoc arguments about the behavior of the constant diagram functor with respect to cofibrations and fibrations worked for certain Reedy categories but not for others.

  \begin{prop}\label{prop:2/3-SM7}
    Suppose that $\lcat{M}$ is a model category and that $f$ is a map in $\Set^{\scat{C}\op}$ whose relative latching maps are all monomorphisms. If $i$ is a Reedy (trivial) cofibration in $\lcat{M}^{\scat{C}}$ then the Leibniz colimit $f\leib\wcolim_{\scat{C}} i$ is a (trivial) cofibration in $\lcat{M}$. 
    \end{prop}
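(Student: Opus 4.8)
The plan is to reduce the statement to the cellular presentation of ``cofibrant'' weights established in \S\ref{sec:cellular}. Since the dual of a Reedy category is again a Reedy category, we may apply corollary~\ref{cor:B-cell-complex} to $\scat{C}\op$. For that Reedy category the covariant representable $(\scat{C}\op)_c$ coincides with the contravariant representable $\scat{C}^c$ of $\scat{C}$ and $\boundary(\scat{C}\op)_c\cong\boundary\scat{C}^c$, so the pertinent set of boundary inclusions is $\{\boundary\scat{C}^c\inc\scat{C}^c\mid c\in\scat{C}\}$ in $\Set^{\scat{C}\op}$. Thus the hypothesis that the relative latching maps of $f$ are all monomorphisms tells us precisely that $f$ admits a presentation as a cell complex each of whose cells is one of the boundary inclusions $\boundary\scat{C}^c\inc\scat{C}^c$.

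Next I would push this presentation through the Leibniz weighted colimit ${-}\leib\wcolim_{\scat{C}}i$. Exactly as in the proof of proposition~\ref{prop:building-up}, the weighted colimit bifunctor $\wcolim_{\scat{C}}\colon\Set^{\scat{C}\op}\times\lcat{M}^{\scat{C}}\to\lcat{M}$ is cocontinuous in each variable, so lemma~\ref{lem:leibniz-tcofp} applies with $\otimes=\wcolim_{\scat{C}}$ and $g=i$: it transports the presentation of $f$ just obtained to a presentation of $f\leib\wcolim_{\scat{C}}i$ as a cell complex whose cells are the maps $(\boundary\scat{C}^c\inc\scat{C}^c)\leib\wcolim_{\scat{C}}i$. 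By definition~\ref{defn:relative-maps}, each of these is precisely the relative latching map $\leib{\latch^c}(i)$ of $i$ at $c$.

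Finally, because $i$ is a Reedy cofibration each relative latching map $\leib{\latch^c}(i)$ is a cofibration in $\lcat{M}$ by the definition of the Reedy model structure in theorem~\ref{thm:model-structure}; and if $i$ is moreover a pointwise weak equivalence then each $\leib{\latch^c}(i)$ is a trivial cofibration by lemma~\ref{lem:triv-cof-char}. Since the class of (trivial) cofibrations in any model category is the left class of a weak factorisation system, it is closed under coproducts, pushouts, and transfinite composites; hence the cell complex $f\leib\wcolim_{\scat{C}}i$, being a transfinite composite of pushouts of coproducts of (trivial) cofibrations, is itself a (trivial) cofibration, as required. No step here is genuinely difficult: the one point demanding care is keeping the variances straight when passing to the dual Reedy category in the first step, after which the proof is a purely formal assembly of results already in hand.
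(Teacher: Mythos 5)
Your proposal is correct and follows essentially the same route as the paper's own proof: corollary~\ref{cor:B-cell-complex} (dualised) presents $f$ as a cell complex with cells the boundary inclusions $\boundary\scat{C}^c\inc\scat{C}^c$, lemma~\ref{lem:leibniz-tcofp} transports this to a cellular presentation of $f\leib\wcolim_{\scat{C}}i$ with cells the relative latching maps $\leib{\latch^c}(i)$, and closure of (trivial) cofibrations under the cell-complex operations finishes the argument. The only difference is that you spell out the passage to the dual Reedy category explicitly, which the paper leaves implicit.
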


Recall that the relative latching maps of $\emptyset \to X$ are the latching maps of $X$.      In particular, it follows immediately that if $X$ is an object in $\Set^{\scat{C}\op}$ whose latching maps are all monomorphisms then the the functor $X\wcolim_{\scat{C}}{-}\colon\lcat{M}^{\scat{C}}\to \lcat{M}$ is a left Quillen functor with respect to the model structure on $\lcat{M}$ and the corresponding Reedy model structure on $\lcat{M}^{\scat{C}}$.
  
  \begin{proof}
     By corollary~\ref{cor:B-cell-complex}, we know that $f$ admits a presentation of a cell complex whose cells are boundary inclusions $\boundary\scat{C}^c\inc\scat{C}^c$. So we may apply lemma~\ref{lem:leibniz-tcofp} to show that $f\leib\wcolim_{\scat{C}} i$ admits a presentation as a cell complex whose cells are $(\boundary\scat{C}^c\inc\scat{C}^c)\leib\wcolim_{\scat{C}}i= \leib{\latch^c}i$. However, if $i$ is a Reedy (trivial) cofibration in $\lcat{M}^{\scat{C}}$ then, each of its relative latching maps $\leib{\latch^c}i$ is a (trivial) cofibration in $\lcat{M}$. So in that case we have succeeded in showing that $f\leib\wcolim_{\scat{C}} i$ admits a presentation as a cell complex whose cells are (trivial) cofibrations and consequently it too is a (trivial) cofibration in $\lcat{M}$ as required.
  \end{proof}
  
 \begin{obs}\label{obs:fibrant.constants}
Consider $1 \in \Set^{\scat{C}\op}$, the constant diagram at the terminal object. If the latching maps of $1$ are monomorphisms, then proposition \ref{prop:2/3-SM7} implies that $1 \wcolim_{\scat{C}} - \colon \lcat{M}^\scat{C} \to \lcat{M}$ is a left Quillen functor. But in example \ref{ex:weighted-conical}, we saw that $1\wcolim_{\scat{C}}-$ is exactly the colimit functor! The dual to proposition \ref{prop:2/3-SM7}, obtained by replacing the model category $\lcat{M}$ with its opposite (and then also $\scat{C}$ with its opposite, for aesthetic reasons), says that if the latching maps of $1 \in \Set^{\scat{C}}$ are monomorphisms, then $ \wlim{1}{-}_{\scat{C}} \cong \lim \colon \lcat{M}^\scat{C} \to \lcat{M}$ is a right Quillen functor.
\end{obs}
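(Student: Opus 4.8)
The observation is a direct specialisation of Proposition \ref{prop:2/3-SM7}, so the plan is to choose the weight and then dualise; there is essentially no new computation to perform. For the colimit half I would apply the remark immediately following Proposition \ref{prop:2/3-SM7} to the single weight $X = 1 \in \Set^{\scat{C}\op}$, under the standing hypothesis that the latching maps of $1$ are monomorphisms. That remark then delivers at once that $1 \wcolim_{\scat{C}} {-} \colon \lcat{M}^{\scat{C}} \to \lcat{M}$ is a left Quillen functor. It remains only to invoke Example \ref{ex:weighted-conical}, which identifies weighting by the terminal object with the ordinary colimit, giving $1 \wcolim_{\scat{C}} {-} \cong \colim$. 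Since $\colim$ is the left adjoint of the constant diagram functor $\lcat{M} \to \lcat{M}^{\scat{C}}$, this identification exhibits $\colim$ as a genuine left adjoint that preserves (trivial) cofibrations, which is exactly the assertion of left Quillen-ness.

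For the limit half I would run the colimit statement just proved, but over the opposite data $(\lcat{M}\op, \scat{C}\op)$, using that the dual of a Reedy category is again a Reedy category. Applying the remark following Proposition \ref{prop:2/3-SM7} in this dual setting to the weight $1 \in \Set^{(\scat{C}\op)\op} = \Set^{\scat{C}}$ shows that $1 \wcolim_{\scat{C}\op} {-} \colon (\lcat{M}\op)^{\scat{C}\op} \to \lcat{M}\op$ is left Quillen, provided the latching maps of $1 \in \Set^{\scat{C}}$ — now taken with respect to the Reedy structure on $\scat{C}$ — are monomorphisms. By Example \ref{ex:weighted-conical} this functor is $\colim \colon (\lcat{M}\op)^{\scat{C}\op} \to \lcat{M}\op$. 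I would then transport this across the canonical isomorphism $(\lcat{M}^{\scat{C}})\op \cong (\lcat{M}\op)^{\scat{C}\op}$, under which $\colim$ corresponds to $\lim = \wlim{1}{{-}}_{\scat{C}}$ on $\lcat{M}^{\scat{C}}$ and ``left Quillen'' corresponds to ``right Quillen''; this yields exactly the claimed right Quillen-ness of $\wlim{1}{{-}}_{\scat{C}}$.

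The main obstacle is the bookkeeping in this transport, namely verifying that the opposite of the Reedy model structure on $\lcat{M}^{\scat{C}}$ is precisely the Reedy model structure on $(\lcat{M}\op)^{\scat{C}\op}$ associated with the dual Reedy category $\scat{C}\op$. This rests on three facts that are either self-dual or already recorded: pointwise weak equivalences are manifestly self-dual; passing to $\lcat{M}\op$ interchanges fibrations and cofibrations; and passing from $\scat{C}$ to $\scat{C}\op$ interchanges relative latching maps with relative matching maps, since it interchanges $\direct{\scat{C}}$ with $\inverse{\scat{C}}$ and colimits with limits. Carrying the same double dualisation through the hypotheses confirms the pleasant coincidence flagged in the statement: the condition ``the latching maps of $1 \in \Set^{\scat{C}\op}$ are monomorphisms'' is converted into ``the latching maps of $1 \in \Set^{\scat{C}}$ are monomorphisms'', so no variance mismatch arises. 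This compatibility of Reedy model structures with opposites is the same principle that underlies Proposition \ref{prop:reedy.model.dual}, and once it is in hand the remainder of the argument is purely formal.
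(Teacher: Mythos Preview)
Your proposal is correct and matches the paper's own reasoning essentially verbatim: specialise the remark following Proposition~\ref{prop:2/3-SM7} to the weight $1$, invoke Example~\ref{ex:weighted-conical} to identify the resulting functor with $\colim$, and then dualise by passing to $\lcat{M}\op$ and $\scat{C}\op$. The paper treats this as an observation rather than a theorem with proof, so your careful unpacking of the dualisation bookkeeping (in particular the identification of the opposite of the Reedy model structure with the Reedy model structure on the opposite, as in Proposition~\ref{prop:reedy.model.dual}) is more detail than the paper itself provides, but it is exactly the right justification.
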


To apply observation \ref{obs:fibrant.constants}, we must describe conditions on the Reedy category $\scat{C}$ so that the constant $\scat{C}$-diagram $1$ has monomorphic latching maps. By observation \ref{obs:latching.ordinary.colimit}, the latching object at $c \in \scat{C}$ of the constant diagram at $1$ is  the colimit of the constant diagram at $1$ indexed by the category of elements for the weight $\boundary\scat{C}^c$. The colimit of a constant diagram is the coproduct of the single object indexed over the set of connected components. In particular, the latching map $l^c$, whose codomain is $1$, is a monomorphism if and only if for each $c \in \scat{C}$, the category $\el\boundary\scat{C}^c$ is either empty or connected so that this coproduct is either $\emptyset$ or $1$.

\begin{defn}[connected weights] Say a weight $W \in \Set^\scat{C}$ is \emph{connected} if it is empty or if either of the following equivalent conditions are satisfied: \begin{enumerate} \item The category $\el W$ is connected.
\item The functor $W$ cannot be expressed as a coproduct $W \cong W' \coprod W''$ with both $W'$ and $W''$ non-empty.
\end{enumerate}
\end{defn}

Combining observation \ref{obs:fibrant.constants} with the terminology just introduced, we have the following corollary of proposition \ref{prop:2/3-SM7}.

\begin{cor}\label{cor:connected.weights} If $\scat{C}$ is a Reedy category so that each $\boundary\scat{C}^c$ is connected, then for any model category $\lcat{M}$, $\lim \colon \lcat{M}^\scat{C} \to \lcat{M}$ is a right Quillen functor. Dually, if instead each $\boundary\scat{C}_c$ is connected, then $\colim \colon \lcat{M}^\scat{C} \to \lcat{M}$ is a left Quillen functor.
\end{cor}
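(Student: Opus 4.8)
The plan is to reduce the statement to Observation~\ref{obs:fibrant.constants}, which already tells us that $\lim\colon\lcat{M}^{\scat{C}}\to\lcat{M}$ is right Quillen as soon as the latching maps of the terminal object $1\in\Set^{\scat{C}}$ are monomorphisms. So the only thing left to check is the purely combinatorial assertion that, for a Reedy category $\scat{C}$, the constant $\scat{C}$-diagram $1$ has monomorphic latching maps if and only if every $\boundary\scat{C}^c$ is connected.

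First I would identify the $c$-th latching object of the terminal constant diagram. By Observation~\ref{obs:weights-latching} it is $\latch^c 1\cong\boundary\scat{C}^c\wcolim_{\scat{C}}1$, and by Observation~\ref{obs:latching.ordinary.colimit} (or directly Observation~\ref{obs:wcolim-in-sets}) this weighted colimit is the ordinary colimit of the constant $\Set$-valued diagram at $1$ indexed by $\el\boundary\scat{C}^c$. Since the colimit of a constant diagram at the one-point set is the set of connected components of the indexing category, this gives $\latch^c 1\cong\cpts(\el\boundary\scat{C}^c)$, and the latching map $l^c\colon\latch^c 1\to 1^c=1$ is the unique map to the terminal set. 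Such a map is a monomorphism precisely when its domain has at most one element, i.e.\ precisely when $\el\boundary\scat{C}^c$ is empty or connected, which by the definition of connected weights is exactly the condition that $\boundary\scat{C}^c$ be connected. Feeding this back into Observation~\ref{obs:fibrant.constants} yields the first half of the corollary.

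For the second half I would simply dualise. Applying the first half to the Reedy category $\scat{C}\op$ with its canonical dual Reedy structure shows that if each $\boundary(\scat{C}\op)^c$ is connected then $\lim\colon\lcat{N}^{\scat{C}\op}\to\lcat{N}$ is right Quillen for every model category $\lcat{N}$; taking $\lcat{N}=\lcat{M}\op$ turns this into the statement that $\colim\colon\lcat{M}^{\scat{C}}\to\lcat{M}$ is left Quillen. It then remains to observe that the contravariant representable $(\scat{C}\op)^c$ on $\scat{C}\op$ is literally the covariant representable $\scat{C}_c$ on $\scat{C}$, and that the latching construction for $\scat{C}\op$ used to form $\boundary(\scat{C}\op)^c$ coincides with the one for $\scat{C}$ used to form $\boundary\scat{C}_c$ (the dual Reedy structure swaps $\direct{\scat{C}}$ and $\inverse{\scat{C}}$, so this is Observation~\ref{obs:cons-skeleta} read in the opposite category), whence $\boundary(\scat{C}\op)^c=\boundary\scat{C}_c$. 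Alternatively one can bypass the representable bookkeeping entirely by invoking the dual of Observation~\ref{obs:fibrant.constants} (equivalently, the dual of Proposition~\ref{prop:2/3-SM7}) directly and repeating the connected-components computation above with $\scat{C}$ replaced by $\scat{C}\op$.

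There is no serious obstacle here; the argument is essentially a translation of Proposition~\ref{prop:2/3-SM7} through the definition of a connected weight. The one point that genuinely deserves care is the duality bookkeeping in the second paragraph: one must line up the $\op$ of the Reedy structure, the $\op$ of the model category, and the exchange of covariant and contravariant representables so that ``$\boundary(\scat{C}\op)^c$ connected'' really does become ``$\boundary\scat{C}_c$ connected''. Everything else — the colimit of a constant $\Set$-diagram being $\cpts$ of its index category, a map into $1$ being monic iff its domain is subterminal, and the equivalence of the two clauses in the definition of a connected weight — is elementary.
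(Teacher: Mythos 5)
Your proposal is correct and follows essentially the same route as the paper: the paper likewise reduces the corollary to observation~\ref{obs:fibrant.constants}, identifies $\latch^c 1$ as the set of connected components of $\el\boundary\scat{C}^c$ via observation~\ref{obs:latching.ordinary.colimit}, notes that the latching map into $1$ is monic exactly when this is empty or a singleton, and disposes of the colimit half by duality. Your extra care with the $\op$-bookkeeping in the second half is a welcome expansion of what the paper compresses into the single word ``Dually''.
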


We like our statement of corollary \ref{cor:connected.weights} because it makes it clear that ``it is all in the weights''. For the reader's convenience, we note that this condition is expressed in another way in the standard literature.
  
  \begin{defn}[cofibrant constants] A Reedy category $\scat{C}$ has \emph{cofibrant constants} if the constant $\scat{C}$-diagram at any cofibrant object in any model category is Reedy cofibrant. Dually, $\scat{C}$ has \emph{fibrant constants} if the constant $\scat{C}$-diagram at any fibrant object in any model category is Reedy fibrant.
   \end{defn}

\begin{lem} The weights $\boundary\scat{C}^c$ are all connected if and only if $\scat{C}$ has cofibrant constants.
\end{lem}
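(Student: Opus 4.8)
The plan is to make the latching maps of a constant diagram explicit and then read off both implications. Fix a model category $\lcat{M}$ and an object $M\in\lcat{M}$, and consider the constant diagram $\const M\in\lcat{M}^{\scat{C}}$. By theorem \ref{thm:model-structure}, $\const M$ is Reedy cofibrant if and only if each relative latching map of $\emptyset\inc\const M$ is a cofibration in $\lcat{M}$; and, as noted just after definition \ref{defn:relative-maps}, these relative latching maps are precisely the latching maps $l^{\const M,c}\colon\latch^c(\const M)\to(\const M)^c=M$. Now observation \ref{obs:weights-latching} gives $\latch^c(\const M)\cong\boundary\scat{C}^c\wcolim_{\scat{C}}\const M$, and observation \ref{obs:weighted.as.ordinary} identifies the right-hand side with the colimit of the constant diagram on $M$ over the category of elements $\el\boundary\scat{C}^c$. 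Since the colimit of a constant diagram on $M$ over a shape category $\scat{D}$ is the copower $\pi_0(\scat{D})\tns M$, with structure map the fold map when $\scat{D}$ is nonempty, we conclude that $l^{\const M,c}$ is the fold map $\pi_0(\el\boundary\scat{C}^c)\tns M\to M$ --- exactly the computation already carried out in the paragraph preceding the definition of connected weights, now with the terminal object $1$ replaced by an arbitrary $M$. Hence $\scat{C}$ has cofibrant constants if and only if for every model category $\lcat{M}$, every cofibrant $M$, and every $c\in\scat{C}$ the fold map $\pi_0(\el\boundary\scat{C}^c)\tns M\to M$ is a cofibration.

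For the forward implication, assume each $\boundary\scat{C}^c$ is connected, so that every $\pi_0(\el\boundary\scat{C}^c)$ is empty or a one-element set. In the first case the fold map is the map $\emptyset_{\lcat{M}}\to M$, which is a cofibration because $M$ is cofibrant; in the second it is the identity on $M$, which is a cofibration as well. So every latching map of $\const M$ is a cofibration, $\const M$ is Reedy cofibrant, and $\scat{C}$ has cofibrant constants.

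For the reverse implication it suffices to exhibit, whenever some $\boundary\scat{C}^c$ fails to be connected (equivalently $\pi_0(\el\boundary\scat{C}^c)$ has at least two elements), a single model category witnessing the failure of cofibrant constants. Take $\lcat{M}=\sSet$ with its Quillen model structure, in which every object is cofibrant and the cofibrations are precisely the monomorphisms, and take $M=\Delta^0$. Then the latching map of $\const(\Delta^0)$ at $c$ is the fold map from a coproduct of at least two copies of $\Delta^0$ onto $\Delta^0$, which is not injective and hence not a cofibration. Thus $\const(\Delta^0)$ is not Reedy cofibrant, $\scat{C}$ does not have cofibrant constants, and the contrapositive is established.

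The argument is essentially routine. The one point requiring a little care is the identification of $\latch^c(\const M)$ with the copower $\pi_0(\el\boundary\scat{C}^c)\tns M$, which combines observation \ref{obs:weighted.as.ordinary} with the standard description of colimits of constant diagrams; once this is in place both directions are immediate, and I do not anticipate any real obstacle.
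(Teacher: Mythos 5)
Your proof is correct and follows essentially the same route as the paper: the forward direction identifies the latching object of a constant diagram at a cofibrant $M$ as $\emptyset$ or $M$ via the connectivity of the weights (exactly the paper's argument via observation \ref{obs:latching.ordinary.colimit}), and the reverse direction uses the constant diagram at $\Delta^0$ in the Quillen model structure on simplicial sets as the witness, just as the paper does. The only cosmetic difference is that you check directly that the fold map on copies of $\Delta^0$ fails to be a monomorphism, where the paper applies the $0$-simplices functor to reduce to the already-established statement about the constant diagram at $1\in\Set$.
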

\begin{proof}
To obtain the ``only if'' direction, recall that the cofibrations of the canonical model structure on the category of simplicial sets are the monomorphisms. So in particular, we know that all simplicial sets, including the $0$-simplex $\Del^0$, are cofibrant in there. So if $\scat{C}$ has cofibrant constants then the constant $\scat{C}$-diagram on $\Del^0$ is Reedy cofibrant, that is to say all of its latching maps are monomorphisms. Applying the functor $({-})_0\colon \sSet\to\Set$, which carries each simplicial set to its set of $0$-simplices and preserves all colimits, it follows that the constant $\scat{C}$-diagram on the terminal set $1 = (\Del^0)_0$ has latching maps which are all monomorphisms. As argued after observation~\ref{obs:fibrant.constants} that in turn implies that each weight $\boundary\scat{C}^c$ is connected, as required.

Conversely, as a consequence of observation \ref{obs:latching.ordinary.colimit}, for any cofibrant object $M$ in any model category $\lcat{M}$, because the weights for the latching objects are connected, the latching objects of the constant diagram at $M$ are either $\emptyset$ or $M$, and the latching maps are either $\emptyset \to M$ or the identity at $M$. Hence, it follows that such diagrams are Reedy cofibrant, which means that $\scat{C}$ has cofibrant constants.
\end{proof}

\begin{ex} The Reedy categories indexing countable sequences, pushout diagrams, coequaliser diagrams, and cosimplicial objects all have fibrant constants; many of the weights for matching objects are empty. Corollary \ref{cor:connected.weights} implies that the ``composition'', pushout, coequaliser, and ``evaluate at [0]'' functors are left Quillen with respect to the Reedy model structures. Dually, the opposite Reedy categories have cofibrant constants, implying that the inverse limit, pullback, equaliser, and ``evaluate at [0]'' functors, respectively, are right Quillen with respect to the Reedy model structures.  This formal calculation in the weights unifies and extends the conclusions of examples \ref{ex:hocoeq}, \ref{ex:hoeq}, \ref{ex:mapping}, \ref{ex:hopushout}, and \ref{ex:stupid-simplicial}.
\end{ex}

\section{Simplicial model categories and geometric realization}\label{sec:simplicial}

Suppose now that $\lcat{M}$, in addition to being complete and cocomplete, is also tensored, cotensored, and enriched over the category of simplicial sets. The tensor and cotensor are defined to be adjoints to the hom-space bifunctor $\hom \colon \lcat{M}\op \times \lcat{M} \to \sSet$ so that the adjunction is encoded by natural isomorphisms \[ \hom (K \tns  M,N) \cong \hom(M, K \pwr N) \cong \hom(K, \hom(M,N)) \qquad \forall K \in \sSet,\ M,N \in \lcat{M} \] of hom-spaces, not simply of hom-sets. It follows that the three bifunctors are simplicially enriched. While we have overloaded the notation ``$\tns$'' and ``$\pwr$,'' there is no ambiguity: the tensor or cotensor with a set is always isomorphic to the tensor or cotensor, respectively, with the corresponding discrete simplicial set.

\begin{defn} Suppose $\lcat{M}$ is a model category that is tensored, cotensored, and enriched over simplicial sets. Then $\lcat{M}$ is a \emph{simplicial model category} if it additionally satisfies the ``SM7'' axiom: \begin{enumerate}[label=(SM7\roman*)] \item The Leibniz tensor sends a monomorphism of simplicial sets and a cofibration in $\lcat{M}$ to a cofibration in $\lcat{M}$.
\item The Leibniz tensor sends a monomorphism of simplicial sets and a trivial cofibration in $\lcat{M}$ to a trivial cofibration in $\lcat{M}$.
\item The Leibniz tensor sends an anodyne map of simplicial sets and a cofibration in $\lcat{M}$ to a trivial cofibration in $\lcat{M}$.
\end{enumerate}
\end{defn}
By observation \ref{obs:leibniz-lifting-properties}, the SM7 axiom has dual forms expressed using the Leibniz bifunctors associated to $\pwr$ or $\hom$. 

\begin{ex} Quillen's original model structure on simplicial sets is a simplicial model category \cite{Quillen:1967:Model}.
\end{ex}

The axioms (SM7i-iii) assert that $\tns$ is a \emph{left Quillen bifunctor} or dually that $\pwr$ and $\hom$ are \emph{right Quillen bifunctors} with respect to the given model structure on $\lcat{M}$ and the Quillen model structure on $\sSet$. More generally, a bifunctor between model categories is a left Quillen bifunctor if its Leibniz bifunctor carries a pair of cofibrations to a cofibration that is acyclic if either of the domain cofibrations is.
	
   \begin{thm}\label{thm:simp.model.cat} Let $\scat{C}$ be a Reedy category and let $\lcat{M}$ be a simplicial model category. Then the weighted colimit and weighted limit bifunctors \[ \sSet^{\scat{C}\op} \times \lcat{M}^{\scat{C}} \xrightarrow{\wcolim_\scat{C}} \lcat{M} \qquad\qquad  (\sSet^{\scat{C}})\op \times \lcat{M}^{\scat{C}} \xrightarrow{ \wlim{\,}{\,}_{\scat{C}}} \lcat{M}\] defined as in \eqref{eq:wlimformula} are respectively left and right Quillen bifunctors.
  \end{thm}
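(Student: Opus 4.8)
The plan is to verify that $\wcolim_{\scat{C}}$ is a left Quillen bifunctor — that is, that its Leibniz bifunctor carries a pair consisting of a Reedy cofibration $j$ of $\sSet^{\scat{C}\op}$ and a Reedy cofibration $i$ of $\lcat{M}^{\scat{C}}$ to a cofibration of $\lcat{M}$, acyclic whenever $i$ or $j$ is — and then to deduce the statement for $\wlim{\,}{\,}_{\scat{C}}$ by duality. For the latter, I would observe that passing from $\lcat{M}$ to $\lcat{M}\op$ interchanges the tensor and cotensor, so that $\lcat{M}\op$ is again a simplicial model category (this is precisely the ``dual form of SM7'' noted just after the definition), and identifies the weighted limit bifunctor valued in $\lcat{M}$ with the weighted colimit bifunctor $\wcolim_{\scat{C}\op}$ valued in $\lcat{M}\op$, under the standard identification of Reedy model structures $(\lcat{M}\op)^{\scat{C}\op}\simeq(\lcat{M}^{\scat{C}})\op$ from the proof of proposition~\ref{prop:reedy.model.dual} (Reedy cofibrations for $\scat{C}\op$ over $\lcat{M}\op$ corresponding to Reedy fibrations for $\scat{C}$ over $\lcat{M}$). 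So it suffices to treat $\wcolim_{\scat{C}}$.

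For that, the engine is the cellular ``building up'' of \S\ref{sec:cellular}. On the diagram side, proposition~\ref{prop:building-up} presents $i$ as a cell complex whose cells are $(\boundary\scat{C}_{\bar c}\inc\scat{C}_{\bar c})\leib\tns\leib{\latch^{\bar c}}(i)$; by lemma~\ref{lem:triv-cof-char} each factor $\leib{\latch^{\bar c}}(i)$ is a cofibration of $\lcat{M}$, and a trivial cofibration when $i$ is Reedy trivial. Applying proposition~\ref{prop:building-up} instead to the Reedy category $\scat{C}\op$ and the model category $\sSet$ (noting the identifications $(\scat{C}\op)_c\cong\scat{C}^c$ and $\boundary(\scat{C}\op)_c\cong\boundary\scat{C}^c$ in $\Set^{\scat{C}\op}$), $j$ is presented as a cell complex whose cells are $(\boundary\scat{C}^c\inc\scat{C}^c)\leib\tns k_c$, where each $k_c=\leib{\latch^c}(j)$ is a monomorphism of simplicial sets — hence a cofibration of $\sSet$ — and is anodyne when $j$ is Reedy trivial (lemma~\ref{lem:triv-cof-char} again). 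Since $\leib\wcolim_{\scat{C}}$ is cocontinuous in each variable by lemma~\ref{lem:leibniz-cocts}, corollary~\ref{cor:leibniz-tcofp} then presents $j\leib\wcolim_{\scat{C}}i$ as a cell complex whose cells are the Leibniz colimits
\[
  \bigl((\boundary\scat{C}^c\inc\scat{C}^c)\leib\tns k_c\bigr)\leib\wcolim_{\scat{C}}\bigl((\boundary\scat{C}_{\bar c}\inc\scat{C}_{\bar c})\leib\tns g_{\bar c}\bigr),
\]
with $k_c$ running over the simplicial-set factors and $g_{\bar c}$ over the $\lcat{M}$-factors above.

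The main technical work, routine but fiddly, is to simplify these cells. A direct computation in the style of those in \S\ref{sec:cellular} gives, for $P\in\Set^{\scat{C}\op}$, $K\in\sSet$, $W\in\Set^{\scat{C}}$ and $X\in\lcat{M}$, a natural isomorphism $(P\tns K)\wcolim_{\scat{C}}(W\tns X)\cong K\tns\bigl((P\wcolim_{\scat{C}}W)\tns X\bigr)$, obtained by shuffling copowers and the simplicial tensoring of $\lcat{M}$ through the defining coend. Passing this through to the Leibniz operations via observation~\ref{obs:leibniz-isos} — one must check that the relevant bifunctors preserve pushouts in the appropriate variables — together with the coherence isomorphism $(K\times K')\tns X\cong K\tns(K'\tns X)$ of the simplicial tensoring, identifies the displayed cell with
\[
  \bigl(k_c\leib\times m_{c,\bar c}\bigr)\leib\tns g_{\bar c},
\]
where $m_{c,\bar c}=(\boundary\scat{C}^c\inc\scat{C}^c)\leib\wcolim_{\scat{C}}(\boundary\scat{C}_{\bar c}\inc\scat{C}_{\bar c})$ is, by the computation in the proof of corollary~\ref{cor:building-up}, the set inclusion $(\boundary\scat{C}_{\bar c})^c\cup(\boundary\scat{C}^c)_{\bar c}\inc\scat{C}^c_{\bar c}$ — in particular a monomorphism of discrete simplicial sets — while $\leib\times$ denotes the Leibniz cartesian product on $\sSet$ and $\leib\tns$ the Leibniz simplicial tensoring on $\lcat{M}$. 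Keeping all the variances straight through this reduction is the step I expect to demand the most care.

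It remains only to read the conclusion off from SM7. The Leibniz product $k_c\leib\times m_{c,\bar c}$ of two monomorphisms of simplicial sets is again a monomorphism, and is anodyne as soon as $k_c$ is (both being instances of the fact that $\sSet$ is itself a simplicial model category). Hence $(k_c\leib\times m_{c,\bar c})\leib\tns g_{\bar c}$ is a cofibration of $\lcat{M}$ by (SM7i); it is a trivial cofibration by (SM7ii) when $g_{\bar c}$ is — i.e.\ when $i$ is a Reedy trivial cofibration — and by (SM7iii) when $k_c$ is anodyne — i.e.\ when $j$ is a Reedy trivial cofibration. Since the cofibrations of $\lcat{M}$, and its trivial cofibrations, are each closed under coproducts, pushouts, and transfinite composites, the cell complex $j\leib\wcolim_{\scat{C}}i$ assembled from these cells is a cofibration of $\lcat{M}$, and a trivial cofibration whenever $i$ or $j$ is a Reedy trivial cofibration. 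This says exactly that $\wcolim_{\scat{C}}$ is a left Quillen bifunctor, whence, by the duality above, that $\wlim{\,}{\,}_{\scat{C}}$ is a right Quillen bifunctor.
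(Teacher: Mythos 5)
Your argument is correct, but it takes a genuinely different route from the paper's. The paper transposes: it notes that $\wcolim_{\scat{C}}$ is adjoint to the pointwise cotensor $(A,M)\mapsto A\pwr M$ valued in $\lcat{M}^{\scat{C}}$, computes that the relative matching map at $c$ of a Leibniz cotensor is the Leibniz cotensor of the relative latching map of the weight at $c$ with the original map of $\lcat{M}$ (because cotensoring converts weighted colimits of weights into weighted limits), and then reads off the Reedy fibration conditions from SM7 in its ``right Quillen'' form --- a half-page argument needing none of the cellular machinery of \S\ref{sec:Leibniz-rel-cell-cx}--\ref{sec:cellular}. You instead stay on the cofibration side: you decompose \emph{both} Reedy cofibrations into their canonical cells via proposition~\ref{prop:building-up}, invoke corollary~\ref{cor:leibniz-tcofp} to present $j\leib\wcolim_{\scat{C}}i$ as a cell complex, and reduce each cell to the form $(k_c\leib\times m_{c,\bar c})\leib\tns g_{\bar c}$ before applying SM7. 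Your route costs more bookkeeping --- the four-variable associativity isomorphism and its Leibniz form, plus the additional input that the Leibniz product of an anodyne map with a monomorphism is anodyne (i.e.\ that $\sSet$ is itself a simplicial model category) --- but it buys an explicit cellular presentation of $j\leib\wcolim_{\scat{C}}i$ in the spirit of observation~\ref{obs:geo-filt}, and it is the natural two-variable extension of the paper's own proof of proposition~\ref{prop:2/3-SM7}. Two minor points: that the maps $\leib{\latch^{\bar c}}(i)$ are cofibrations is the definition of a Reedy cofibration rather than a consequence of lemma~\ref{lem:triv-cof-char}, which you need only for the acyclic cases; and your duality reduction of the weighted limit statement to the weighted colimit statement is sound and is likewise left implicit in the paper, whose proof also treats only the colimit half explicitly.
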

  \begin{proof}
  The weighted colimit is adjoint to the bifunctor \[ (\sSet^{\scat{C}\op})\op \times \lcat{M} \xrightarrow{-\pwr-} \lcat{M}^\scat{C}\] 
  built from the simplicial cotensor $\pwr \colon \sSet\op \times \lcat{M} \to \lcat{M}$; given $A \in \sSet^{\scat{C}\op}$ and $X \in \lcat{M}$ define $(A \pwr X)^c = A_c \pwr X$. Observe that the simplicial cotensor sends weighted colimits in its first variable to weighted limits.
  
   Suppose $i \colon A \to B$ is in $\sSet^{\scat{C}\op}$ and suppose $f \colon X \to Y$ is in $\lcat{M}$. The relative matching map of $i \leib\pwr f \colon B \pwr X \to (B \pwr Y) \times_{A \pwr Y} (A \pwr X)$ is \begin{equation}\label{eq:simpmodelreedy} \leib{\match^c} (i \leib\pwr f) \cong \leibwlim{\boundary\scat{C}_c\inc\scat{C}_c}{i \leib\pwr f}_{\scat{C}} \cong  ( {\boundary\scat{C}_c \inc\scat{C}_c}\leibwcolim_{\scat{C}}{i}) \leib\pwr f \cong (\leib{\latch_c} i) \leib\pwr f.\end{equation} Because $\lcat{M}$ is a simplicial model category, $\pwr \colon \sSet\op \times \lcat{M} \to \lcat{M}$ is a right Quillen bifunctor. If $i$ is a Reedy cofibration, then each $\leib{\latch_c} i$ is a cofibration in $\lcat{M}$. If $f$ is a fibration, it follows that \eqref{eq:simpmodelreedy} is a fibration, and hence that $i \leib\pwr f$ is a Reedy fibration. The same argument combined with lemma \ref{lem:triv-cof-char} implies that if either $i$ or $f$ is acyclic, then $i \leib\pwr f$ is too. 
     \end{proof}
     
     Considering the degenerate case of the Leibniz construction \ref{defn:leibniz} when the domain of one of the morphisms is the initial object, one sees that a left Quillen bifunctor becomes an ordinary left Quillen functor when the value of one of the variables is fixed at a cofibrant object.    Hence, an immediate corollary of theorem \ref{thm:simp.model.cat} is that the weighted colimit functor and the weighted limit functor are, respectively, left and right Quillen, provided that the weight is Reedy cofibrant. 
     
     \begin{ex}[the Yoneda embedding is a Reedy cofibrant weight]
     As in the introduction, let $\Delta \colon \Del \to \sSet$ denote the Yoneda embedding. We must show that each latching map $L^n\Delta \to \Delta^n$ is a cofibration. By definition, $\latch^n\Delta = \boundary\Delta^n \wcolim_{\Del} \Delta \cong \boundary\Delta^n$, by the Yoneda lemma. The proof is completed by the familiar observation that the inclusions $\boundary\Delta^n \to \Delta^n$ are among the monomorphisms, the cofibrations in the Quillen model structure.
   \end{ex}
  
 \begin{cor}[homotopy invariance of geometric realization] Geometric realization preserves pointwise weak equivalences between Reedy cofibrant simplicial objects taking values in a simplicial model category. Dually, totalization preserves pointwise weak equivalences between Reedy fibrant cosimplicial objects taking values in a simplicial model category.
 \end{cor}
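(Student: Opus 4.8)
The plan is to recognise geometric realization as a weighted colimit and then feed it through Theorem~\ref{thm:simp.model.cat} and Ken Brown's lemma. A simplicial object of $\lcat{M}$ is an object of $\lcat{M}^{\Del\op}$, and its geometric realization is the weighted colimit $\Del^\bullet\wcolim_{\Del\op}X$, where the weight is the Yoneda embedding $\Del^\bullet$ regarded as an object of $\sSet^{(\Del\op)\op}=\sSet^{\Del}$; unwinding the coend in~\eqref{eq:wlimformula} gives $\Del^\bullet\wcolim_{\Del\op}X\cong\int^{[n]\in\Del}\Del^n\tns X_n$, the familiar formula. Since $\Del\op$ carries its canonical Reedy structure, Theorem~\ref{thm:simp.model.cat} applies with $\scat{C}=\Del\op$, telling us that $\wcolim_{\Del\op}\colon\sSet^{\Del}\times\lcat{M}^{\Del\op}\to\lcat{M}$ is a left Quillen bifunctor.

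First I would invoke the example established just above, that the Yoneda embedding $\Del^\bullet$ is a Reedy cofibrant weight in $\sSet^{\Del}$ (its latching maps are the boundary inclusions $\boundary\Del^n\inc\Del^n$, which are monomorphisms and hence cofibrations in the Quillen model structure). By the remark following Theorem~\ref{thm:simp.model.cat}, a left Quillen bifunctor restricts to an ordinary left Quillen functor once a variable is fixed at a cofibrant object, so $\Del^\bullet\wcolim_{\Del\op}{-}\colon\lcat{M}^{\Del\op}\to\lcat{M}$ is left Quillen with respect to the Reedy model structure on $\lcat{M}^{\Del\op}$ and the ambient model structure on $\lcat{M}$. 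Now Ken Brown's lemma shows that this functor sends weak equivalences between cofibrant objects to weak equivalences; since, by Theorem~\ref{thm:model-structure}, the cofibrant objects of $\lcat{M}^{\Del\op}$ are exactly the Reedy cofibrant simplicial objects and the weak equivalences there are exactly the pointwise weak equivalences, this is precisely the first assertion.

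The dual statement runs identically. The totalization of a cosimplicial object $X\in\lcat{M}^{\Del}$ is the weighted limit $\wlim{\Del^\bullet}{X}_{\Del}\cong\int_{[n]\in\Del}X^n\pwr\Del^n$, with the same weight $\Del^\bullet\in\sSet^{\Del}$; Theorem~\ref{thm:simp.model.cat} says $\wlim{\,}{\,}_{\Del}$ is a right Quillen bifunctor, so fixing the Reedy cofibrant weight $\Del^\bullet$ makes $\wlim{\Del^\bullet}{-}_{\Del}\colon\lcat{M}^{\Del}\to\lcat{M}$ a right Quillen functor, and the dual form of Ken Brown's lemma gives that totalization preserves pointwise weak equivalences between Reedy fibrant cosimplicial objects.

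I do not expect any genuine obstacle: the substance of the corollary is already packaged in Theorem~\ref{thm:simp.model.cat} and the remark after it, so what remains is essentially bookkeeping. The one thing to keep straight is the variance: geometric realization is a colimit of a $\Del\op$-indexed diagram while totalization is a limit of a $\Del$-indexed diagram, so the two applications of Theorem~\ref{thm:simp.model.cat} use $\scat{C}=\Del\op$ and $\scat{C}=\Del$ respectively, even though in both cases the weight $\Del^\bullet$ lives in $\sSet^{\Del}$ and its Reedy cofibrancy is measured against the canonical Reedy structure on $\Del$.
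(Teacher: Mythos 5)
Your proposal is correct and follows essentially the same route as the paper: identify geometric realization (resp.\ totalization) as the colimit (resp.\ limit) weighted by the Yoneda embedding, note that this weight is Reedy cofibrant, apply theorem~\ref{thm:simp.model.cat} together with the observation that a left (right) Quillen bifunctor becomes a left (right) Quillen functor when one variable is fixed at a cofibrant object, and finish with Ken Brown's lemma. The paper's own proof is just a terser version of this argument, and your care about the variance of the weight matches the setup used in the proof of theorem~\ref{thm:simp.model.cat}.
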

 \begin{proof}
The geometric realization of a simplicial object in a tensored simplicial category is defined to be the colimit weighted by the Yoneda embedding. Dually, the totalization of a cosimplicial object in a cotensored simplicial category is defined to be the limit weighted by the Yoneda embedding. Any left or right Quillen functor preserves weak equivalences between cofibrant or fibrant objects, respectively, by Ken Brown's lemma.
\end{proof}

\begin{obs}[skeletal filtration of geometric realization]\label{obs:geo-filt}
For any simplicial object $X$ valued in a cocomplete category $\lcat{M}$, by proposition \ref{prop:building-up} there is a cell complex presentation 
\begin{equation}\label{eq:reedy's.po2} \xymatrix{ & &  \Delta^n \tns L_n X \cup \partial\Delta^n \tns X_n \ar[d] \ar[r] & \Delta^n \tns X_n \ar[d] & \\
 \emptyset \ar[r] & \cdots \ar[r] & \sk_{n-1} X \ar[r] & \sk_n X \poexcursion \ar[r] & \cdots \ar[r] & X}\end{equation}
in $\lcat{M}^{\Del\op}$ defined by taking the (unenriched) weighted colimit weighted by the hom bifunctor $\Delta \in \Set^{\Del{\op} \times \Del}$. Here $\tns$ denotes the copower, defined pointwise, of a simplicial set with an object of $\lcat{M}$, as in  example \ref{ex:tensor-cotensor}. This is the presentation described in \eqref{eq:reedy's.po}.

When $\lcat{M}$ is a simplicial model category, we can form the (enriched) weighted colimit of the diagrams displayed in \eqref{eq:reedy's.po2} weighted by the Yoneda embedding $\Delta \in \sSet^{\Del}$. By cocontinuity of the weighted colimit bifunctor and the coYoneda lemma, if $Y$ is a simplicial set and $M \in \lcat{M}$ we have an isomorphism \[ \Delta \wcolim_{\Del\op} ( Y \ast M) \cong Y \tns M,\] in which the $\tns$ appearing on the right-hand side is the simplicial tensor of $\lcat{M}$. Thus, taking the geometric realization of the simplicial objects of \eqref{eq:reedy's.po2}, we obtain the following cell complex presentation in $\lcat{M}$:  
\[ \xymatrix{ & &  \Delta^n \tns L_n X \cup \partial\Delta^n \tns X_n \ar[d] \ar[r] & \Delta^n \tns X_n \ar[d] & \\
 \emptyset \ar[r] & \cdots \ar[r] & |\sk_{n-1} X| \ar[r] & |\sk_n X| \poexcursion \ar[r] & \cdots \ar[r] & |X|}
 \] Using this presentation and the ``SM7'' axiom, it is possible to give an alternate proof of the homotopy invariance of the geometric realization based on the homotopy invariance of pushouts and sequential colimits of cofibrations in the model category $\lcat{M}$.
 
 Dual ``Postnikov tower'' presentations exist for the totalization of a cosimplicial object valued in a simplicial model category.
\end{obs}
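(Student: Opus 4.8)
The plan is to read off both cell-complex presentations as direct instances of the machinery already in place, the only genuine work being the interchange of the two distinct roles played by the symbol $\tns$. First I would obtain \eqref{eq:reedy's.po2} by specialising proposition~\ref{prop:building-up} to the Reedy category $\scat{C}=\Del\op$ and the map $\emptyset\to X$ in $\lcat{M}^{\Del\op}$. Since $\Del\op$ has a single object $[n]$ in each degree $n$, and since the relative latching map of $\emptyset\to X$ at $[n]$ is precisely the latching map $L_n X\to X_n$, the countable composite \eqref{eq:skel-seq} collapses to the skeletal tower $\emptyset\to\sk_0 X\to\cdots\to\sk_n X\to\cdots\to X$. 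Unwinding the conventions one checks that $\scat{C}_{[n]}=\Delta^n$ and $\boundary\scat{C}_{[n]}=\boundary\Delta^n$ as objects of $\Set^{\Del\op}$ and that $\tns$ there is the pointwise copower, so the cell attached at stage $n$, namely $(\boundary\scat{C}_{[n]}\inc\scat{C}_{[n]})\leib\tns\leib{\latch^{[n]}}(\emptyset\to X)$, is exactly the Leibniz copower $(\boundary\Delta^n\inc\Delta^n)\leib\tns(L_n X\to X_n)$ displayed in the top row of \eqref{eq:reedy's.po2}. Comparing domains and codomains against the footnoted product convention recovers Reedy's original pushout \eqref{eq:reedy's.po}.

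Next I would record the identification $\Delta\wcolim_{\Del\op}(Y\ast M)\cong Y\tns M$. Unwinding the coend defining the enriched weighted colimit gives $\int^{[n]\in\Del\op}\Delta^n\tns(Y_n\tns M)$; rewriting $\Delta^n\tns(Y_n\tns M)\cong(\Delta^n\times Y_n)\tns M$ by associativity of the simplicial tensor together with the fact that the copower by a set is the tensor by the corresponding discrete simplicial set, and then pulling the cocontinuous simplicial tensor out of the coend, reduces the expression to $\bigl(\int^{[n]}\Delta^n\times Y_n\bigr)\tns M$, whose inner coend is $Y$ by the coYoneda (density) lemma. This is a purely formal coend computation.

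Finally I would apply geometric realization $|{-}|=\Delta\wcolim_{\Del\op}({-})\colon\lcat{M}^{\Del\op}\to\lcat{M}$ to the tower \eqref{eq:reedy's.po2}. As a weighted colimit in its diagram variable this functor is a left adjoint, adjoint to $-\pwr-$ as exhibited in the proof of theorem~\ref{thm:simp.model.cat}, and hence cocontinuous, so it carries the sequential colimit and each defining pushout of \eqref{eq:reedy's.po2} to the analogous colimit and pushout in $\lcat{M}$. By the previous step each cell $Y\ast M$ with $Y\in\{\Delta^n,\boundary\Delta^n\}$ and $M\in\{L_n X,X_n\}$ is sent to $Y\tns M$, with $\tns$ now the simplicial tensor, and $\sk_n X$ to $|\sk_n X|$; assembling these produces the asserted cell-complex presentation of $\emptyset\to|X|$ in $\lcat{M}$. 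The totalization statement follows by dualising throughout, using the dual of proposition~\ref{prop:building-up} to supply the matching Postnikov tower and the continuity of the weighted \emph{limit}. I expect the only real subtlety to lie in keeping the two meanings of $\tns$, the pointwise copower internal to $\lcat{M}^{\Del\op}$ and the $\sSet$-tensor of $\lcat{M}$, distinct and correctly interchanged; once the coYoneda identification of the second step is in hand, the passage between the two diagrams is forced by cocontinuity and needs no further geometric input.
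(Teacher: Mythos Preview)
Your proposal is correct and follows essentially the same approach as the paper: the observation in the paper is self-justifying, invoking proposition~\ref{prop:building-up} for the first presentation, cocontinuity together with the coYoneda lemma for the isomorphism $\Delta\wcolim_{\Del\op}(Y\ast M)\cong Y\tns M$, and then cocontinuity of geometric realization to transport the cell-complex structure, exactly as you have outlined. Your expansion of these steps---particularly the coend computation in the second paragraph and the care taken to distinguish the two meanings of $\tns$---faithfully unwinds what the paper leaves implicit.
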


\bibliographystyle{amsplain}
\bibliography{index}

\end{document}